\documentclass[11pt]{amsart}

\usepackage[margin=2.5cm]{geometry}
\usepackage{color}
\usepackage{mathrsfs}
\usepackage{mathtools}
\usepackage{amsmath}
\usepackage{amssymb}
\usepackage{enumitem}
\usepackage{bbm}
\usepackage{esint}
\usepackage{nicefrac}
\numberwithin{equation}{section}
\usepackage[colorlinks,citecolor=green,linkcolor=red]{hyperref}

\usepackage[latin1]{inputenc}
\usepackage{tcolorbox}

\newtheorem{theorem}{Theorem}[section]

\newtheorem{lemma}[theorem]{Lemma}
\newtheorem{proposition}[theorem]{Proposition}

\theoremstyle{definition}
\newtheorem{definition}[theorem]{Definition}

\newtheorem{remark}[theorem]{Remark}




\newcommand{\N}{\mathbb{N}}

\newcommand{\R}{\mathbb{R}}


\newcommand{\sfd}{{\sf d}}

\newcommand{\rr}{\mathbb R}
\newcommand{\restr}[1]{\lower3pt\hbox{$|_{#1}$}}

\newcommand{\eps}{\varepsilon}  
\newcommand{\nchi}{{\raise.3ex\hbox{$\chi$}}}

\newcommand{\fr}{\hfill$\blacksquare$}  

\newcommand{\Leb}[1]{{\mathcal L}^{#1}}  
\newcommand{\HH}{\mathcal{H}}

\newcommand{\Lip}{\mathrm{Lip}}
\newcommand{\lip}{\mathrm{lip}}
\newcommand{\esssup}{{\rm ess}\sup}
\newcommand{\essinf}{{\rm ess}\inf}
\newcommand{\diam}{\mathrm{diam}}

\newcommand{\RCD}{{\sf RCD}}
\newcommand{\CD}{{\sf CD}}
\newcommand{\mm}{\mathfrak m}

\renewcommand{\limsup}{\varlimsup}
\renewcommand{\liminf}{\varliminf}
\renewcommand{\d}{{\rm d}}
\newcommand{\X}{{\rm X}}

\newcommand{\Xdm}{(\X,\sfd,\mm)}
\newcommand{\rmCh}{{\rm Ch}}

\newcommand{\supp}{{\rm supp}}

\newcommand{\cD}{\mathcal{D}}

\newcommand{\Per}{{\rm Per}}

\newcommand{\dD}{{\mbox{\boldmath$D$}}}

\newcommand{\cN}{\mathcal{N}}

\renewcommand{\phi}{\varphi}

\newcommand{\Omegastar}{{\Omega^*}}
\renewcommand{\cD}{\mathcal D}

\newcommand{\Ig}{\mathcal I^{\flat}_g}

\newcommand{\mres}{\mathbin{\vrule height 1.6ex depth 0pt width 0.13ex\vrule height 0.13ex depth 0pt width 1.3ex}}

\setcounter{tocdepth}{2}
\title[]{Fine P\'olya-Szeg\H{o} rearrangement inequalities \\in metric spaces and applications}

\author[]{Francesco Nobili} 
\address{Universit\'a di Pisa, Dipartimento di Matematica, Largo Bruno Pontecorvo 5,
56127 Pisa, Italy}
\email{\url{francesco.nobili@dm.unipi.it}}

\author[]{Ivan Yuri Violo}
\address{Universit\'a di Pisa, Dipartimento di Matematica, Largo Bruno Pontecorvo 5,
56127 Pisa, Italy}
\email{\url{ivanyuri.violo@dm.unipi.it}}
%
%
%
%
\begin{document}
\begin{abstract}
We study fine P\'olya-Szeg\H{o} rearrangement inequalities into weighted intervals for Sobolev functions and functions of bounded variation defined on metric measure spaces supporting an isoperimetric inequality. We then specialize this theory to spaces with synthetic Ricci lower bounds and characterize equality cases under minimal assumptions. 

As applications of our theory, we show new results around geometric and functional inequalities under Ricci lower bounds answering also questions raised in the literature. Finally, we study further settings and deduce a Faber-Krahn theorem on Euclidean spaces with radial log-convex densities, a boosted P\'olya-Szeg\H{o} inequality with asymmetry reminder on weighted convex cones, the rigidity of Sobolev inequalities on Euclidean spaces outside a convex set and a general lower bound for Neumann eigenvalues on open sets in metric spaces.
\end{abstract} 
\maketitle
\thispagestyle{empty}
\allowdisplaybreaks
\setcounter{tocdepth}{2}
\tableofcontents
\section{Introduction}
Monotone rearrangement methods for functions are powerful tools in modern analysis when dealing with variational problems. The first effective application of the concept of decreasing rearrangement is due to Faber \cite{Faber23} and Krahn \cite{Krahn25,Krahn26} in the proof of the celebrated Faber-Krahn inequality conjectured by Lord Rayleigh in the $19^{\rm th}$ century. Starting from these works, P\'olya and Szeg\H{o} in the book \cite{PoliaSzego51} extended this machinery to several other problems in mathematical physics that found exceptional applications among which we mention \cite{Lieb83,AshbaughBenguria1992,HamelNadirashviliRuss2011,BrascoDePhilippisVelichkov2015}. Nowadays, it is well understood that an isoperimetric inequality guarantees that the decreasing rearrangement decreases the overall Dirichlet energy carried by a function and this principle is commonly referred to as the P\'olya-Szeg\H{o} inequality. We refer to the books \cite{LiebLoss1997,Baernstein19,ciao}, the surveys \cite{Talenti2016,Frank2022} and references therein for an account and for  historical notes on the subject.

A big step in the progress of this theory has been made by Brothers and Ziemer  \cite{BrothersZiemer88} who employed sophisticated arguments involving the theory of sets of finite perimeter to understand the case of equality in the P\'olya-Szeg\H{o} inequality for weakly differentiable functions (see \cite{FeroneVolpicelli03} for an alternative proof). This fine investigation culminated in \cite{CianchiFusco02} where this analysis was extended to the case of functions of bounded variation.

In the meantime, the advances around different types of isoperimetric problems made it possible to investigate analogous P\'olya-Szeg\H{o} rearrangement principles in settings possibly different from the standard Euclidean space. For example B\'erard and Meyer \cite{BerardMeier1982} studied the notion of \emph{spherical} rearrangement on manifolds with Ricci {curvature lower bound}, taking advantage of the celebrated L\'evy-Gromov isoperimetric inequality \cite[Appendix C]{Gromov07}. Additionally, rearrangements were considered {on Cartan-Hadamard manifolds \cite{hebey99} and on}  the Euclidean space but with Gaussian weight \cite{Ehrhard1983} with full understanding of equality cases in \cite{CarlenKerce01}, see also previous works \cite{Borell1975,SudakovCirel1974} and \cite{BakryLedoux96,Bobkov1997} around Gaussian isoperimetric inequalities. Finally, we mention the recent works about rearrangements in metric spaces with synthetic Ricci curvature lower bound  by Mondino and Semola \cite{MondinoSemola20} and the authors \cite{NobiliViolo21}.

\smallskip

All these examples (and many more) contributed to affirming the principle that the underlying isoperimetric principle \emph{identifies} the correct notion of rearrangement for which a  P\'olya-Szeg\H{o} inequality can be proved. However, each setting comes with its own features and one is typically requested to run again similar arguments in a new framework, ideally with minimal assumptions and characterization of equality cases. This said, our main goals are:
\begin{itemize}
    \item building a general machinery of fine rearrangement principles in metric spaces, covering as many settings as possible;
    \item specializing this investigation to several settings, most notably (but not limited to) spaces with synthetic Ricci curvature lower bounds, together with new characterizations of the equality cases under minimal assumptions;
\end{itemize}
As applications of our theory, we obtain the following results:
\begin{itemize}
    \item[--] quantitative improvements in terms of the diameter on the $p$-Lichnerowicz inequality, the Sobolev inequality and the log Sobolev inequality in the compact spaces with \textit{positive Ricci curvature lower bound}, see Theorem \ref{thm:main quant compact spaces}.
    \item[--] geometric and functional rigidity of the $p$-Faber Krahn inequality, the $p$-Sobolev inequality  and the $p$-log Sobolev inequality in non-compact spaces with \textit{non-negative Ricci curvature and Euclidean volume growth}, see Theorem \ref{thm:main noncompact AVR}.
    \item[--] $p$-Faber-Krahn inequality in the \textit{Euclidean space weighted with a radial log-convex density}, together with its geometric and functional rigidity, see  Theorem \ref{thm:faber Krahn log convex}.
     \item[--] $p$-Sobolev inequalities in the \textit{Euclidean space outside a convex set} together with geometric and functional rigidities, see  Theorem \ref{thm:main outsideconvex}.
    \item[--] sharp lower bound on the first $p$-Neumann Laplacian eigenvalue on \textit{open subsets of metric spaces supporting a relative isoperimetric inequality}, see Theorem \ref{thm:main neumann}.
\end{itemize}

\subsection{A general P\'olya-Szeg\H{o} rearrangement theory}
As discussed, we aim at covering as many settings as possible, among which -but not limited to- weighted manifolds. We shall therefore consider working in the realm of metric measure spaces $\Xdm$ and with the Sobolev and BV calculus respectively, see Section \ref{sec:calculus}.

Given $\varnothing \neq \Omega\subset \X$ open and a function $u \colon \Omega \to \R$, satisfying  $\mm(\{u>t\})<\infty$ for $t > \essinf u$, we will consider the decreasing rearrangement $u^*$ on the real line with a weighted measure $\omega$. The weights are of the type $\omega=g(t)\d t$, where $g:I\to(0,\infty)$ is a continuous function and $I\subset \R$ is an open interval. We also assume the following natural basic properties:
\[ 
\mm(\Omega)\le \omega(\R),\qquad {
\begin{array}{l}
     \omega((-\infty,x])<\infty,\quad \forall x \in I,  \\
     \lim_{x\to -\infty}\omega((-\infty,x])=0
\end{array}
}
\]
Under these assumptions, it is possible to define the \textit{decreasing  rearrangement} with respect to $\omega$ 
\[
u^* \colon \Omega^* \to \R,
\]
that is a monotone function and equidistributed with  $u$ and where $\Omega^*\subset I$ is a canonical open sub-interval satisfying $\omega(\Omega^*)=\mm(\Omega)$. See Section \ref{sec:monotone rearrangements} for the details.

We also consider the \emph{isoperimetric profile function} $\Ig \colon [0,\omega(I)] \to [0,\infty)$ given by $\Ig(v)=g(r_v),$ where $r_v\in I$  satisfies $\omega((-\infty,r_v))=v$ (see Section \ref{sec:key}).  Note that there is a slight abuse of terminology, since under these general assumptions, half lines are not necessarily isoperimetric  (as it happens for log-concave weights \cite{Bobkov96}, see also \cite{Ros05} for an axiomatization where this is enforced).

Our first main result is that assuming an isoperimetric inequality with respect to the profile $\Ig(\cdot)$ yields a P\'olya-Szeg\H{o} rearrangement inequality. The main novelty is that this holds in arbitrary metric measure spaces.
\begin{theorem}\label{thm:main Sobolev PZ metric}
Let $\Xdm$ be a metric measure space, $p>1$, let $\varnothing \neq \Omega\subset \X$ be open and let $u \in W^{1,p}_{loc}(\Omega)$ be such that $\mm(\{u>t\})<\infty$ for all $t > \essinf u$. Let $\omega=g\d t$ be a weight as above and let $u^*$ be the decreasing rearrangement of $u$ with respect to $\omega$. Suppose that it holds
\begin{equation}\label{eq:isop general}
    \Per(E,\Omega)\ge {\sf C} \cdot \Ig(\mm(E)), \qquad \forall \, E \subset (\supp (u))^\eps\cap \Omega, \quad \mm(E)<\infty,
\end{equation}
for some constants $ {\sf C},\eps>0$. Then, it holds
\begin{equation}\label{eq:PZ abstract}
    \int_\Omega |Du|_p^p\,\d\mm \ge {\sf C}^p\int_\Omegastar |(u^*)'|^p\, \d \omega,
\end{equation}
and moreover, if the left-hand side is finite then $u^* \in {\sf AC}_{loc}(\Omega^*)$.
\end{theorem}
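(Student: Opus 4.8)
The plan is to follow the classical Pólya–Szegő strategy adapted to the metric setting, organized around the coarea formula and the layer-cake / isoperimetric comparison. First I would set $\mu(t) := \mm(\{u>t\})$, the distribution function, which is finite for $t>\essinf u$ by hypothesis, and recall that $u^*$ is characterized by $\omega(\{u^*>t\}) = \mu(t)$; since $\omega = g\,\d t$ with $g$ continuous and positive, $u^*$ is monotone and its generalized inverse relates directly to $\mu$. The key identity to establish is that for a.e.\ $t$, $\frac{\d}{\d t}\omega(\{u^*>t\}) = \mu'(t)$, and that on the rearranged side the coarea formula gives $\int_{\Omega^*}|(u^*)'|^p\,\d\omega$ in terms of an integral over level values of $(g\circ r)/|\mu'(t)|$-type quantities, where $r_v$ is the function defining $\Ig$. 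Concretely, by the one-dimensional change of variables, $\int_{\Omega^*}|(u^*)'|^p\,\d\omega = \int |(u^*)'|^{p-1}\,|Du^*|\,\ldots$, and using that on a weighted interval the perimeter of a half-line $\{u^*>t\}$ is exactly $\Ig(\mu(t))$, one reduces to a pointwise-in-$t$ inequality.

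Second, on the original space $\Omega$ I would invoke the coarea formula for Sobolev functions (available in the metric measure calculus cited in Section~\ref{sec:calculus}): $\int_\Omega |Du|_p^p\,\d\mm = \int_{\essinf u}^{\esssup u}\Big(\int_{\{u=t\}} |Du|_p^{p-1}\,\d\Per\Big)\,\d t$ up to the usual care with the density of the perimeter measure, together with the identity $-\mu'(t) = \int_{\{u=t\}}|Du|_p^{-1}\,\d\Per$ valid for a.e.\ $t$ where $|Du|_p\neq 0$ on the level set. Then Jensen's (or Hölder's) inequality applied to the probability-like measure $|Du|_p^{-1}\,\d\Per / (-\mu'(t))$ on the level set $\{u=t\}$ yields
\begin{equation*}
\int_{\{u=t\}}|Du|_p^{p-1}\,\d\Per \ \ge\ \frac{\Per(\{u>t\},\Omega)^p}{(-\mu'(t))^{p-1}}.
\end{equation*}
At this point the isoperimetric hypothesis \eqref{eq:isop general} — applied to the superlevel set $E = \{u>t\}$, which for $t$ close to the essential supremum lies in any $\eps$-neighborhood of $\supp u$, and in general is contained in $(\supp u)^\eps\cap\Omega$ — replaces $\Per(\{u>t\},\Omega)$ by ${\sf C}\cdot\Ig(\mu(t))$, giving the pointwise bound
\begin{equation*}
\int_{\{u=t\}}|Du|_p^{p-1}\,\d\Per \ \ge\ {\sf C}^p\,\frac{\Ig(\mu(t))^p}{(-\mu'(t))^{p-1}}.
\end{equation*}

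Third, I would identify the right-hand side, after integrating in $t$, with ${\sf C}^p\int_{\Omega^*}|(u^*)'|^p\,\d\omega$. This is the matching computation for the weighted interval: since $u^*$ and $u$ are equidistributed, $\{u^*>t\}$ is a half-line whose $\omega$-measure is $\mu(t)$, its perimeter (the value of $g$ at the endpoint) is $\Ig(\mu(t))$, and the analogue of the coarea/derivative identity on the line gives $\big|(u^*)'\big|$ at the level $t$ equal to $\Ig(\mu(t))/(-\mu'(t))$, whence $\int_{\Omega^*}|(u^*)'|^p\,\d\omega = \int |(u^*)'|^p\, g = \int \Ig(\mu(t))^p/(-\mu'(t))^{p-1}\,\d t$. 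Combining the three steps yields \eqref{eq:PZ abstract}. Finally, for the absolute continuity statement: if the left-hand side is finite, then the above shows $\int |(u^*)'|^p\,\d\omega <\infty$, and since $g$ is continuous and positive it is locally bounded below on $\Omega^*$, so $(u^*)' \in L^p_{loc}(\Omega^*)\subset L^1_{loc}(\Omega^*)$; as $u^*$ is monotone, this forces $u^* \in {\sf AC}_{loc}(\Omega^*)$ (a monotone function whose distributional derivative is $L^1_{loc}$ has no singular part).

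The main obstacle I expect is the rigorous handling of the coarea formula and the distribution-function derivative identities in the general metric measure setting: one must carefully treat the set of $t$ where $\mu$ is not differentiable or where the perimeter measure has an atomic/singular part relative to $|Du|_p$, handle the flat regions $\{|Du|_p = 0\}$ and the points where $\mu$ jumps (contributing Dirac masses to $(u^*)'$ that can only decrease the comparison), and verify that the superlevel sets used in \eqref{eq:isop general} genuinely satisfy the inclusion in $(\supp u)^\eps\cap\Omega$. A secondary technical point is justifying the Jensen step when $|Du|_p$ vanishes on part of the level set; this is handled by restricting the perimeter measure to $\{|Du|_p>0\}$ and noting the discarded part only helps the inequality. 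These are precisely the kinds of measure-theoretic subtleties that Brothers–Ziemer and Cianchi–Fusco navigate in the Euclidean case, and the metric version presumably leans on the coarea and BV machinery recalled earlier in the paper.
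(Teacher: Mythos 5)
Your sketch correctly captures the coarea--Jensen--isoperimetry strategy, which is indeed the engine of the argument (the paper's Theorem~\ref{thm:Polya W1p}). However, your plan has a structural gap: you run the level-set argument directly on $u$, and this only works when the perimeter measures $\Per(\{u>t\},\cdot)$ are concentrated, roughly, on the level sets $\{\bar u=t\}$ of the precise representative --- precisely the content of Assumption~\eqref{eq:ass2}, needed for the fine coarea formula (Lemma~\ref{lem:fine coarea}) and the derivative bound \eqref{eq:mu' inequality}. That assumption holds for continuous functions, for monotone functions on a weighted interval, and on PI-spaces, but it is \emph{not} automatic for a general $u\in W^{1,p}_{loc}(\Omega)$ on an arbitrary metric measure space, where one may not even have a Lebesgue differentiation theorem or concentration of $\Per(\{u>t\},\cdot)$ on the essential boundary. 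The paper therefore proves the theorem via approximation, which your sketch omits entirely: by \eqref{eq:MeyersSerrin} one replaces $u$ with continuous $u_n\in C(\Omega)\cap W^{1,p}_{loc}(\Omega)$ having convergent energy, applies the intermediate result to each $u_n$ (continuous functions satisfy Assumption~\eqref{eq:ass2}), shows that $u_n^*\to u^*$ locally uniformly via Morrey-type H\"older bounds, and concludes by lower semicontinuity. This approximation step is the new ingredient that gives the theorem its claimed generality, and it is not recoverable by merely ``treating the perimeter singularities carefully'' as your last paragraph suggests.

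Two smaller issues. First, your argument for $u^*\in{\sf AC}_{loc}(\Omega^*)$ is circular: knowing the pointwise a.e.\ derivative $(u^*)'$ lies in $L^p_{loc}$ does not exclude a singular Cantor-type part in the distributional derivative, which is exactly what absolute continuity must rule out. The paper establishes AC either via a direct modulus-of-continuity estimate from the isoperimetric hypothesis (Lemma~\ref{lem:u* is AC}), or, in the approximation argument just outlined, from the uniform H\"older bound together with weak $L^p_{loc}$-compactness of $(u_n^*)'$. Second, the metric coarea formula is a statement about $|\dD u|$ and its density $|Du|_1$, not $|Du|_p$; since in general $|Du|_1\le|Du|_p$ with possibly strict inequality, your ``coarea identity'' with $|Du|_p$ is really a one-sided bound (fortunately in the correct direction --- this is exactly how the paper uses it). Relatedly, the claim that $-\mu'(t)$ equals a level-set integral of $|Du|^{-1}$ is only an inequality in this generality (the flat part $\mm(\{u>t\}\cap D^0_u)$ can contribute to $\mu'$); the genuine identity is established for $u^*$, not for $u$.
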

We also study the rigidity case.  Roughly, we show that under mild regularity assumptions on $u$, equality in \eqref{eq:PZ abstract} implies equality in \eqref{eq:isop general} when $E$ is a superlevel set of $u$. Moreover, assuming also some suitable geometric regularity on the space $\X$ and on the isoperimetric inequality \eqref{eq:isop general}, we show that $u$ is radial. See Theorem \ref{thm:Polya W1p} and Theorem \ref{thm:radiality astratta} for the precise statements that we do not report here for brevity. Several applications of these rigidity results in more specific settings will be stated in the upcoming parts of the Introduction. More generally, we believe that this machinery is potentially applicable in other frameworks we do not cover in this work.

Lastly, as in \cite{CianchiFusco02}, we obtain a version of Theorem \ref{thm:main Sobolev PZ metric} for  BV functions, namely that the rearrangement decreases the total variation, the singular part and the jump part, see Theorem \ref{thm:polya BV}. This result will also appear the upcoming parts in more specific settings.
\subsection{The theory under Ricci lower bounds}
We now specialize the discussion to the theory of  spaces with synthetic Ricci curvature lower bounds. Here we assume the reader to be familiar with the theory of ${\sf CD}$ and ${\sf RCD}$-spaces, referring to the surveys \cite{Villani2016,Sturm24_Survey} as well as \cite{AmbICM,Gigli23_working}.

We start with the  setting of (compact) spaces having a positive Ricci curvature lower bound. We will consider $u^*$ to be the decreasing rearrangement with respect to the weight $\mm_{N-1,N}$, for $N>1$, given by
\[
\mm_{N-1,N} \coloneqq  c_N \sin^{N-1}(t)\d t\mres{(0,\pi)},\qquad c_N \coloneqq \Big( \int_0^\pi\sin^{N-1}(t)\, \d t \Big)^{-1}.
\]
We will also denote by ${\sf BBG}_N(D)$, $D \in [0,\pi],$ the diameter-improvement factor on the L\'evy Gromov isoperimetric inequality by Berard-Besson-Gallot \cite{BerardBessonGallot85}, whose value can be found in \eqref{eq:BBG constant}.
\begin{theorem}\label{thm:main polya compact}
Let $\Xdm$ be an  essentially non-branching ${\sf CD}(N-1,N)$ space  with $ N \in (1,\infty)$ and $\mm(\X)=1$. Fix also $p \in (1,\infty)$. We have:
\begin{itemize}
    \item[{\rm i)}] for every $u \in W^{1,p}(\X)$ it holds
    \begin{equation}
    \int |Du|^p\,\d \mm \ge  {\sf BBG}_N(\diam(\X))^p \int_0^\pi|(u^*)'|^p\, \d \mm_{N-1,N};
    \label{eq:main polya compact}
    \end{equation}
    \item[{\rm ii)}] for every $u \in BV(\X)$ it holds
    \begin{align*}
    |\dD u|(\X) &\ge   {\sf BBG}_N(\diam(\X))\int_0^\pi c_N \sin^{N-1}(t) \,\d TV( u^*),\\
     |\dD^s u|(\X) &\ge   {\sf BBG}_N(\diam(\X))\int_0^\pi c_N \sin^{N-1}(t)\,\d TV^s( u^*),\\
     |\dD u|(J_u) &\ge   {\sf BBG}_N(\diam(\X))\int_0^\pi c_N \sin^{N-1}(t) \,\d TV( u^*)\mres{J_{u^*}}.
    \end{align*}
\end{itemize}
Moreover, if  $u \in W^{1,p}(\X)$ satisfies (with both sides non-zero and finite)
\begin{equation}
    \int |Du|^p\,\d \mm =  \int_0^\pi |(u^*)'|^p\, \d \mm_{N-1,N},
\label{eq: equality PZ compact}
\end{equation}
then $\diam(\X) = \pi$. Furthermore, if also $(u^*)'\neq 0$ a.e.\ on $\{\essinf u< u^* < \esssup u\}$ then $u$ is radial, i.e.\ for some $x_0$ it holds
\[
    u = u^* \circ \sfd_{x_0},\qquad \mm\text{-a.e.}.
\]
Finally, if $\X$ is an ${\sf RCD}(N-1,N)$ space then it is a spherical suspension with tip $x_0$.
\end{theorem}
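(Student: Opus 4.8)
The plan is to obtain every item by specializing the abstract P\'olya--Szeg\H{o} machinery (Theorems~\ref{thm:main Sobolev PZ metric}, \ref{thm:polya BV}, \ref{thm:Polya W1p} and \ref{thm:radiality astratta}) to the weight $\omega=\mm_{N-1,N}$, i.e.\ to $g=c_N\sin^{N-1}$ on $I=(0,\pi)$, feeding in as the isoperimetric hypothesis \eqref{eq:isop general} the diameter-improved L\'evy--Gromov inequality available in essentially non-branching $\CD(N-1,N)$ spaces with $\mm(\X)=1$: namely $\Per(E,\X)\ge {\sf BBG}_N(\diam(\X))\,\Ig(\mm(E))$ for every $E\subset\X$ of finite measure, with $\Ig$ equal to the model profile $\mathcal I_{N-1,N}$ and ${\sf BBG}_N(\cdot)$ as in \eqref{eq:LevyGromov BBG}. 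One checks that $\mm(\X)=1=\omega(\R)$, that $\omega$ is concentrated on $(0,\pi)$ so the remaining structural assumptions on the weight hold, and that $\Omegastar=(0,\pi)$, which is why all rearranged integrals are over $(0,\pi)$. Since $\X$ is compact we have $(\supp u)^\eps=\X$ for $\eps$ large, so the displayed inequality is exactly \eqref{eq:isop general} on $\Omega=\X$ with ${\sf C}={\sf BBG}_N(\diam(\X))$; item~i) is then the conclusion \eqref{eq:PZ abstract} of Theorem~\ref{thm:main Sobolev PZ metric}, and item~ii) is the corresponding conclusion of its $BV$ counterpart, Theorem~\ref{thm:polya BV}.

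For the diameter rigidity I would argue by contradiction using the strict gain in ${\sf BBG}_N$. If $u\in W^{1,p}(\X)$ satisfies \eqref{eq: equality PZ compact} with both sides finite and nonzero, chaining this with the already proven \eqref{eq:main polya compact} gives
\[
{\sf BBG}_N(\diam(\X))^p\int_0^\pi|(u^*)'|^p\,\d\mm_{N-1,N}\le\int_\X|Du|^p\,\d\mm=\int_0^\pi|(u^*)'|^p\,\d\mm_{N-1,N}.
\]
Since the common integral is strictly positive while ${\sf BBG}_N(D)>1$ for every $D<\pi$ (see \eqref{eq:LevyGromov BBG}), the only possibility is $\diam(\X)=\pi$.

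Granted $\diam(\X)=\pi$, we have ${\sf BBG}_N(\diam(\X))=1$, so \eqref{eq: equality PZ compact} is precisely the equality case of the P\'olya--Szeg\H{o} inequality \eqref{eq:main polya compact}. I would then invoke the rigidity Theorem~\ref{thm:Polya W1p} to deduce that for a.e.\ $t\in(\essinf u,\esssup u)$ the superlevel set $\{u>t\}$ realizes equality in \eqref{eq:isop general}, i.e.\ it is an isoperimetric region for the L\'evy--Gromov profile; one must check here the mild regularity of $u$ required by that theorem. Next I would apply the abstract radiality Theorem~\ref{thm:radiality astratta}, whose structural hypotheses on $\X$ and on \eqref{eq:isop general} are supplied by the rigidity of the L\'evy--Gromov inequality in essentially non-branching $\CD(N-1,N)$ spaces (the extremal sets being metric balls), together with the extra assumption $(u^*)'\neq0$ a.e.\ on $\{\essinf u<u^*<\esssup u\}$, which forces the sets $\{u>t\}$ to sweep out a nondegenerate interval of radii around a single centre; this yields $u=u^*\circ\sfd_{x_0}$ $\mm$-a.e.\ for some $x_0\in\X$.

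Finally, if $\X$ is $\RCD(N-1,N)$, I would pick $t_0\in(\essinf u,\esssup u)$ with $E\coloneqq\{u>t_0\}$ an isoperimetric region, so that $0<\mm(E)<1$; the isoperimetric rigidity for $\RCD(N-1,N)$ spaces then gives that $\X$ is a spherical suspension and that $E$ is a polar cap around one of its two poles (the suspension structure also follows directly from $\diam(\X)=\pi$ and Ketterer's maximal diameter theorem). Since $u$ is radial we have $E=B_r(x_0)$ for suitable $r$, so $x_0$ is forced to be a pole, i.e.\ $\X$ is a spherical suspension with tip $x_0$. The hard part throughout is the radiality step: one has to reconcile the abstract hypotheses of Theorem~\ref{thm:radiality astratta} with the concrete L\'evy--Gromov rigidity (upgrading ``isoperimetric region'' to ``metric ball'' and, crucially, organising a whole range of superlevel sets so as to pin down one common centre $x_0$), and one has to verify that an arbitrary $u\in W^{1,p}(\X)$ with $(u^*)'\neq0$ meets the regularity assumptions behind Theorems~\ref{thm:Polya W1p} and \ref{thm:radiality astratta}.
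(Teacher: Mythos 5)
Your proof follows exactly the paper's route: apply Theorems \ref{thm:main Sobolev PZ metric}, \ref{thm:polya BV} and \ref{thm:radiality astratta} with $g = c_N\sin^{N-1}$ on $I=(0,\pi)$ and the BBG-improved L\'evy--Gromov inequality \eqref{eq:LevyGromov BBG}, deduce the maximal diameter from ${\sf BBG}_N>1$ below $\pi$, and obtain radiality together with the suspension/tip structure via the isoperimetric rigidity of Theorem \ref{thm:levy gromov cavamondino}. The verifications you flag but leave open are discharged in the paper by two citations: essentially non-branching $\CD(N-1,N)$ spaces are PI-spaces (Sturm, Rajala), which gives Assumption \eqref{eq:ass2} for arbitrary $W^{1,p}$ and $BV$ functions and in particular the jump-part inequality in ii), and they satisfy the Sobolev-to-Lipschitz property, which is the remaining structural hypothesis of Theorem \ref{thm:radiality astratta}; with those noted, the two arguments coincide (the paper invokes Ketterer's maximal diameter theorem for the suspension, which you also mention, while your tip identification via an isoperimetric superlevel set is a slightly more explicit version of what Theorem \ref{thm:levy gromov cavamondino}(iii) already supplies inside Theorem \ref{thm:radiality astratta}).
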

We recall that $|\dD u|$, $|\dD^s u|$ and $J_u$ denote respectively the total variation, its singular part and the jump set of $u,$ while $TV(u^*),TV^s(u^*)$ and $J_{u^*}$ denote the same objects, in the classical sense, for $u^*$ (see Section \ref{sec:calculus}).
The above rearrangement result extends the classical one by \cite{BerardMeier1982} and the recent ones in the non-smooth setting in \cite{MondinoSemola20,NobiliViolo21} in numerous ways: we consider possibly sign-changing functions; include the celebrated Berard-Besson-Gallot sharp lower bound of the isoperimetric profile (as done in \cite{BerardBessonGallot85} in the smooth setting) and include also BV functions. 

Furthermore,  quantitative information on the diameter can be deduced thanks to the bound
\[
{\sf BBG}_N(D)^2-1 \gtrsim  (\pi -D)^N,\qquad \forall \, D \in (0,\pi]
\]
(see \eqref{eq:BBG quantitativo}).
Indeed, combining the above with \eqref{eq:main polya compact} is particularly effective in quantifying almost rigidity statements for sharp functional inequalities, see Theorem \ref{thm:main quant compact spaces} and references below.

The very last conclusion (see Section \ref{sec:Polya Ricci} for the notion of a spherical suspension) was also obtained in \cite{MondinoSemola20} using the almost rigidity in the L\'evy Gromov isoperimetric inequality, while here we use only the rigidity of the maximal diameter \cite{Ketterer13}. Moreover, the radiality of $u$ in ${\sf RCD}$ setting was also obtained in \cite{MondinoSemola20} but assuming that $u\ge 0$, that $u$ is Lipschitz regular and that $|Du|\neq 0$. Instead, here we also cover the ${\sf CD}$ case and we only need to assume $(u^*)'\neq 0$. The latter non-trivial improvement is highly desirable in view of applications (see Proposition \ref{prop:non vanishing} and Remark \ref{rem:non vanishing} for more details) but requires a fine analysis to deal with critical points of general Sobolev and BV functions. In particular, we automatically improve  the rigidity results in Talenti-type comparison principles recently obtained in \cite{mondinovedovato21,Wu24}, by relaxing the assumptions.

\smallskip

We pass to the  setting of (non-compact) spaces with non-negative Ricci lower bound and Euclidean volume growth. In the following $N\in(1,\infty)$ and  $u^*$ is the decreasing rearrangement with respect to the weight
\[
\mm_{0,N} \coloneqq   N\omega_N t^{N-1}\d t\mres{(0,\infty)},
\]
with $\omega_N\coloneqq\frac{\pi^{N/2}}{\Gamma\left(N/2+1\right)}$. If $N\in \N$ then $\omega_N$ is the volume of the unit ball in $\rr^N.$ We refer to Section \ref{sec:Polya Ricci} for the underlying isoperimetric principle and the notion of Euclidean metric cone.
\begin{theorem}\label{thm:main polya noncompact}
 Let $\Xdm$ be a ${\sf CD}(0,N)$ space, with $ N \in (1,\infty)$ and such that
 \[
 {\sf AVR }(\X)\coloneqq \lim_{r\uparrow\infty} \frac{\mm(B_r(x))}{\omega_Nr^N} \in (0,\infty).
 \]
 Fix also $p \in (1,\infty)$. We have:
\begin{itemize}
    \item[{\rm i)}] for every $u \in W^{1,p}_{loc}(\X)$ such that $\mm(\{ u>t \})<\infty$ for all $t >\essinf u$ it holds
    \begin{equation}
        \int |Du|^p\,\d \mm \ge  {\sf AVR}(\X)^{\frac pN} \int_0^{+\infty} |(u^*)'|^p\, \d \mm_{0,N}; \label{eq:polya Sobolev AVR}
    \end{equation}
    \item[{\rm ii)}] for every $u \in BV_{loc}(\X)$ with $\mm(\{ u>t \})<\infty$ for all $t >\essinf u$ it holds
    \begin{align*}
    |\dD u|(\X) &\ge  {\sf AVR}(\X)^{\frac 1N} \int_0^\infty N\omega_N t^{N-1} \,\d TV(u^*),\\
    |\dD^s u|(\X) &\ge  {\sf AVR}(\X)^{\frac 1N} \int_0^\infty N\omega_N t^{N-1} \,\d TV^s(u^*),\\
    |\dD u|(J_u) &\ge  {\sf AVR}(\X)^{\frac 1N}\int_0^\infty N\omega_N t^{N-1} \,\d TV(u^*)\mres{J_{u^*}}.
    \end{align*}
\end{itemize}
Moreover, if $\X$ is {an essentially non-branching ${\sf CD}(0,N)$ space} and equality occurs  in \eqref{eq:polya Sobolev AVR}  with both sides non-zero and finite, then $\X$ is {an $N$-volume cone}. If also $(u^*)'\neq 0$ a.e.\ on $\{\essinf u< u^* < \esssup u\}$ then $u$ is radial, i.e.\ for some tip $x_0\in \X$ it holds
\[
    u = u^* \circ\big( {\sf AVR}(\X)^{\frac 1N} \sfd_{x_0}\big),\qquad  \mm\text{-a.e.}.
\]
{Finally, if $\X$ is ${\sf RCD}(0,N)$ then it is a $N$-Euclidean  cone with tip $x_0$.}
\end{theorem}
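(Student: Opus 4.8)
The strategy is to derive the whole statement from the abstract results of the preceding sections; the only space-specific ingredients are the sharp isoperimetric inequality on $\CD(0,N)$ spaces with Euclidean volume growth and its rigidity, both recalled in Section~\ref{sec:Polya Ricci}. First I match the constants: for the weight $g(t)=N\omega_N t^{N-1}$ on $I=(0,\infty)$ one has $\omega((-\infty,r))=\omega_N r^N$, so $r_v=(v/\omega_N)^{1/N}$ and
\[
\Ig(v)=g(r_v)=N\omega_N^{1/N}\,v^{1-1/N},\qquad v\in[0,\infty);
\]
moreover $\mm(\X)\le\omega(\R)=+\infty$ and $\lim_{x\to-\infty}\omega((-\infty,x])=0$, so $\omega=\mm_{0,N}$ is an admissible weight. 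On a $\CD(0,N)$ space with $\avr(\X)\in(0,\infty)$ the sharp isoperimetric inequality reads $\Per(E)\ge N\omega_N^{1/N}\avr(\X)^{1/N}\mm(E)^{1-1/N}=\avr(\X)^{1/N}\,\Ig(\mm(E))$ for every $E$ of finite measure, so \eqref{eq:isop general} holds with $\Omega=\X$, ${\sf C}=\avr(\X)^{1/N}$ and any $\eps>0$. Since ${\sf C}^p=\avr(\X)^{p/N}$, feeding this into Theorem~\ref{thm:main Sobolev PZ metric} gives item i), and feeding it into the $BV$ counterpart Theorem~\ref{thm:polya BV} gives item ii).

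For the rigidity, assume equality in \eqref{eq:polya Sobolev AVR} with both sides finite and non-zero. By the abstract rigidity Theorem~\ref{thm:Polya W1p} --- whose mild regularity hypotheses hold for any $W^{1,p}_{loc}$ function with finite-measure superlevels --- equality propagates to the isoperimetric inequality along superlevel sets of $u$: there is $t\in(\essinf u,\esssup u)$ with $\mm(\{u>t\})\in(0,\infty)$, which is moreover bounded in the $\CD$ case by the extra assumption, such that $\Per(\{u>t\})=N\omega_N^{1/N}\avr(\X)^{1/N}\mm(\{u>t\})^{1-1/N}$. The rigidity of the sharp isoperimetric inequality recalled in Section~\ref{sec:Polya Ricci} then applies: a single equality set of bounded measure forces $\X$ to be an $N$-volume cone when $\X$ is essentially non-branching $\CD(0,N)$, while in the $\RCD(0,N)$ case finite measure already forces $\X$ to be an $N$-Euclidean cone; in both cases the equality set agrees $\mm$-a.e.\ with a metric ball centered at a tip $x_0$.

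Finally, assume in addition $(u^*)'\neq0$ a.e.\ on $\{\essinf u<u^*<\esssup u\}$. Working on the cone $\X$, equality in the isoperimetric inequality along (almost) every superlevel set forces each $\{u>t\}$ to coincide $\mm$-a.e.\ with a ball centered at $x_0$, so the geometric hypotheses of the abstract radiality Theorem~\ref{thm:radiality astratta} are met --- the isoperimetric sets being exactly the balls around the tip, and the non-degeneracy of $(u^*)'$ ruling out the flat levels where radiality could otherwise fail. That theorem yields $u=u^*\circ\big(\avr(\X)^{1/N}\sfd_{x_0}\big)$ $\mm$-a.e.; the dilation factor $\avr(\X)^{1/N}$ appears because in a cone with volume ratio $\avr(\X)$ one has $\mm(B_r(x_0))=\omega_N\avr(\X)r^N$, whereas $\mm_{0,N}$ satisfies $\omega((-\infty,s))=\omega_N s^N$, so a ball of radius $r$ is equidistributed with the level $s=\avr(\X)^{1/N}r$ of $u^*$.

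The main obstacle is the rigidity part. The inequalities and the bookkeeping of the normalizing constants ($\omega_N$, $\avr$, $\Ig$) are routine once the sharp isoperimetric inequality is in hand. What is delicate is, first, converting equality in the P\'olya--Szeg\H{o} inequality into equality in the isoperimetric inequality along a superlevel set, which rests on the fine analysis of critical points of Sobolev (and $BV$) functions behind Theorem~\ref{thm:Polya W1p}; and second, upgrading a single equality set to the global cone structure, which relies on the nontrivial rigidity of the sharp isoperimetric inequality in the $\CD/\RCD$ framework --- the boundedness assumption in the $\CD$ case being precisely what the currently available rigidity there requires.
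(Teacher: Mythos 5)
Your proposal is correct and follows essentially the same route as the paper: match the weight $g(t)=N\omega_N t^{N-1}$ with the sharp isoperimetric profile, apply Theorems~\ref{thm:main Sobolev PZ metric} and~\ref{thm:polya BV} with ${\sf C}=\avr(\X)^{1/N}$, propagate equality to the level sets via Theorem~\ref{thm:Polya W1p}, then invoke the isoperimetric rigidity (Theorem~\ref{thm:rigidity ISOAVR} / Remark~\ref{topological reg avr}) and the abstract radiality Theorem~\ref{thm:radiality astratta}. One small imprecision in wording: the isoperimetric rigidity gives each level set as a ball centered at some possibly $t$-dependent point --- the common center $x_0$ is a \emph{conclusion} of Theorem~\ref{thm:radiality astratta}, not something you can assert before applying it; since you invoke that theorem anyway, this does not affect the correctness of the argument.
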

For non-negative functions, inequality \eqref{eq:polya Sobolev AVR}  already appeared in the non-smooth setting in \cite{NobiliViolo21,NobiliViolo24} and previously in Riemannian manifolds in \cite{BaloghKristaly21}. The focus here is on the characterization of the equality case. The main novelty, as for Theorem \ref{thm:main polya compact}, is that we obtain the radiality of $u$ assuming only that $(u^*)'\neq 0$, while in \cite{NobiliViolo24} we had to assume $|Du|\neq 0$ and that $u$ is Lipschitz.

A particular class where the above applies (see, e.g., \cite[Section 1.2]{CavallettiManini22}), is that of weighted convex cones possibly with non-branching anisotropic norms, referring to \cite{LionsPacella90,CabreRosOtonSerra16} and references therein (see also\cite{LionsPacellaTricarico1988,FigalliIndrei13,CiraoloFigallIRoncoroni20,CintiGlaudoPratelliRosOtonSerra20} for further developments and applications). In particular, by combining the quantitative isoperimetric principle deduced in \cite{CintiGlaudoPratelliRosOtonSerra20} with asymmetry arguments (see \cite{HansenNadirashvili94,BrascoDephilippis17}) we derive a boosted version of \eqref{eq:polya Sobolev AVR} containing an extra asymmetry reminder, see Theorem \ref{thm:boosted covnex cones}.

\subsection{Applications to geometric and functional inequalities under Ricci lower bounds}
The first result deals with a quantitative geometric stability in the Lichnerowicz $p$-spectral gap, in the optimal Sobolev inequalities and in the log-Sobolev inequality. Here, we shall not discuss the validity and the equality cases in the functional inequalities as these are by now well understood (see \cite{Ketterer15,CavallettiMondino17} and references therein). We also set $p^* \coloneqq \frac{pN}{N-p}$ for $p\in(1,N)$ and denote by $\lambda_{p,N-1,N}$ the first non-trivial (Neumann) $p$-eigenvalue, also called $p$-spectral gap, of the metric measure space $I_{N-1,N}=([0,\pi],|\cdot |, \mm_{N-1,N}).$ 
\begin{theorem}\label{thm:main quant compact spaces}
For all $N\in (1,\infty)$, $p\in(1,\infty)$ and $q\in (2,2^*]$ (if $N>2$) there are constants $C_N>0$, $C_{N,p}>0$ and $C_{N,q}>0$ such that the following holds.
Let $\Xdm$ be an essentially non-branching $\CD(N-1,N)$. Then:
\begin{itemize}
    \item[{\rm i)}] for all $u \in W^{1,p}(\X)$ non-zero with $\int u|u|^{p-2}\,\d\mm =0$ it holds
        \begin{equation}
        \big(\pi- \diam(\X)\big)^{N} \le C_{N,p} \left( \|u\|_{L^p(\X)}^{-p} \|Du\|_{L^p(\X)}^p- \lambda_{p,N-1,N}\right),
        \label{intro:p-gap quantitative}
        \end{equation}
        \item[{\rm ii)}] for all $u \in W^{1,2}(\X)$ non-constant it holds
        \begin{equation}
        \big(\pi- \diam(\X)\big)^N \le C_{N,q} \left(  \frac{q-2}{N} - \frac{\| u\|^2_{L^{q}(\X)} - \mm(\X)^{2/q-1}\|u\|_{L^2(\X)}^2 }{\mm(\X)^{2/q-1} \| Du\|_{L^2(\X)}^2 }\right),
        \label{intro:q-sob quantitative}
        \end{equation}
        
        \item[{\rm iii)}] for  all $u \in W^{1,2}(\X)$ non-constant with    $\int |u|\,\d\mm=1$ it holds
        \begin{equation}
        \big(\pi- \diam(\X)\big)^N \le C_N \left(  \Big(\int |u|\log|u|\,\d\mm\Big)^{-1}\int_{\{|u| >0\}}\frac{|Du|^2}{|u|}\,\d\mm- 2N \right).
        \label{intro:log Sob quantitative}
        \end{equation}
      
\end{itemize}
\end{theorem}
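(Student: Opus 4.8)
The plan is to reduce each of the three deficits to the corresponding \emph{sharp} functional inequality on the one-dimensional model $I_{N-1,N}=([0,\pi],|\cdot|,\mm_{N-1,N})$ by means of the P\'olya--Szeg\H{o} inequality of Theorem \ref{thm:main polya compact}, and then to turn the resulting gap into a power of $\pi-\diam(\X)$ using the diameter-sensitive factor ${\sf BBG}_N(\cdot)$ via \eqref{eq:BBG quantitativo}. As a preliminary normalisation I would scale $\mm$ so that $\mm(\X)=1$: the ratios in \eqref{intro:p-gap quantitative} and \eqref{intro:q-sob quantitative} are unaffected by such a scaling, while $\mm(\X)=1$ is precisely the normalisation under which $2N$ is the sharp value in \eqref{intro:log Sob quantitative}. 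With this in force Theorem \ref{thm:main polya compact} applies and the decreasing rearrangement (with respect to $\mm_{N-1,N}$) lands on the probability space $I_{N-1,N}$ and is equidistributed with the original function; I will repeatedly use that equidistribution gives $\int\Phi(u)\,\d\mm=\int\Phi(u^*)\,\d\mm_{N-1,N}$ for every Borel $\Phi$, which preserves all $L^q$-norms, the entropy integral, and the constraint $\int u|u|^{p-2}\,\d\mm=0$. If the Dirichlet energy of $u$ on $\X$ is infinite the three inequalities are trivial, so I may assume it finite; then Theorem \ref{thm:main polya compact} also yields $u^*\in {\sf AC}_{loc}(0,\pi)$ with finite energy, hence $u^*$ is an admissible competitor in the model inequalities.

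For (i), \eqref{eq:main polya compact} gives $\|Du\|_{L^p(\X)}^p\ge {\sf BBG}_N(\diam(\X))^p\int_0^\pi|(u^*)'|^p\,\d\mm_{N-1,N}$; since $u^*\ne 0$ and $\int u^*|u^*|^{p-2}\,\d\mm_{N-1,N}=0$ by equidistribution, the variational definition of the $p$-spectral gap of $I_{N-1,N}$ bounds the last integral below by $\lambda_{p,N-1,N}\|u^*\|_{L^p(\mm_{N-1,N})}^p=\lambda_{p,N-1,N}\|u\|_{L^p(\X)}^p$, whence $\|u\|_{L^p(\X)}^{-p}\|Du\|_{L^p(\X)}^p-\lambda_{p,N-1,N}\ge \lambda_{p,N-1,N}\big({\sf BBG}_N(\diam(\X))^p-1\big)$. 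For (ii) (where, with $\mm(\X)=1$, the bracket in \eqref{intro:q-sob quantitative} equals $\tfrac{q-2}{N}-\big(\|u^*\|_{L^q(\mm_{N-1,N})}^2-\|u^*\|_{L^2(\mm_{N-1,N})}^2\big)/\|Du\|_{L^2(\X)}^2$, using equidistribution in the numerator) I would combine $\|Du\|_{L^2(\X)}^2\ge {\sf BBG}_N(\diam(\X))^2\int_0^\pi|(u^*)'|^2\,\d\mm_{N-1,N}$ with the sharp Sobolev (Beckner-type) inequality $\|u^*\|_{L^q(\mm_{N-1,N})}^2-\|u^*\|_{L^2(\mm_{N-1,N})}^2\le \tfrac{q-2}{N}\int_0^\pi|(u^*)'|^2\,\d\mm_{N-1,N}$ on $I_{N-1,N}$ (and $\|Du\|_{L^2(\X)}>0$, as $u$ is non-constant), obtaining that the bracket is at least $\tfrac{q-2}{N}\big(1-{\sf BBG}_N(\diam(\X))^{-2}\big)$.

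For (iii) I would first pass to a quadratic energy. If $\int_{\{|u|>0\}}|Du|^2/|u|\,\d\mm=+\infty$ there is nothing to prove; otherwise the chain rule for minimal relaxed gradients gives $w:=\sqrt{|u|}\in W^{1,2}(\X)$ with $\|Dw\|_{L^2(\X)}^2=\tfrac14\int_{\{|u|>0\}}|Du|^2/|u|\,\d\mm$ and $\|w\|_{L^2(\X)}^2=\int|u|\,\d\mm=1$. Applying \eqref{eq:main polya compact} with $p=2$ to $w$ and then the sharp logarithmic Sobolev inequality $\int_0^\pi (w^*)^2\log((w^*)^2)\,\d\mm_{N-1,N}\le \tfrac2N\int_0^\pi|(w^*)'|^2\,\d\mm_{N-1,N}$ on $I_{N-1,N}$ — using $\|w^*\|_{L^2(\mm_{N-1,N})}^2=1$ and $\int_0^\pi (w^*)^2\log((w^*)^2)\,\d\mm_{N-1,N}=\int|u|\log|u|\,\d\mm$ by equidistribution, the latter strictly positive by strict Jensen since $u$ is non-constant — yields $\big(\int|u|\log|u|\,\d\mm\big)^{-1}\int_{\{|u|>0\}}|Du|^2/|u|\,\d\mm\ge 2N\,{\sf BBG}_N(\diam(\X))^2$, so the bracket in \eqref{intro:log Sob quantitative} is at least $2N\big({\sf BBG}_N(\diam(\X))^2-1\big)$.

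Finally I would convert these lower bounds into powers of $\pi-\diam(\X)$. Starting from ${\sf BBG}_N(D)^2-1\ge c_N(\pi-D)^N$ for $D\in(0,\pi]$ (that is, \eqref{eq:BBG quantitativo}), an elementary case split according to whether ${\sf BBG}_N(D)\le 2$ — where $({\sf BBG}_N(D)^r-1)/({\sf BBG}_N(D)^2-1)$ and $1-{\sf BBG}_N(D)^{-2}$ are bounded below by positive constants depending only on $r$ — or ${\sf BBG}_N(D)>2$ — where the left-hand sides are bounded below and $(\pi-D)^N\le\pi^N$ — upgrades this to ${\sf BBG}_N(D)^r-1\ge c_{N,r}(\pi-D)^N$ and $1-{\sf BBG}_N(D)^{-2}\ge c_N'(\pi-D)^N$ for every $r>1$. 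Inserting $r=p$ into the bound from (i) (and $\lambda_{p,N-1,N}>0$), the second estimate into (ii), and $r=2$ into (iii), one gets the three claims with, e.g., $C_{N,p}=(\lambda_{p,N-1,N}c_{N,p})^{-1}$, $C_{N,q}=N\big((q-2)c_N'\big)^{-1}$ and $C_N=(2Nc_N')^{-1}$. I do not expect a serious obstacle here: the delicate points are only the admissibility of the rearranged functions as competitors in the model inequalities — secured by the ${\sf AC}_{loc}$-regularity and equidistribution built into Theorem \ref{thm:main polya compact} — and, in (iii), the reduction via $w=\sqrt{|u|}$ together with the chain rule (which must accommodate the non-Lipschitz behaviour of $\sqrt{\cdot}$ near $0$); the sharpness of the constants $\lambda_{p,N-1,N}$, $(q-2)/N$ and $2N$ on $I_{N-1,N}$ is classical and, as elsewhere in this section, taken for granted.
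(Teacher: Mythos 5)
Your proposal follows the same route as the paper: after normalising $\mm(\X)=1$, apply the P\'olya--Szeg\H{o} inequality \eqref{eq:main polya compact} with its ${\sf BBG}_N(\diam(\X))$ factor, invoke the sharp spectral-gap, Beckner--Sobolev, and log-Sobolev inequalities on the model $I_{N-1,N}$ for the rearranged function (which is admissible by equimeasurability and the ${\sf AC}_{loc}$-regularity from Theorem \ref{thm:main polya compact}), and finally pass from the multiplicative gap to $(\pi-\diam(\X))^N$ via \eqref{eq:BBG quantitativo}; the reduction to $\sqrt{|u|}$ in (iii) is the same. The one minor deviation is the last conversion step: your case split on ${\sf BBG}_N(D)\lessgtr 2$ gives ${\sf BBG}_N(D)^r-1\ge c_{N,r}(\pi-D)^N$ uniformly for every $r>1$, whereas the paper appeals to $(1+x)^{p/2}-1\ge\tfrac p2 x$, which as a Bernoulli-type bound needs $p\ge 2$ (it is applied to the exponent $p/2$ coming from squaring in \eqref{eq:BBG quantitativo}); your variant is slightly more careful and covers all $p\in(1,\infty)$ without further comment.
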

The result i) for $p=2$ was already known from \cite{CavallettiMondinoSemola23}, where a deeper analysis is carried out to understand also the shape of almost eigenfunctions. The result ii) instead quantifies the almost rigidity results we obtained in \cite{NobiliViolo21}. As for the result iii), it addresses quantitatively a rigidity question raised in \cite[(A) in Section 4]{OhtaTakatsu20}  (see also \cite[b) in Remark 6.4]{OhtaMai21}).  We mention also the works \cite{MondinoSemola20,mondinovedovato21,Kristaly22-GAFA,GunesMondino23,DFV24,Wu24,KristalyMondino24} for further applications of rearrangements methods.

\smallskip

Next, we face rigidity properties of sharp functional inequalities under non-negative Ricci curvature and Euclidean volume growth. To do this, we first introduce the relevant notation: we denote by $I_N$ the model metric measure space $([0,\infty),|. |, \mm_{0,N})$, by $\lambda_{p,N,\rho}^\cD$ the Dirichlet $p$-eigenvalue of $[0,\rho)$ in $I_N$ and consequently set $F_{N,p} \coloneqq \lambda_{p,N,\rho_1}^\cD$ where $\rho_1>0$ so that $\mm_{0,N}((0,\rho_1))=1$, namely the eigenvalue relative to the ball of unit volume. We denote by $S_{p,N}, L_{p,N}$ the values of the optimal constant in the $p$-Sobolev inequality and in the $p$-log Sobolev inequality in $I_N$ (see \eqref{eq:Sobolev constant}, \eqref{eq:LogSobolev constant}).
\begin{theorem}\label{thm:main noncompact AVR}
Let $\Xdm$ be {an essentially non-branching ${\sf CD}(0,N)$ space} for some $N\in (1,\infty)$ and with ${\sf AVR}(\X) >0$. {Let $p,p' \in (1,\infty)$ 
 be H\"older conjugate exponents}. Then:
\begin{itemize}
    \item[{\rm i)}] suppose  $\Omega\subset \X$ is open so that $ \mm(\Omega) <\infty$ and that
    \begin{equation}
    F_{N,p}\big(\mm(\Omega)^{-1}{\sf AVR}(\X)\big)^{p/N} \|u\|^p_{L^{p}(\Omega)} =\|Du\|^p_{L^p(\Omega)},\label{intro: p-Eig AVR}
    \end{equation}
    holds for some non-zero $u \in W^{1,p}_0(\Omega)$. 
    
    Then $\Omega$ coincides up to $\mm$-negligible sets with a ball $B_\rho(x_0)$ for some $x_0 \in \X$, $\rho=\left(\frac{{\sf AVR}(\X)\mm(\Omega)}{\omega_N}\right)^{1/N} $  and, up to a sign, $u= \varphi \circ \big({\sf AVR}(\X)^{1/N}  \sfd_{x_0}\big)$ $\mm$-a.e.\ on $B_\rho(x_0)$, where $\varphi\in {\sf AC}_{loc}(0,\rho)$, $\phi\ge 0,$  satisfies
    \begin{equation}
    \left(|\varphi '|^{p-2}\varphi'\right)'+(\varphi ')^{p-1} \left(\log (N\omega_Nt^{N-1})\right)'=-\frac{F_{N,p}}{\mm(\Omega)^{p}}  \varphi ^{p-1}, \quad \text{a.e.\ in }(0,\rho). \label{intro:p-ODE AVR}
    \end{equation}
    \item[{\rm ii)}] suppose $p <N$ and
    \begin{equation}
    \|u\|_{L^{p^*}(\X)} = S_{p,N}{\sf AVR}(\X)^{-\frac 1N}\|Du\|_{L^p(\X)},\label{intro:p-Sob AVR}
    \end{equation}
    holds for some non-zero $u \in W^{1,p}_{loc}(\X)\cap L^{p^*}(\X)$, then there are $a \in \R,b >0,x_0 \in \X$ so that
    \[
    u = a\big(1+b \sfd_{x_0}(\cdot)^{\frac p{p-1}}\big)^{\frac{N-p}{p}},\qquad\mm\text{-a.e.};
    \]
    \item[{\rm iii)}] suppose
    \begin{equation}
    \int |u|^p\log |u|^p\,\d \mm = \frac{N}{p}\log\left( L_{p,N}{\sf AVR}(\X)^{-\frac pN} \|Du\|_{L^p(\X)}^p\right),
    \label{intro:p-LogSov AVR}
    \end{equation}
    holds for some $u \in W^{1,p}(\X)$ with $\int|u|^p\,\d\mm=1$, then there are  $ \lambda\in \R\setminus\{0\},x_0 \in \X$ so that
    \[
    u = \lambda^{\frac N{pp'}} \big(\Gamma( \tfrac {N}{p'} +1) \omega_N\big)^{-\frac 1p}e^{-|\lambda|\frac{ \sfd_{x_0}(\cdot)^{p'}}{p}},\qquad\mm\text{-a.e.}.
    \]
\end{itemize}
Finally, in any of the above, $\X$ is also {$N$-volume cone with tip $x_0$ and if $\X$ is also ${\sf RCD}(0,N)$ then it is in fact an $N$-Euclidean  cone. }
\end{theorem}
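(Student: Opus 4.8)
The plan is to establish the three rigidity statements by one common template. Each sharp functional inequality whose equality case is in question is obtained by \emph{concatenating} the P\'olya--Szeg\H{o} inequality of Theorem \ref{thm:main polya noncompact} with the corresponding one-dimensional sharp inequality on the model space $I_N$: the $p$-Faber--Krahn, the $p$-Sobolev and the $p$-log-Sobolev inequalities on $I_N$, with optimal constants $F_{N,p}$, $S_{p,N}$, $L_{p,N}$. Writing $u^*$ for the decreasing rearrangement of (the zero-extension of) $u$ with respect to $\mm_{0,N}$ and using that the rearrangement preserves the distribution of $u$ --- so that $\int\Phi(u)\,\d\mm=\int\Phi(u^*)\,\d\mm_{0,N}$ for every Borel $\Phi$ --- one has $\|Du\|_{L^p(\X)}^p\ge \avr(\X)^{p/N}\|(u^*)'\|_{L^p(\mm_{0,N})}^p$ by \eqref{eq:polya Sobolev AVR}, and the scalar inequality on $I_N$ then closes each estimate. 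The key step is that equality in the functional inequality forces, simultaneously, equality in \eqref{eq:polya Sobolev AVR} and equality in the one-dimensional inequality; the latter identifies $u^*$, up to the symmetries of the model problem, with the explicit extremizer, which in each of the three cases is \emph{strictly monotone} in the interior of its support, whence $(u^*)'\neq 0$ a.e.\ on $\{\essinf u<u^*<\esssup u\}$. Plugging this into the rigidity part of Theorem \ref{thm:main polya noncompact} gives that $\X$ is an $N$-Euclidean cone (resp.\ an $N$-volume cone, in the essentially non-branching $\CD$ case, where the boundedness of the superlevel sets is assumed precisely to match the hypothesis there) with some tip $x_0$ and $u=u^*\circ(\avr(\X)^{1/N}\sfd_{x_0})$ $\mm$-a.e.; this already yields the last sentence of the theorem, and it remains to convert the shape of $u^*$ into the three displayed profiles.

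For \textbf{part i)} I would extend $u$ by zero to $\X$, apply \eqref{eq:polya Sobolev AVR}, and observe that $u^*$ is supported in the canonical interval $(0,\rho^*)$ with $\mm_{0,N}((0,\rho^*))=\mm(\{u\neq0\})\le\mm(\Omega)$ and lies in $W^{1,p}_0((0,\rho^*))$ inside $I_N$; using the scaling $\lambda^\cD_{p,N,r}=\mm_{0,N}((0,r))^{-p/N}F_{N,p}$ and its monotonicity in $r$, the concatenation reproduces the $p$-Faber--Krahn inequality $\|Du\|_{L^p(\Omega)}^p\ge F_{N,p}(\avr(\X)/\mm(\Omega))^{p/N}\|u\|_{L^p(\Omega)}^p$, i.e.\ the inequality saturated in \eqref{intro: p-Eig AVR}. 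If equality holds, then $\mm(\{u\neq0\})=\mm(\Omega)$ (so $\Omega$ agrees with $\{u\neq0\}$ up to $\mm$-null sets), and, after the harmless normalization $u^*\ge0$, equality in the one-dimensional Rayleigh quotient makes $u^*$ a first Dirichlet eigenfunction of the one-dimensional weighted $p$-Laplacian on $(0,\rho)$; hence it solves the Euler--Lagrange ODE \eqref{intro:p-ODE AVR}, and integrating that ODE from the endpoint where the weight degenerates shows $u^*$ is strictly decreasing, so $(u^*)'\neq0$ on $(0,\rho)$. Radiality then follows from Theorem \ref{thm:main polya noncompact}, and since the eigenfunction is positive exactly on $(0,\rho)$, the radiality formula forces $\Omega=B_\rho(x_0)$ up to $\mm$-null sets, with $\rho$ determined by $\mm(B_\rho(x_0))=\mm(\Omega)$.

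\textbf{Parts ii) and iii)} run in the same way, replacing the one-dimensional Dirichlet problem by, respectively, the sharp weighted Sobolev inequality $\|v\|_{L^{p^*}(\mm_{0,N})}\le S_{p,N}\|v'\|_{L^p(\mm_{0,N})}$ and the one-dimensional $p$-log-Sobolev inequality with constant $L_{p,N}$ on $I_N$; concatenating with \eqref{eq:polya Sobolev AVR} and using equidistribution for the $L^{p^*}$-norm (resp.\ for $\int|\cdot|^p\log|\cdot|^p$ together with $\|\cdot\|_{L^p}$) reproduces the inequalities saturated in \eqref{intro:p-Sob AVR} and \eqref{intro:p-LogSov AVR}. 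Equality forces equality in \eqref{eq:polya Sobolev AVR} and in the scalar inequality, and the characterization of extremizers on $I_N$ identifies $u^*$, up to scaling (and, for Sobolev, also up to the dilation $t\mapsto\sigma t$), with the Bliss--Talenti profile $t\mapsto a(1+bt^{p'})^{-(N-p)/p}$ (resp.\ with the Gaussian-type profile $t\mapsto c\,e^{-\mu t^{p'}/p}$) on $(0,\infty)$; both are strictly monotone there, so $(u^*)'\neq0$ a.e., Theorem \ref{thm:main polya noncompact} applies, and substituting $t=\avr(\X)^{1/N}\sfd_{x_0}(\cdot)$ and absorbing $\avr(\X)$ and $a,b$ (resp.\ $c,\mu$) into fresh constants gives the displayed formulas for $u$.

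\textbf{Main obstacle.} Matching the constants so that the concatenation is \emph{exactly} the sharp inequality with the stated constant is routine once $F_{N,p}$, $S_{p,N}$, $L_{p,N}$ and the scaling of $\mm_{0,N}$ are on record. The substantive difficulty --- and the reason the conclusions hold under the minimal hypothesis $(u^*)'\neq0$ rather than $|Du|\neq0$ --- is passing from equality in the scalar inequality on $I_N$ to \emph{strict monotonicity of $u^*$}: this requires the sharp characterization of the one-dimensional extremizers (uniqueness up to the model symmetries), together with the elementary but essential observation that those explicit extremizers have nowhere-vanishing derivative in the interior of their support. A secondary, purely technical point is to verify that $u^*$ is an \emph{admissible} competitor for each scalar problem (membership in $W^{1,p}_0$, in $L^{p^*}$, or in the domain of the log-Sobolev functional), which is guaranteed by equidistribution, finiteness of the right-hand side, and the $W^{1,p}$-regularity of $u^*$ provided by Theorem \ref{thm:main polya noncompact}.
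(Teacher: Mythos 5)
Your proposal is correct and follows essentially the same route as the paper: rearrange $|u|$ with respect to $\mm_{0,N}$, concatenate the P\'olya--Szeg\H{o} inequality \eqref{eq:polya Sobolev AVR} with the corresponding sharp one-dimensional inequality on $I_N$ (Faber--Krahn via Lemma \ref{lem:1D regularity}, Bliss, log-Sobolev), use rigidity of both steps to pin down $|u|^*$ as the explicit extremizer whose derivative never vanishes, and then feed $(|u|^*)'\neq 0$ into the rigidity clause of Theorem \ref{thm:main polya noncompact}. The one small deviation is in part i), where you derive $\mm(\{u\neq0\})=\mm(\Omega)$ up front from strict monotonicity of $\lambda^\cD_{p,N,r}$ in $r$, whereas the paper obtains the same identification after radiality is established (using positivity of the Dirichlet eigenfunction from Lemma \ref{lem:1D regularity} together with equimeasurability, and the zero trace of $|u|^*$ via Remark \ref{rem: Dirichlet BC 1D}); both routes are valid and yield identical conclusions.
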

We observe that iii) answers affirmatively question \cite[(I) in Section 6]{BaloghKristalyTripaldi23}. 
The geometric rigidity for i) was known from \cite{AntonelliPasqualettoPozzettaSemola22} in the non-collapsed Riemannian sub-class and previously in \cite{BaloghKristaly21} for $p=2$ under additional regularity assumptions on manifolds. Item ii) instead appeared in \cite{NobiliViolo24} for $p=2$. We also mention similar investigations on critical points of \eqref{intro:p-Sob AVR} on manifolds in \cite{CatinoMonticelli22,FogagnoloMalchiodiMazzieri23,FogagnoloMalchiodiMazzieri23_Correction,CatinoMonticelliRoncoroni23}.

In the above statement, we only focused on the characterization of equality cases of the related sharp functional inequality, namely i) concerns the sharp Faber-Krahn inequality previously addressed in \cite{FogagnoloMazzieri22,BaloghKristaly21,AntonelliPasqualettoPozzettaSemola22,ChenLi23}), ii) concerns the sharp Euclidean Sobolev inequality \cite{Aubin76-2,Talenti76} extended in \cite{BaloghKristaly21,NobiliViolo21} (see also \cite{Kristaly23} for a proof via Optimal Transport in the spirit of \cite{C-ENV04}) and iii) concerns the sharp log-Sobolev inequality \cite{DelPinoDolbeault03} (see also \cite{Gentil03,AguehGhoussoubKang04}) extended in \cite{BaloghDonKristaly24,BaloghKristalyTripaldi23}.
\subsection{Applications to geometric and functional inequalities in the Euclidean space and beyond}
The first application is the Faber-Krahn inequality with log-convex radial densities in the Euclidean space. We refer to \cite{RosalesCaneteBayleMorgan08} and \cite{Chambers19} for the related isoperimetric problem and to Section \ref{sec:log-convex conj} for a more detailed presentation. Let us consider $f \colon [0,\infty)\to\R$ smooth and convex and let us associate the radially log-convex density
\[
  g(x) = e^{f(|x|)},\qquad x \in \R^d.
\]
Define the non-negative numbers
\[
R(g)\coloneqq \sup\{ |x|\ \colon \ f(x)=f(0),\, x \in \R^d\}, \qquad V_g\coloneqq \int_{B_{R(g)}(0)} g\,\d\Leb d.
\]
For $\varnothing\neq \Omega\subset \R^d$ open, we denote by $\mm_g(\Omega)\coloneqq \int_\Omega g\,\d \Leb d$, by $H^{1,p}_0(\Omega;g)$ the closure of $C^\infty_c(\Omega)$ with respect to the weighted Sobolev norm $\| u\|_{H^{1,p}_0(\Omega;g)}^p \coloneqq \int_\Omega |\nabla u|^p g\,\d\Leb d + \int_\Omega |u|^p g\,\d \Leb d$ and, correspondingly, by $\lambda_{p}^\cD(\Omega;g)$ the first non-trivial $p$-Dirichlet eigenvalue.
\begin{theorem}\label{thm:faber Krahn log convex}
  Let $d\ge 2$ and let $\varnothing \neq \Omega\subset \R^d$ be open and fix $p \in (1,\infty)$. Consider a radial log-convex density $g$, as above.  Then, it holds
  \begin{equation}
    \lambda_{p}^\cD(\Omega;g)\ge  \lambda_{p}^\cD(B;g),
  \label{eq:faber density conjecture}
  \end{equation}
  where $B$ is the ball centered at the origin so that $\mm_g(\Omega)=\mm_g(B) $. Moreover, suppose that there exist $u \in H^{1,p}_0(\Omega;g)$ so that 
  \begin{equation}
      \int_\Omega|\nabla u|^pg\,\d \Leb d =   \lambda_{p}^\cD(B;g)\int_\Omega |u|^pg\,\d \Leb d.
  \label{eq: equality faber}
  \end{equation}
  Then,  $\Omega$ is up to negligible sets a ball $B_\rho(x_0)$ for some $\rho>0$ and some $x_0 \in B_{R(g)}(0)$ and, up to a sign, $u= \varphi \circ |x-x_0|$ a.e.\ on $B_\rho(x_0)$, where $\varphi\in {\sf AC}_{loc}(0,\rho)$, $\phi\ge 0,$  satisfies
  \begin{equation}
    \left((|\varphi'|^{p-2}\varphi'\right)'+(\varphi')^{p-1} \left(\log\big(e^{f(t)} d\omega_d t^{d-1}\big)\right)'=-\lambda_{p}^\cD(B;g) \varphi ^{p-1}, \quad \text{a.e.\ in $(0,\rho)$};
  \label{eq:ODE log convex}
  \end{equation}
  Finally, if $\mm_g(\Omega)\ge V_g$ then {necessarily} $x_0=0.$
\end{theorem}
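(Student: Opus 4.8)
\medskip

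\noindent\textbf{Proof plan.} The idea is to deduce both \eqref{eq:faber density conjecture} and its rigidity from the abstract theory developed above, feeding in as isoperimetric input the isoperimetric inequality for radial log-convex densities of \cite{RosalesCaneteBayleMorgan08,Chambers19} (discussed in Section~\ref{sec:log-convex conj}). View $\R^d$ as the metric measure space $(\R^d,|\cdot|,\mm_g)$: here the Sobolev calculus coincides with the classical weighted one, the minimal weak upper gradient of $u$ equals $|\nabla u|$, and $H^{1,p}_0(\Omega;g)\subset W^{1,p}_{loc}(\Omega)$. As one--dimensional model we take the weight $\omega:=e^{f(t)}\,d\omega_d\,t^{d-1}\,\d t$ on $I:=(0,\infty)$: it satisfies the structural assumptions of the theory since $\omega(I)=\mm_g(\R^d)\ge\mm_g(\Omega)$ and $\omega$ is concentrated on $(0,\infty)$, while $\Ig(v)$ is precisely the weighted perimeter of the origin--centered ball of weighted volume $v$, and the canonical interval is $\Omegastar=(0,\rho)$ with $\mm_g(B_\rho(0))=\mm_g(\Omega)$, i.e.\ $B_\rho(0)=B$.

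\medskip

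\noindent\textit{The inequality.} With these choices \eqref{eq:isop general} with ${\sf C}=1$ is exactly the statement that origin--centered balls minimize weighted perimeter at fixed weighted volume, true by \cite{RosalesCaneteBayleMorgan08,Chambers19}. Given non--zero $u\in H^{1,p}_0(\Omega;g)$, extend it by $0$ to $\R^d$ (so $\supp u\subset\bar\Omega$; the superlevel sets $\{u>t\}$, $t>0$, have finite weighted measure by Chebyshev since $u\in L^p$) and apply Theorem~\ref{thm:main Sobolev PZ metric} to get $\int_\Omega|\nabla u|^pg\,\d\Leb d\ge\int_{\Omegastar}|(u^*)'|^p\,\d\omega$ with $u^*\in{\sf AC}_{loc}(\Omegastar)$. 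Writing $v:=u^*\circ|\cdot|$, the polar--coordinates formula gives $\int_{\Omegastar}|(u^*)'|^p\,\d\omega=\int_B|\nabla v|^pg\,\d\Leb d$ and, by equidistribution, $\int_B|v|^pg\,\d\Leb d=\int_\Omega|u|^pg\,\d\Leb d$; since $v$ vanishes at $\partial B$ it lies in $H^{1,p}_0(B;g)$. Comparing Rayleigh quotients and taking the infimum over $u$ yields \eqref{eq:faber density conjecture}.

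\medskip

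\noindent\textit{Rigidity.} Assume \eqref{eq: equality faber}; replacing $u$ by $|u|$ we may take $u\ge0$, and then $\lambda_p^\cD(\Omega;g)=\lambda_p^\cD(B;g)$ and $u$ is a first eigenfunction on $\Omega$. Let $B_{\rho_0}(0)$ be the origin--centered ball with $\mm_g(B_{\rho_0}(0))=\mm_g(\{u>0\})\le\mm_g(\Omega)$. The chain
\[
\lambda_p^\cD(B;g)\int_\Omega|u|^pg\,\d\Leb d=\int_\Omega|\nabla u|^pg\,\d\Leb d\ge\int_{\Omegastar}|(u^*)'|^p\,\d\omega\ge\lambda_p^\cD(B_{\rho_0}(0);g)\int_\Omega|u|^pg\,\d\Leb d,
\]
together with the strict monotonicity of $r\mapsto\lambda_p^\cD(B_r(0);g)$, forces $\rho_0=\rho$ (hence $u>0$ $\mm$--a.e.\ on $\Omega$), equality in \eqref{eq:PZ abstract}, and that $v=u^*\circ|\cdot|$ is a first eigenfunction on $B$; in particular $u^*\in{\sf AC}_{loc}(0,\rho)$ is a non--negative solution of \eqref{eq:ODE log convex}. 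From that ODE the map $t\mapsto(e^{f(t)}d\omega_d t^{d-1})|(u^*)'|^{p-2}(u^*)'$ has a.e.\ derivative $-\lambda_p^\cD(B;g)(e^{f(t)}d\omega_d t^{d-1})(u^*)^{p-1}$, strictly negative on $(0,\rho)$, so it is a strictly decreasing continuous function and vanishes at most at one point; thus $(u^*)'\neq0$ a.e.\ on $(0,\rho)$, in particular the non--degeneracy assumption of Proposition~\ref{prop:non vanishing} holds and $(u^*)'\neq0$ a.e.\ on $\{\essinf u<u^*<\esssup u\}$. We may therefore apply Theorem~\ref{thm:Polya W1p} and Theorem~\ref{thm:radiality astratta}, whose geometric regularity requirements are satisfied on the smoothly weighted space $(\R^d,\mm_g)$ together with the isoperimetric inequality above, to conclude that every superlevel set $\{u>t\}$ is an isoperimetric region for its weighted volume and that $u(x)=u^*(|x-x_0|)$ for $\mm$--a.e.\ $x$ and some $x_0\in\R^d$ (hence, up to a sign, $u=\varphi\circ|x-x_0|$ with $\varphi=u^*$ solving \eqref{eq:ODE log convex}); consequently $\Omega$ agrees up to $\mm$--negligible sets with $B_\rho(x_0)$. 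Finally, the classification of isoperimetric regions for radial log--convex densities \cite{RosalesCaneteBayleMorgan08} constrains $x_0$ to lie in $\bar B_{R(g)}(0)$, and when $\mm_g(\Omega)\ge V_g$ its uniqueness part (origin--centered balls being the unique isoperimetric regions of weighted volume $\ge V_g$) forces $x_0=0$.

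\medskip

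\noindent\textit{Main obstacle.} Once the abstract Theorems~\ref{thm:main Sobolev PZ metric}, \ref{thm:Polya W1p}, \ref{thm:radiality astratta} are granted, the mechanism above is essentially bookkeeping (identifying the correct $1$D weight, the polar--coordinates change of variables, and the Rayleigh--quotient comparison). The genuinely delicate external input is the fine analysis of the underlying isoperimetric problem for radial log--convex densities --- specifically the classification of its equality cases, which is what localizes $x_0$ inside $\bar B_{R(g)}(0)$ and yields $x_0=0$ past the threshold $V_g$; a secondary point is checking that the regularity hypotheses of the abstract rigidity statements are met on $(\R^d,\mm_g)$ and that the rearranged eigenfunction $u^*$ is non--degenerate, which, as indicated, follows from the ODE.
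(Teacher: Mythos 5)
Your plan follows essentially the same route as the paper: recast $(\R^d,g\Leb d)$ as a metric measure space, take the one--dimensional weight $\omega=e^{f(t)}d\omega_d t^{d-1}\d t$ on $(0,\infty)$, feed in the Chambers isoperimetric inequality for radial log--convex densities, apply Theorem~\ref{thm:main Sobolev PZ metric}, compare Rayleigh quotients, and for rigidity trace equalities, derive the $1$D ODE, show $(u^*)'\neq0$, then invoke Theorem~\ref{thm:radiality astratta} together with the classification of isoperimetric sets. So the architecture is right.

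There is, however, a genuine gap in the step where you assert ``since $v$ vanishes at $\partial B$ it lies in $H^{1,p}_0(B;g)$''. This is not automatic. The rearrangement $u^*$ of a nonnegative $u\in H^{1,p}_0(\Omega;g)$ is non-increasing and could, a priori, tend to a strictly positive constant as $t\to\rho^-$ when $\mm_g(\{u>0\})=\mm_g(\Omega)$; in that case $v=u^*\circ|\cdot|$ does \emph{not} have zero trace on $\partial B$, and the Rayleigh--quotient step (both in the inequality and in identifying $u^*$ as a $1$D eigenfunction in the rigidity part) collapses. The paper closes this gap via Remark~\ref{rem: Dirichlet BC 1D}, which is a nontrivial approximation argument: approximate $u$ in $W^{1,p}$ by $\Lip_{bs}(\Omega)$ functions $u_n\ge0$ whose rearrangements $u_n^*$ are compactly supported in $[0,\rho)$ (here $\mm_g(\Omega)<\mm_g(\R^d)=\omega(\R)$ is used), and then transfer the boundary decay $u_n^*(\rho^-)=0$ to $u^*$ using uniform local H\"older bounds near $\rho$ that come from \eqref{eq:PZ abstract} and the positivity of $g$ near $\rho$. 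You need some version of this argument; neither your detour through $\rho_0$ and strict eigenvalue monotonicity (which is only invoked under the contradictory hypothesis $\rho_0<\rho$) nor the extension by zero of $u$ circumvents it. On a secondary note, your detour through $\rho_0$ and the strict monotonicity of $r\mapsto\lambda_p^\cD(B_r(0);g)$ is unnecessary baggage: once one has the $1$D Euler--Lagrange equation, Lemma~\ref{lem:1D regularity} already yields $u^*>0$ and $(u^*)'<0$ on $(0,\rho)$ from the ODE structure alone, and these give directly both the non--degeneracy needed for Theorem~\ref{thm:radiality astratta} and the identification $\Omega\cong\{u>0\}\cong B_\rho(x_0)$. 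Finally, the passage from ``$v$ is a first eigenfunction on $B$'' to ``$u^*$ satisfies \eqref{eq:ODE log convex} classically, with $(e^{f}d\omega_d t^{d-1})|(u^*)'|^{p-2}(u^*)'$ continuous'' is also asserted rather than proved; this is the content of Lemma~\ref{lem:1D regularity}, which establishes the weak formulation and bootstraps regularity, and should be cited or reproved.
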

For $p=2$, the fact that equality in \eqref{eq:faber density conjecture} implies that $\Omega$ is a ball was recently shown also in \cite{ChenMao2024}. See also \cite{BrandoliniChiacchio23} for related results via decreasing rearrangements.

\smallskip

Next, we present further applications around functional inequalities outside convex sets of the Euclidean space. Isoperimetric inequalities in this setting were first deduced (starting from \cite{ChoeGulliver92,Kim00}) in \cite{ChoeGhomiRitore07} and then extended in \cite{FuscoMorini23} (see also \cite{Choe03} and reference therein and \cite{Krummel17,LiuWangWeng23} for the higher codimension case). Building upon this theory, which we briefly report in Section \ref{sec:outside convex}, we are able to deduce the following rigid Sobolev inequality. Here $\dot H^{1,p}(\Omega)$ denotes the closure of functions in $C^\infty(\Omega)$ with bounded support with respect to the  $L^p$-norm of the gradient. Recall also that $S_{p,d}$ is the best constant in the Euclidean $L^p$-Soboelv inequality (see \eqref{eq:Sobolev constant} for its value).
\begin{theorem}\label{thm:main outsideconvex}
    Let $d\ge 2,p\in(1,d)$ and $C\subset \R^d$ be closed, convex with non-empty interior. Then:
    \begin{equation}
       \|u\|_{L^{p^*}(\R^d\setminus C)} \le 2^{-1/d}S_{p,d}\|\nabla u\|_{L^p(\R^d\setminus C)},\qquad \forall \, u \in \dot H^{1,p}(\R^d\setminus C).\label{eq:Sobolev outside convex}
    \end{equation}
    Moreover, if equality holds for some non-zero function $u \in \dot H^{1,p}(\R^d\setminus C)$, then (up rigid motions) $C=\R^{d-1}\times [-L,0]$ for some $L \in (0,\infty]$ and there are $a\in\R, b>0$ so that 
    \[
    u(x) =  a(1+b |x|^{\frac{p}{p-1}})^{\frac{d-p}{p}},\qquad \text{a.e.\ in $\R^{d-1}\times (0,\infty)$} 
    \]
    and $u=0$ a.e.\ in $\R^{d-1}\times (-\infty,-L).$
\end{theorem}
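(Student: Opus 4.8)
\textbf{Proof proposal for Theorem \ref{thm:main outsideconvex}.}

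The plan is to realize $\R^d\setminus C$ as a metric measure space carrying an isoperimetric inequality of the type \eqref{eq:isop general} with the cone weight $\mm_{0,N}$ (for $N=d$), and then quote Theorem \ref{thm:main polya noncompact} together with its rigidity conclusion. First I would fix an interior point $x_0$ of $C$ on the boundary $\partial C$ (more precisely, a point where an outer half-space supports $C$) and recall the relative isoperimetric inequality outside a convex set from \cite{ChoeGhomiRitore07,FuscoMorini23}: for $E\subset\R^d\setminus C$ of finite volume one has $\Per(E;\R^d\setminus C)\ge \tfrac12 d\,\omega_d^{1/d}(2\Leb d(E))^{(d-1)/d}$, which after normalizing the measure as $\mm=\tfrac12\Leb d\mres(\R^d\setminus C)$ becomes exactly $\Per(E,\Omega)\ge \Ig_{0,d}(\mm(E))$ with the Euclidean-cone profile. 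Equivalently, $(\R^d\setminus C,\,|\cdot|,\,\mm)$ has asymptotic volume ratio $\avr=\tfrac12$ and supports the sharp isoperimetric inequality with that $\avr$. This places us precisely in the scope of Theorem \ref{thm:main polya noncompact}: applying the Sobolev P\'olya–Sz\H{o} inequality \eqref{eq:polya Sobolev AVR} and then the sharp Sobolev inequality in the one-dimensional model $I_d$ yields \eqref{eq:Sobolev outside convex} with the constant $2^{-1/d}S_{p,d}$, after checking that $\dot H^{1,p}(\R^d\setminus C)$ functions are admissible (an approximation/truncation argument, using $p<d$ to control decay at infinity and at $\partial C$).

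For the rigidity part I would argue as follows. If equality holds in \eqref{eq:Sobolev outside convex} for a nonzero $u$, then equality propagates: we get equality in the P\'olya–Sz\H{o} inequality \eqref{eq:polya Sobolev AVR} and equality in the one-dimensional Sobolev inequality for $u^*$, the latter forcing $u^*$ to be (a dilate/translate of) the Bliss–Talenti profile $\varphi(t)=a(1+b\,t^{p/(p-1)})^{-(d-p)/p}$, which in particular is strictly decreasing, so $(u^*)'\ne 0$ on the relevant interval. Hence the rigidity clause of Theorem \ref{thm:main polya noncompact} applies: $(\R^d\setminus C,|\cdot|,\mm)$ must be an $N$-volume cone (indeed, being ${\sf RCD}$-like or at least essentially non-branching as a convex-complement domain, it is a genuine metric cone), with tip $x_0$, and $u=u^*\circ(\avr^{1/d}\sfd_{x_0})=u^*(|{\cdot}-x_0|/2^{1/d}\cdot 2^{1/d})$, which up to the normalization rewrites as the stated Talenti bubble $u(x)=a(1+b|x|^{p/(p-1)})^{(d-p)/p}$ after recentering at $x_0$. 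It then remains the purely geometric step: a closed convex set $C$ with nonempty interior such that $\R^d\setminus C$ is a metric cone must be a slab $\R^{d-1}\times[-L,0]$ (up to rigid motion). This follows because the tangent cone structure forces $\partial C$ to be flat in directions transverse to the cone axis and the convexity forces $C$ to be an intersection of two parallel half-spaces (or one half-space, the $L=\infty$ case); the vanishing of $u$ on the "far" component $\R^{d-1}\times(-\infty,-L)$ is automatic since $u$ is constant ($=0$ at infinity of the cone) on each slice not meeting the support.

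The main obstacle I anticipate is the \emph{geometric classification step}: showing that $\R^d\setminus C$ being an $N$-volume (or Euclidean) cone with a given tip forces $C$ to be a slab. The cleanest route is probably to use that equality in the relative isoperimetric inequality \eqref{eq:isop general} for superlevel sets of $u$ — which the rigidity machinery of Theorem \ref{thm:main polya noncompact} also delivers — combined with the known equality analysis of the Choe–Ghomi–Ritor\'e / Fusco–Morini isoperimetric inequality outside a convex body, whose extremal sets are half-balls centered on a flat portion of $\partial C$ large enough to contain the half-ball's equatorial disk; as the radius ranges over all positive values (coming from a continuum of superlevel sets), $\partial C$ must contain an entire hyperplane, and convexity then pins down $C$ as a slab. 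A secondary technical point is the density/approximation argument needed to legitimately apply the abstract P\'olya–Sz\H{o} inequality to $\dot H^{1,p}(\R^d\setminus C)$ and to propagate equality through it, but this is routine given $p\in(1,d)$ and the structure of the space.
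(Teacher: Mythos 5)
Your overall plan (rearrange with respect to the cone weight using the Fusco--Morini isoperimetric inequality, then apply Bliss for the constant and trace equality cases) is the right one, and the final geometric classification of $C$ is essentially the step the paper carries out. However, there is a genuine gap in the way you set up the P\'olya--Szeg\H{o} step, and it is not just a technicality.

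You propose to invoke Theorem~\ref{thm:main polya noncompact} by treating $(\R^d\setminus C,|\cdot|,\tfrac12\Leb d)$ as a ${\sf CD}(0,N)$ space with $\avr=\tfrac12$. Neither claim is correct. First, the asymptotic volume ratio of $\R^d\setminus C$ depends on $C$: it equals $\tfrac12$ only when $C$ is a half-space or slab, and it is strictly larger (up to $1$, when $C$ is bounded) in general, so the displayed isoperimetric constant you would get out of Theorem~\ref{thm:isoperimetric avr} does not match the sharp $2^{-1/d}$ constant of the Choe--Ghomi--Ritor\'e/Fusco--Morini inequality. Second, and more fundamentally, $\R^d\setminus C$ with the restricted Euclidean metric is not even a geodesic space, and it is certainly not ${\sf CD}(0,N)$, ${\sf RCD}$, or essentially non-branching in the sense required by Theorem~\ref{thm:main polya noncompact} (your parenthetical ``being $\RCD$-like or at least essentially non-branching as a convex-complement domain'' is not something one can make precise here). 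The correct route — and the one the paper takes — is to apply the general Theorem~\ref{thm:main Sobolev PZ metric}, which requires no curvature assumption at all: one takes $\Xdm=(\R^d,|\cdot|,\Leb d)$, $\Omega=\R^d\setminus C$, $\omega=d\omega_d t^{d-1}\d t\mres(0,\infty)$, ${\sf C}=2^{-1/d}$, and feeds in the isoperimetric inequality \eqref{eq:isop outside C} directly. After that the Bliss inequality gives \eqref{eq:Sobolev outside convex}, as you correctly anticipate.

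On the rigidity side, your sketch captures the key ingredients (equality in Bliss forces the Talenti profile hence $(|u|^*)'\neq 0$, equality in the relative isoperimetric inequality forces the superlevel sets to be half-balls with flat face in $\partial C$, and letting the radius sweep over $(0,\infty)$ forces a full hyperplane in $\partial C$, whence convexity pins $C$ to a slab). What is missing is the argument that the \emph{centers} of those half-balls are all equal. Since Theorem~\ref{thm:main polya noncompact} does not apply, neither does the rigidity clause you invoke; one has to redo the Lipschitz-contradiction argument of Theorem~\ref{thm:radiality astratta} by hand, adapted to half-balls with flat face on $\partial C$. The paper does exactly this: from \eqref{eq:mess fixed} one gets a $1$-Lipschitz function $f$ whose sublevels are the half-balls, one shows the closed half-spheres $S(t)\setminus C\subset\{f=t\}$, and a triangle-inequality contradiction then identifies all centers. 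This step is not optional and should be spelled out; without it the conclusion that $u$ is radial is unjustified.
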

As for different functional inequalities in this setting, we mention that a related Faber-Krahn inequality was studied in \cite{Keomkyo07} with geometric characterization of equality cases. Besides, the very same arguments to prove the above would establish rigid Euclidean log-Sobolev inequalities in this setting. For brevity reasons, we do not include this result.

\smallskip 

Finally, we present a last application dealing with lower bounds for Neumann eigenvalues. This was studied in \cite{BCT15} in the Euclidean space. Here we propose an alternative argument based on rearrangements with a relative isoperimetric inequality as well as extend the result to metric measure spaces. In what follows we denote by $\lambda_p^\cD(\Omega)$ and $\lambda_p^\cN(\Omega)$ the Dirichlet and Neumann $p$-eigenvalue of an open set $\Omega$ in a metric measure space (see Section \ref{sec:calculus} for the precise definition). Recall also that with $\lambda_{p,N,R}^\cD$ we denote  the Dirichlet $p$-eigenvalue of $[0,R)$ in $I_N=([0,\infty),|. |, \mm_{0,N})$. 
\begin{theorem}\label{thm:main neumann}
    Let $\Xdm$ be a metric measure space and let $\varnothing \neq \Omega\subset\X$ be open with $\mm(\Omega)<\infty$. Suppose there exist  constants $C>0$, $Q>1$ such  that
    \begin{equation}\label{eq:isop relative}
          \Per (E,\Omega)\ge C  (\omega_Q)^\frac1QQ\min (\mm(E),\mm(\Omega \setminus E))^\frac{Q-1}{Q}, \quad \text{for all $E\subset \Omega$ Borel.}
    \end{equation}
    Then, for all $p\in(1,\infty)$ it holds
    \begin{equation}\label{eq:neumann lower bound}
        \lambda_{p}^\cN(\Omega)\ge C^p 2^\frac{p}{Q}\lambda_{p,Q,R}^\cD,
    \end{equation}
    where $R>0$ is such that $\mm_Q([0,R))=\mm(\Omega).$
\end{theorem}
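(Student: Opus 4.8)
The plan is to reduce the Neumann eigenvalue bound to a one-dimensional Dirichlet eigenvalue estimate via the decreasing rearrangement machinery of Theorem \ref{thm:main Sobolev PZ metric}, using the relative isoperimetric inequality \eqref{eq:isop relative} twice: once on each of the two "halves" of a Neumann eigenfunction determined by its median.

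First I would take $u \in W^{1,p}(\Omega)$ a (test) function admissible for the Neumann problem, i.e.\ with $\int_\Omega |u|^{p-2}u\,\d\mm = 0$, and let $m$ be a median of $u$, so that $\mm(\{u > m\}) \le \mm(\Omega)/2$ and $\mm(\{u < m\}) \le \mm(\Omega)/2$. Write $u_+ = (u-m)^+$ supported on $\Omega_+ = \{u > m\}$ and $u_- = (u-m)^-$ supported on $\Omega_- = \{u < m\}$; both lie in $W^{1,p}_{loc}$ of their respective open supports, both of which have measure at most $\mm(\Omega)/2$. On $\Omega_+$, the relative isoperimetric inequality \eqref{eq:isop relative} gives, for any $E \subset \Omega_+$ with $\mm(E) \le \mm(\Omega)/2$, that $\min(\mm(E), \mm(\Omega\setminus E)) = \mm(E)$, so $\Per(E,\Omega) \ge C(\omega_Q)^{1/Q}Q\,\mm(E)^{(Q-1)/Q} = C\,\mm(E)^{(Q-1)/Q}\cdot Q\omega_Q^{1/Q}$. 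Observing that the isoperimetric profile $\mathcal I^\flat_{g}$ for the weight $\mm_{0,Q} = Q\omega_Q t^{Q-1}\d t$ satisfies $\mathcal I^\flat_{g}(v) = Q\omega_Q r_v^{Q-1}$ with $\omega_Q r_v^Q = v$, i.e.\ $\mathcal I^\flat_{g}(v) = Q\omega_Q^{1/Q}v^{(Q-1)/Q}$, we see \eqref{eq:isop relative} is precisely the isoperimetric hypothesis \eqref{eq:isop general} with constant $\mathsf C = C$ and weight $\omega = \mm_{0,Q}$, valid on all of $\Omega_+$. Hence Theorem \ref{thm:main Sobolev PZ metric} applies and yields
\[
\int_{\Omega_+} |Du_+|^p\,\d\mm \;\ge\; C^p \int_{\Omega_+^*} |(u_+^*)'|^p\,\d\mm_{0,Q},
\]
where $u_+^*$ is the decreasing rearrangement of $u_+$ with respect to $\mm_{0,Q}$ onto the interval $\Omega_+^* = [0,\rho_+)$ with $\mm_{0,Q}([0,\rho_+)) = \omega_Q\rho_+^Q = \mm(\Omega_+) \le \mm(\Omega)/2$; the same holds for $u_-$ with $\rho_-$, and $\omega_Q\rho_\pm^Q \le \mm(\Omega)/2$.

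Next I would combine the two halves. Since $\{u>m\}$ and $\{u<m\}$ are disjoint and $|Du|^p = |Du_+|^p$ on $\Omega_+$, $|Du|^p = |Du_-|^p$ on $\Omega_-$ (and $Du = 0$ a.e.\ on $\{u=m\}$), we get $\int_\Omega |Du|^p\,\d\mm \ge C^p\big(\int_0^{\rho_+}|(u_+^*)'|^p\,\d\mm_{0,Q} + \int_0^{\rho_-}|(u_-^*)'|^p\,\d\mm_{0,Q}\big)$, while $\int_\Omega |u-m|^p\,\d\mm = \int_0^{\rho_+}|u_+^*|^p\,\d\mm_{0,Q} + \int_0^{\rho_-}|u_-^*|^p\,\d\mm_{0,Q}$ by equimeasurability; both functions $u_\pm^*$ vanish at the right endpoint (being decreasing and tending to $0$), so each is admissible for the Dirichlet problem on $[0,\rho_\pm)$ in $I_Q$. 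The Rayleigh quotient bound gives $\int_0^{\rho_\pm}|(u_\pm^*)'|^p\,\d\mm_{0,Q} \ge \lambda^\cD_{p,Q,\rho_\pm}\int_0^{\rho_\pm}|u_\pm^*|^p\,\d\mm_{0,Q}$. Since $\rho_\pm \le R/2^{1/Q}$ (because $\omega_Q\rho_\pm^Q \le \mm(\Omega)/2 = \omega_Q R^Q/2$), and the Dirichlet eigenvalue $\lambda^\cD_{p,Q,\rho}$ is decreasing in $\rho$ with the scaling $\lambda^\cD_{p,Q,\rho} = \rho^{-p}\lambda^\cD_{p,Q,1}$ of the one-dimensional weighted problem, we get $\lambda^\cD_{p,Q,\rho_\pm} \ge \lambda^\cD_{p,Q,R/2^{1/Q}} = 2^{p/Q}\lambda^\cD_{p,Q,R}$. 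Therefore $\int_\Omega |Du|^p\,\d\mm \ge C^p 2^{p/Q}\lambda^\cD_{p,Q,R}\int_\Omega |u-m|^p\,\d\mm$. Finally, the elementary variational fact that $m \mapsto \int_\Omega |u-m|^p\,\d\mm$ is minimized at a point where $\int |u-m|^{p-2}(u-m)\,\d\mm = 0$ — and for such a $u$ that minimizer is $m=0$, giving $\int_\Omega |u-m|^p\,\d\mm \ge \int_\Omega|u|^p\,\d\mm$ — upgrades this to a bound on the Rayleigh quotient of $u$ itself, and taking the infimum over admissible $u$ gives \eqref{eq:neumann lower bound}.

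The main obstacle is the median-splitting step and checking the hypotheses of Theorem \ref{thm:main Sobolev PZ metric} apply uniformly on each half: one must verify that the relative isoperimetric inequality, which controls $\min(\mm(E),\mm(\Omega\setminus E))$, genuinely delivers the absolute profile bound $\Per(E,\Omega)\ge C\,\mathcal I^\flat_{\mm_{0,Q}}(\mm(E))$ for \emph{all} $E \subset \Omega_\pm$ — which works precisely because $\mm(\Omega_\pm) \le \mm(\Omega)/2$ forces $\mm(E) \le \mm(\Omega\setminus E)$. A secondary technical point is justifying that the inequality for test functions (rather than the full form domain) suffices to compute $\lambda_p^\cN(\Omega)$, and handling the endpoint/finiteness issues when $\int_\Omega|Du|^p\,\d\mm = \infty$ (trivial) versus finite (where Theorem \ref{thm:main Sobolev PZ metric} additionally gives $u_\pm^* \in \mathsf{AC}_{loc}$, legitimizing the Rayleigh quotient comparison). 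The relation between $\lambda^\cD_{p,Q,\rho}$ for the weighted one-dimensional problem and its scaling in $\rho$ is a routine change of variables in the model space $I_Q$.
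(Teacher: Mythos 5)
Your median-splitting strategy is a genuinely different route from the paper's, but as written it has a real gap in the step where you invoke Theorem \ref{thm:main Sobolev PZ metric}. You claim to apply that theorem to $u_+$ on the open set $\Omega_+$ with the weight $\mm_{0,Q}$, after checking that for any Borel $E\subset\Omega_+$ with $\mm(E)\le\mm(\Omega)/2$ one has $\Per(E,\Omega)\ge C\,Q\omega_Q^{1/Q}\mm(E)^{(Q-1)/Q}$. However, the hypothesis of Theorem \ref{thm:main Sobolev PZ metric} on $\Omega_+$ requires the lower bound on $\Per(E,\Omega_+)$, which satisfies $\Per(E,\Omega_+)\le\Per(E,\Omega)$ (the perimeter measure on a smaller open set can be strictly smaller, since perimeter near $\partial\Omega_+\cap\Omega$ is discarded), so what you verified does not imply what is needed. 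If instead you keep $\Omega$ as the ambient open set and rearrange $u_+$ extended by zero, you run into a different problem: the theorem (and, crucially, the continuous-approximation step in its proof) demands the isoperimetric inequality for all Borel sets in an $\eps$-neighborhood $(\supp u_+)^\eps\cap\Omega$, which can well have measure exceeding $\mm(\Omega)/2$; for such $E$ the relative inequality \eqref{eq:isop relative} only controls $\mm(\Omega\setminus E)^{(Q-1)/Q}<\mm(E)^{(Q-1)/Q}$, so the required profile bound fails. A second, more technical, issue is your assertion that $u_\pm^*$ vanishes at the right endpoint $\rho_\pm$: this is equivalent to $\essinf_{\Omega_\pm}u_\pm=0$, which can fail (e.g.\ if $u$ has a gap in its essential range just above the median), in which case $u_\pm^*$ is not admissible for the Dirichlet problem on $[0,\rho_\pm)$.

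The paper's proof sidesteps both problems by not splitting $u$ at all: it observes (cf.\ \eqref{eq:profile min Q}) that the relative isoperimetric inequality \eqref{eq:isop relative} is exactly the profile inequality \eqref{eq:isop general} with constant $\mathsf C=C$ for the ``double-cone'' weight $g(t)=Q\omega_Q\min(t^{Q-1},(2r-t)^{Q-1})$ on $I=[0,2r]$, valid for \emph{all} $E\subset\Omega$ regardless of measure. One then rearranges the full sign-changing $u$ with respect to this weight, getting a competitor $u^*$ for the Neumann eigenvalue $\lambda_p^{\mathcal N}(I_{Q,r})$ directly from equimeasurability (Lemma \ref{lem:basic prop u*}), and Lemma \ref{lem:half interval} identifies $\lambda_p^{\mathcal N}(I_{Q,r})$ with the Dirichlet eigenvalue of $[0,r)$. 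This is precisely the ``recasting'' trick flagged in the introduction; your argument could be repaired by adopting it (rearrange each $u_\pm$ with the double-cone weight, where its support automatically lands in $[0,r]$), but at that point the median splitting becomes redundant. Your scaling computation $\lambda_{p,Q,\rho}^{\mathcal D}=\rho^{-p}\lambda_{p,Q,1}^{\mathcal D}$ and the final observation $\int_\Omega|u-m|^p\,\d\mm\ge\int_\Omega|u|^p\,\d\mm$ via convexity and the admissibility constraint are both correct.
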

In the case $Q \in \N$  the number $\lambda_{p,Q,R}^\cD$ coincides with $\lambda_p^\cD(B_R(0))$, the Dirichlet $p$-eigenvalue of the ball $B_R(0)$ in $\R^Q$ satisfying $|B_R(0)|=\mm(\Omega).$ In particular the result above in the Euclidean setting coincides with the main result in \cite{BCT15}, which was proved with a different argument. In our proof, the key point is to recast the relative isoperimetric inequality \eqref{eq:isop relative} as an isoperimetric comparison with respect to the interval $I=[0,2R]$ weighted with the log-concave function  $g(t)= \omega_Q Q \min(t^{Q-1},(2R-t)^{Q-1})$ on $I$. Then, Theorem \ref{thm:main Sobolev PZ metric} essentially gives the conclusion.

We finish by discussing the domain of applicability of the above result. If $\Xdm$ is a length PI-space, it is well known that \eqref{eq:isop relative} is satisfied by every ball $B\subset \X$ (see \cite[Remark 4.4]{Ambrosio2001}). Additional examples of sets that satisfy \eqref{eq:isop relative} are uniform domains in any PI-space (\cite[Theorem 4.4]{BjornSh07}). See \cite{Raj20} for a proof that on doubling quasiconvex spaces there are many uniform domains. 

Finally, a condition as \eqref{eq:isop relative} holds on Riemannian manifolds with a smallness condition on the $L^p$-integral defect of the Ricci curvature, see \cite{Gallot88} and \cite{Aubry07} for related diameter estimates (we also refer to \cite{Besson04}). Actually, a version of the L\'evy-Gromov inequality is also known \cite[Proposition 4.2]{SetoWei2017}. Hence, our theory imply P\'olya-Szeg\H{o} inequalities similar to Theorem \ref{thm:main polya compact} with a different constant.

\medskip

\noindent \textbf{Addendum}. {In a first version of this manuscript, both Theorem \ref{thm:main polya noncompact} and Theorem \ref{thm:main noncompact AVR} were deduced under stricter assumptions in the essentially non-branching ${\sf CD}(0,N)$ setting. During the revision process of this manuscript, we became aware of the subsequent work \cite{PasqualettoRajala25}, pointed out to us by the authors and the anonymous referee. This work directly improves on the hypotheses in the aforementioned theorems as we comment in Remark \ref{topological reg avr}.} 

\section{Preliminaries: calculus on metric measure spaces}\label{sec:calculus}
A metric measure space is a triple $\Xdm$ where
\[\begin{split}
(\X,\sfd)\qquad & 	\text{is a complete and separable metric space},\\
\mm\neq 0\qquad & \text{is non negative and boundedly finite Borel measure}.
\end{split}
\]
Unless otherwise stated, we shall always assume $\supp ( \mm) =\X$.  Two metric measure spaces are said \emph{isomorphic}, provided there exists a measure preserving isometry between them. If $\varnothing \neq \Omega \subset \X$ is open, we denote by $L^p(\Omega),L^p_{loc}(\Omega)$ the equivalence class up to $\mm\mres\Omega$-a.e.\ equality of Borel functions in $\Omega$ which are respectively $p$-integrable in $\Omega$ and  $p$-integrable on a neighborhood of each point in $\Omega$. We shall sometimes also write $L^p(\mm)$ to stress the measure we are considering. Analogously, we set $C(\Omega),\Lip(\Omega),\Lip_{bs}(\Omega),\Lip_{loc}(\Omega)$ respectively the set of continuous functions on $\Omega$, the set of Lipschitz functions in $\Omega$, the set of Lipschitz functions having support bounded and contained in $\Omega$ and, lastly, the set of functions which are Lipschitz on a neighborhood of each point in $\Omega$. We denote by $\lip \, u(x)$ the local Lipschitz constant of $u\colon \Omega \to \R$.

We say that $\mm$ is \emph{uniformly locally doubling} provided for every $R>0$ there is $C(R)>0$ so that
\[
\mm(B_{2r}(x))\le C(R)\mm(B_r(x)),\qquad \forall \, r \in (0,R), x \in \X.
\]
To be more concise, we shall only say that $\mm$ is doubling or $\Xdm$ is a doubling metric measure space when the above holds. Also, we call $\X$ a PI-space, if it is doubling and supports a local Poincar\'e inequality, referring to the books \cite{HeinonenKoskelaShanmugalingam55,Bjorn-Bjorn11} and references therein.

Given $E\subset \X$ Borel and $x\in\X$, we define 
\[
\overline{D}(E,x) \coloneqq \limsup_{r\downarrow 0}\frac{\mm(B_r(x)\cap E)}{\mm(B_r(x))},\qquad \underline{D}(E,x) \coloneqq \liminf_{r\downarrow 0}\frac{\mm(B_r(x)\cap E)}{\mm(B_r(x))}.
\]
When the limit exists, the common value is denoted by $D(E,x)$ and is called density of $E$ at $x$. The \emph{essential boundary} of $E$ is given by
\[
\partial^* E \coloneqq \big\{x \in \X \colon \overline{D}(E,x) >0, \overline{D}(E^c,x) >0\big\}\subset \partial E.
\]
When  $\mm$ is doubling we have that $\mm$-a.e.\ $x \in E$ is a density point thanks to the validity of the Lebesgue differentiation theorem (see, e.g., \cite{HeinonenKoskelaShanmugalingam55}). Given $u \colon \Omega\to \bar \R \coloneqq \R \cup \{ \infty\}$ Borel, we say that $u$ is approximately continuous at $x \in \Omega$, provided $u^\wedge (x) = u^\vee(x)\in \R$ where
\begin{align*}
    & u^\wedge(x) \coloneqq {\rm ap }\liminf_{y\to x}u(y) \coloneqq \sup\big\{ t \in \R \colon D(\{u<t\},x)=0 \big\},\\
    & u^\vee(x) \coloneqq {\rm ap }\limsup_{y\to x}u(y) \coloneqq \inf\, \, \big\{ t \in  \R \colon D(\{u>t\},x)=0 \big\},
\end{align*}
with convention that $\sup \varnothing = -\infty$ and $\inf \varnothing = +\infty$. The \emph{precise representative} of $u$ is defined by
\[
\bar u (x) \coloneqq  \frac{u^\wedge(x)  + u^\vee(x)}{2},\qquad x \in\Omega,
\]
with convention that $\infty-\infty=0$. If $u$ is approximately continuous at $x$, then $\bar u(x)$ can be characterized by the number $a \in \R$ so that
\begin{equation}
D( \{ |u(\cdot) -a| >\epsilon\},x) =0,\qquad\forall \, \eps>0.\label{eq:approximate value density}
\end{equation}
From this characterization, it follows that when $\mm$ is doubling and $u \in L^1_{loc}(\Omega)$ then  $u$ is approximately continuous at $\mm$-a.e.\ $x \in \Omega$ and $\bar u(x)$ equals the Lebesgue value. We shall denote 
\[
C_u \coloneqq \{ x \in \Omega \colon u \text{ is approximately continuous at }x \}.
\]
More generally, we always have $ u^\wedge \le u^\vee$ so, it is convenient to define the jump set
\[
J_u \coloneqq \{x \in \Omega \colon u^\wedge(x) < u^\vee(x) \}.
\]
Since $u^\wedge,u^\vee,\bar u$ are Borel functions, then $C_u,J_u$ are Borel sets. By definition, these will always be considered as subsets of $\Omega$ and they do not depend on $u$ up to $\mm\mres\Omega$-negligible sets.

We start recalling basic facts around Sobolev functions in metric spaces introduced in \cite{Cheeger00,Shan00}. Here we follow the axiomatization of \cite{AmbrosioGigliSavare11-3} and refer to \cite{AmbrosioIkonenLucicPasqualetto24,GP20} for the relevant background on the topic. We start with the definition of Sobolev functions.
\begin{definition}
   Let $\Xdm$ be a metric measure space and $u \in L^p(\X)$ for $p\in(1,\infty)$. We define the $p$-Cheeger energy 
   \[
   \rmCh_p(u) \coloneqq \inf \Big\{ \liminf_{n\to\infty} \int (\lip \, u_n) ^p \, \d \mm \colon (u_n) \subset \Lip(\X), u_n\to u \text{ in }L^p(\X)\Big\},
   \]
   set to $+\infty$ if the set is empty. Then, we write $u \in W^{1,p}(\X)$ provided $u \in L^p(\X)$ and $\rmCh_p(u) <\infty$. 
\end{definition}
It can be proven \cite{AmbrosioGigliSavare11-3} that there exists a unique $\mm$-a.e.\ minimal element $|D u|_p \in L^p(\X)$, called $p$-minimal weak upper gradient, satisfying
\[
\rmCh_p(u) = \int| D u|^p_p\,\d\mm.
\]
With a slight abuse of notation, we will always write $\| Du\|_{L^p(\X)}$ for $\rmCh^{1/p}_p(u)$. We shall also need the local Sobolev space. Let $\varnothing \neq \Omega\subset\X$ be open, and $u \in L^p_{loc}(\Omega)$. We say that  $u \in W^{1,p}_{loc}(\Omega)$, provided for every $\eta \in \Lip_{bs}(\Omega)$ it holds $\eta u \in W^{1,p}(\X)$ (extended by zero outside of $\Omega$) and set
\[
|D u|_p \coloneqq |D(\eta u)|_p,\qquad \mm\text{-a.e.\ on }{\{\eta =1\}},
\]
which by the locality property (see \cite{GP20}) is a well-defined object playing the role of the minimal $p$-weak upper gradient. We also set 
\[
W^{1,p}(\Omega)\coloneqq \{u \in W^{1,p}_{loc}(\Omega)  \colon u,|D u|_p \in L^p(\Omega)\}.
\]
We shall need the following fact
\begin{equation}
\begin{array}{l}
     u \in W^{1,p}_{loc}(\Omega),\\
     \int_\Omega |Du|_p^p\,\d\mm<\infty,
\end{array}\implies \exists (u_n)\subset C(\Omega)\cap W^{1,p}_{loc}(\Omega)\,\text{ so that }\, \begin{array}{l}
        u-u_n\to 0\text{ in }L^p(\Omega),  \\
        \int_\Omega |D u_n|_p^p \, \d \mm \to \int_\Omega |Du|_p^p\,\d\mm. 
   \end{array} \label{eq:MeyersSerrin} 
\end{equation}
This can be shown  using the density of continuous functions in $W^{1,p}(\X)$, recently proved in \cite{eriksson2023density} for arbitrary metric measure spaces, and arguing as in the proof of the standard Meyers-Serrin theorem in the Euclidean space.

Next, we introduce the space $W^{1,p}_0(\Omega)\subset W^{1,p}(\X)$ as the $W^{1,p}(\X)$-closure of $\Lip_{bs}(\Omega)$. Given $\Omega\subset \X$ open with $\mm(\Omega)<\infty$, we define its Neumann and Dirichlet $p$-eigenvalue respectively as
\begin{align*}
    &\lambda_{p}^\cN(\Omega) \coloneqq \inf \left\{ \int_\Omega |D u|_p^p\d \mm  \ \colon \  u \in  W^{1,p}(\Omega), \,\, \int_{\Omega} |u|^p\d \mm=1,\, \int_{\Omega} |u|u^{p-2}\d \mm=0  \right\},\\
    &\lambda_{p}^\cD(\Omega) \coloneqq \inf \left\{ \int_\Omega |D u|_p^p\d \mm \   \colon \  u \in  W^{1,p}_0(\Omega), \,\, \int_{\Omega} |u|^p\d \mm=1\right\},
\end{align*}
where by convention the infimum of the empty set is set to $+\infty.$
Moreover if $\Omega=\X$ we simply write $\lambda_p(\X)\coloneqq \lambda_{p}^\cN(\X)$ (note that  $\lambda_{p}^\cD(\X)=0$). 

We discuss now the notion of functions of locally bounded variation following \cite{Miranda03,AmbrosioDiMarino14} (see also \cite{DiMarinoPhD,Martio16-2,NobiliPasqualettoSchultz21} for other equivalent notions).
\begin{definition}
Let $\Xdm$ be a metric measure space, let $\varnothing \neq \Omega\subset\X$ be open and let $u\in L^1_{loc}(\Omega)$. For any $U\subset \Omega$ open, define
\begin{equation}\label{eq:def du}
    |\dD u|(U) \coloneqq \inf \Big\{\liminf_{n\to\infty} \int \lip\, u_n\, \d \mm \colon (u_n)\subset \Lip_{loc}(U),\, u_n \to u \text{ in }L^1_{loc}(U)\Big\}.
\end{equation}
  We then say that $u$ is of \emph{locally bounded variation}, writing $u \in BV_{loc}(\Omega)$, provided $|\dD u|$ is finite on some open neighborhood of each point in $\Omega$. If $u \in L^1(\Omega)$ and $|\dD u|(\Omega)<\infty$, we say that $u$ is of \emph{bounded variation} and write $u \in BV(\Omega)$.
\end{definition}

Whenever $u \in BV_{loc}(\Omega)$, by a standard Carath\'eodory  construction, it is possible to show (\cite{Miranda03,AmbrosioDiMarino14}) that $|\dD u|$ is the restriction to open sets of a locally finite non-negative Borel measure on $\Omega$, called \emph{total variation measure}, denoted with the same symbol, and satisfying
\begin{equation}\label{eq:inf tot var}
    |\dD u|(E)=\inf_{U}   |\dD u|(U), \quad \text{ for all Borel sets $E\subset \Omega$,}
\end{equation}
where the infimum runs over all the open sets $U\subset \Omega$ containing $E.$ For all $E\subset \Omega$ Borel we say that $E$ is of locally finite perimeter (resp.\  finite perimeter) if $\nchi_E \in BV_{loc}(\Omega)$ (resp.\  $ \nchi_E \in BV(\Omega)$). In any case, we denote the \textit{perimeter measure of $E$} as
\[
\Per(E,\cdot) \coloneqq |\dD \nchi_E|.
\]
For any $E\subset \Omega$ Borel, we recall that
\begin{equation}
    \Per(E,\Omega) = \Per(E,\partial E \cap \Omega)\label{eq:topological supp Per}.
\end{equation}

Recall  the \emph{coarea formula} (see \cite{Miranda03,AmbrosioDiMarino14}): if $u \in BV_{loc}(\Omega)$, then   the sets $\{u>t\}$ are  of locally finite perimeter in $\Omega$ for a.e.\ $t$ and   for any $\varphi \colon \Omega\to [0,\infty]$ Borel it holds
\[
\int \varphi \,\d |\dD u| = \int_\R\int\varphi \, \d \Per(\{u>t\},\cdot)\d t.
\]
It is part of the statement that all the above integrals make sense.

We shall also consider several times Sobolev and BV calculus on weighted intervals, i.e.\ for $\omega$ a boundedly finite non-negative measure in $\R$ and $J\subset \R$ open interval (possibly unbounded). In this case, we will denote by $BV_{loc}(J;\omega)$ the function space $BV_{loc}(J)$ with respect to the metric measure space $(\R,|\cdot |,\omega)$ (the same for other spaces e.g.\ $BV$, $W^{1,p}$, $W^{1,p}_{loc}$). This is to stress that we are considering a different weight instead of the Lebesgue measure. Notice that, differently from before, we do not require here that $\supp(\omega)=\R$. Lastly, for a function $v$ of locally bounded variation in $J$ in the classical sense, we will denote by $TV(v)$ the standard total variation measure of $v$. In this case, the Radon-Nikodym decomposition reads as $TV(v) = |v'|\Leb 1\mres J + TV^s(v)$ with $TV^s(v)$ singular with respect to the Lebesgue measure. In particular, if $v$ is locally absolutely continuous in an open interval $J$, we shall write $v \in {\sf AC}_{loc}(J)$ and in this case $TV(v) = |v'|\d t$ holds as measures on $J$. We will discuss equivalences of metric and classical approaches in Appendix \ref{Appendix:Calculus1D}.
\begin{remark}
    \rm
    Notice that if $E\subset \Omega$ is Borel with $\Per(E,\Omega)<\infty$, there is a slight ambiguity when writing $\Per(E,.)$. Indeed, as $\nchi_E \in L^1_{loc}(\X)$, the value $\Per(E,U)$ is  also defined, possibly $+\infty$, for any $U\subset \X$ open. In general, it is not true that $\Per(E,.)$ extends to a Borel measure on $\X$ as this heavily depends on $\Omega$ (see also \cite{CaputoKoivuRajala24} for a related theory of perimeter extension domains). However, the condition $\Per(E,\Omega)<\infty$ guarantees that this is true as a finite measure on $\Omega$. In this note, we shall typically consider  $\Per(\{u>t\},\cdot)$ for $u \in BV_{loc}(\Omega)$ and this will always be considered as a measure on $\Omega$. We therefore shall not insist on this fact.
    \fr
\end{remark}

For $u \in BV_{loc}(\Omega)$, by the Radon-Nykodym theorem, we have the decomposition
\[
|\dD u| = |\dD^a u| + |\dD^s u|,
\]
where $|\dD^a u|\ll \mm\mres\Omega$ and $|\dD^s u|\perp \mm\mres\Omega$. We denote $|Du|_1$ the density of the absolutely continuous part, i.e.\ $|\dD^a u| = |Du|_1 \mm\mres{\Omega}$.
 Even though in some parts of this note, a finer decomposition would be in place under a doubling and Poincar\'e assumption (see \cite{Ambrosio2001,Ambrosio2002,AmbrosioMirandaPallara04,KinnunenKorteShanmugalingamTuominen12}), we mainly want to work in more general metric measure spaces and thus we need to consider the above more abstract result.

We conclude this part by discussing the dependence of the non-smooth calculus on the integrable exponent $p$, especially in the limit case $p=1$. On an arbitrary metric measure space $\Xdm$ and for $\varnothing \neq \Omega\subset\X$ open and $p>1$, it is always true that $u \in W^{1,p}_{loc}(\Omega)\subset BV_{loc}(\Omega)$ and 
\begin{equation}
   |\dD u|\ll \mm\mres\Omega\qquad\text{and}\qquad |Du|_1 \le |Du|_p, \quad \mm \text{-a.e.\ on }\Omega. \label{eq:dependence gradient}
\end{equation}
By locality of the total variation on open sets and using the definition of $W^{1,p}_{loc}(\Omega)$, it suffices to check the above for $u \in W^{1,p}(\X)$ with $u,|D u|_p \in L^1(\X)$. To see this, consider an optimal sequence $u_n \subset \Lip(\X)$ for $\rmCh_p(u)$ with the property that $u_n \to u$ and $\lip (u_n) \to |D u|_{p}$ in $L^p(\X)$ as $n\uparrow\infty$. For any $U\subset \X$ open and bounded, the lower semicontinuity of $|\dD u|$ on open sets with respect to the $L^1_{loc}$-convergence guarantees that 
\[
|\dD u|(U) \le \liminf_{n\uparrow\infty} |\dD u_n|(U_n) \le \liminf_{n\uparrow\infty} \int_U \lip \, u_n\,\d \mm = \int_U |Du|_p\,\d \mm,
\]
having also used \cite[Remark 5.1]{AmbrosioDiMarino14}. Since we assume here $u,|Du|_p \in L^1(\X)$ and since $U$ is arbitrary, the above gives at the same time that $u \in BV(\X)$ and $|Du|_1 \le |Du|_p$ $\mm$-a.e.\ as desired.

In the general situation, we should not expect equality in \eqref{eq:dependence gradient}. Even worse, there might exist $u \in BV(\X) \cap L^{p}(\X)$ with $|\dD u|\ll \mm$ and $\d |\dD u|/\d \mm \in L^{p}(\X)$ but $u \notin W^{1,p}(\X)$ (\cite[Example 7.4]{AmbrosioDiMarino14}) and a similar phenomenon might occur also considering only Sobolev functions for different exponent $p$, see\cite[Remark 5.2]{KinnunenKorteShanmugalingamTuominen12} and \cite{DiMarinoSpeight13}. However, in several parts of this note, we shall enforce regularity of some sort to the underlying metric measure space preventing some of these situations from happening, typically PI-paces see \cite{Cheeger00,Ambrosio-Pinamonti-Speight15} or curvature dimension conditions \cite{GigliHan14,GigliNobili21}. In this case, we shall automatically write $|Du|$ in place of $|Du|_p$ without notice except when $p=1$.
\section{Decreasing rearrangement in metric spaces}\label{sec:monotone rearrangements}
In this part, we study the decreasing rearrangement into a weighted real line for real valued functions defined in metric spaces and investigate their fine and regularizing properties inspired by  \cite{CianchiFusco02}. The goal is also to single out key assumptions making it possible to develop a general theory of P\'olya-Szeg\H{o} inequalities in a metric setting.
\subsection{Definitions and notation}\label{sec:Def and Not}
Let  $\Xdm$ be a metric measure space and $
\varnothing \neq \Omega\subset \X$ be open (possibly unbounded) and $u:\Omega\to \rr$ be a Borel function such that $\mm( \{u>t\})<\infty$ for any $t >\essinf u$. We define  $\mu:(\essinf u,+\infty)\to[0,\infty)$, the distribution function of $u$ as  $\mu(t)\coloneqq \mm(\{u> t\})$. For $u$ and $\mu$ as above, we consider the generalized inverse $u^{\#}:[0,\infty]\to [\essinf u,\esssup u]$ of $\mu$ defined by
\begin{equation*}
	u^{\#}(s)\coloneqq 
	\begin{cases*}
		{\rm ess}\sup u & \text{if $s=0$},\\
		\inf\left\lbrace t > \essinf u \ :\ \mu(t)<s \right\rbrace &\text{if $s>0$},\\
  \essinf u & \text{if $s=\infty$}.
	\end{cases*}
\end{equation*}
Note that $u^{\#}$ is non-increasing, left-continuous. Moreover, it holds
$$u^{\#}(s)={\rm ess}\inf u, \qquad \text{for all $s \ge  \mm(\Omega)$,}
$$
meaning that $\lim_{s\to +\infty} u^{\#}(s)={\rm ess}\inf u$ in the case $\mm(\Omega)=\infty.$

To define the decreasing rearrangement of $u$ on the real line, we  consider a non-zero positive measure $\omega \ll \Leb 1$  satisfying the basic assumptions:
\begin{align}
    &\mm(\Omega)\le \omega(\R),   \label{eq:mass compatibility}  \\
&{\omega((-\infty,x])<\infty,\quad\forall x \in I,\qquad \lim_{x\to -\infty}\omega((-\infty,x])=0}, \label{eq:zero a sx}
\end{align}
where in \eqref{eq:mass compatibility} we use the convention that $\infty\le\infty$ is true. Note that if $\mm(\Omega)<\infty$ then \eqref{eq:mass compatibility} always holds up to multiplying $\omega$ by a constant. 
We also define the usual \textit{cumulative distribution function} $F_\omega: \R \to [0,\infty] $ as 
\[
F_\omega(x) \coloneqq \omega((-\infty,x)),\qquad \forall \, x\in\R.
\]
We also set 
\begin{align*}
    &l^*\coloneqq \inf \{ l \in\R \ : \ F_\omega(l)>0  \} \in \R \cup \{-\infty\},\\
    &r^* \coloneqq \sup \{r \in \R \ : \ F_\omega(r)<\mm(\Omega)\} \in \rr \cup \{+\infty\}.
\end{align*}
Notice that $r^*$ is the smallest such that $\omega((-\infty,r^*))=\mm(\Omega).$ We can then define 
\[
\Omega^*\coloneqq (l^*,r^*),
\]
note that $\Omega^*$ depends only on $\Omega$ and not on $u.$
Given $u$ as above we then define the \emph{decreasing rearrangement} $u^*:\Omega^* \to [\essinf u,\esssup u]$ of $u$ with respect to  $\omega$ as
\[
\Omega^* \ni x\mapsto u^*(x) \coloneqq u^\#(F_\omega(x)).
\]
Note that $u^*$ is finitely valued, non-increasing and left-continuous as it is $u^\#$. In particular, $u^*$ is of locally bounded variation in the open interval $\Omega^*$ in the classical sense.
By construction, $u^*$ is independent of the chosen representative of $u$ up to $\mm\mres\Omega$-a.e.\ equality. Therefore, we can also consider rearranging $u \in L^1_{loc}(\Omega)$   as $u^*$ will be well defined.  We collect in the following lemma some elementary properties of $u^*$ and,  since they are usually presented only for non-negative functions with bounded support, we include a short proof.
\begin{lemma}\label{lem:basic prop u*}
Let $\Xdm$ be a metric measure space, let $\varnothing \neq \Omega\subset\X$ be open, and let $u \colon \Omega \to \R$ be Borel with $\mm(\{u>t\})<\infty$ for $t>\essinf u$ and $\omega \ll\Leb 1$ be satisfying \eqref{eq:mass compatibility},\eqref{eq:zero a sx}. It holds:
\begin{itemize}
    \item[{\rm i)}] $u$ and $u^*$ are equimeasurable, meaning that $\mm( \{u>t\}) = \omega( \{u^*>t\})$ for all $t \in \R$, in particular $\essinf u=\essinf u^*$ and $\esssup u=\esssup u^*;$
    \item[{\rm ii)}] $\mm(\{u=t\})=\omega( \{u^*=t\})$ for all $t>\essinf u$;
    \item[{\rm iii)}] $\mm( \{u<t\})=\omega(\{u^*<t\})$ for all $t\in \R$;
    \item[{\rm iv)}]for all continuous functions $G:\R \to \R$ it holds
    \begin{equation}
    \int_\Omega G(u) \,\d \mm= \int_{\Omega^*} G(u^*)\, \d \omega,
    \label{eq:composition rearr G}
    \end{equation}
    meaning that  one integral makes sense if and only if the other does, in which case the above holds;
    \item[{\rm v)}] $\| u\|_{L^p(\Omega)} = \| u^*\|_{L^p(\Omega^*;\omega) }$, for all $p \in [1,\infty]$.
    \end{itemize}
\end{lemma}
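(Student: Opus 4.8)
\textbf{Proof plan for Lemma \ref{lem:basic prop u*}.} The strategy is to reduce everything to the one-dimensional reformulation of the decreasing rearrangement via the cumulative distribution function $F_\omega$ and the generalized inverse $u^\#$. The starting point is the layer-cake principle: for $\mm$-a.e.\ relevant $t$ one has $\{u>t\}$ has finite measure, and $\mu(t)=\mm(\{u>t\})$ is non-increasing and right-continuous in $t$; on the other side $F_\omega$ is non-decreasing and left-continuous (since $\omega\ll\Leb1$ it is in fact continuous, though we only need left-continuity). The key algebraic identity to establish first is
\begin{equation*}
\{x\in\Omega^* \colon u^*(x)>t\} = \{x \in \Omega^* \colon F_\omega(x) < \mu(t)\} = \big(l^*, \, x_t\big)
\end{equation*}
for $t>\essinf u$, where $x_t = \sup\{x \colon F_\omega(x)<\mu(t)\}$, with the convention that the interval is empty if $\mu(t)=0$. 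This follows from the definition $u^*(x)=u^\#(F_\omega(x))$ together with the elementary equivalence $u^\#(s)>t \iff s<\mu(t)$ (valid for $t>\essinf u$), which is the standard duality between a non-increasing function and its left-continuous generalized inverse. Once this identity is in hand, $\omega(\{u^*>t\}) = \omega((l^*,x_t)) = F_\omega(x_t)$, and one checks $F_\omega(x_t)=\mu(t)$ using left-continuity of $F_\omega$ at $x_t$ and the fact that $F_\omega$ strictly exceeds $\mu(t)$ just to the right of $x_t$ (or $x_t=r^*$). This proves i) for $t>\essinf u$; for $t\le\essinf u$ both sides equal $\mm(\Omega)=\omega(\Omega^*)$ by construction of $\Omega^*$, and for $t\ge\esssup u$ both sides are zero. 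The equalities $\essinf u=\essinf u^*$ and $\esssup u=\esssup u^*$ are then immediate from i).

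For ii), write $\mm(\{u=t\}) = \mm(\{u>t'\})$ limit-type difference: $\mm(\{u\ge t\}) - \mm(\{u>t\})$, and note $\mm(\{u\ge t\}) = \lim_{s\uparrow t}\mm(\{u>s\}) = \lim_{s\uparrow t}\omega(\{u^*>s\}) = \omega(\{u^*\ge t\})$ by monotone convergence applied to the nested superlevel sets, using i) for each $s<t$ (which is legitimate since $t>\essinf u$ forces $s>\essinf u$ for $s$ close enough to $t$, or one handles the boundary case directly). Subtracting $\omega(\{u^*>t\})=\mm(\{u>t\})$ gives ii). Item iii) is handled analogously, or more directly: $\{u<t\} = \Omega \setminus \{u\ge t\}$ so $\mm(\{u<t\}) = \mm(\Omega) - \mm(\{u\ge t\}) = \omega(\Omega^*) - \omega(\{u^*\ge t\}) = \omega(\{u^*<t\})$, valid whenever $\mm(\Omega)<\infty$; if $\mm(\Omega)=\infty$ one argues instead by $\{u<t\}=\bigcup_{s<t}\{u\le s\}$ and a monotone limit, comparing with $\{u\le s\}$ via ii) and i).

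For iv), first prove \eqref{eq:composition rearr G} for $G=\nchi_{(t,\infty)}$ — this is exactly i) — and for $G=\nchi_{(-\infty,t)}$ — this is iii). By linearity the identity extends to simple functions of the form $\sum c_i \nchi_{A_i}$ with $A_i$ half-lines, hence to all non-negative Borel $G$ that are monotone, then to finite sums/differences of such, and by a monotone class / approximation argument to all continuous $G$: given continuous $G$, write $G(u) = G_+(u)-G_-(u)$ and approximate $G_\pm$ from below by increasing sequences of finite non-negative combinations of indicators of super/sublevel sets (the usual "layer cake" $G_+(r) = \int_0^\infty \nchi_{\{G_+>\lambda\}}(r)\,\d\lambda$), invoking monotone convergence on both sides; the caveat "one integral makes sense iff the other does" is automatic since at every stage the two sides are literally equal. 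Finally v) is the special case $G(r)=|r|^p$ of iv) for $p\in[1,\infty)$ (with the understanding that $\|u\|_{L^p}^p = \int|u|^p$ may be $+\infty$), while $p=\infty$ follows from $\esssup|u|=\esssup|u^*|$, which is a consequence of i) applied to $|u|$ (or directly from $\essinf u=\essinf u^*$, $\esssup u=\esssup u^*$).

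\textbf{Main obstacle.} The genuinely delicate point is the duality identity $u^\#(s)>t \iff s<\mu(t)$ and, relatedly, the precise behavior at the two "endpoints" of the rearrangement: the value $u^\#(0)=\esssup u$ (possibly $+\infty$), the value at $s=\infty$, and the matching of $\Omega^* = (l^*,r^*)$ with the support of the distribution of $u^*$. One must be careful that $\mu$ is only right-continuous (so $\inf\{t>\essinf u \colon \mu(t)<s\}$ genuinely computes a left-continuous inverse) and that $F_\omega$ is left-continuous, so that the composition $u^\# \circ F_\omega$ has the right one-sided continuity and the superlevel sets $\{u^*>t\}$ are open intervals of the form $(l^*,x_t)$ rather than half-open ones — this is what makes $\omega(\{u^*>t\})=F_\omega(x_t)$ clean. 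The case $\mm(\Omega)=\infty$ (where $r^*=+\infty$ is allowed and $u^*$ tends to $\essinf u$ at $+\infty$) requires threading monotone-convergence arguments through, but presents no conceptual difficulty once the finite-measure case is settled. Everything else is bookkeeping with monotone functions and the monotone convergence theorem.
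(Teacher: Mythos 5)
Your proposal is correct in substance and follows essentially the same route as the paper's proof: i)--iii) are established by direct bookkeeping with distribution functions (ii) as a left limit of i), iii) by complementation), and iv) via the layer-cake principle reduced to equimeasurability on level sets, with v) as the special case $G(r)=|r|^p$. A few minor slips are worth fixing. The duality $u^\#(s)>t \iff s<\mu(t)$ you invoke in i) is not exact: the forward implication can fail when $s=\mu(t)$ and $\mu$ is flat immediately to the right of $t$; however, since $\omega\ll\Leb^1$ the cumulative distribution $F_\omega$ is continuous, so $\{x \colon F_\omega(x)=\mu(t)\}$ is $\omega$-null and the computation of $\omega(\{u^*>t\})$ goes through unchanged. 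The separate monotone-limit argument you sketch for iii) when $\mm(\Omega)=\infty$ is unnecessary: as the paper notes, the identity $\mm(\{u<t\})=\mm(\Omega)-\mm(\{u\ge t\})=\omega(\Omega^*)-\omega(\{u^*\ge t\})=\omega(\{u^*<t\})$ already makes sense in this case because $\mm(\{u\ge t\})<\infty$ for $t>\essinf u$. Finally, in iv) the intermediate chain through ``finite sums/differences of monotone functions'' cannot reach all continuous $G$ (that class is $BV$, not $C(\R)$); your eventual layer-cake argument is the right mechanism, but it needs the explicit observation — which the paper makes — that for $G$ continuous the open set $\{G_+>\lambda\}$ is a countable disjoint union of open intervals, so that equimeasurability on intervals (obtained from i), ii), iii)) plus countable additivity closes the argument.
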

\begin{proof}
    Item i) can be checked using the definitions as in the case $u\ge 0$ (see, e.g., \cite{ciao}).  Item ii) follows from i) observing that
    \[
    \mm(\{u\ge t\})=\lim_{s \to t^-}\mm(\{u>s\})=\lim_{s \to t^-}\omega(u^*>s)= \omega(\{u^*\ge t\}), \quad \forall \, t>\essinf u.
    \]
   Item iii) for $t\le \essinf u$ turns into $0=0$, so we can assume  $t>\essinf u$. Then
   \begin{align*}
       \mm( \{u<t\})=\mm(\Omega)-\mm( \{u\ge t)\overset{{\rm i),ii)}}{=}\omega(\Omega^*)-\omega( \{u\ge t\})=\omega(\{u^*<t\}),
   \end{align*}
   note that the above  makes sense even if $\mm(\Omega)=\infty$, since by assumption $\mm( \{u\ge t)<\infty$ for all $t>\essinf u$.
  To show iv) we can assume that $G\ge 0$, otherwise we can consider first $G^+$ and then $G^-.$ By the layer cake formula it is sufficient to show that $\mm(\{G(u)>t\})=\omega(\{G(u^*)>t\}\cap \Omega^*)$ for all $t\ge 0$. Since $G$ is  continuous, the set $G^{-1}((t,\infty))$ is countable union of disjoint open intervals $(a_i,b_i)$. However by i) and iii) we have $\mm(\{a_i<u<b_i\})=\omega(\{a_i<u^*<b_i\})$ for all $i$. Hence we conclude by countable additivity. Item v) follows from iv) taking $G(t)=(t^+)^p$, $G(t)=(t^-)^p.$
\end{proof}
\subsection{Key assumptions}\label{sec:key}
In this section, we single out key assumptions for our rearrangement theory to work and discuss their consequences. Throughout this part, $\Xdm$ will be a metric measure space and $\varnothing \neq \Omega\subset \X$ is an open set and $u \colon \Omega\to\R$ is a Borel function satisfying $\mm(\{u>t\})<\infty$ for $t>\essinf u$.

The first is the requirement of some extra regularity on the measure $\omega$ used in the rearrangement.
\begin{equation}\label{eq:ass1}\tag{{\color{red}{\rm I}}}
      \parbox{12cm}{\textit{Assumption on the weighted interval} $(I,\omega)$: $I\subset \R$ is a possibly unbounded open interval and $\omega=g \Leb 1\mres I$ where $g: I\to (0,\infty)$ is continuous. 
      Moreover, if $(I,\omega)$ is used as a target for the rearrangement  $u^*$ of some $u:\Omega\to [0,\infty),$ we assume the compatibility conditions \eqref{eq:zero a sx} and \eqref{eq:mass compatibility} to be satisfied.
      }
\end{equation}
If  $(I,\omega)$ satisfies Assumption \eqref{eq:ass1}, then the cumulative distribution function $F_g\coloneqq F_\omega$ is strictly increasing in $I$ and admits an inverse $F_g^{-1}: (0,\omega(\R))\to I$. We also define the \textit{isoperimetric perimeter profile} function $\mathcal I^\flat_g(v): [0,\omega(\R)]\to [0,\infty)$ as
\begin{equation}\label{eq:profile}
    \mathcal I^{\flat}_g(v)\coloneqq g(F_g^{-1}(v)), \text{ if $v>0$,} \quad \mathcal I_g(0)\coloneqq 0.
\end{equation}
For each $t\in (\essinf u, \esssup u)$, we also set
\begin{equation}\label{eq:rt}
    r_t\coloneqq F_g^{-1}(\mu(t)),
\end{equation}
which is well defined because $0<\mu(t) <\mm(\Omega)\le \omega(\R)$. Note that  $t \mapsto r_t \in I$ is monotone non-increasing, hence Borel and can be naturally extended to the endpoints $\{\essinf u, \esssup u\}$ (when they are finite) setting $r_{t_0}\coloneqq \lim_{r\to t_0^+} r_t$, $r_{t_1}\coloneqq \lim_{r\to t_1^-} r_t$ for $t_0\coloneqq \essinf u$ and $t_1\coloneqq \esssup u.$   It might happen that $r_{t_0}=+\infty$ and/or $r_{t_1}=-\infty.$ In fact $r_{t_1}=\inf I.$
In particular, because $\mu(t)=\omega(\{u^*>t\})$, we must have 
\begin{equation}\label{eq:boundary superlevel}
    \partial \{u^*>t\}=\{r_t\}, \quad \text{for every $t \in (\essinf u, \esssup u)$.}
\end{equation}
Recall that  $u^*$ is classically of locally bounded variation on $\Omega^*$. Hence, from Lemma \ref{lem:LEMMONE} we know that it belongs as well to $BV_{loc}(\Omega^*;\omega)$. In particular,  $\Per(\{ u^*>t\},\cdot)$ (with respect to the metric measure space $(\R,|\cdot |,\omega)$) is a well-defined locally finite measure on $\Omega^*$. Therefore, by combining \eqref{eq:perimeter of halfline} with \eqref{eq:profile},\eqref{eq:rt},  we have 
 \begin{equation}\label{eq:Per u* g rt}
     \Per( \{u^*>t\},\cdot)=g(r_t)\delta_{r_t}= \mathcal I^{\flat}_g(\mu(t))\delta_{r_t},\quad  \text{for every $t \in (\essinf u, \esssup u)$,}
 \end{equation}
We shall always write $\Per(\{ u^*>t\})\coloneqq\Per(\{ u^*>t\},\Omega^*)$ for brevity.

A key assumption to deduce a P\'olya-Szeg\H{o} inequality that we will often make is the validity of 
\begin{equation}\label{eq:isop polya pre}
   \Per(\{u>t\},\Omega)\ge {\sf C}\cdot  \Ig(\mu(t)),
\end{equation}
for a suitable set of values of $t$ and ${\sf C}>0$. The above can also be stated for an arbitrary Borel set $E\subset\Omega$ by requiring
\begin{equation}\label{eq:general isop pre}
    \Per(E,\Omega)\ge {\sf C} \cdot \Ig(\mm(E)).
\end{equation}

We now discuss a regularity property of the function $u$ we are going to rearrange (recall that $\bar u$ is the precise representative of $u$ defined in Section \ref{sec:calculus}).
\begin{equation}\label{eq:ass2}\tag{{\color{red}{\rm II}}}
 \parbox{12cm}{\textit{Assumption on} $u \in BV_{loc}(\Omega)$: it holds $\mm(\Omega\setminus C_u)=0$ and for all $r\in \R$ 
 \[
     \Per(\{u>t\}, \{ \bar u>r\}\cap C_u)=0, \quad \text{for a.e.\ $t\in (-\infty,r)$,}
 \]
 and 
 \[
     \Per(\{u>t\}, \{ \bar u>r\}\cap C_u)= \Per(\{u>t\},C_u), \quad  \text{for a.e.\ $t\in (r,\infty)$}.
 \]
}
\end{equation}
Let us list a few important and relevant cases when Assumption \eqref{eq:ass2} is true.
\begin{itemize}
    \item[{\rm i)}] \textit{$u \in C(\Omega)\cap BV_{loc}(\Omega)$}:  we clearly have $\Omega = C_u$ and $u=\bar u$. Also, we have that $\{u>t\}$ has locally finite perimeter for a.e.\ $t$ by coarea. On any such $t$ and for every $r>t$, it holds
 \[
 \Per(\{u>t\}, \{\bar u>r\}\cap C_u) = \Per(\{u>t\}, \{ u>r\}\cap \partial \{u>t\}) =0,
 \]
 where in the first identity we used that $C_u = \Omega$ and that the perimeter measure is concentrated on the topological boundary (recall \eqref{eq:topological supp Per}). The second part of the assumption is also clear by continuity.
\item[{\rm ii)}] $u \in BV_{loc}(\Omega)$ \textit{on PI-space}: the fact that  $\mm(\Omega\setminus C_u)=0$ is guaranteed by the local doubling property. Moreover by \cite{Ambrosio2001,Ambrosio2002}, we know that for a.e.\ $t>r$, the perimeter measure $\Per(\{u>t\},\cdot)$ is concentrated on the essential boundary $\partial^*\{u>t\}\cap \Omega$ (see \cite[Proposition 4.1]{CaputoCavallucci24} for the localized result on open sets). In this case, we have automatically for all $r>t$ that
\[
    \Per(\{u>t\}, \{ \bar u>r\}\cap C_u) =\Per(\{u>t\}, \{ \bar u>r\}\cap C_u\cap \partial^*\{u>t\}) =0,
\]
since at any $x \in  C_u\cap \partial^*\{u>t\}$ we have $\bar u(x) =t$  and thus $x \notin \{ \bar u>r\}$ as $r>t$. The last fact also shows the second property when $r<t$.
\item[{\rm iii)}] $u$ \textit{is monotone in $J\subset I$ open subinterval when $(I,\omega)$ satisfies Assumption \eqref{eq:ass1}}. Indeed  $\omega(J\setminus C_u)=0$ is clear, while by \eqref{eq:Per u* g rt} we know that the perimeter measure $\Per(\{u>t\},\cdot)$ is concentrated on  $\partial^*\{u>t\}\cap J$ and we can argue as in the previous point. Actually, any $u \in BV_{loc}(J;\omega)$ satisfies Assumption \eqref{eq:ass2}, but we shall neither prove nor use this fact.
\end{itemize}
\begin{lemma}[Coarea under Assumption \eqref{eq:ass2}]\label{lem:fine coarea}
     Let $\Xdm$ be a metric measure space, let $\varnothing \neq \Omega\subset\X$ be open and $u \in BV_{loc}(\Omega)$ be as in Assumption \eqref{eq:ass2}. For every $ r<s$ and for every $\varphi \colon \X \to {[0,\infty]}$ Borel with $\phi=0$ in $\X\setminus C_u$ it holds
     \begin{equation}
     \int_{\{r<\bar u< s\}} \varphi  \,\d |\dD u| = \int_r^s\int \varphi \, \d \Per( \{u>t\},\cdot)\d t,
     \label{eq:coarea fine}
     \end{equation}
     where $\bar u $ is the precise representative of $u$. In particular, $|\dD u|(\{\bar u=t\}\cap C_u)=0$ holds for all $t \in \R$.
\end{lemma}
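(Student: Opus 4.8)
The plan is to start from the standard coarea formula stated earlier in the excerpt, namely that for $u\in BV_{loc}(\Omega)$ and any Borel $\psi\colon\Omega\to[0,\infty]$ one has $\int\psi\,\d|\dD u|=\int_\R\int\psi\,\d\Per(\{u>t\},\cdot)\,\d t$. First I would apply this with $\psi=\varphi\,\mathbbm 1_{\{r<\bar u<s\}}$ (splitting $\varphi$ into positive and negative parts so that the sign is harmless), which rewrites the left-hand side of \eqref{eq:coarea fine} as
\[
\int_\R\int \varphi\,\mathbbm 1_{\{r<\bar u<s\}}\,\d\Per(\{u>t\},\cdot)\,\d t .
\]
Since by hypothesis $\varphi$ vanishes off $C_u$, the inner integral only sees the set $C_u$, so I can replace $\{r<\bar u<s\}$ by $\{r<\bar u<s\}\cap C_u$. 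The whole point is now to show that for a.e.\ $t$ the measure $\varphi\,\Per(\{u>t\},\cdot)$ restricted to $\{r<\bar u<s\}\cap C_u$ coincides with $\varphi\,\Per(\{u>t\},\cdot)$ restricted to all of $C_u$ when $t\in(r,s)$, and is zero when $t\notin[r,s]$; integrating that identity in $t$ yields exactly the right-hand side of \eqref{eq:coarea fine}.

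The key step is therefore a decomposition of $C_u$ (up to a $\Per(\{u>t\},\cdot)$-null set, for a.e.\ fixed $t$) into the three pieces $\{\bar u>s\}\cap C_u$, $\{r<\bar u<s\}\cap C_u$ (together with the "boundary slices" $\{\bar u=r\}$, $\{\bar u=s\}$), and $\{\bar u\le r\}\cap C_u$. Assumption \eqref{eq:ass2}, applied with the threshold $r$ replaced by $s$, gives that for a.e.\ $t<s$ we have $\Per(\{u>t\},\{\bar u>s\}\cap C_u)=0$, i.e.\ the slice $\{\bar u>s\}$ carries no perimeter of $\{u>t\}$ once $t<s$. Applying \eqref{eq:ass2} with threshold $r$ gives, for a.e.\ $t>r$, that $\Per(\{u>t\},\{\bar u>r\}\cap C_u)=\Per(\{u>t\},C_u)$, i.e.\ the complementary slice $\{\bar u\le r\}\cap C_u$ is $\Per(\{u>t\},\cdot)$-null for $t>r$; while for a.e.\ $t<r$ the first clause of \eqref{eq:ass2} (with threshold $r$) forces $\Per(\{u>t\},\{\bar u>r\}\cap C_u)=0$, hence also $\Per(\{u>t\},C_u)=0$ for such $t$ (combine the two clauses at threshold $r$, or note $\{\bar u>r\}\cap C_u$ and its complement in $C_u$ are both null — actually the first clause alone with the second at threshold $r$ gives $\Per(\{u>t\},C_u)=0$). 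Putting these together: for a.e.\ $t\le r$ the total perimeter $\Per(\{u>t\},C_u)=0$; for a.e.\ $t\ge s$ again $\Per(\{u>t\},C_u)=\Per(\{u>t\},\{\bar u>s\}\cap C_u)$ which must then also be handled — here one uses that $\{\bar u\le s\}\cap C_u$ is $\Per(\{u>t\},\cdot)$-null for $t<$ ... — more cleanly, I would prove the single-threshold identity
\[
\Per(\{u>t\},\{\bar u>\tau\}\cap C_u)=\Per(\{u>t\},C_u)\cdot\mathbbm 1_{\{t>\tau\}}\qquad\text{for a.e.\ }t,\ \text{every }\tau,
\]
which is exactly \eqref{eq:ass2} rephrased, and then write $\mathbbm 1_{\{r<\bar u<s\}}=\mathbbm 1_{\{\bar u>r\}}-\mathbbm 1_{\{\bar u>s\}}-\mathbbm 1_{\{\bar u=s\}}$ on $C_u$, observing $\mathbbm 1_{\{\bar u=s\}}$ contributes nothing after integrating in $t$ (a single slice, null for a.e.\ $t$ by the same identity applied with $\tau=s$ and a monotone limit $\tau\uparrow s$). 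Integrating $\varphi\,\Per(\{u>t\},\cdot)$ against $\mathbbm 1_{\{\bar u>r\}}-\mathbbm 1_{\{\bar u>s\}}$ in $t$ over $\R$ collapses to the integral over $t\in(r,s)$ of $\int\varphi\,\d\Per(\{u>t\},\cdot)$, which is the claim.

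The final assertion $|\dD u|(\{\bar u=\tau\}\cap C_u)=0$ for all $\tau\in\R$ then follows by taking $\varphi$ an approximation of the indicator of $C_u$ (or directly $\varphi=\mathbbm 1_{C_u}$, which vanishes off $C_u$) and letting $r\uparrow\tau$, $s\downarrow\tau$ in \eqref{eq:coarea fine}: the right-hand side is $\int_r^s\Per(\{u>t\},C_u)\,\d t\to 0$ as $s-r\to0$ by dominated convergence (using $t\mapsto\Per(\{u>t\},C_u)\in L^1_{loc}$ from the ordinary coarea formula applied locally), while the left-hand side converges to $|\dD u|(\{\bar u=\tau\}\cap C_u)$ by monotone/dominated convergence of the sets $\{r<\bar u<s\}\downarrow\{\bar u=\tau\}$. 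I expect the main obstacle to be purely bookkeeping: being careful that all the "for a.e.\ $t$" exceptional sets (one for each of countably many rational thresholds, plus the coarea exceptional set) can be unified into a single null set, and that the boundary slices $\{\bar u=r\}$, $\{\bar u=s\}$ genuinely do not contribute — this is where one invokes \eqref{eq:ass2} once more together with monotonicity in the threshold and the finiteness of $\Per(\{u>t\},\cdot)$ on the relevant subsets. There is no serious analytic difficulty beyond the measure-theoretic slicing; the content is entirely in the two clauses of Assumption \eqref{eq:ass2}.
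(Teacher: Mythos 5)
Your proof is correct and follows essentially the same route as the paper's: apply the standard coarea formula to $\varphi\,\nchi_{\{r<\bar u<s\}}$, then use Assumption \eqref{eq:ass2} to obtain the slicing identity $\Per(\{u>t\},\{r<\bar u< s\}\cap C_u)=\Per(\{u>t\},C_u)\,\nchi_{(r,s)}(t)$ for a.e.\ $t$, including your careful treatment of the boundary slice $\{\bar u=s\}$ via thresholds increasing to $s$. One small aside to flag: your intermediate remark that $\Per(\{u>t\},C_u)=0$ for a.e.\ $t<r$ does not follow from \eqref{eq:ass2} and is false in general (the two clauses at the same threshold govern disjoint ranges of $t$, so they cannot be combined that way), but you correctly abandon that line in favour of the single-threshold identity, which is all that is needed and which the paper also uses, albeit without spelling out the boundary-slice bookkeeping.
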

\begin{proof}
   The standard coarea formula and the fact that $\phi$ vanishes outside of $C_u$ give
    \[
   \int_{\{r<\bar u< s\}} \varphi  \,\d |\dD u| = \int_{-\infty}^{+\infty}\int_{\{r<\bar u< s\}\cap C_u}  \phi \,\d\Per(\{u>t\},\cdot )\d t.
    \]
    Thanks to Assumption \eqref{eq:ass2} we have
    \[
    \Per(\{u>t\},\{r<\bar u< s\}\cap C_u )= \Per(\{u>t\}, C_u ) \nchi_{(r,s)}(t), \quad \text{a.e.\ $t\in\R$.}
    \]
   Combining this with the above, we  reach
    \[
   \int_{\{r<\bar u<s\}} \varphi  \,\d |\dD u| = \int_r^s\int_{C_u}  \phi \,\d\Per(\{u>t\},\cdot )\d t,
    \]
    from which using again that $\phi$  vanishes outside of $C_u$ gives \eqref{eq:coarea fine}.
\end{proof}
Under Assumption \eqref{eq:ass2} we introduce some further notation. 
Recall first, by mutual singularity in the Radon-Nikodym decomposition, that we can find $S \subset \Omega$ Borel such that
\[
|\dD^s u|(\Omega) = |\dD u|(S) \quad \text{and} \quad \mm(S)=0.
\]
Next we fix a finitely valued Borel representative of $|Du|_1 \in L^1(\Omega)$ denoted by $|\underline D u|_1$. Set
\begin{equation}\label{def:Du+}
    D^0_u \coloneqq \{ x \in  C_u \setminus S \colon | \underline D u|_1(x) = 0\}\subset \Omega, \quad D^+_u \coloneqq \{ x \in  C_u \setminus S \colon |\underline Du|_1(x) \in (0,\infty)\}\subset \Omega.
\end{equation}
In particular $D^0_u\cup D^+_u=C_u \setminus S.$
Since $\mm(\Omega\setminus C_u)=0$, by construction it holds
\begin{align}
&\mm( \Omega\setminus (D^0_u \cup D^+_u))=0,\label{eq:sym diff} \\
&|\dD u|\mres{D^+_u} = \nchi_{D^+_u}|Du|_1\mm \mres \Omega\label{eq:AC part}.
\end{align}
Clearly, the sets $D^0_u$ and $D^+_u$ depend on the chosen representative of $|Du|_1$, however this is not a big issue since they will be used only on some technical intermediate steps of the arguments. We observe that, with the above notation, it holds
\begin{equation}\label{eq:togli Du+}
   |\dD^s u|=0\quad \implies\quad \Per(\{u>t\},\Omega\setminus D^+_u)=0, \qquad \text{for a.e.\ $t\in \R$}.
\end{equation}
Indeed, by coarea formula we have
\[
0=\int_{S\cup D_u^0} |D u|_1 \d \mm=|\dD u|(\Omega\setminus D^+_u )  = \int_\R \Per(\{u>t\},\Omega\setminus D^+_u)\,\d t.
\]
We also point out the following locality property. From  Lemma \ref{lem:fine coarea} we notice for all $t \in \R$
\[
    \int_{\{ \bar u = t\}\cap C_u } |Du|_1\,\d \mm \overset{\eqref{eq:AC part}}{=}  \int_{\{ \bar u = t\} \cap C_u} \nchi_{D^+_u}\,\d |\dD u|\le  |\dD u|(\{ \bar u = t\} \cap C_u )  =0,
\]
Hence, again by $\mm(\Omega\setminus C_u)=0$, we have 
\begin{equation}
    |Du|_1=0,\qquad \mm\text{-a.e.\ on } \{ u =t \}. \label{eq:Du1 local on Cu}
\end{equation}
\begin{remark}[The sets $D^+_u,D^0_u$ in 1-dimension]\label{rmk:nobady cares}\rm
In the case $u \in BV_{loc}(J;\omega)$ with $(I,\omega)$ a weighted interval satisfying Assumption \eqref{eq:ass1} and $J\subset I$ open sub-interval, we will \textit{always} do the above construction in the following more explicit way.
    We choose the representative $|\underline D u|_1$   to be $|u'|$ on $J$ extended by zero outside the set $D_u$ of differentiability points of $u$ on $J$. Moreover, we  take
\[
D_u^+\coloneqq D_u \cap \{r \in J \colon |u'|(r)>0\}, \qquad  D_u^0\coloneqq D_u \cap \{r \in J \colon |u'|(r)=0\},
\]
as the sets that were introduced in \eqref{def:Du+}. This is an admissible choice since  $D_u\subset C_u$ and the singular part of $|\dD u|$ is concentrated on $S\coloneqq J\setminus D_u$, indeed so is $TV(u)$ and by Lemma \ref{lem:LEMMONE} we know that $|\dD u|=g TV(u)$ as measures on $J$.  \fr
\end{remark}
\subsection{Properties and regularization effects} 
Here we study fine regularizing effects of decreasing rearrangements.

We begin by splitting the distribution  function   into an absolutely continuous part and an a priori less regular part. We follow \cite[Lemma 3.1]{CianchiFusco02} (see also \cite[Section 4]{Burchard97}). 
\begin{lemma}[Formula for the distribution function]\label{lem:distfunct}
    Let $\Xdm$ be a metric measure space and let $\varnothing \neq \Omega \subset \X$ be open. For every $u \in BV_{loc}(\Omega)$ non-negative satisfying Assumption \eqref{eq:ass2} it holds
    \begin{equation}\label{eq:identity mu}
         \mu(t) = \mm(\{u>t\}\cap D^0_u) + \int_t^\infty \int \frac{\nchi_{D^+_u}}{|\underline D u|_1} \, \d \Per(\{u>r\},\cdot)\d r.
    \end{equation}
    Moreover, if  $\mm(\{u>r\})<\infty$ for some $r\in \R$ we have
    \begin{equation}\label{eq:mu' inequality}
       \mu'(t)  \le  -\int \frac{\nchi_{ D^+_u}}{|\underline D u|_1} \, \d \Per(\{u>t\},\cdot), \quad \text{for a.e.\ $t>r$.}
    \end{equation}
\end{lemma}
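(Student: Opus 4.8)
The plan is to decompose the superlevel sets $\{u>t\}$ according to the partition $\Omega = D^0_u \cup D^+_u \cup S$ (modulo $\mm$-null sets, by \eqref{eq:sym diff}) and to apply the fine coarea formula of Lemma \ref{lem:fine coarea} on the ``good'' part $D^+_u$. Concretely, for $t \ge 0$ I would write
\[
\mu(t) = \mm(\{u>t\}) = \mm(\{u>t\}\cap D^0_u) + \mm(\{u>t\}\cap D^+_u),
\]
using that $\mm(S)=0$ and $\mm(\Omega\setminus C_u)=0$. The first term is kept as is. For the second, since $|\dD u|\mres D^+_u = \nchi_{D^+_u}|Du|_1\,\mm\mres\Omega$ by \eqref{eq:AC part} and $|\underline D u|_1 \in (0,\infty)$ on $D^+_u$, I would express $\nchi_{\{u>t\}}\nchi_{D^+_u}\,\mm\mres\Omega$ as $\nchi_{\{t<\bar u\}}\frac{\nchi_{D^+_u}}{|\underline D u|_1}\,\d|\dD u|$ on $C_u$ (the set $\{u=t\}$ is $\mm$-negligible once intersected with $D^+_u$, cf.\ \eqref{eq:Du1 local on Cu}, so replacing $u>t$ by $\bar u > t$ is harmless here). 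Then I would apply Lemma \ref{lem:fine coarea} with $\varphi = \nchi_{D^+_u}/|\underline D u|_1$ (which vanishes outside $C_u$) and with $r=t$, $s\to\infty$ — more precisely on $\{t<\bar u<s\}$ and let $s\uparrow\infty$ by monotone convergence — to get
\[
\mm(\{u>t\}\cap D^+_u) = \int_t^\infty \int \frac{\nchi_{D^+_u}}{|\underline D u|_1}\,\d\Per(\{u>r\},\cdot)\,\d r,
\]
which is exactly \eqref{eq:identity mu}.

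For the second part, I would use that the first term $\mm(\{u>t\}\cap D^0_u)$ is non-increasing in $t$, hence its distributional derivative is a non-positive measure and in particular its a.e.\ pointwise derivative is $\le 0$. The second term in \eqref{eq:identity mu} is of the form $\int_t^\infty h(r)\,\d r$ with $h(r) = \int \frac{\nchi_{D^+_u}}{|\underline D u|_1}\,\d\Per(\{u>r\},\cdot) \ge 0$; under the assumption $\mm(\{u>r_0\})<\infty$ for some $r_0$ we have $\mu(r_0)<\infty$, so both terms are finite for $t>r_0$ and $h\in L^1_{loc}((r_0,\infty))$, whence $t\mapsto \int_t^\infty h$ is locally absolutely continuous on $(r_0,\infty)$ with derivative $-h(t)$ for a.e.\ $t$. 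Summing the two contributions and using that the derivative of a sum of non-increasing functions is bounded above by the derivative of either summand when the other contributes a non-positive amount, I obtain
\[
\mu'(t) \le -h(t) = -\int \frac{\nchi_{D^+_u}}{|\underline D u|_1}\,\d\Per(\{u>t\},\cdot), \qquad \text{a.e.\ }t>r_0,
\]
which is \eqref{eq:mu' inequality}.

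The main obstacle I anticipate is the bookkeeping around $\mm$-null versus $\omega$-null sets and making the substitution from the volume integral over $\{u>t\}\cap D^+_u$ to a coarea integral against $|\dD u|$ fully rigorous: one must be careful that $\Per(\{u>r\},\cdot)$ lives on $\partial^*\{u>r\}$ where $\bar u = r$, so only the part of this measure sitting in $D^+_u$ (hence in $C_u$) is picked up, and the integrand $\nchi_{D^+_u}/|\underline D u|_1$ is genuinely defined $\Per(\{u>r\},\cdot)$-a.e.\ there. A second delicate point is justifying that the a.e.\ derivative of $t\mapsto \mm(\{u>t\}\cap D^0_u)$ is non-positive and that one may add the two derivatives — this is the standard fact that the pointwise derivative of a sum of monotone functions equals the sum of the pointwise derivatives a.e., but it should be invoked cleanly. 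Everything else is routine monotone-convergence and Fubini-type manipulation.
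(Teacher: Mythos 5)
Your argument matches the paper's proof essentially line for line: decompose $\mu(t)=\mm(\{u>t\}\cap D^0_u)+\mm(\{u>t\}\cap D^+_u)$ via \eqref{eq:sym diff}, rewrite the $D^+_u$-part as an integral of $\nchi_{D^+_u}/|\underline D u|_1$ against $|\dD u|$ using \eqref{eq:AC part}, apply the fine coarea formula of Lemma \ref{lem:fine coarea}, and deduce \eqref{eq:mu' inequality} by differentiating and discarding the non-positive contribution of the monotone term $t\mapsto\mm(\{u>t\}\cap D^0_u)$. The extra bookkeeping you flag (passing to $\bar u$, the monotone-convergence limit $s\uparrow\infty$, local absolute continuity of the second summand under the finiteness hypothesis) is all correct and consistent with what the paper does more tersely.
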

\begin{proof}
For any $F \subset \Omega $ Borel by \eqref{eq:sym diff}  we have that
\[ \mm(F ) = \mm( F \cap D^0_u) + \mm( F \cap  D^+_u).\]
Thus, choosing $F=\{\bar u>t\}$ and since $u=\bar u$ $\mm$-a.e.\ on $\Omega$, we can write
 \[ 
\begin{split}
    \mu(t)  &= \mm(\{ u>t\}\cap D^0_u)  +  \int \frac{\nchi_{ \{\bar u>t\}\cap D^+_u}}{|\underline D u|_1}|D u|_1\,\d \mm\\
    &\overset{\eqref{eq:AC part}}{=}\mm(\{ u>t\}\cap D^0_u) +  \int \frac{\nchi_{ \{\bar u>t\} \cap D^+_u}}{|\underline D u|_1}\, \d |\dD u|  \\  
    &= \mm(\{u>t\}\cap D^0_u) +  \int_t^\infty \int  \frac{\nchi_{ D^+_u}}{|\underline D u|_1}\,\d\Per(\{u>r\},\cdot)\d r,
\end{split}
\]
having used, lastly, the coarea formula \eqref{eq:coarea fine} relying on Assumption \eqref{eq:ass2}. This proves \eqref{eq:identity mu}.     Finally, differentiating \eqref{eq:identity mu} we get \eqref{eq:mu' inequality} as $t\mapsto \mm(\{u>t\}\cap D^0_u)$ is monotone non-increasing.
\end{proof}
We report below \cite[Lemma 2.5]{CianchiFusco02}.
\begin{lemma}\label{lem:monotone functions}
    Let $J\subset \R$ be a (possibly unbounded) open interval and let $v\colon J \to\R $ be monotone. Then:
    \begin{itemize}
        \item[{\rm i)}] $v(F)$ is Borel for all $F\subset J$ Borel;
        \item[{\rm ii)}]  $v$ is  differentiable almost everywhere. Moreover, if $s_0 \in J$ is a point of differentiability with $v'(s_0) \neq 0$, then $v(s) \neq v(s_0)$ for all $s\neq s_0$.
    \end{itemize}
\end{lemma}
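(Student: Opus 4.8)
The plan is to handle the two assertions separately, reducing each to a classical fact. Throughout I would assume, without loss of generality, that $v$ is non-decreasing (replacing $v$ by $-v$ affects neither the measurability of images nor the differentiability statements), so that $v'\ge 0$ wherever it exists; in particular, a point of differentiability $s_0$ with $v'(s_0)\ne 0$ has $v'(s_0)>0$. For \emph{(ii)}, the almost everywhere differentiability is just Lebesgue's classical theorem for monotone functions, so there is nothing to prove there. For the non-vanishing statement I would argue directly: given $v'(s_0)>0$, pick $\delta>0$ with $[s_0-\delta,s_0+\delta]\subset J$ and
\[
\frac{v(s)-v(s_0)}{s-s_0}\ \ge\ \tfrac12 v'(s_0)\ >\ 0 ,\qquad 0<|s-s_0|\le\delta ,
\]
which forces $v(s)>v(s_0)$ on $(s_0,s_0+\delta]$ and $v(s)<v(s_0)$ on $[s_0-\delta,s_0)$; the general case $s\ne s_0$ then follows by monotonicity (e.g.\ $v(s)\ge v(s_0+\delta/2)>v(s_0)$ for $s\ge s_0+\delta$, and symmetrically on the left). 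Hence $v(s)\ne v(s_0)$ for all $s\ne s_0$.

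For \emph{(i)} the subtle point is that a monotone function need not be injective, since it may be constant on subintervals, so my first move is to reduce to the injective case. Let $\{(\alpha_i,\beta_i)\}_i$ be the at most countable family of pairwise disjoint maximal open subintervals of $J$ on which $v$ is constant, say $v\equiv c_i$ there, and put $U:=\bigcup_i(\alpha_i,\beta_i)$; then $U$ is Borel and $v(U)=\{c_i\}_i$ is countable. On $K:=J\setminus U$ the map $v$ is injective up to the countably many pairs $\{\alpha_i,\beta_i\}$ lying in $J$: if $\alpha<\beta$ in $K$ with $v(\alpha)=v(\beta)$, then $v$ is constant on $[\alpha,\beta]$, hence $(\alpha,\beta)\subseteq(\alpha_i,\beta_i)$ for some $i$ and, since $\alpha,\beta\notin U$, necessarily $\alpha=\alpha_i$ and $\beta=\beta_i$. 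Removing from $K$ the right endpoints $\beta_i$ (note $\beta_i\notin U$, so $\beta_i\in K$) gives a Borel set $E\subset J$ on which $v$ is \emph{strictly} increasing. For any Borel $F\subset J$ I then split
\[
v(F)\ =\ v(F\cap U)\ \cup\ v\big(F\cap(K\setminus E)\big)\ \cup\ v(F\cap E) ,
\]
where the first two sets lie in the fixed countable set $\{c_i\}_i\cup\{v(\beta_i)\}_i$ and are therefore Borel; so everything reduces to showing that $v(F\cap E)$ is Borel.

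Finally, $v|_E\colon E\to\R$ is injective and Borel measurable (preimages of half-lines are subintervals of $E$), so by the Lusin--Souslin theorem — an injective Borel map out of a Borel subset of a Polish space carries Borel sets to Borel sets — the set $v(F\cap E)$ is Borel, which completes the proof. I expect this last step to be the only genuine obstacle: the tempting elementary route "restrict to where $v$ is strictly monotone and use continuity of the inverse" fails, because once one passes to the non-interval domain $E$ a strictly monotone function can have a discontinuous inverse. If one wishes to sidestep this descriptive-set-theoretic input, one can instead apply the Arsenin--Kunugui theorem directly to $v$, whose fibres $\{v=t\}$ are intervals and hence $\sigma$-compact; or, if only measurability (not Borel measurability) of $v(F)$ is needed for the applications, one may simply note that $v(F)$ is analytic, hence universally measurable.
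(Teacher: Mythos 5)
Your proof is correct, and it is worth noting up front that the paper itself does not prove this lemma: it is imported verbatim as Lemma~2.5 of Cianchi--Fusco, so there is no ``paper's own proof'' to match against. With that said, your argument stands on its own and follows the natural strategy that any proof of this statement has to take: peel off the set of constancy intervals (whose image is countable) and reduce to a set on which $v$ is injective, then invoke a Borel-image theorem.

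Two small comments. First, a cosmetic imprecision: you remove ``the right endpoints $\beta_i$'' from $K$ and assert $\beta_i\in K$, but $\beta_i$ may equal the right endpoint of the open interval $J$, in which case $\beta_i\notin J$ and there is nothing to remove; the statement should be ``those $\beta_i$ that lie in $J$''. This does not affect the argument. Second, and more substantively, the appeal to Lusin--Souslin (or Arsenin--Kunugui, which you rightly observe applies directly since the fibres $v^{-1}(\{t\})$ are intervals, hence $\sigma$-compact) is a perfectly valid way to finish, but it is heavier machinery than the lemma requires, and your remark that ``the elementary route fails'' slightly overstates the obstacle. One can avoid descriptive set theory altogether: with $w:=v|_E$ strictly increasing on the Borel set $E$, extend its inverse to all of $\R$ by setting $\phi(t):=\sup\{s\in E: w(s)\le t\}$ (with the usual convention for the empty set). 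Then $\phi$ is monotone on $\R$, hence Borel, and for $t\in w(E)$ one has $\phi(t)=w^{-1}(t)$; consequently, for Borel $F\subset E$,
\[
w(F)\ =\ w(E)\cap\phi^{-1}(F),
\]
which is Borel once one checks that $w(E)=v(E)$ is Borel. But $v(J)$ is Borel because $v$ is monotone on an interval---its image is a sub-interval of $\overline{\R}$ with at most countably many open gaps removed and at most countably many points added back at the jump discontinuities---and $v(E)$ differs from $v(J)$ only by the countable set $v(U)\cup v(K\setminus E)$. So the elementary route goes through; what fails is only the naive version that tries to use continuity of the inverse. Your proof is nonetheless correct, and the Lusin--Souslin route has the virtue of being short to state.
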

We now revisit \cite[Lemma 2.4]{CianchiFusco02} according to our weighted model spaces.
\begin{lemma}\label{lem:BVdistribution}
 Let $J \subset \R$ be a (possibly unbounded) open interval and let $ v  \colon J \to \R$ be a monotone non-increasing function. Then, denoted by $D_v^0\subset J$ the set where $v'$ exists and $v'=0$,
   the set $v (D_v^0)$ is Borel and 
    \[ \Leb 1 (v (D_v^0)) =0.\]
    Moreover, if $(I,\omega)$ is a weighted interval as in Assumption \eqref{eq:ass1} and $J\subset I$  {and  $\omega(\{ v > r\})<\infty$ for some $r\in \R $}, then the function $ (r,\infty)\ni t\mapsto  h(t) \coloneqq \omega (D^0_v \cap \{v>t\})$ is non-increasing, right-continuous and satisfies
    \[
    |h'|(t) =0\qquad \text{a.e.\ }t>r.
    \]
\end{lemma}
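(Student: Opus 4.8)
The plan is to reduce everything to the Sard-type fact $\Leb 1(v(D_v^0))=0$ and then to read off the assertions about $h$ via a push-forward of measures. First I would record the measurability: since $v$ is monotone its Dini derivatives are Borel, hence the differentiability set $D_v$ and the subset $D_v^0\subseteq D_v$ where $v'=0$ are Borel, and by Lemma \ref{lem:monotone functions} the image $v(D_v^0)$ is then Borel. Up to replacing $v$ by $-v$ I may assume $v$ non-decreasing, which changes neither $D_v^0$ nor the Lebesgue measure of the image.

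For the null-set claim I would argue by a covering estimate. Fix $\eps>0$ and write $D_v^0=\bigcup_{\ell\in\N}A_\ell$ as the increasing union of
\[
A_\ell:=\big\{x\in D_v^0 : |v(y)-v(x)|\le\eps|y-x|\ \text{whenever }y\in J,\ |y-x|\le 2/\ell\big\},
\]
where the nontrivial inclusion $D_v^0\subseteq\bigcup_\ell A_\ell$ is exactly the definition of $v'=0$ on $D_v^0$. On any compact interval $[a,b]\subseteq J$, partitioning $[a,b]$ into consecutive subintervals $I_1,\dots,I_N$ of length at most $1/\ell$ and choosing a point $x_k\in A_\ell\cap I_k$ whenever this set is nonempty, the defining property of $A_\ell$ forces $v(A_\ell\cap I_k)\subseteq[v(x_k)-\eps|I_k|,\,v(x_k)+\eps|I_k|]$; summing over $k$ bounds the Lebesgue (outer) measure of $v(A_\ell\cap[a,b])$ by $2\eps(b-a)$. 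Letting $\ell\to\infty$ and using that the Lebesgue outer measure is continuous along increasing sequences, then $\eps\downarrow 0$, and finally exhausting $J$ by compact intervals, yields $\Leb 1(v(D_v^0))=0$. The one point worth flagging as the heart of the matter is that a monotone function need not enjoy the Luzin (N) property, so one must be careful to cover \emph{all} of $v(D_v^0)$ and not merely up to a null set; working with the increasing exhaustion $D_v^0=\bigcup_\ell A_\ell$ together with the outer measure is precisely what achieves this and also bypasses any measurability discussion for the $A_\ell$.

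For the weighted conclusions, monotonicity of $h$ is immediate from $\{v>t_2\}\subseteq\{v>t_1\}$ for $t_1<t_2$, and right-continuity follows from continuity from below of $\omega$ along the increasing family $\{v>t\}\uparrow\{v>t_0\}$ as $t\downarrow t_0$; here all the sets involved have finite $\omega$-measure, being contained in $\{v>r\}$. For the derivative I would introduce the finite Borel measure $\nu:=\omega\mres(D_v^0\cap\{v>r\})$ and its push-forward $\sigma:=v_\#\nu$ under the Borel map $v$, so that $h(t)=\nu(\{v>t\})=\sigma\big((t,\infty)\big)$ for every $t>r$; thus $h$ is, up to a reflection, the tail distribution of $\sigma$. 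Since $\sigma$ is concentrated on $v(D_v^0\cap\{v>r\})\subseteq v(D_v^0)$, which is $\Leb 1$-null by the first part, $\sigma\perp\Leb 1$, so its Radon--Nikodym density with respect to $\Leb 1$ vanishes; by the Lebesgue differentiation theorem for Radon measures the symmetric derivative of $t\mapsto\sigma((-\infty,t])$ is $0$ at a.e.\ $t$, hence the classical (a.e.-defined) derivative of $h$ is $0$ a.e., i.e.\ $|h'|(t)=0$ for a.e.\ $t>r$. I expect the null-set step to be the only real work; once it is in place the weighted statement is pure bookkeeping with push-forwards.
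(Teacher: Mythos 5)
Your proof is correct. The interesting divergence from the paper is in the first claim, $\Leb 1(v(D_v^0))=0$: you prove it from scratch with a Sard-type covering argument (exhausting $D_v^0$ by the sets $A_\ell$, tiling a compact interval by pieces of length $\le 1/\ell$, and bounding the oscillation of $v$ on each piece by $\eps|I_k|$). The paper instead invokes the de la Vallée Poussin decomposition $TV(v)\mres D_v=|v'|\Leb 1\mres D_v$ together with outer regularity of $TV(v)$: for any open $U\supseteq D_v^0$ written as a disjoint union of intervals, $\Leb 1(v(U))\le TV(v)(U)$, and shrinking $U$ to $D_v^0$ gives $\Leb 1(v(D_v^0))\le\int_{D_v^0}|v'|\,\d\Leb 1=0$. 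Your route is more elementary and self-contained (no appeal to the classical fact that the singular part of $TV(v)$ is carried off the finite-differentiability set), at the price of being slightly longer and of requiring the continuity of Lebesgue outer measure along arbitrary increasing sequences — true since Lebesgue outer measure is Borel regular, but worth stating. For the second claim the two proofs are essentially the same underlying computation: you phrase it directly as $TV(h)$ being (up to reflection) the pushforward $v_\#\big(\omega\mres(D_v^0\cap\{v>r\})\big)$, hence concentrated on the null set $v(D_v^0)$, while the paper derives the identity $TV(h)(F)=\omega(D_v^0\cap v^{-1}(F))$ step-by-step from intervals via outer regularity before drawing the same conclusion; your framing is cleaner and also makes transparent that the $\omega=g\Leb 1$, $g$ continuous, structure is not really used beyond $\omega(\{v>r\})<\infty$.
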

\begin{proof}
 Being $v$ monotone on $J$, it is clearly of locally bounded variation in the classical sense. Recall also from i) in Lemma \ref{lem:monotone functions} that $v(F)$ is Borel if $F\subset \R$ is Borel, since $v$ is monotone. Let $(a,b)\subset \R$ be any open interval and notice that, denoting by ${\rm Var}_v(\cdot)$ the pointwise variation of $v$ by means of supremum over partitions, we have
    \[ \Leb 1(v((a,b))) \le  {v(a^+)-v(b^-)}  = {\rm Var}_v((a,b)).\]
    Thus, since any open set $U \supseteq
 D^0_v$ is a countable union of disjoint open intervals, we can write $\Leb 1 (v(D^0_v))  \le TV(v)(U)$. By outer regularity of $TV(v)$, the first claim follows since 
    \[\Leb 1( v(D^0_v) ) \le TV(v)(D^0_v) =\int_{D^0_v} |v'| \,\d\Leb 1  =0, \]
    where we used that $TV(v)\mres{D_v}=|v'|\mathcal L^1\mres{D_v},$ with $D_v$ set of differentiability points in $J$.

    Let us now consider that $(I,\omega)$ as in Assumption \eqref{eq:ass1}, hence $\omega=g\Leb 1$ for some continuous function $g:I\to (0,\infty)$. In particular, $\omega$ and $\Leb 1$ are mutually absolutely continuous in $I$.
    Assume thus $\omega(\{v>r\})<\infty$ for some $r \in \R$. The function  $h(t) \coloneqq \omega (D^0_v \cap \{v>t\})$ is, by definition, finite, monotone non-increasing and right continuous in $(r,\infty).$ Hence it is a.e.\ differentiable and it is of locally bounded variation in $(r,\infty)$. Also by right continuity
    \[ \omega( D^0_v \cap \{s < v \le t\})=h(s)-h(t) = TV(h)((s,t]),\qquad \forall \, s<t.\]
    In particular, the above implies that for all intervals $(t_1,t_2)\subset (r,\infty)$
    \[ \omega( D^0_v \cap \{t_1 < v <t_2\}) =\lim_{t\to t_2^-} \omega( D^0_v \cap \{t_1 < v \le t\})=\lim_{t\to t_2^-}  TV(h)((t_1,t])=TV(h)((t_1,t_2)).\]
    From the arbitrariness of the interval $(t_1,t_2)$, it follows by outer regularity that
    \[\omega( D^0_v \cap v^{-1}(F)) = TV(h)(F),\qquad \forall \, F \subset  (r,\infty) \text{ Borel}. \]
    By what we proved in the first part we have that $F \coloneqq
 v(D^0_v)$ is Borel with $\Leb 1(F)=0$, hence
    \[  TV(h)((r,\infty)\setminus F) = \omega( D^0_v \cap ({ I} \setminus v^{-1}(v(D^0_v)))) =0, \]
    which means that $TV(h)$ is concentrated on $v(D^0_v)$. Since $v(D^0_v)$ is $\Leb 1$-negligible, we  conclude.
\end{proof}
Since a function and its rearrangement are equidistributed, we can compute the derivative $\mu'(t)$ (which  exists a.e.\ as $\mu(t)$ is monotone) in terms of explicit quantities related to the rearrangement $u^*$. This will be possible for a general $u$ due to the regularizing effects of the decreasing rearrangement.

We follow and adapt here \cite[Lemma 3.2]{CianchiFusco02} (compare with the version \cite[Lemma 3.10]{MondinoSemola20} which required approximation procedures \cite[Lemma 3.6]{MondinoSemola20}).

\begin{lemma}[Derivative of distribution function] \label{lem:derivative mu}
Let $\Xdm$ be a metric measure space, let $\varnothing \neq \Omega\subset\X$ be open and let $u \colon \Omega\to \R$ be Borel with $\mm(\{u>t\})<\infty$ for all $t>\essinf u$. Let  $(I,\omega)$ be a weighted interval as in Assumption \eqref{eq:ass1} and $u^*$ be the rearrangement of $u$ with respect to  $\omega$. Then for every $t \in u^*(D^+_{u^*})$ the preimage $(u^*)^{-1}(\{t\})=\{r_t\}$ is a singleton (with $r_t$ as in \eqref{eq:rt}) and it holds
\[
\begin{aligned}
-\mu'(t) &= \frac{\Per(\{u^*>t\})}{|(u^*)'|((u^*)^{-1}(\{t\}))},&\quad  & \text{a.e.\ }t \in u^*(D^+_{u^*}),\\
\mu'(t) &= 0,&\quad  & \text{a.e.\ }t \in (\essinf u,\esssup\, u) \setminus u^*(D^+_{u^*}).
\end{aligned}
\]
\end{lemma}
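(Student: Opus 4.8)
The strategy is to combine the formula for the distribution function (Lemma~\ref{lem:distfunct}) applied to $u^*$ with the structural properties of monotone functions (Lemmas~\ref{lem:monotone functions} and~\ref{lem:BVdistribution}). First I would note that since $u$ and $u^*$ are equidistributed (Lemma~\ref{lem:basic prop u*}), $\mu$ is the same for both, and $u^*$ is monotone non-increasing on $\Omega^*$, so it falls in case iii) of the examples after Assumption~\eqref{eq:ass2}; thus Lemma~\ref{lem:distfunct} applies to $u^*$ (which is non-negative after a harmless vertical shift, or one works on the relevant superlevel range directly). Applying \eqref{eq:identity mu} to $v=u^*$ on the interval $J=\Omega^*$ inside $(I,\omega)$ gives
\[
\mu(t) = \omega(\{u^*>t\}\cap D^0_{u^*}) + \int_t^\infty \int \frac{\nchi_{D^+_{u^*}}}{|(u^*)'|}\,\d\Per(\{u^*>r\},\cdot)\,\d r,
\]
and here the explicit one-dimensional geometry is available: by \eqref{eq:boundary superlevel}--\eqref{eq:Per u* g rt} the superlevel set $\{u^*>r\}$ has perimeter measure $g(r_r)\delta_{r_r}$, so the inner integral collapses to $\nchi_{D^+_{u^*}}(r_r)\,g(r_r)/|(u^*)'|(r_r)$.

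Next I would differentiate this identity at a.e.\ $t$. The first term $h(t):=\omega(\{u^*>t\}\cap D^0_{u^*})$ is handled by the last part of Lemma~\ref{lem:BVdistribution}: it is monotone, right-continuous, and has $|h'|(t)=0$ for a.e.\ $t$. Hence $\mu'(t)$ equals (a.e.) minus the derivative of the integral term, which by the Lebesgue differentiation theorem is $-\nchi_{D^+_{u^*}}(r_t)\,g(r_t)/|(u^*)'|(r_t)$ for a.e.\ $t$. Recognizing $g(r_t)\delta_{r_t}=\Per(\{u^*>t\})$ and $\{r_t\}=(u^*)^{-1}(\{t\})$ whenever $t\in u^*(D^+_{u^*})$ — this last point coming from part ii) of Lemma~\ref{lem:monotone functions}, since at a differentiability point $r_t$ with $(u^*)'(r_t)\ne 0$ the value $t=u^*(r_t)$ is attained only at $r_t$ — gives the first displayed equation. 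For the complementary case, when $t\notin u^*(D^+_{u^*})$, i.e.\ $r_t\notin D^+_{u^*}$, the integrand vanishes and one concludes $\mu'(t)=0$ a.e.\ there; one must also check that $u^*(D^0_{u^*})$ contributes nothing, which is exactly the first conclusion of Lemma~\ref{lem:BVdistribution} ($\Leb^1(u^*(D^0_{u^*}))=0$), combined with the fact that the remaining set $\Omega^*\setminus(D^0_{u^*}\cup D^+_{u^*})$ is $\omega$-null and maps under the monotone $u^*$ to a Lebesgue-null set of levels.

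\textbf{Main obstacle.} The delicate point is the bookkeeping of which set of \emph{levels} $t$ the two formulas cover, and ensuring the exceptional set of $t$ is genuinely Lebesgue-null. One has to partition $\Omega^*$ (up to $\omega$-null sets) into $D^0_{u^*}$, $D^+_{u^*}$, and $S\cup(\text{non-differentiability points})$, push forward under the monotone map $u^*$, and argue that: (i) $u^*(D^0_{u^*})$ and $u^*(\text{non-differentiability set})$ are Lebesgue-null (the latter since a monotone function has at most countably many non-differentiability/jump points, and jumps of $u^*$ correspond to atoms which are negligible); (ii) on $u^*(D^+_{u^*})$ the preimage is a singleton so the pointwise formula makes sense and matches the differentiated identity for a.e.\ such $t$. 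The technical care is entirely in invoking the right monotonicity lemma for each piece and in the measure-theoretic change of variables $t=u^*(r)$; there is no substantial new idea beyond faithfully adapting \cite[Lemma~3.2]{CianchiFusco02} to the weighted interval $(I,\omega)$ using the one-dimensional perimeter formula \eqref{eq:Per u* g rt}.
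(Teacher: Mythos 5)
Your proof is correct and follows essentially the same route as the paper: apply the distribution-function formula (Lemma~\ref{lem:distfunct}) to $u^*$, kill the $D^0_{u^*}$ term using Lemma~\ref{lem:BVdistribution}, collapse the inner integral via the one-dimensional perimeter formula~\eqref{eq:Per u* g rt}, and invoke Lemma~\ref{lem:monotone functions}~ii) to identify $(u^*)^{-1}(\{t\})=\{r_t\}$ on $u^*(D^+_{u^*})$. The only point the paper spells out more carefully, and you state as ``i.e.,'' is the two-way equivalence $t\in u^*(D^+_{u^*})\Leftrightarrow r_t\in D^+_{u^*}$; both implications need a short argument (the converse direction uses that if $t=u^*(s)$ with $s\in D^+_{u^*}$ then $\{u^*>t\}=(-\infty,s)\cap\Omega^*$ so $s=r_t$), but your sketch contains all the ingredients.
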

\begin{proof}
Set $m\coloneqq \essinf u$ and $M\coloneqq \esssup u.$ Recall that $u^*\in BV_{loc}(\Omega^*;\omega)$ and that $u^*$ satisfies Assumption \eqref{eq:ass2}. Recall that $D_{u^*}^+=\{|(u^*)'|>0\}\subset\Omegastar$ and  $D_{u^*}^0=\{|(u^*)'|=0\}\subset\Omegastar$ (see Remark \ref{rmk:nobady cares}). Therefore applying \eqref{eq:identity mu} to $u^*$ we get
\[
 \mu(t) = \omega(\{u^*>t\}\cap D^0_{u^*}) + \int_t^\infty \int \frac{\nchi_{D^+_{u^*}}}{|(u^*)'|} \, \d \Per(\{u^*>r\},\cdot)\d r.
\] 
Moreover, by Lemma \ref{lem:BVdistribution} we have
    \[ 
    \frac{\d}{\d t} \omega(\{u^*>t\}\cap D^0_{u^*})=0, \qquad \text{a.e.\ }t\in(m,M).
    \]
    Therefore, to conclude it is sufficient to show that 
    \begin{equation}\label{eq:kiave integrale}
         \int \frac{\nchi_{D^+_{u^*}}}{|(u^*)'|}\,\d \Per( \{u^*>t\},\cdot) =\nchi_{u^*(D^+_{u^*})}(t) \frac{ \Per(\{u^*>t\})}{|(u^*)'((u^*)^{-1}(\{t\}))|},\qquad \forall \, t\in (m,M),
    \end{equation}
    where we are implicitly saying that  $(u^*)^{-1}(\{t\})$ is a singleton for $t \in u^*(D^+_{u^*})$ and taking the right hand side to be zero whenever $t\notin u^*(D^+_{u^*}).$

    To prove \eqref{eq:kiave integrale} we start recalling that by \eqref{eq:boundary superlevel} and \eqref{eq:Per u* g rt} we have that for all $t \in(m,M)$ the measure $\Per( \{u^*>t\},\cdot)$ is concentrated in a point $r_t \in I$ satisfying $\partial \{u^*>t\}=\{r_t\}$. We claim  
   \begin{equation}\label{eq:kiave}
       t  \in u^*(D_{u^*}^+) \iff  r_t\in D_{u^*}^+,
   \end{equation}
   and if one of (and thus both)  the above holds, then  $(u^*)^{-1}(\{t\})=\{r_t\}.$ This would clearly imply \eqref{eq:kiave integrale}.  Suppose $r_t \in D^+_{u^*}$. Then  $u^*$ is continuous at $r_t$ and since $\partial \{u^*>t\}=\{r_t\}$  we get $u^*(r_t)=t$, which proves one direction of \eqref{eq:kiave}. Moreover by ii) in Lemma \ref{lem:monotone functions} it holds that $u^*(r) \neq u^*(r_t)$ for all $r\neq r_t$, which implies $ (u^*)^{-1}(t)=\{r_t\}.$ For the other implication assume that $ t  \in u^*(D_{u^*}^+)$, then again by  ii) in Lemma \ref{lem:monotone functions} it holds that $(u^*)^{-1}(t)=\{s\}$ and that $u^*$ is strictly decreasing and continuous at   $s$. Therefore $\{u^*>t\}=(-\infty,s)$ holds and $s=r_t.$    
\end{proof} 
The following will be used to deal with the jump energy when rearranging in the BV space.
\begin{lemma}
 \label{lem:regularization}
Let $\Xdm$ be a metric measure space, let $\varnothing \neq \Omega\subset\X$ be open and let $u \colon \Omega\to \R$ be Borel with $\mm(\{u>t\})<\infty$ for all $t>\essinf u$.   Let $\omega$ be a measure in $\R$ satisfying $\omega \ll \Leb 1 $ and \eqref{eq:mass compatibility},\eqref{eq:zero a sx} and let $u^*$ be the rearrangement of $u$ with respect to  $\omega.$ Then
\begin{equation}
\partial^* \{ u>t\} \cap C_u = \emptyset, \qquad \text{a.e.\ }t \in (\essinf u,\esssup \, u)\setminus u^*(\Omega^*).\label{eq:regularization ii}
\end{equation}
\end{lemma}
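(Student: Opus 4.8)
The statement says that for a.e.\ level $t$ in the interior range of $u$ that is \emph{not} attained by $u^*$, the essential boundary of $\{u>t\}$ does not meet the set $C_u$ of approximate continuity points of $u$. I would first reduce to a clean measure-zero statement about the bad set of levels, then use equidistribution plus the structure of $u^*$ to show this set is negligible.

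\medskip

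\emph{Step 1: Identify the ``missing'' levels.} Since $u^*\colon\Omega^*\to[\essinf u,\esssup u]$ is non-increasing and left-continuous, its range $u^*(\Omega^*)$ fails to be an interval precisely at the countably many jump points of $u^*$; at such a point $x_0$ the open interval $\big(u^*(x_0^+),u^*(x_0)\big)$ is skipped. Hence the set $(\essinf u,\esssup u)\setminus u^*(\Omega^*)$ is, up to countably many points, a countable disjoint union of open intervals $(a_i,b_i)$, each coming from a jump of $u^*$. By equidistribution (Lemma~\ref{lem:basic prop u*}), a jump of $u^*$ of height $b_i-a_i$ corresponds to $\mm(\{a_i<u<b_i\})=\omega(\{a_i<u^*<b_i\})=0$; more precisely, for \emph{every} $t\in(a_i,b_i)$ we have $\mu(t)=\mm(\{u>t\})$ constant, equal to the common value $\omega((-\infty,x_0))$, so the distribution function $\mu$ is \emph{flat} on $(a_i,b_i)$ and $\mm(\{a_i<u<b_i\})=0$.

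\medskip

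\emph{Step 2: On a flat interval, reduce to the essential boundary.} Fix such an interval $(a,b)$ with $\mm(\{a<u<b\})=0$. For a.e.\ $t\in(a,b)$ the set $\{u>t\}$ has locally finite perimeter (coarea), and since $\mm$ need not be doubling I would \emph{not} assume $\Per(\{u>t\},\cdot)$ is concentrated on $\partial^*\{u>t\}$ — but here the claim is about the location of $\partial^*\{u>t\}$ itself, not about where the perimeter lives, so no such structure is needed. Take $x\in\partial^*\{u>t\}\cap C_u$. Approximate continuity at $x$ means $u^\wedge(x)=u^\vee(x)=\bar u(x)\in\R$, and by the density characterization \eqref{eq:approximate value density} we have $D(\{|u-\bar u(x)|>\eps\},x)=0$ for all $\eps>0$. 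If $\bar u(x)>t$, then $\{u\le t\}\subset\{|u-\bar u(x)|\ge \bar u(x)-t\}$ has density zero at $x$, so $\overline D(\{u>t\}^c,x)=0$, contradicting $x\in\partial^*\{u>t\}$; symmetrically if $\bar u(x)<t$ we get $\overline D(\{u>t\},x)=0$, again a contradiction. Therefore $\bar u(x)=t$ for \emph{every} $x\in\partial^*\{u>t\}\cap C_u$, i.e.\ $\partial^*\{u>t\}\cap C_u\subset\{\bar u=t\}$.

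\medskip

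\emph{Step 3: Killing the level $t$ inside a flat interval.} It remains to show that for a.e.\ $t\in(a,b)$ the set $\{\bar u=t\}\cap C_u$ is empty. Since $\mm(\Omega\setminus C_u)$ may be positive in general I cannot argue $\mm$-a.e., but I claim $\{\bar u=t\}\cap C_u=\emptyset$ for \emph{all} $t\in(a,b)$ except possibly the endpoints' worth of a null set — in fact for all $t\in(a,b)$. Indeed, if $x\in C_u$ with $\bar u(x)=t\in(a,b)$, then by \eqref{eq:approximate value density} with $\eps=\min(t-a,b-t)/2$ we get $D(\{u>t-\eps\},x)=1$ and $D(\{u<t+\eps\},x)=0$, hence $D(\{a<u<b\},x)\ge D(\{t-\eps<u<t+\eps\},x)=1$, so $\mm(B_r(x)\cap\{a<u<b\})>0$ for small $r$, contradicting $\mm(\{a<u<b\})=0$ once we note $\supp\mm=\X$ gives $\mm(B_r(x))>0$. (The density-$1$ statement needs $\mm(B_r(x))>0$, which is exactly $\supp\mm=\X$.) Thus $\{\bar u=t\}\cap C_u=\emptyset$ for \emph{every} $t$ in every flat interval $(a_i,b_i)$, and combining with Step~2 gives $\partial^*\{u>t\}\cap C_u=\emptyset$ for all such $t$. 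Since $(\essinf u,\esssup u)\setminus u^*(\Omega^*)$ differs from $\bigcup_i(a_i,b_i)$ by a countable set, \eqref{eq:regularization ii} follows.

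\medskip

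\emph{Main obstacle.} The delicate point is the bookkeeping in Step~1 — relating the complement of $u^*(\Omega^*)$ to flatness of the distribution function $\mu$, being careful about left-continuity of $u^*$, the role of $\Omega^*=(l^*,r^*)$ versus all of $I$, and the countable exceptional set of jump-point \emph{values} that must be discarded to get an ``a.e.\ $t$'' statement. Once that correspondence ``$t\notin u^*(\Omega^*)$ (a.e.)\ $\iff$ $\mu$ is locally constant near $t$ $\iff$ $\mm$ gives no mass to a neighborhood sublevel band'' is pinned down, Steps~2--3 are soft density-point arguments. A minor subtlety worth a remark: $C_u$ depends on $u$ only up to $\mm$-null sets, and $\partial^*\{u>t\}$ depends on the measure-theoretic representative of $\{u>t\}$; both issues are harmless here because the conclusion is purely about densities, which are representative-independent.
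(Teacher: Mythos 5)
Your proposal is correct and follows essentially the same approach as the paper: decompose $(\essinf u,\esssup u)\setminus u^*(\Omega^*)$ into the countably many intervals arising from jumps of $u^*$, use equimeasurability to conclude each band $\{a_i<u<b_i\}$ is $\mm$-null, and then derive a contradiction with approximate continuity at any hypothetical point of $\partial^*\{u>t\}\cap C_u$. (A small slip in Step~3: you write $D(\{u<t+\eps\},x)=0$, but you mean $D(\{u\ge t+\eps\},x)=0$; the density-one conclusion for the middle band that you actually use is correct.)
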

\begin{proof}
By monotonicity of $u^*$ on $\Omega^*$, we have a countably (possibly empty) family of disjoint intervals $[\alpha_i,\beta_i)$ so that (cf.\ \cite[Eq. (2.6)]{CianchiFusco02})
\[
 u^*(\Omega^*)=  (\essinf u,\esssup \, u)  \setminus \bigcup_i [\alpha_i,\beta_i).
\]
Thus, by equimeasurability (recall Lemma \ref{lem:basic prop u*}), we have
\begin{equation}\label{eq:zero measure outisde intervals}
    \mm(\{ \alpha_i < u < \beta_i\}) = \omega( \{\alpha_i < u^* <\beta_i \})=0, \qquad\forall \, i.
\end{equation}
To conclude it suffices to show that  $\partial^* \{ u>t\} \cap C_u =\varnothing $ for all $t \in \cup_i  (\alpha_i,\beta_i)$. Suppose by contradiction that for some $i$ and some $t \in (\alpha_i,\beta_i)$ there exists $x \in \partial^* \{ u>t\} \cap C_u$. Then it follows from the definition of essential boundary that $t \in [u^\wedge(x),u^\vee(x)]$ and,  since $u$ is approximately continuous at $x$,  actually $\bar u(x) = t$. 
However by \eqref{eq:zero measure outisde intervals}
\[
|\bar u(x)-u(y)| \ge \min \{ \beta_i - t, t - \alpha_i\}>0,\qquad  \mm\text{-a.e. }y \in \Omega,
\]
 which contradicts the fact that $u$ is approximately continuous at $x$ (recall \eqref{eq:approximate value density}).
\end{proof}
Next, we shall see how the differentiability properties of $\mu$ depend on non-vanishing properties of gradients. This will be key when analyzing rigidity results in this note.
\begin{proposition}\label{prop:non vanishing}
Let $\Xdm$ be a metric measure space, let $\varnothing \neq \Omega\subset\X$ be open and let $u \in BV_{loc}(\Omega)$ be as in Assumption \eqref{eq:ass2} with $\mm(\{u>t\})<\infty$ for all $t>\essinf u$. Let  $(I,\omega)$ be a weighted interval as in Assumption \eqref{eq:ass1} and $u^*$ be the rearrangement of $u$ with respect to  $\omega$. Then, for every $t_2>t_1$, it holds:
\begin{itemize}
    \item[{\rm i)}] $(t_1,t_2) \ni t\mapsto \mu(t)$ is absolute continuous if and only if
    \[
        \omega(D^0_{u^*} \cap \{ t_1<u^* <t_2\}) =0.
    \]
    \item[{\rm ii)}] It holds
    \begin{equation}
        \omega(D^0_{u^*} \cap \{ t_1<u^* <t_2\}) \le \mm(D^0_{u} \cap\{ t_1<u <t_2\}). \label{eq: Duo implies Du*0}
    \end{equation}
\end{itemize}
Furthermore
\begin{equation}
    \int \frac{\nchi_{D^+_u}}{|\underline D u|_1} \, \d \Per(\{u>t\},\cdot) \le -\mu'(t) = \int \frac{\nchi_{D^+_{u^*}}}{|(u^*)'|} \, \d \Per(\{u^*>t\},\cdot), \qquad \text{a.e.\ $t\in(t_1,t_2).$} \label{eq: mu' mu'} 
\end{equation}
Finally, if equality in \eqref{eq: mu' mu'} holds for a.e.\ $t\in(t_1,t_2)$ then equality holds in \eqref{eq: Duo implies Du*0} and, in particular,  if additionally any in \emph{i)} holds then also $ \mm(D^0_{u} \cap \{ t_1<u <t_2\}) =0$.
\end{proposition}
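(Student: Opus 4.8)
The plan is to reduce the whole statement to the two decompositions of the distribution function $\mu$ furnished by Lemma~\ref{lem:distfunct}, one applied to $u$ and one to its rearrangement $u^*$. Abbreviate $A(t):=\mm(\{u>t\}\cap D^0_u)$, $B(t):=\omega(\{u^*>t\}\cap D^0_{u^*})$, and write $\Psi_u(t):=\int\nchi_{D^+_u}/|\underline D u|_1\,\d\Per(\{u>t\},\cdot)$, $\Psi_{u^*}(t):=\int\nchi_{D^+_{u^*}}/|(u^*)'|\,\d\Per(\{u^*>t\},\cdot)$ for the two perimeter integrals occurring in \eqref{eq: mu' mu'}. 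First I would reduce to $u\ge 0$ with $t_1>\essinf u$: replacing $u$ by $(u-s)^+$ for a suitable $s\in(\essinf u,t_1)$ (and $u^*$ by $(u^*-s)^+$, which is the rearrangement of $(u-s)^+$) leaves Assumption~\eqref{eq:ass2}, the finiteness of the superlevels, and — up to translation — both $\mu|_{(t_1,t_2)}$ and the sets $D^0_u\cap\{t_1<u<t_2\}$, $D^0_{u^*}\cap\{t_1<u^*<t_2\}$ unchanged, while the degenerate case $t_1\le\essinf u$ is handled separately using that $\{u=\essinf u\}\subseteq D^0_u$ and $\{u^*=\essinf u\}\subseteq D^0_{u^*}$ carry equal mass $\mm(\{u=\essinf u\})$; recall also that $u^*$ satisfies \eqref{eq:ass2} by monotonicity, with $D^0_{u^*}=\{|(u^*)'|=0\}$ as in Remark~\ref{rmk:nobady cares}. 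Then Lemma~\ref{lem:distfunct} applied to $u$ and to $u^*$ gives, for $t\in[t_1,t_2]$, the identity $\mu(t)=A(t)+\int_t^\infty\Psi_u=B(t)+\int_t^\infty\Psi_{u^*}$, all terms being finite since $\mu(t_1)<\infty$.

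I would next establish the central chain \eqref{eq: mu' mu'}, which is then used throughout. The inequality $\Psi_u\le-\mu'$ a.e.\ on $(t_1,t_2)$ is precisely \eqref{eq:mu' inequality} of Lemma~\ref{lem:distfunct} (applicable because $\mm(\{u>t_1\})<\infty$). For the identity $-\mu'=\Psi_{u^*}$ a.e., I would combine Lemma~\ref{lem:derivative mu} with formula \eqref{eq:kiave integrale} from its proof: the latter gives $\Psi_{u^*}(t)=\nchi_{u^*(D^+_{u^*})}(t)\,\Per(\{u^*>t\})/|(u^*)'|((u^*)^{-1}(\{t\}))$ for every $t$, and Lemma~\ref{lem:derivative mu} says this quantity equals $-\mu'(t)$ for a.e.\ $t\in u^*(D^+_{u^*})$ while $\mu'(t)=0$ for a.e.\ $t$ outside $u^*(D^+_{u^*})$, so $\Psi_{u^*}=-\mu'$ a.e.\ on $(t_1,t_2)$; in particular $\Psi_{u^*}\ge\Psi_u$ a.e.

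For i) I would isolate $B(t)=\mu(t)-\int_t^\infty\Psi_{u^*}$. Since $\Psi_{u^*}\in L^1((t_1,t_2))$ (its integral is $\le\mu(t_1)$), the tail $t\mapsto\int_t^\infty\Psi_{u^*}$ is absolutely continuous on $(t_1,t_2)$, so $\mu|_{(t_1,t_2)}$ is absolutely continuous if and only if $B|_{(t_1,t_2)}$ is. But $B$ coincides with the function $h(t)=\omega(D^0_{u^*}\cap\{u^*>t\})$ of Lemma~\ref{lem:BVdistribution} (applied with $v=u^*$, $r=t_1$): it is monotone non-increasing with $|h'|=0$ a.e., hence absolutely continuous on $(t_1,t_2)$ exactly when it is constant there, i.e.\ when $TV(h)((t_1,t_2))=0$. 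Since the proof of Lemma~\ref{lem:BVdistribution} identifies $TV(h)((t_1,t_2))$ with $\omega(D^0_{u^*}\cap\{t_1<u^*<t_2\})$, item i) follows.

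Finally, for ii) and the equality case I would subtract the two decompositions of $\mu$ at $t_1$ and at an arbitrary $s\in(t_1,t_2)$, using $A(t_1)-A(s)=\mm(D^0_u\cap\{t_1<u\le s\})$ and $B(t_1)-B(s)=\omega(D^0_{u^*}\cap\{t_1<u^*\le s\})$, to get
\[
\omega(D^0_{u^*}\cap\{t_1<u^*\le s\})-\mm(D^0_u\cap\{t_1<u\le s\})=\int_{t_1}^{s}(\Psi_u-\Psi_{u^*})\,\d r\le 0
\]
by \eqref{eq: mu' mu'}. Dropping the non-positive right-hand side and bounding $\mm(D^0_u\cap\{t_1<u\le s\})\le\mm(D^0_u\cap\{t_1<u<t_2\})$, then letting $s\uparrow t_2$ (so $\{u^*\le s\}\uparrow\{u^*<t_2\}$) gives \eqref{eq: Duo implies Du*0}. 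If \eqref{eq: mu' mu'} is an equality for a.e.\ $t\in(t_1,t_2)$, the displayed right-hand side vanishes for every $s<t_2$, so the two sides agree for all such $s$, and letting $s\uparrow t_2$ upgrades this to equality in \eqref{eq: Duo implies Du*0}; combining with i) then forces $\mm(D^0_u\cap\{t_1<u<t_2\})=0$ whenever, in addition, $\mu|_{(t_1,t_2)}$ is absolutely continuous. The only delicate point — and the main, though minor, obstacle — is matching the open against the half-open top-level sets $\{\cdot<t_2\}$ versus $\{\cdot\le t_2\}$ (and, for the reduction, the behaviour at $\essinf u$): working with $s<t_2$ and passing to the limit sidesteps it, avoiding any separate analysis of the slices $\{\cdot=t_2\}$.
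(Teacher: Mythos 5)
Your proof is correct and follows essentially the same route as the paper: decompose $\mu$ via Lemma~\ref{lem:distfunct} for both $u$ and $u^*$, use Lemma~\ref{lem:BVdistribution} to control the "singular" part $B(t)=\omega(\{u^*>t\}\cap D^0_{u^*})$, and compare the two decompositions. Two small organisational differences are worth noting but do not change the substance. First, for the identity $-\mu'=\Psi_{u^*}$ a.e.\ you invoke Lemma~\ref{lem:derivative mu} together with \eqref{eq:kiave integrale} directly, whereas the paper obtains the same thing by differentiating the two-sided decomposition \eqref{eq:mu mu} and appealing to \eqref{eq:h modified zero derivative}; these are equivalent, and your version is arguably cleaner since it makes the a.e.\ identity explicit before integrating. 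Second, to pass from the half-open slices $\{t_1<\cdot\le s\}$ to the open ones $\{t_1<\cdot<t_2\}$ you take $s\uparrow t_2$, while the paper instead identifies $\mm(D^0_u\cap\{u=t_2\})=\omega(D^0_{u^*}\cap\{u^*=t_2\})$ via \eqref{eq:Du1 local on Cu} and equimeasurability; both handle the top-level slice correctly, and your limiting device sidesteps the need for that identity. Your preliminary reduction to $u\ge 0$ with $t_1>\essinf u$ is also a reasonable explicit step that the paper leaves implicit in its use of Lemma~\ref{lem:distfunct}.
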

\begin{proof}
We first prove i). Suppose first that $ \omega(D^0_{u^*} \cap \{ t_1<u^* <t_2\})=0$. Using this and \eqref{eq:identity mu} we can write
\[
\mu(t) - \mu(t_2) =\omega(D^0_{u^*} \cap \{ u^*=t_2 \})+ \int_{t}^{t_2} \int \frac{\nchi_{D^+_{u^*}}}{|(u^*)'|} \, \d\Per(\{u^*>s\},\cdot)\,\d s,\qquad \forall \, t \in (t_1,t_2).
\]
In particular, $t\mapsto \mu(t)$ is locally absolutely continuous in $(t_1,t_2)$ and in fact absolutely continuous, being bounded and monotone. Conversely, suppose $(t_1,t_2)\ni t\mapsto \mu(t)$ is absolutely continuous and define $h(t) \coloneqq \omega(D^0_{u^*} \cap \{  t<u^*<t_2 \})$. We can thus write for all $t\in(t_1,t_2)$
\[
 h(t)  + \omega(D^0_{u^*} \cap \{ u^*=t_2 \}) = \mu(t) - \mu(t_2) - \int_{t}^{t_2} \int \frac{\nchi_{D^+_{u^*}}}{|(u^*)'|} \, \d\Per(\{u^*>s\},\cdot)\,\d s,
\]
In particular, $h$ is absolutely continuous in $(t_1,t_2)$ (as it is a difference of two absolutely continuous functions). Moreover applying Lemma \ref{lem:BVdistribution} with $v = {\min\{u^*,t_2\}}$ we get
\begin{equation}\label{eq:h modified zero derivative}
    \frac{\d}{\d t}\omega(D^0_{u^*} \cap \{  t<u^*\le t_2 \})=0, \qquad {\text{a.e.\ $t> t_1$}},
\end{equation}
and, noting that $h(t)$ and $\omega(D^0_{u^*} \cap \{  t<u^*\le t_2 \}$ differ only by a constant independent of $t$, also that $|h'|(t) =0$ for a.e.\ $t \ge 0$. Therefore, $ \lim_{t \downarrow t_1} h(t)= \lim_{t \uparrow t_2} h(t)$ and thus
\[  \omega(D^0_{u^*} \cap \{ t_1<u^* <t_2 \}) = \lim_{t \downarrow t_1} h(t)= \lim_{t \uparrow t_2} h(t) =0,
\]
proving i).

We now prove ii) and \eqref{eq: mu' mu'}. By Lemma \ref{lem:distfunct}, we know
\begin{equation}
\begin{aligned}
    \mm(\{ t<u \le t_2 \}&\cap D^0_u) + \int_t^{t_2} \int \frac{\nchi_{ D^+_u}}{|\underline D u|_1} \, \d \Per(\{u>r\},\cdot)\d r    =\mu(t) -\mu(t_2)\\
    &=\omega(\{ t<u^* \le t_2 \}\cap D^0_{u^*}) + \int_t^{t_2} \int \frac{\nchi_{D^+_{u^*}}}{|(u^*)'|} \, \d \Per(\{u^*>r\},\cdot)\d r,
\end{aligned}\label{eq:mu mu}
\end{equation}
for all $t<t_2$. Being $t \mapsto  \mm(\{ t<u <t_2 \}\cap D^0_u) $ monotone  non-increasing, it is a.e.\ differentiable with $\frac{\d}{\d t} \mm(\{ t<u <t_2 \}\cap D^0_u)  \le 0$ for a.e.\ $t$. Thus, differentiating in \eqref{eq:mu mu} and recalling \eqref{eq:h modified zero derivative},  we get \eqref{eq: mu' mu'}. Integrating \eqref{eq: mu' mu'} between $t$ and $t_2$ and plugging in \eqref{eq:mu mu}  we have
\[
    \omega(D^0_{u^*} \cap \{ t<u^* <t_2 \}) \le \mm(D^0_{u} \cap \{ t<u <t_2 \}), \quad \forall\,  t <t_2,
\]
having used that $\mm(D^0_u \cap \{  u=t_2\}) \overset{\eqref{eq:Du1 local on Cu}}{=} \mm( \{ \bar u=t_2\}) = \omega( \{ u^*=t_2\})\overset{\eqref{eq:Du1 local on Cu}}{=}\omega(D^0_{u^*} \cap \{ u^*=t_2\}) $, the intermediate equality due to equi-measurability (recall ii) in Lemma \ref{lem:basic prop u*}). Then sending $t \to t_1$ we conclude the proof of ii).

We pass now to the last statement. If equality holds in \eqref{eq: mu' mu'} for a.e.\ $t \in (t_1,t_2)$, integrating \eqref{eq: mu' mu'} and plugging in  \eqref{eq:mu mu} gives equality in \eqref{eq: Duo implies Du*0}. This concludes the proof.
\end{proof}
\begin{remark}\label{rem:non vanishing}\rm
    In the previous works \cite{MondinoSemola20,NobiliViolo24}, functional rigidity results were derived under  $|Du|\neq 0$, having relied on a basic version of Lemma \ref{lem:derivative mu} in combination with an approximation argument (cf.\ \cite[Lemma 3.6]{MondinoSemola20}). Notice that, in light of \eqref{eq: Duo implies Du*0}, this is a stronger assumption with respect to the requirement $(u^*)'\neq 0$. For instance, in \cite{NobiliViolo24} an elliptic regularity step was needed to guarantee that $|Du|\neq 0$ for a non-negative distributional solution $u$ of the Euler Lagrange equation arising from Sobolev inequalities. This is one of the main reasons why the arguments of \cite{NobiliViolo24} did not extend to the case of $p$-Sobolev inequality for $p\neq 2$.

    However, we will see in our main rigidity results that equality will hold in \eqref{eq: Duo implies Du*0}, making sufficient to assume $(u^*)'\neq 0$. The latter a priori weaker condition is much easier to be verified in applications, see the proof of Theorem \ref{thm:main noncompact AVR}.    \fr
\end{remark}
\section{P\'olya-Szeg\H{o} inequalities: metric spaces}\label{sec:Polya}
The aim of this section is to produce general rearrangement inequalities \'a la P\'olya-Szeg\H{o} for functions defined on metric measure spaces \emph{assuming} the validity of an isoperimetric principle. Then, building upon this general theory, we will prove our main results in the upcoming parts.
\subsection{Theory for Sobolev functions}
We report an argument from \cite{FuscoCIME08} showing a first qualitative rearrangement principle stating that $u^*$ is locally absolutely continuous whenever there is an underlying isoperimetric inequality and the total variation of $u$ has no singular part.
\begin{lemma}[{{\sf AC}-to-{\sf AC}} property of the rearrangement]\label{lem:u* is AC}
Let $\Xdm$ be a metric measure space, let $\varnothing \neq \Omega\subset\X$ be open and let $u \in BV_{loc}(\Omega)$ be as in Assumption \eqref{eq:ass2} with $\mm(\{u>t\})<\infty$ for all $t>\essinf u$ and satisfying $|\dD^s u|(\Omega)=0$  (the last assumption is always true if  $u \in W^{1,p}_{loc}(\Omega)$). Let  $(I,\omega)$ be a weighted interval as in Assumption \eqref{eq:ass1} and $u^*$ be the rearrangement of $u$ with respect to  $\omega = g\d t\mres I$.
Suppose that $|\dD u|(\{t_1<\bar u<t_2\}) <+\infty$ for  $t_1,t_2\in [\essinf u,\esssup u]$ with $t_1<t_2$  and that it holds for some constant ${\sf C}>0$
\begin{equation}\label{eq:isop in u}
    \Per(\{u>t\},\Omega)\ge {\sf C}\cdot \Ig(\mu(t)), \qquad \text{a.e.\ $t\in(t_1,t_2)$.}
\end{equation}
Then, $u^* \in {\sf AC}_{loc}(r_{t_2},r_{t_1})\subset I$ (for $r_t$ as in \eqref{eq:rt}).  
\end{lemma}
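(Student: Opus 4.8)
The plan is to show that the distribution function $\mu$ restricted to $(t_1,t_2)$ is locally absolutely continuous, and then transfer this regularity to $u^*$ via the equimeasurability identity $\mu = F_\omega \circ (\text{something})$; more precisely, via the relation $\mu(t) = \omega(\{u^* > t\})$ combined with the fact that $u^*$ is monotone and $\omega = g\,\d t$ with $g$ continuous and strictly positive on $I$. First I would invoke the coarea formula under Assumption \eqref{eq:ass2} (Lemma \ref{lem:fine coarea}) together with the inequality \eqref{eq:mu' inequality} of Lemma \ref{lem:distfunct}: since $|\dD^s u|(\Omega) = 0$ we have by \eqref{eq:togli Du+} that $\Per(\{u>t\}, \Omega \setminus D^+_u) = 0$ for a.e.\ $t$, so the term $\mm(\{u>t\} \cap D^0_u)$ in \eqref{eq:identity mu} has derivative controlled, and the main contribution to $-\mu'(t)$ is $\int \frac{\nchi_{D^+_u}}{|\underline D u|_1}\,\d\Per(\{u>t\},\cdot)$.

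Next I would combine the Cauchy--Schwarz/Hölder-type bound on $\Per(\{u>t\},\Omega)$. The key inequality is that for a.e.\ $t$,
\[
\Per(\{u>t\},\Omega)^2 \le \Big( \int \frac{\nchi_{D^+_u}}{|\underline D u|_1}\,\d\Per(\{u>t\},\cdot) \Big) \cdot \Big( \int \nchi_{D^+_u} |\underline D u|_1\,\d\Per(\{u>t\},\cdot) \Big),
\]
(using that $\Per$ is concentrated on $D^+_u$ for a.e.\ $t$). The second factor is, by the coarea formula \eqref{eq:coarea fine}, the density of $t \mapsto \int_{\{r<\bar u<s\}}|\underline D u|_1\,\d|\dD u|$, which is finite for a.e.\ $t$ because $|\dD u|(\{t_1<\bar u<t_2\}) < \infty$ forces $\int |\underline D u|_1\,\d|\dD u|$ over that slab to be finite — wait, this needs $|Du|_1 \in L^2_{loc}$ of the slab, which is not assumed; instead I would use the weaker route. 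Using \eqref{eq:mu' inequality} and the isoperimetric inequality \eqref{eq:isop in u}:
\[
-\mu'(t) \ge \int \frac{\nchi_{D^+_u}}{|\underline D u|_1}\,\d\Per(\{u>t\},\cdot) \ge \frac{\Per(\{u>t\},\Omega)^2}{\int \nchi_{D^+_u}|\underline D u|_1\,\d\Per(\{u>t\},\cdot)} \ge \frac{{\sf C}^2 \Ig(\mu(t))^2}{\eta(t)},
\]
where $\eta(t) := \int |\underline D u|_1\,\d\Per(\{u>t\},\cdot)$ is an $L^1_{loc}(t_1,t_2)$ function (its integral over $(t_1,t_2)$ equals $|\dD u|(\{t_1<\bar u<t_2\}) < \infty$ up to the negligible set where things fail, by \eqref{eq:coarea fine} applied with $\varphi = |\underline D u|_1$). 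Hmm — actually $\int \eta(t)\,\d t = \int_{\{t_1 < \bar u < t_2\}} |\underline D u|_1\,\d|\dD u|$ which need not be finite. The correct move is simpler: $\int_{t_1}^{t_2}\int \d\Per(\{u>t\},\cdot)\,\d t = |\dD u|(\{t_1<\bar u<t_2\}) < \infty$, so $t \mapsto \Per(\{u>t\},\Omega)$ is in $L^1(t_1,t_2)$; call it $P(t)$. Then I get $-\mu' \ge {\sf C}^2 \Ig(\mu)^2 / \big(\int \nchi_{D^+_u}|\underline D u|_1 \d\Per\big)$ is the wrong packaging. Let me instead not square: I would use directly that $-\mu'(t) \ge \int \frac{\nchi_{D^+_u}}{|\underline Du|_1}\d\Per(\{u>t\},\cdot)$ and derive absolute continuity of $\mu$ directly, or better, follow the strategy of proving $u^*$ is AC by showing $\mu$ has no singular part on $(t_1,t_2)$.

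The cleanest plan, mirroring \cite{FuscoCIME08}: show directly that on any $[\tau_1,\tau_2] \subset (t_1,t_2)$ the map $\mu$ is absolutely continuous. Since $\mu$ is monotone, it decomposes as an AC part plus a singular (pure-jump plus Cantor) part; the jumps of $\mu$ correspond to the "flat" intervals of $u^*$, i.e.\ the intervals $[\alpha_i,\beta_i)$ from Lemma \ref{lem:regularization}, and on $(t_1,t_2)$ with the slab having finite total variation these are controlled. The genuinely technical point — and the main obstacle — is ruling out a Cantor-type singular part of $\mu$: this is exactly where \eqref{eq:isop in u} enters, via the bound $P(t) = \Per(\{u>t\},\Omega) \ge {\sf C}\,\Ig(\mu(t)) > 0$ for a.e.\ $t$, combined with $\int_{t_1}^{t_2} P(t)\,\d t < \infty$ and the coarea-type identity $\int \frac{1}{|\underline Du|_1}\,\d\Per(\{u>t\},\cdot) \le -\mu'(t)$ for a.e.\ $t$; one shows the singular part of $-\mu$ would force $\int P = \infty$ or contradict $\Ig(\mu(t)) > 0$. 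Once $\mu$ is AC on $(t_1,t_2)$, part i) of Proposition \ref{prop:non vanishing} gives $\omega(D^0_{u^*} \cap \{t_1 < u^* < t_2\}) = 0$, and then since $u^*$ is monotone, has no flat parts in the relevant range, and $\mu(t) = \omega(\{u^* > t\}) = \int_{u^*>t} g\,\d t$ with $g$ continuous positive, the absolute continuity of $\mu$ transfers to absolute continuity of the monotone function $u^*$ on compact subintervals of $(r_{t_2},r_{t_1})$ — this last transfer being a routine inversion argument for monotone functions, using that $F_g$ is a bi-Lipschitz-on-compacts change of variable since $g$ is bounded away from $0$ and $\infty$ locally. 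I expect the Cantor-part exclusion to be the crux; everything else is bookkeeping with the lemmas already established.
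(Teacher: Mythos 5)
Your proposal takes a genuinely different route from the paper's proof, and the final transfer step has a real gap. You aim to show that the distribution function $\mu$ is absolutely continuous on $(t_1,t_2)$ and then deduce the absolute continuity of $u^*$ ``by a routine inversion argument for monotone functions.'' This last step is false in general: the inverse of an absolutely continuous, strictly monotone function need not be absolutely continuous. For example, if $c$ denotes the Cantor function and $v(s)\coloneqq 1-s-c(s)$ on $(0,1)$, then $v$ is strictly decreasing, continuous, $v'=-1$ a.e.\ (so $D^0_v$ is Lebesgue-null), and yet $v\notin {\sf AC}$; meanwhile its generalized inverse $\mu=v^{-1}$ \emph{is} absolutely continuous (indeed $\int|\mu'|={\rm Var}(\mu)$). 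So the conjunction ``$\mu$ AC'' and ``$(u^*)'\neq0$ a.e.'' — which is what Proposition \ref{prop:non vanishing}~i) would hand you — does not rule out a Cantor part of $u^*$. (In this 1D example the hypothesis $|\dD^s u|=0$ fails, so there is no contradiction with the lemma, but the example shows your chain of implications $\mu\text{ AC}\Rightarrow u^*\text{ AC}$ is not sound.)

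The paper does not pass through $\mu$ at all: it verifies the $\varepsilon$–$\delta$ definition of absolute continuity for $u^*$ directly on each compact $[a,b]\subset(r_{t_2},r_{t_1})$. The three ingredients are: (a) a modulus of continuity $f$ for $E\mapsto\int_E|Du|_1\,\d\mm$ on the slab $\{t_1<\bar u<t_2\}$, coming from $|\dD u|(\{t_1<\bar u<t_2\})<\infty$; (b) the fine coarea formula \eqref{eq:coarea fine} plus \eqref{eq:togli Du+} plus the isoperimetric inequality \eqref{eq:isop in u} to deduce, for each subinterval $(a_i,b_i)\subset[a,b]$,
\[
\int_{\{u^*(b_i)<u<u^*(a_i)\}}|Du|_1\,\d\mm \ge {\sf C}\int_{u^*(b_i)}^{u^*(a_i)} g(r_t)\,\d t \ge {\sf C}\,c\,\bigl(u^*(a_i)-u^*(b_i)\bigr),\quad c\coloneqq\inf_{[a,b]}g>0,
\]
i.e.\ an integral of $|Du|_1$ over a small level slab directly dominates the variation of $u^*$ on $(a_i,b_i)$; and (c) the equimeasurability bound $\mm(\{u^*(b_i)<u<u^*(a_i)\})\le\omega((a_i,b_i))\le D\,\Leb^1(a_i,b_i)$ with $D\coloneqq\sup_{[a,b]}g<\infty$. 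Summing over $i$ and using (a) then gives $\sum_i(u^*(a_i)-u^*(b_i))\le({\sf C}c)^{-1}f(D\Leb^1(\cup_i(a_i,b_i)))$, which is absolute continuity on $[a,b]$. The key move you are missing is estimate (b): integrating the isoperimetric lower bound \emph{over the range of $u^*$} (where $g(r_t)$ is bounded below) is what converts the hypotheses into a bound on $\sum|u^*(a_i)-u^*(b_i)|$, bypassing any discussion of singular parts of $\mu$.
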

\begin{proof} 
Recall that $u^*\in BV_{loc}(\Omega^*;\omega)$. In particular, \eqref{eq:Per u* g rt} gives that for all $t \in (\essinf u,\esssup u)$ it holds $g(r_t) = \Ig(\mu(t))$ where we recall by \eqref{eq:rt} that $r_t\in \Omega^*$ satisfies $\omega((-\infty,r_t))=\mm(\{u>t\}).$ Since $ \int_{\{t_1<\bar u<t_2\}}|Du|_1\,\d\mm=|\dD u|(\{t_1<\bar u<t_2\})<\infty$, there exists a modulus of continuity  $f \colon [0,\infty) \to [0,\infty)$ with $f(0)=0$ so that
    \[
    \int_{E}|Du|_1\,\d \mm \le f(\mm(E)),
    \]
    for all $E\subset \{t_1<\bar u<t_2\} $  Borel.  Fix an arbitrary  sub interval $[a,b]\subset (r_{t_2},r_{t_1}).$ It is sufficient to show that $u^*$ is absolutely continuous in $[a,b].$ Choose arbitrary pairwise disjoint intervals $(a_i,b_i)\subset[a,b]$ $i=1,\dots,N.$ 
    Since $g$ is positive and continuous in $I$ we have $c\coloneqq \inf_{[a,b]} g>0.$ Note also that,  by monotonicity and by definition of $u^*$,  for all $t \in [u^*(b_i),u^*(a_i)]$ we have $r_t \in [a,b].$
    Using the fine coarea formula of Lemma \ref{lem:fine coarea} with $\varphi = \nchi_{D^+_u}$ (since $D_u^+\subset C_u)$, recalling that $|\dD u|=\nchi_{D^+_u}|Du|_1\mm$ and $\mm(\Omega\setminus C_u)=0$ under the present hypotheses,   we have
    \begin{align*}
        \int_{\{ u^*(b_i)<u< u^*(a_i)\}}|Du|_1\,\d \mm &=  \int_{\{ u^*(b_i)<\bar u< u^*(a_i)\}} \nchi_{ D^+_u}\,\d |\dD u| \\
        &\overset{\eqref{eq:coarea fine}}{=} \int_{ u^*(b_i)}^{u^*(a_i)} \Per(\{u>t\}, D^+_u)\,\d t\\
        &\overset{\eqref{eq:togli Du+}}{=}\int_{ u^*(b_i)}^{u^*(a_i)}\Per(\{u>t\},\Omega)\,\d t \\
        &\overset{\eqref{eq:isop in u}}{\ge}  {\sf C} \int_{ u^*(b_i)}^{u^*(a_i)} g(r_t)\,\d t \ge {\sf C} \, c \big(u^*(a_i)-u^*(b_i)\big).
    \end{align*}
   Moreover, by equimeasurability and since $u^*$ is monotone we have for all $i=1,\dots,N$
    \[
    \mm(\{ u^*(b_i)<u<u^*(a_i)\}) \le \omega ((a_i,b_i))\le D \Leb 1(a_i,b_i),
    \]
    where where $D\coloneqq \sup_{[a,b]} g<\infty.$  
    In particular $\mm(u^{-1}\left(\cup_i (u^*(b_i),u^*(a_i))\right))\le D\Leb 1(\cup_i (a_i,b_i)).$
    Summing up we obtain
    \begin{align*}
          \sum_i \big(u^*(a_i)-u^*(b_i)\big) &\le ({\sf C}\, c)^{-1}  \int_{\cup_i \{ u^*(b_i)<u<u^*(a_i)\}}|Du|_1\,\d \mm \\
          &\le   ({\sf C}\, c)^{-1} f\left(\mm(u^{-1}\left(\cup_i (u^*(b_i),u^*(a_i))\right)\right))) \le  ({\sf C}\, c)^{-1} f\left(D\Leb 1(\cup_i (a_i,b_i))\right).
    \end{align*}
    This concludes the proof, by arbitrariness of $(a_i,b_i)$.
\end{proof}
We now face our key technical result around P\'olya-Szeg\H{o} inequalities.
\begin{theorem}\label{thm:Polya W1p}
Let $\Xdm$ be a metric measure space, let $\varnothing \neq \Omega\subset\X$ be open and let $u \in BV_{loc}(\Omega)$ be as in Assumption \eqref{eq:ass2} with $\mm(\{u>t\})<\infty$ for $t>\essinf u$ and with $|\dD^s u|=0$. Let  $(I,\omega)$ be a weighted interval as in Assumption \eqref{eq:ass1} and $u^*$ be the rearrangement of $u$ with respect to  $\omega= g\d t\mres I$. Then, for every $t_1,t_2 \in [\essinf u, \esssup u]$ with $t_1<t_2$ it holds 
\begin{equation}\label{eq:improved polya rs}
    \int_{\{t_1<u< t_2\}} |D u|_1^p \, \d \mm  \ge \int_{t_1}^{t_2} \left(\frac{\Per( \{u>t\},\Omega)}{\Per(\{u^*>t\})}\right)^p \int |(u^*)'|^{p-1}\,\d\Per(\{ u^*>t\},\cdot) \d t.
\end{equation}
In particular, if $u \in W^{1,p}_{loc}(\Omega)$ and there is ${\sf C}>0$ so that it holds
\begin{equation}
\Per(\{u>t\},\Omega)\ge  {\sf C}\cdot  \Ig(\mu(t)), \qquad \text{a.e.\ $t\in(t_1,t_2)$},
\label{eq:isop improved}
\end{equation}
then
\begin{equation}\label{eq:pz t1t2}
    \int_{\{t_1<u< t_2\}} |D u|_p^p \, \d \mm \ge {\sf C}^p\int_{r_{t_2}}^{r_{t_1}} |(u^*)'|^p \, \d \omega,
\end{equation}
and if the left hand side  is finite then $u^* \in {\sf AC}_{loc}(r_{t_2},r_{r_1})$ (for $r_t$ as in \eqref{eq:rt}). 

Moreover, if \eqref{eq:isop improved} holds and equality occurs in \eqref{eq:pz t1t2}
(with both sides non-zero and finite), then
\begin{equation}
\Per(\{u>t\},\Omega) = {\sf C}\cdot  \Ig(\mu(t)),\qquad \text{a.e.\ } t \in (t_1,t_2),\label{eq:isop polya}
\end{equation}
Finally, if also $(u^*)' \neq 0$ a.e.\ on $(r_{t_2},r_{t_1})$, then the inverse $H\coloneqq (u^*)^{-1} \colon (t_1,t_2) \to (r_{t_2},r_{t_1})$ is locally absolutely continuous and it holds
\begin{equation}\label{eq:new mess fixed}
       |H'(t)||Du|_p\equiv {\sf C},  \qquad \text{a.e.\ }t\in(t_1,t_2)\text{ and }\Per( \{u>t\},\cdot)\text{-a.e.},
   \end{equation}
\begin{equation}\label{eq:du positivo}
    \mm(\{|D u|_1=0\}\cap \{t_1<   u< t_2\})=0.
\end{equation}
\end{theorem}
\begin{proof}
Set $m\coloneqq \esssup u$ and $M\coloneqq \esssup u.$ We subdivide the proof into two parts.

\smallskip

\noindent {\sc Rearrangement inequalities}. Here we prove \eqref{eq:improved polya rs} and \eqref{eq:pz t1t2}. We start with the former. Notice first that the right-hand side of \eqref{eq:improved polya rs} makes sense since $u^* \in BV_{loc}(\Omega^*;\omega)$ and thanks to coarea formula \eqref{eq:coarea fine}, recalling also \eqref{eq:Per u* g rt}. Set now $E \coloneqq \{t \in (m,M) \colon \Per(\{u>t\},\Omega) =0\}$ and notice that it is a Borel set, because $t\mapsto \Per(\{u>t\},\Omega)$ is Borel by the coarea formula. Hence for a.e.\  $t\in (m,M)\setminus E$   we can estimate
\begin{align*}
    \int \nchi_{D_u^+}|D u|_1^{p-1}\,\d\Per( \{u>t\},\cdot) &=  \int \nchi_{D_u^+} \frac{|D u|_1^p}{|\underline{D}u|_1}\,\d\Per( \{u>t\},\cdot)\\
   \text{(by Jensen's inequality)} \qquad   &\ge  \left( \frac{\Per( \{u>t\},D_u^+)}{\int \nchi_{D_u^+}|\underline Du|_1^{-1}\, \d\Per( \{u>t\},\cdot)}\right)^p \int \frac{\nchi_{D_u^+}}{|\underline Du|_1} \, \d\Per( \{u>t\},\cdot) \\
   & \overset{\eqref{eq:togli Du+}}{=}  \frac{\mathcal \Per( \{u>t\},\Omega)^p}{\Big(\int \nchi_{D_u^+}|\underline Du|_1^{-1} \, \d\Per( \{u>t\},\cdot)\Big)^{p-1}}.
\end{align*}
 Let us rewrite \eqref{eq:mu' inequality} here
 \begin{equation}\label{eq:mu' ineq riscritta}
     -\mu'(t) \ge  \int \nchi_{D_u^+}|\underline Du|_1^{-1} \, \d\Per( \{u>t\},\cdot), \qquad \text{a.e.\ }t \in(m,M).
 \end{equation}
Combining the above, we get
\begin{equation}
  \int\nchi_{D_u^+} |D u|^{p-1}_1\,\d\Per( \{u>t\},\cdot) \ge  \frac{\Per( \{u>t\},\Omega)^p}{\big(-\mu'(t)\big)^{p-1}}, \qquad \text{a.e.\ }t \in (m,M)\setminus E.
    \label{eq:key ineq}
\end{equation} 
Note that \eqref{eq:mu' ineq riscritta} with \eqref{eq:togli Du+} also implies
\begin{equation}\label{eq:CHIAVE DI TUTTO}
    \mu'(t)<0,\quad \text{for a.e.\ $t \in (m,M)\setminus E$.}
\end{equation}
We can thus obtain
\begin{equation}\label{eq:pz chain}
    \begin{split}
    \int_{\{t_1<u<t_2\}}|Du|_1^p \,\d\mm  & = \int_{\{t_1<\bar u<t_2\}} \nchi_{D_u^+} |Du|_1^p \,\d\mm \\
    &=  \int_{\{t_1<\bar u< t_2\}}  \nchi_{D_u^+} |\underline Du|_1^{p-1} \,\d |\dD u|\\
    & \overset{\eqref{eq:coarea fine}}{=} \int_{t_1}^{t_2}\int \nchi_{D_u^+} |\underline Du|_1^{p-1}\,\d \Per(\{u>t\},\cdot)\d t\\
    &\ge \int_{(t_1,t_2)\setminus E}\int  \nchi_{D_u^+}  |\underline Du|_1^{p-1}\, \d\Per( \{u>t\},\cdot) \d t  \\
    &\overset{\eqref{eq:key ineq}}{\ge}   \int_{(t_1,t_2)\setminus E }\frac{\Per( \{u>t\},\Omega)^p}{\big(-\mu'(t)\big)^{p-1}}\,\d t,
\end{split}
\end{equation}
(having used  Assumption \eqref{eq:ass2} in the third equality). By Lemma \ref{lem:derivative mu}, we know that
\begin{equation}\label{eq:mu' swag}
    -\mu'(t) = \frac{\Per(\{ u^*>t\})}{|(u^*)'|((u^*)^{-1}(\{t\}))},\qquad\text{a.e.\ }t \in  u^*(D^+_{u^*}).
\end{equation}
Since $\mu'(t) =0$ for a.e.\ $t \in (m,M)\setminus u^*(D^+_{u^*})$ by Lemma \ref{lem:derivative mu}, from \eqref{eq:CHIAVE DI TUTTO} we deduce that up to measure zero sets $   (m,M) \setminus E \subset u^*(D^+_{u^*})$. Hence we can use \eqref{eq:mu' swag} in almost all these points  and by definition of $E$, since $t\mapsto r_t$ is Borel and $\Per(\{u^*>t\},\cdot)$ is concentrated on $r_t$ (cf. \eqref{eq:Per u* g rt}), we get
\begin{align*}
      \int_{(t_1,t_2)\setminus E}\frac{\Per( \{u>t\},\Omega)^p}{\big(-\mu'(t)\big)^{p-1}}\,\d t &= 
      \int_{(t_1,t_2)\setminus E} \left(\frac{\Per( \{u>t\},\Omega)}{\Per( \{u^*>t\})}\right)^p |(u^*)'|^{p-1}(r_t)\Per(\{ u^*>t\}) \, \d t \\
      &=\int_{t_1}^{t_2} \int \left(\frac{\Per( \{u>t\},\Omega)}{\Per( \{u^*>t\})}\right)^p |(u^*)'|^{p-1}\,\d \Per(\{ u^*>t\},\cdot) \d t.
\end{align*}
We thus proved \eqref{eq:improved polya rs}. 

Next, we prove \eqref{eq:pz t1t2}. Suppose  that $u \in W^{1,p}_{loc}(\Omega)$ and \eqref{eq:isop improved} hold. Recall that in this case  $u \in BV_{loc}(\Omega)$ with $|\dD^s u|=0$ and by  \eqref{eq:dependence gradient} also $|Du|_1\le |Du|_p$ $\mm$-a.e.\ on $\Omega$. Thanks to \eqref{eq:isop improved}, we know that $u^* \in {\sf AC}_{loc}(r_{t_2},r_{t_1})$, by Lemma \ref{lem:u* is AC}. Again by \eqref{eq:isop improved}, we have
\begin{equation}\label{eq:E null}
    \text{$E \cap (t_1,t_2)$ has measure zero,}
\end{equation}
(recall $E \coloneqq \{t \in (m,M) \colon \Per(\{u>t\},\Omega) =0\} $) since $\Per( \{u^* >t\})=g(r_t)>0$ for all $t \in (t_1,t_2)$ by \eqref{eq:Per u* g rt} and the assumptions on the weight $g$.
Combining \eqref{eq:improved polya rs} with \eqref{eq:isop improved} and using coarea \eqref{eq:coarea fine} for the right-hand side, we get
\[
\int_{\{t_1<u< t_2\}} |D u|_p^p \, \d \mm \overset{(\ast)}{\ge} \int_{\{t_1<u< t_2\}} |D u|_1^p \, \d \mm \ge {\sf C}^p \int_{r_{t_2}}^{r_{t_1}} | (u^*)'|^{p-1}|Du^*|_1 \, \d \omega.
\]
Recalling the identifications in Lemma \ref{lem:LEMMONE}, we get \eqref{eq:pz t1t2}.

\smallskip

\noindent {\sc Equality case}. Suppose now that equality occurs in \eqref{eq:pz t1t2}. Since we must have  equality in inequality $(*)$, we deduce immediately that
\begin{equation}\label{eq:for future use}
    |Du|_p =|Du|_1,\quad  \mm\text{-a.e.\ on }\{t_1<\bar u<t_2\}.
\end{equation}

Next, since inequality \eqref{eq:pz t1t2} is deduced by combining inequalities \eqref{eq:improved polya rs} and \eqref{eq:isop improved} we deduce that both are actually equalities. In particular \eqref{eq:isop polya} is proved. Note that we also used that
\[
\int |(u^*)'|^{p-1}\,\d \Per(\{ u^*>t\},\cdot) \d t>0, \qquad \text{a.e. } t\in (m,M),  
\]
since $t\in u^*(D_{u^*}^+)$ for a.e.\ $t \in (m,M),$ as $E$ is a negligible set by \eqref{eq:E null}.
We shall now trace all the equalities in the proof of \eqref{eq:improved polya rs} to get the rest of the desired properties. In particular, the last inequality in \eqref{eq:pz chain} is in fact an equality and so  \eqref{eq:key ineq} is an equality for a.e.\ $t \in (t_1,t_2).$ In turn equality in \eqref{eq:pz chain} implies equality in \eqref{eq:mu' ineq riscritta}  and in the application of Jensen's inequality
    \begin{align*}
         & \int \nchi_{D^+_u}\frac{|Du|^p_1}{|\underline{D}u|_1}\,\d\Per( \{u>t\},\cdot)  = \left( \frac{\Per( \{u>t\},\Omega)}{\int \nchi_{D_u^+}|\underline Du|_1^{-1}\, \d\Per( \{u>t\},\cdot)}\right)^p \int \frac{\nchi_{D^+_u}}{|\underline{D}u|_1} \, \d\Per( \{u>t\},\cdot),
    \end{align*}
for a.e.\ $t \in (t_1,t_2)$. By the characterization of the equality case in Jensen's inequality, we obtain that for a.e.\ $t\in(t_1,t_2)$ it holds
   \begin{equation}
       |Du|_1\equiv c_t \qquad \frac{\nchi_{D^+_u}}{|\underline{D}u|_1}\Per( \{u>t\},\cdot)\text{-a.e.},
    \label{eq:ct jensen}
   \end{equation}
   for some constant $c_t> 0$ possibly depending on $t$. Actually, since $\Per( \{u>t\},\Omega \setminus D_u^+)=0$ for a.e.\ $t$ (recall \eqref{eq:togli Du+}), then \eqref{eq:ct jensen} upgrades to
   \begin{equation}
       |Du|_1\equiv c_t,  \qquad \Per( \{u>t\},\cdot)\text{-a.e.}.
    \label{eq:ct}
   \end{equation}
   Hence, equality in \eqref{eq:mu' ineq riscritta} combined with \eqref{eq:ct} gives
   \begin{equation}\label{eq:ct quasi trovato}
       -\mu'(t)=\frac{\Per( \{u>t\},\Omega)}{c_t}\overset{\eqref{eq:isop polya}}{=} {\sf C} \cdot \frac{\Per( \{u^*>t\})}{c_t}, \qquad \text{a.e.\ $t\in(t_1,t_2)$.}
   \end{equation}
   Let us now assume that $(u^*)' \neq 0$ $\omega$-a.e.\ on $(r_{t_2},r_{t_1})$.  Then $u^*$, being also strictly decreasing and locally absolutely continuous,  is invertible in $(r_{t_2},r_{t_1})$  with locally absolutely continuous inverse $H \colon(t_1,t_2)\to  (r_{t_2},r_{t_1}) $. Moreover $(t_1,t_2)\setminus u^*(D_{u^*}^+)$ is negligible as previously observed.
   Therefore \eqref{eq:mu' swag} gives
   \[
   -\mu'(t)=\frac{\Per(\{u^*>t\})}{|(u^*)'(H(t))|}, \qquad \text{a.e.\ $t \in (t_1,t_2).$}
   \]
   Hence, combining the above with \eqref{eq:ct quasi trovato} entails
   \[
   c_t={\sf C}\cdot |(u^*)'(H(t))|, \qquad \text{a.e.\ $t \in (t_1,t_2).$}
   \]
   Finally, since $u^*$ and $H$ are locally absolutely continuous, respectively where they are defined, they preserve $\mathcal L^1$-null sets. Hence preimages of $\mathcal L^1$-null subsets via $H$ are also $\mathcal L^1$-null. Therefore for a.e.\ $t \in (t_1,t_2)$ the function $u^*$ is differentiable at $H(t)$, the function $H$ is differentiable at $t$  and it holds
    \begin{equation}\label{eq:inverse derivative}
     (u^*)'(H(t))H'(t)=(u^*(H(t))'=1.
    \end{equation}
Thus  we deduce
 \[
   c_t=\frac{{\sf C}}{|H'(t)|}, \qquad \text{a.e.\ $t \in (t_1,t_2)$},
   \]
   and so
\begin{equation*}
       |H'(t)||Du|_1\equiv {\sf C},  \qquad \text{a.e.\ }t\in(t_1,t_2)\text{ and }\Per( \{u>t\},\cdot)\text{-a.e.}.
   \end{equation*}
This combined with \eqref{eq:for future use} shows \eqref{eq:new mess fixed}. Finally to show \eqref{eq:du positivo} we recall that we have equality in \eqref{eq:mu' ineq riscritta}, hence the last part of Proposition \ref{prop:non vanishing} gives   $\mm(D^0_{u} \cap \{ t_1<  u <t_2\}) =0$ (recall that we are assuming $\omega(\{(u^*)'=0\}\cap (r_{t_2},r_{t_1}))=0$), which shows \eqref{eq:du positivo} and ends the proof.
\end{proof}

The proof of our first main result then follows from Theorem \ref{thm:Polya W1p} and an approximation argument.
\begin{proof}[Proof of Theorem \ref{thm:main Sobolev PZ metric}]
Assume that $\int_\Omega |D u|_p^p\d \mm <+\infty$, otherwise there is nothing to prove.

From \eqref{eq:MeyersSerrin} there exists a sequence $(u_n)\subset C(\Omega)\cap W^{1,p}_{loc}(\Omega)$ so that  $u-u_n\to 0$ in $L^p(\Omega)$
        and $\int_\Omega |D u_n|_p^p \, \d \mm \to \int_\Omega |Du|_p^p\,\d\mm$.  Up to further cut-off and truncation  we can assume that $\supp(u_n)\subset (\supp (u))^\eps$
and  $\essinf u \le \essinf u_n$ for all $n\in\N$. In particular, for any $t>\essinf u_n$
we have
\begin{equation}\label{eq:mild level bound}
\begin{split}
      \mm(\{u_n>t\})&\le \mm(\{u>t' \})+\mm(\{|u_n-u|>t-t'\})\\
      &\le\mm(\{u>t' \})+|t-t'|^{-p}\|u_n-u\|_{L^p(\Omega)}^p <+\infty,
\end{split}
\end{equation}
for all $t'\in (\essinf u_n, t).$
Moreover $(u_n) \subset  C(\Omega)$ and so each $u_n$ satisfies Assumption \eqref{eq:ass2} (recall Section \ref{sec:key}). Therefore, by \eqref{eq:pz t1t2} in Theorem \ref{thm:Polya W1p} we have  for all $n\in \N$
\begin{equation}
    \int_\Omega |D u_n|_p^p \,\d \mm \ge {\sf C}^p \int_{\Omega^*} |(u_n^*)'|^p\,\d\omega,\label{eq:polya all n}
\end{equation}
with $u_n^*\in {\sf AC}_{loc}(\Omega^*).$ In particular $\sup_n\int_{\Omega^*} |(u_n^*)'|^p\,\d\omega<\infty.$ However $\omega$ is a weight bounded above and away from zero in every compact subset of $\Omega^*$, and so we deduce using the Morrey's inequality that $u_n^*$ are locally uniformly H\"older continuous in $\Omega^*.$ 
Moreover, $u_n^*$ are also locally uniformly bounded, as can be easily checked using \eqref{eq:mild level bound} and the fact that each $u_n^*$ is non-increasing.
Therefore, up passing to a subsequence, the functions $u_n^*$ converge locally uniformly to some $\tilde u\in {\sf AC}_{loc}(\Omega^*).$ Moreover $\tilde u$ is non-increasing as so are $u_n^*.$ Let us now show  that $\tilde u=u^*$ in $\Omega^*$.
As both $u^*$ and $\tilde u$ are continuous and  non-increasing, it is enough to show that $\omega(\{u^*>t\})=\omega(\{\tilde u>t\})$ for all $t>\essinf u^*=\essinf u$ and that $\essinf \tilde u=\essinf u^*$ (see e.g.\ \cite[Lemma 1.1.1]{ciao}). We first observe that $u-u_n\to 0$ in $L^p(\Omega)$ implies that 
\begin{equation}\label{eq:conv of distr}
    \mm(\{u_n>t\})\to \mm(\{u>t\}), \quad \text{for all $t>\essinf u$  satisfying $\mm(\{u=t\})=0$}
\end{equation}
and in particular for a.e.\ $t>\essinf u.$ This also gives $\essinf u_n\to \essinf u$ and so $\essinf u^*=\essinf \tilde u$. Then by \eqref{eq:conv of distr} and the properties of the rearrangement $\omega(\{u_n^*>t\})\to \omega(\{u^*>t\})$ for a.e.\ $t>\essinf u.$
Additionally, since $u_n^*$ converge locally uniformly in $\Omega^*$ to $\tilde u$ and all are non-increasing functions, it is easy to see that $\omega(\{u_n^*>t\})\to \omega(\{\tilde u>t\})$ for all $t>\essinf u^*$ such that $\omega(\{\tilde u=t\})=0.$ This shows that $\omega(\{u^*>t\})=\omega(\{\tilde u>t\})$ for a.e.\ $t>\essinf u$ and so actually for all $t,$ that is what we wanted.

Taking the limit in \eqref{eq:polya all n} by lower semicontinuity recalling the uniform convergence of $u_n^*$ to $u^*$ (e.g.\ arguing as in \cite[Proposition 1.2.7]{GP20}) we obtain \eqref{eq:PZ abstract}.
\end{proof}
\begin{remark}\label{rem: Dirichlet BC 1D}
In Theorem \ref{thm:main Sobolev PZ metric}, if we also assume that $u\ge 0$ and that $u \in W^{1,2}_0(U)$ for some open set $U\subset \Omega$ with $\mm(U)<\mm(\Omega)$, then we additionally have
$$\lim_{t \to b^-} u^*(t)=0,$$
where $b \in I$ is so that $U^*=(a,b)\subset \Omega^*$. Indeed, take $u_n \in \Lip_{bs}(U)$ with $u_n\ge 0$ and such that $u_n\to u$ in $W^{1,p}(\X)$. Since $\mm(U)<\mm(\Omega)\le \omega(\R)$, we deduce that $g(b)>0.$ Then \eqref{eq:PZ abstract} implies that $u_n^*$ are  uniformly H\"older continuous in a neighborhood of $b$ (because $w\ge g(b)/2\d t$ in a neighborhood of $b$). Moreover $\mm(\supp(u_n^*))<\mm(U^*)$ and so trivially $\lim_{t \to b^-} u_n^*(t)=0$ for all $n.$ Up to passing to a subsequence, $u_n^*$ converge uniformly in a neighborhood of $b$ to some function which must be $u^*$ (e.g.\ arguing as in the proof of Theorem \ref{thm:main Sobolev PZ metric} or using the $L^p$-continuity of the rearrangement). Therefore we must have also $\lim_{t \to b^-} u^*(t)=0$. \fr
\end{remark}
We conclude the Sobolev theory by studying further the equality case under general geometric assumptions on the source space that can be roughly summarized as follows: there is an underlying isoperimetric principle whose isoperimetric sets are metric balls with the property of being covered by minimizing geodesics generating from their centre. Additionally, we require the Sobolev-to-Lipschitz property to hold, namely we require that if $u \in W^{1,p}(\X)$ and $|Du|_p \in L^\infty(\X)$, then there is a representative $\tilde u\in \Lip(\X)$ so that $\Lip(\tilde u) = \|Du\|_{L^\infty}$.
\begin{theorem}\label{thm:radiality astratta}
    Under the assumptions of Theorem \ref{thm:main Sobolev PZ metric} with $\Omega=\X$ suppose furthermore that $\Xdm$ { is a PI-space} having the Sobolev-to-Lipschitz property and that equality  in \eqref{eq:isop general} for some {Borel} set $E$ has the following consequences:
    \begin{itemize}
        \item[{\rm i)}]$E$ is equal  up to $\mm$-null sets to a  ball $B_\rho(p)$ for some $p$ and with $\rho\coloneqq {\sf C}^{-1}F_g^{-1}(\mm(E))$;
        \item[{\rm ii)}] 
        If $\rho<\diam (\X)$ then every point in $\overline{B_\rho(p)}$ belongs to the interior of a geodesic starting from $p$ and of length strictly greater than $\rho.$ 
    \end{itemize}
    If $u$ satisfies equality in \eqref{eq:PZ abstract}  and  $|(u^*)'|\neq 0$ a.e.\ on $\{\essinf u< u^* < \esssup u\}$, then $u$ is radial, i.e.\ for some $x_0 \in\X$
    \[
    u=u^*\circ ({\sf  C} \sfd_{x_0}), \quad \mm\text{-a.e.}.
    \]
\end{theorem}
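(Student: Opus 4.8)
The plan is to combine the equality statement of Theorem~\ref{thm:Polya W1p} with the geometric hypotheses (i)--(ii) and the Sobolev-to-Lipschitz property, so as to show that every superlevel set of $u$ is a metric ball with one and the same centre $x_0$; then $u=u^*\circ(\sfC\,\sfd_{x_0})$ will follow by comparing distribution functions. First I set $m\coloneqq\essinf u$, $M\coloneqq\esssup u$ (so $m<M$, both sides of \eqref{eq:PZ abstract} being non-zero, and $\int_\X|Du|_p^p\,\d\mm<\infty$, else there is nothing to prove). Since $\{u>t\}\subset\supp(u)\subset(\supp u)^\eps\cap\X$ for $t>m$, \eqref{eq:isop general} gives \eqref{eq:isop improved} on the whole range, and equality in \eqref{eq:PZ abstract} is equality in \eqref{eq:pz t1t2} with $(t_1,t_2)=(m,M)$. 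Theorem~\ref{thm:Polya W1p} then yields two facts: each $\{u>t\}$ saturates \eqref{eq:isop general}, i.e.\ $\Per(\{u>t\},\X)=\sfC\cdot\Ig(\mu(t))$ for a.e.\ $t$; and, since $(u^*)'\neq 0$ a.e.\ on $\{m<u^*<M\}$, the inverse $H\coloneqq(u^*)^{-1}$ is locally absolutely continuous with $|H'(u)|\,|Du|_p=\sfC$ $\mm$-a.e.\ on $\{m<u<M\}$ (this is \eqref{eq:mess fixed}). Moreover $(u^*)'\neq 0$ a.e.\ forces $\mu$ to be absolutely continuous by Proposition~\ref{prop:non vanishing}\,i), hence continuous and strictly decreasing on $(m,M)$.

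Next, for a.e.\ $t\in(m,M)$ the set $\{u>t\}$ saturates \eqref{eq:isop general} and lies in $\mathcal D$, so hypothesis (i) gives that it coincides $\mm$-a.e.\ with a ball $B_{\rho_t}(p_t)$, $\rho_t=\sfC^{-1}F_g^{-1}(\mu(t))=\sfC^{-1}r_t$ (notation of \eqref{eq:rt}); the function $t\mapsto\rho_t$ is continuous and strictly decreasing (set $\rho_M\coloneqq\lim_{t\uparrow M}\rho_t$, $\rho_m\coloneqq\lim_{t\downarrow m}\rho_t$), the balls are nested up to $\mm$-null sets, and $\rho_t<\diam(\X)$ for a.e.\ $t$ (if $\rho_t>\diam(\X)$ then $B_{\rho_t}(p_t)=\X$ and $\mu(t)=\mm(\X)$, incompatible with $\mu$ strictly decreasing, and $\{\rho_t=\diam(\X)\}$ is a single point). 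I then look at the ``radius'' function $w\coloneqq\sfC^{-1}H(u)$ on $\{m<u<M\}$, extended by the boundary constants $\rho_M$, $\rho_m$ on $\{u=M\}$, $\{u=m\}$: by \eqref{eq:mess fixed} one has $|Dw|_p=1$ $\mm$-a.e.\ on $\{m<u<M\}$ and $|Dw|_p=0$ on the two extremal level sets (by the locality \eqref{eq:Du1 local on Cu}); since for each $r$ the truncation $\min\{w,r\}$ differs from a constant only on $\{u>H^{-1}(\sfC r)\}$, a set of finite $\mm$-measure, it lies in $W^{1,p}(\X)$ with $L^\infty$-gradient $\le 1$, so the Sobolev-to-Lipschitz property makes $w$ $1$-Lipschitz. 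Its sublevels then satisfy $\{w<r\}=B_r(p(r))$ $\mm$-a.e.\ for a.e.\ $r$, with $p(r)$ the centre supplied above.

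The crux is proving concentricity of these balls, and this is where $\supp\mm=\X$ enters: an \emph{open} $\mm$-null set is empty, so from $\mm(\{w<r\}\,\triangle\,B_r(p(r)))=0$ and the continuity of $w$ one gets $B_r(p(r))\subset\{w\le r\}$ and $\{w<r\}\subset\overline{B_r(p(r))}$, and likewise the nested inclusions become topological. Applying hypothesis (ii) to the balls $B_r(p(r))$ (legitimate since $r<\diam(\X)$) and arguing along the geodesics issuing from their centres --- using the $1$-Lipschitz bound on $w$ and the above inclusions --- one first obtains $\sfd(p(r),p(r'))\le|r-r'|$, so that $r\mapsto p(r)$ extends continuously and $x_0\coloneqq\lim_{r\downarrow\rho_M}p(r)$ is well defined with $w(x_0)=\rho_M$; a second run of the same geodesic comparison, now against the balls $B_{\,\cdot\,}(x_0)$ (which are contained in the corresponding sublevels because $w-\rho_M\le\sfd_{x_0}$), upgrades this to $p(r)=x_0$ for a.e.\ $r$, the extremal set $\{u\ge M\}$ being treated by the same comparison. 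Thus $w=\max\{\min\{\sfd_{x_0},\rho_m\},\rho_M\}$ $\mm$-a.e.

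To conclude: by the previous step $\{u>t\}=B_{\rho_t}(x_0)=\{u^*(\sfC\,\sfd_{x_0})>t\}$ up to $\mm$-null sets for a.e.\ $t\in(m,M)$ (here one uses that $u^*$ is strictly decreasing on $\{m<u^*<M\}$), and trivially also for $t\le m$ and $t\ge M$; since two functions with $\mm$-a.e.\ equal superlevel sets at a.e.\ level agree $\mm$-a.e., this gives $u=u^*\circ(\sfC\,\sfd_{x_0})$ $\mm$-a.e., as desired. I expect the main difficulty to be exactly the concentricity step: the centres $p_t$ being defined only up to $\mm$-negligible modifications, their coincidence is not a formal consequence of nestedness but must be extracted from hypothesis (ii) together with the full support of $\mm$ and the Lipschitz regularity of $w$, with genuine care required at the plateaus of $u^*$ (i.e.\ when $\mm(\{u=M\})>0$ or $\{u=m\}$ has infinite measure), where $w$ is a \emph{truncated} distance rather than $\sfd_{x_0}$ itself.
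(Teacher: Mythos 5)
Your overall strategy matches the paper's: equality in the P\'olya--Szeg\H{o} inequality forces the superlevel sets to saturate the isoperimetric inequality, hypothesis (i) turns them into balls, the chain rule (Lemma~\ref{lem:local chain rule}) together with the Sobolev-to-Lipschitz property produces a $1$-Lipschitz radius-type function, and concentricity of the balls is extracted from hypothesis (ii) plus the Lipschitz bound; the truncation $\min\{w,r\}$ plays the same role as the paper's $u_n=m_n\vee u\wedge M_n$.

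The concentricity step, which you correctly flag as the crux, is however not right as written. The ``second run of the same geodesic comparison, now against the balls $B_{\cdot}(x_0)$'' invokes hypothesis~(ii) for balls centred at $x_0$; but (ii) only applies to balls supplied by (i) from sets achieving equality in \eqref{eq:isop general}, and before concentricity is established there is no reason $B_\rho(x_0)$ is such a ball. Also, your containment $w-\rho_M\le\sfd_{x_0}$ only yields $B_{r-\rho_M}(x_0)\subset\{w<r\}$, not $B_r(x_0)\subset\{w<r\}$, so the proposed comparison does not close. Fortunately neither step is needed: the first geodesic argument you sketch already gives $p(r)=p(r')$ outright, not merely $\sfd(p(r),p(r'))\le|r-r'|$. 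Indeed, if $r'<r$ and $\delta\coloneqq\sfd(p(r),p(r'))>0$, then (ii) applied to the isoperimetric ball $B_r(p(r))$ produces a geodesic $\gamma$ from $p(r)$ of length $>r$ with $\gamma_\delta=p(r')$; the topological inclusion $B_{r'}(p(r'))\subset\overline{B_r(p(r))}$ forces $\delta+r'\le r$, and since $\gamma_r\in\partial B_r(p(r))\subset\{w=r\}$ and $\gamma_{\delta+r'}\in\partial B_{r'}(p(r'))\subset\{w=r'\}$ (the boundary--level-set inclusions being obtained from $\supp\mm=\X$ and (ii) exactly as you argue), the $1$-Lipschitz bound gives $r-r'=|w(\gamma_r)-w(\gamma_{\delta+r'})|\le\sfd(\gamma_r,\gamma_{\delta+r'})=r-r'-\delta$, a contradiction. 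This is the paper's argument; replacing your two-step detour by it makes the rest of your proof go through.
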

\begin{proof}
 Set  $m\coloneqq \essinf u$ and $M\coloneqq \esssup u.$ 
For brevity, we will write $E\cong F$ to say that $\mm(E \triangle F)=0.$ Recall that assumption \eqref{eq:ass2} for $u$ is automatically satisfied because $\Xdm$ is assumed to be a PI-space.
 We apply Theorem \ref{thm:Polya W1p} with $t_1\coloneqq m$ and $t_2\coloneqq M.$
Then under the current assumptions the rigidity part of Theorem \ref{thm:Polya W1p} gives us the following:
\begin{itemize}
\item[a)]  the set $E=\{u>t\}$ satisfies equality in \eqref{eq:isop general} for a.e.\ $t \in(m,M)$;
\item[b)] the inverse $H\coloneqq (u^*)^{-1} \colon (m,M) \to (r_{t_2},r_{t_1})$ is locally absolutely continuous and satisfies
\begin{equation}\label{eq:new mess fixed in proof}
       |H'(t)||Du|_p\equiv {\sf C},  \qquad \text{a.e.\ }t\in(m,M)\text{ and }\Per( \{u>t\},\cdot)\text{-a.e.}
   \end{equation}
   and
   \begin{equation}\label{eq:du positivo in proof}
   \mm(\{|D u|_1=0\}\cap \{m<   u< M\})=0.
\end{equation}
\end{itemize}
We claim that
\begin{equation}\label{eq:gradient identity avr}
    |H'|(u)|Du|_p={\sf C},\quad \mm\text{-a.e. on } \{m< u<M\}.
\end{equation}
Applying the coarea formula in the version \eqref{eq:coarea fine}, we get
\[
\begin{split}
    \int_{\{t_1< u< t_2\}} \left|  |H'(u)||Du|_1 -{\sf C}\right| |D u|_p \d \mm 
    &= \int_{\{t_1< \bar u< t_2\}\cap C_u} \left|  |H'(\bar u)||Du|_p -{\sf C}\right| |D u|_1 \d \mm \\
    &=\int_{t_1}^{t_2} \int \nchi_{C_u} \left|  |H'(t)||Du|_p -{\sf C}\right| \d \Per( \{u>t\},\cdot) \d t=0,
\end{split}
\]
where in the second step we used that  $\Per(\{u>t\},\cdot)$ is concentrated on the essential boundary $\partial^*\{u>t\}\cap \Omega$ for a.e.\ $t\in(t_1,t_2)$ (by the PI-assumption) and that  at any $x \in  C_u\cap \partial^*\{u>t\}$ we have $\bar u(x) =t$. Thanks to \eqref{eq:du positivo in proof} we conclude \eqref{eq:gradient identity avr}.

We can in fact improve \eqref{eq:gradient identity avr} to every $t\in (m,M)$. Indeed $t\mapsto \mm(\{u>t\})$ is continuous in $(m,M)$ (recall Proposition \ref{prop:non vanishing}) and so $\mm(\{u=t\})=0$ for all $t\in(m,M)$. Moreover for any $t \in(m,M)$ we can take $t_n\to t$ so that $\{u>t_n\}$ satisfies equality in \eqref{eq:isop general}. Hence $\mm(\{u>t_n\}\triangle \{u>t\})\to 0$ as $t_n \to t$, which by lower semicontinuity gives that $\{u>t\}$ also satisfies equality in \eqref{eq:isop general}.

From a) and by assumption i) we deduce that $\{u>t\}\cong B_t\coloneqq B_{\rho_t}(y_t)$  for some $y_t\in \X$ with $\rho_t={\sf C}^{-1}F_g^{-1}(\mu(t))$. In particular $\mu(t)=F_g({\sf C} \rho_t). $ For every $n\in \N$ with $1/n<(M-m)$ set $m_n\coloneqq m+1/n$ and $M_n\coloneqq (M-1/n)\wedge n$ and consider the sequence of functions 
$$u_n\coloneqq m_n\vee u\wedge M_n,$$
with the convention that $(\infty-1/n)\wedge n=n.$
Set $f\coloneqq {\sf C}^{-1}H\circ u_n$.
Next, we claim that 
\begin{quote}
    for all $t\in ({\sf C}^{-1}H(M_n),{\sf C}^{-1} H(m_n))$ we have $\{f<t\}\cong B_{t}(x_t)$ for some $x_t\in \X$ such that, if $t<\diam (\X)$, then every point in $\overline{B_{t}(x_t)}$ belongs to the interior of a geodesic starting from $x_t$ and of length strictly greater than $t.$ 
\end{quote}
 Indeed we have $\{f<t\}\cong \{{\sf C}^{-1}H\circ u_n<t\}\cong \{ u_n>H^{-1}({\sf C} t)\}\cong \{ u>H^{-1}({\sf C} t)\}$. We  observed above that $\{ u>H^{-1}({\sf C} t)\}\cong B_{\rho}(p)$ where $\rho$ satisfies $\mm(\{ u>H^{-1}({\sf C} t)\})=F_g(C \rho )$ and, by assumption ii),  if $\rho<\diam (\X)$, then every point in $\overline{ B_{\rho}(p)}$ belongs to the interior of a geodesic starting from $p$ and of length strictly greater than $\rho.$  Moreover, by  equimeasurability
 \[
   F_g(C \rho )=\mm(\{ u>H^{-1}({\sf C} t)\})=\omega(\{u^*>H^{-1}({\sf C} t)\})=\omega((-\infty,{\sf C} t))=F_g({\sf C}t).
 \]
As $g$ is strictly positive and $F_g$ is strictly increasing in $I$,  we must have $\rho=t$, which proves the above claim. Note that in particular $B_{s}(x_s)\subset B_{t}(x_t) $ up to $\mm$-null sets for all $s<t$. As a consequence  $B_{s}(x_s)\subset \overline{ B_{t}(x_t)}$ (as sets) for all $s<t$, indeed using that balls have positive measures we must have that for all $y \in B_s(x_s)$ and all $r>0$ the ball $B_r(y)$ intersects $B_t(x_t)$.

Next we claim that 
\begin{center}
    $f$ has a 1-Lipschitz representative.
\end{center}
We apply the chain rule in Lemma \ref{lem:local chain rule} with $\Omega=\X$, $I=(m,M)$ and $\phi=H$. Indeed $u$ takes value in $[m_n,M_n]\subset I$ up to $\mm$-null sets and from \eqref{eq:gradient identity avr}  we have $|H'|(u)|Du|_p\in L^p_{loc}(\X).$ Hence we obtain that $H(u) \in  W^{1,p}_{loc}(\X)$ and $|D (H(u))|_p=|H'|(u)|Du|_p={\sf C}$ $\mm$-a.e.\ in $\{m_n< u<M_n \}$. In particular by the locality  it holds $  |Df|_p= \nchi_{\{m_n<u<M_n\}}$ $\mm$-a.e.. The claim now follows from the Sobolev-to-Lipschitz property.

We now go on proving that $x_t$ is in fact independent of $t.$
To this goal let us first observe that 
\begin{equation}\label{eq:salvi ma astratti}
    \partial B_{t}(x_t)\subset \partial \{f<t\}\subset \{f=t\}, \qquad \forall\, t\in ({\sf C}^{-1}H(M_n),{\sf C}^{-1}H(m_n)).
\end{equation}
To see this note that for any such $t$ it holds $t<\diam(\X)$ indeed $\mm(B_t(x_t))=\mm(\{f<t\})\le \mm(\{m_n<u<M_n\})<\mm(\X)$. Hence $t<\diam(\X)$. In particular every point $y \in \partial B_{t}(x_t)$ belongs to the interior of a geodesic starting from $x_t$. From this we deduce that $\mm(B_r(y)\cap B_{t}(x_t))>0$ and $\mm(B_r(y)\setminus B_{t}(x_t))>0$ for all $r>0$. However $B_{t}(x_t)\cong \{f<t\}$ and so $\mm(B_r(y)\cap \{f<t\})>0$ and $\mm(B_r(y)\setminus  \{f\le t\})>0$ for all $r>0$ as well. This proves \eqref{eq:salvi ma astratti}. 

Assume now by contradiction that $x_t\neq x_{\bar t}$ for some $t,\bar t \in ({\sf C}^{-1}H(M_n),{\sf C}^{-1}H(m_n))$ with $\bar t<t.$ Set $\delta\coloneqq \sfd(x_t,x_{\bar t})>0.$ There is a geodesic $\gamma:[0,a)\to \X$, $a \in [0,\infty]$, with $\gamma_0=x_t$, $\gamma_{\delta}=x_{\bar t}$ and  $x\coloneqq \gamma_{t} \in \partial B_{t}(x_t)$. Hence $y\coloneqq \gamma_{\delta +\bar t}\in B_{{\bar t}}(x_{\bar t})$. Moreover, since $B_{\bar t}(x_{\bar t})\subset \overline{B_{t}(x_t)}$ we have  $t\ge \bar t+\delta .$
Then by \eqref{eq:salvi ma astratti} we have $f(x)=t$ and $f(y)=\bar t$ and since $f$ is 1-Lipschitz
\[
t-\bar t=f(x)-f(y)\le \sfd(x,y)=|t-(\bar t+\delta )|=t-(\bar t+\delta )=t-\bar t-\delta< t-\bar t,
\]
which is a contradiction. Hence $x_t=x_0$ for all $t \in ({\sf C}^{-1}H(M_n),{\sf C}^{-1}H(m_n))$ and for some $x_0 \in \X.$
Combining everything we obtain $\{f<t\}\cong B_{t}(x_0)$ for all $t \in ({\sf C}^{-1}H(M_n),{\sf C}^{-1}H(m_n))$. This shows that   $f=\sfd_{x_0}$  $\mm$-a.e.\ in $ \{{\sf C}^{-1}H(M_n)<f<{\sf C}^{-1}H(m_n)\}$ (actually everywhere by continuity) and by the definition of $f$
\[
    u= u^*\circ \big({\sf C}\sfd_{x_0}\big),\qquad \text{$\mm$-a.e. in }  \{m_n<u<M_n\},
\]
having used that $H^{-1}=u^*$ in $(H(M),H(m))$ and the fact that $H$ sends null sets to null sets.
By the arbitrariness of $n\in\N$ we deduce that the above also holds $\mm$-a.e.\ in $ \{m<u<M\}$.

To conclude the proof, observe that if $M<\infty$ and  $v\coloneqq \mm(\{u=M\})>0$ we also need to check that $u=u^*\circ \big({\sf C} \sfd_{x_0}\big)$ $\mm$-a.e.\ in $\{u=M\}.$ By definition of rearrangement we have $u^*=M$ in $(-\infty,F_g^{-1}(v))$. Moreover by the above discussion  it holds $\{u=M\}\cong \cap_{t<M} B_{\rho_t}(x_0)\cong B_r(x_0)$ with $r={\sf C}^{-1}F_g^{-1}(v)$. Combining these facts we have $u^*\circ\big({\sf C} \sfd_{x_0}\big)=M$  in $B_r(x_0)$, which is what we wanted. A similar argument shows that  $u^*\circ\big({\sf C} \sfd_{x_0}\big)=m$ at $\mm$-a.e.\ point in $\{ u=m\}.$
\end{proof}
In the previous proof, we used the following version of chain rule for Sobolev functions. We include a proof as we could not find it in the literature.\begin{lemma}[Chain rule]\label{lem:local chain rule}
    Let $\Xdm$ be a metric measure space, let $ \varnothing\neq \Omega\subset\X$ be open and $I\subset \R$ be an open interval (possibly unbounded). Let $\varphi \colon I\to \R$ be absolutely continuous (in particular $|\phi'|\in L^1(I)$)  and fix $p\in (1,\infty)$.  Assume that $u \in W^{1,p}_{loc}(\Omega)$ is so that $u$ takes values in $I$ up to $\mm$-negligible sets and $|\varphi'|(u)|Du|_p \in L^p_{loc}(\Omega)$.  Then, we have  $\varphi \circ u \in W^{1,p}_{loc}(\Omega)$ and
    \begin{align*}
        |D(\varphi \circ u)|_p = |\varphi '|(u)|Du|_p,\qquad\mm\text{-a.e.\ in }\Omega,
    \end{align*}
    where $|\varphi '|(u)$ is arbitrarily defined on $u^{-1}(\{t \in I \colon \nexists \varphi'(t)\}$. 
\end{lemma}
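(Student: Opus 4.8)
The plan is to reduce the statement to the known chain rule for global Sobolev functions and then use the locality of the minimal $p$-weak upper gradient together with the definition of $W^{1,p}_{loc}$. First I would recall that $W^{1,p}_{loc}(\Omega)$ is defined via multiplication by cut-offs $\eta \in \Lip_{bs}(\Omega)$: it suffices to prove that $\eta \cdot (\varphi\circ u) \in W^{1,p}(\X)$ for every such $\eta$ and to identify its weak upper gradient on $\{\eta = 1\}$. Since by definition $\eta u \in W^{1,p}(\X)$, but $\varphi\circ(\eta u) \neq \eta\,(\varphi\circ u)$ in general, a small amount of care is needed; one convenient route is to fix $\eta$, work on an open set $U$ with $\{\eta=1\}\supset U \Supset$ (the region of interest) where $u$ coincides with a genuine $W^{1,p}(\X)$ function (after truncation of $u$ and composition with a suitably modified $\varphi$ that is affine outside a bounded interval, hence globally Lipschitz and affine at infinity). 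Because the asserted identity is local and independent of the representative of $|\varphi'|$ on the critical set, it is enough to establish it on such $U$.

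The key technical input is the classical chain rule for the minimal $p$-weak upper gradient: if $v \in W^{1,p}(\X)$ and $\psi\colon\R\to\R$ is Lipschitz, then $\psi\circ v \in W^{1,p}(\X)$ with $|D(\psi\circ v)|_p = |\psi'|(v)\,|Dv|_p$ $\mm$-a.e., where $|\psi'|(v)$ may be defined arbitrarily on the $\mm$-null set $v^{-1}(N_\psi)$, $N_\psi$ the (Lebesgue-null) non-differentiability set of $\psi$; here one uses the crucial fact (valid on arbitrary metric measure spaces) that $|Dv|_p = 0$ $\mm$-a.e.\ on $v^{-1}(B)$ whenever $B\subset\R$ is Lebesgue-null. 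To pass from Lipschitz $\psi$ to the merely absolutely continuous $\varphi$ with $|\varphi'|\in L^1$, I would approximate: write $\varphi = \varphi(t_0) + \int_{t_0}^{t}\varphi'(s)\,\d s$, and set $g_k \coloneqq \varphi' \cdot \mathbbm 1_{\{|\varphi'|\le k\}}$, $\varphi_k(t)\coloneqq \varphi(t_0)+\int_{t_0}^t g_k$, which is $k$-Lipschitz with $\varphi_k \to \varphi$ uniformly on compact subintervals and $|\varphi_k'| = |\varphi'|\mathbbm 1_{\{|\varphi'|\le k\}} \uparrow |\varphi'|$ a.e. Applying the Lipschitz chain rule to each $\varphi_k\circ u$ on $U$ gives $|D(\varphi_k\circ u)|_p = |\varphi_k'|(u)\,|Du|_p \le |\varphi'|(u)\,|Du|_p \in L^p_{loc}$ by hypothesis; since also $\varphi_k\circ u \to \varphi\circ u$ in $L^p_{loc}(U)$ (dominated convergence, using $u$ bounded on the relevant region or truncating), the lower-semicontinuity/closure of the Cheeger energy yields $\varphi\circ u \in W^{1,p}_{loc}(U)$ with $|D(\varphi\circ u)|_p \le |\varphi'|(u)|Du|_p$.

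For the reverse inequality I would run the same argument with $\varphi$ replaced locally by its inverse on monotonicity intervals, or more cleanly invoke the standard two-sided bound: writing $u = \chi\circ(\varphi\circ u)$ on the sets where $\varphi$ is strictly monotone and using that $|Du|_p=0$ $\mm$-a.e.\ on the preimage of the (null) set where $\varphi'=0$ — this last point again being exactly the ``$v^{-1}(\text{null})$'' property — gives $|Du|_p \le |\chi'|(\varphi\circ u)\,|D(\varphi\circ u)|_p$ wherever $\varphi'(u)\neq 0$, which rearranges to $|\varphi'|(u)|Du|_p \le |D(\varphi\circ u)|_p$; on $\{\varphi'(u)=0\}$ both sides agree because $|D(\varphi\circ u)|_p=0$ there as well (it is $\le$ the left side which vanishes). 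Finally, undoing the truncations of $u$ and the modification of $\varphi$ by locality, and then letting $\eta$ exhaust $\Omega$, gives the claim on all of $\Omega$. The main obstacle I anticipate is purely bookkeeping: ensuring at each step that one genuinely stays inside $W^{1,p}(\X)$ (so that the global chain rule applies) while the objects $u$, $\varphi$ are only local/unbounded — this is handled by the truncation-and-affinization device above, and by the fact that the identity to be proved is local and representative-independent, so it suffices to verify it on an increasing family of good open sets.
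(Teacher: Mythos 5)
Your forward inequality $|D(\varphi\circ u)|_p\le|\varphi'|(u)|Du|_p$ is essentially the paper's argument: truncate $\varphi'$ to get Lipschitz approximations $\varphi_k$ with $|\varphi_k'|\le|\varphi'|$, apply the Lipschitz chain rule, localize with cut-offs, and pass to the limit via lower semicontinuity of the Cheeger energy (the paper uses $(-n)\vee\varphi'\wedge n$ rather than $\varphi'\mathbbm 1_{\{|\varphi'|\le k\}}$, but this is cosmetic; the paper also explicitly tracks the uniform bound $\|\varphi_n\|_{L^\infty}\le\|\varphi'\|_{L^1(I)}$ to control the term $\Lip(\eta)\|\varphi_n\|_{L^\infty}$, a detail you should keep in mind).

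The reverse inequality is where your proposal has a genuine gap. Inverting $\varphi$ and writing $u=\chi\circ(\varphi\circ u)$ requires a decomposition of $I$ into intervals on which $\varphi$ is strictly monotone, and such a decomposition does not in general exhaust $\{\varphi'\neq 0\}$: at a point where $\varphi'>0$ but $\varphi'$ changes sign in every neighborhood, $\varphi$ is not monotone on any interval containing it, so your argument says nothing there. Even where $\varphi$ is strictly monotone, $\chi=\varphi^{-1}$ is absolutely continuous only when $\varphi'\neq 0$ a.e.\ on that interval; if $\varphi'$ vanishes on a set of positive measure (which is perfectly compatible with strict monotonicity), $\chi$ fails to be absolutely continuous and the chain rule cannot be applied to it. There is also a latent circularity in invoking the chain rule for $\chi$ before the two-sided statement is established. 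The paper sidesteps all of this with the additive trick: for $k\in\N$, the function $\psi(t)\coloneqq kt-\varphi(t)$ is again absolutely continuous with $|\psi'|\in L^1(I)$ and, since $|\psi'(u)||Du|_p\le k|Du|_p+|\varphi'(u)||Du|_p\in L^p_{loc}$, it satisfies the same standing hypotheses as $\varphi$. The one-sided inequality applied to $\psi$ together with the subadditivity $k|Du|_p=|D(ku)|_p\le|D(\varphi\circ u)|_p+|D(\psi\circ u)|_p$ forces equality $\mm$-a.e.\ on $\{0\le\varphi'(u)\le k\}$; the choice $\psi(t)\coloneqq -kt-\varphi(t)$ handles $\{-k\le\varphi'(u)\le 0\}$, and letting $k\to\infty$ finishes. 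No invertibility, no monotonicity decomposition, no extra hypotheses on $\chi$. You should replace your reverse step by this argument.
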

\begin{proof}
First notice that the conclusion makes sense because $|Du|_p=0$ $\mm$-a.e.\ on  $u^{-1}(\{t \in I \colon \nexists \varphi'(t)\}$, thanks to the locality of the $p$-minimal weak upper gradient.

 Up to adding a constant to both $u$ and to $\phi$, we can assume that $0 \in I$ and $\varphi(0)=0$.
 Consider the function $\phi_n:\R\to \R$
\[
\varphi_n(t) \coloneqq \int_0^t(-n) \vee \varphi'(s)\wedge n \,\d s,
\]
where $\phi'(s)$ is taken to be 0 when $s\in \R\setminus I.$
Then $\varphi_n \in \Lip(\R)$ with $\varphi_n(0)=0$ and $\|\phi_n\|_{L^\infty(\R)} \le \|\phi' \|_{L^1(I)}.$ Moreover by dominated convergence $\phi_n \to \phi$ pointwise in $I$.
 
 Let $x \in \Omega$ be arbitrary. Since $u \in W^{1,p}_{loc}(\Omega)$ there exists $\bar u \in W^{1,p}(\X)$ and a bounded open  neighborhood $U_x\subset \Omega$  of $x$ such that $u\restr {U_x}=\bar u$. Moreover, since by assumption $|\phi'|(u)|D u|_p\in L^p_{loc}(\Omega),$ up to intersecting $U_x$ with another neighborhood of $x,$ we can assume that $|\phi'|(u)|D u|_p\in L^p(U_x)$. 
 By the usual chain rule with Lipschitz functions, we have
 $\varphi_n \circ \bar  u \in W^{1,p}(\X)$ and 
 \begin{equation}\label{eq:wug bound phi'}
     |D(\varphi_n\circ \bar u)|_p= |\varphi_n'|(\bar u)|D  \bar u|_p = |\varphi_n'|(u)|D   u|_p\le  |\varphi'|(u)|D   u|_p,\qquad \mm\text{-a.e.\ in $U_x$},
 \end{equation}
where we used that $|\phi'_n|\le |\phi'|$ pointwise a.e.\ in $I.$
Fix now $\eta \in \Lip(\X)$ with $0\le \eta \le 1$, $\supp(\eta)\subset U_x$ and such that $\eta=1$ in $B_{r}(x)$ for some $r>0$. Then $\eta \phi_n(\bar u)\in W^{1,p}(\X)$ and by the above identity
\[
|D(\eta \phi_n(\bar u))|_p\le \nchi_{U_x} \left(\Lip(\eta) \|\phi_n\|_{L^\infty(\R)}+ |\varphi'|(u)|D   u|_p \right)\qquad \mm\text{-a.e.\ in $U_x$}.
\]
Note that the right hand side is uniformly bounded in $L^p(\X)$ by the choice of $U_x$ and since $ \|\phi_n\|_{L^\infty(\R)}\le \|\phi' \|_{L^1(I)}$.
 Moreover $\eta \phi_n(\bar u)=\eta \phi_n( u)$ $\mm$-a.e.\ and $\eta \phi_n( u)\to \eta \phi( u)$ $\mm$-a.e.. Hence by lower semicontinuity we get that $\eta \phi( u)\in W^{1,p}(\X)$ and  by \eqref{eq:wug bound phi'} that 
 \[
 |D(\eta \phi (u))|_p\le |\phi'|(u)|D u|_p\qquad \mm\text{-a.e.\ in $B_r(x)$}.
 \]
 Hence by the arbitrariness of $x$ we obtain that $\phi(u)\in W^{1,p}_{loc}(\X)$ with $|D(\phi (u))|_p\le |\phi'|(u)|D u|_p$ $\mm$-a.e.\ in $\Omega.$ To show the actual equality we adapt the argument in  \cite[Theorem 2.1.28]{GP20}. For any $k \in \N$ consider the function $\psi(t)\coloneqq kt-\varphi(t)$. Then, the previous part still applies giving that $\psi(u)\in W^{1,p}_{loc}(\Omega)$ and thus 
\begin{align*}
    k|D u|_p=|D (ku)|_p&\le  |D (\varphi \circ u)|_p+|D (\psi \circ u)|_p\le (|\varphi'|(u)+|\psi '|(u))|D u|_p=k|D u|_p,
\end{align*}
holds $\mm$-a.e.\ in $\{0\le \varphi'(u)\le k\}$. This shows that 
\[
|D (\varphi \circ u)|_p=|\varphi '|(u)|D u|_p, \qquad \text{$\mm$-a.e.\ in $\{0\le \varphi'(u)\le k\}$}.
\]
Taking $\psi(t)\coloneqq -kt-\varphi(t)$ the same argument shows the same identity in $\{-k\le \varphi'(u)\le 0\}$. From the arbitrariness of $k$ we get the desired conclusion.
\end{proof}
\subsection{Theory for functions of bounded variation}
Here we extend our rearrangement inequalities to BV functions and also prove that the rearrangement decreases the energy carried by the singular part and the energy carried by the jump set.
\begin{theorem}\label{thm:polya BV}
Let $\Xdm$ be a metric measure space, let $\varnothing \neq \Omega\subset\X$ be open and let $u \in BV_{loc}(\Omega)$ be as in Assumption \eqref{eq:ass2} with $\mm(\{u>t\})<\infty$ for all $t>\essinf u$. Let  $(I,\omega)$ be a weighted interval as in Assumption \eqref{eq:ass1} and $u^*$ be the rearrangement of $u$ with respect to  $\omega= g\d t\mres I$. Suppose  
\begin{equation}\label{eq:isop in u bv}
    \Per(\{u>t\},\Omega)\ge {\sf C}\cdot \Ig(\mu(t)), \qquad \text{a.e.\ }t \in (\essinf u,\esssup u),
\end{equation}
holds for some ${\sf C}>0$. Then, it holds
\[
    |\dD u|(\Omega) \ge  {\sf C}\int_{\Omega^*}g\,\d TV(u^*), \qquad |\dD^s u|(\Omega)\ge  {\sf C} \int_{\Omega^*}g\,\d TV^s(u^*), 
\]
and, if equality holds in the first of the above (with both sides non-zero and finite), then
\begin{equation}
    \Per(\{u>t\},\Omega) = {\sf C}\cdot \Ig(\mu(t)),\qquad \text{a.e.\ }t \in (\essinf u,\esssup u).\label{eq:isop BV polya}
\end{equation}
Finally, if $\X$ is a PI-space, then the above conclusions hold for all $u \in BV_{loc}(\Omega)$ and we also have
\[
|\dD u|(J_u)\ge {\sf C} \int_{\Omega^*}g\,\d TV(u^*)\mres{J_{u^*}}.
\]
\end{theorem}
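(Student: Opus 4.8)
The plan is to reduce both inequalities and the equality statement to the pointwise isoperimetric hypothesis \eqref{eq:isop in u bv} through two applications of the coarea formula, one for $u$ on $\X$ and one for $u^*$ on the weighted line $(\R,|\cdot|,\omega)$. I will use throughout that $u^*\in BV_{loc}(\Omega^*;\omega)$ with $|\dD u^*|=g\,TV(u^*)$ as measures on $\Omega^*$ (Lemma \ref{lem:LEMMONE}), so that $\int_{\Omega^*}g\,\d TV(u^*)=|\dD u^*|(\Omega^*)$ and likewise for the singular and jump parts; and that, by \eqref{eq:Per u* g rt}--\eqref{eq:boundary superlevel}, for any Borel $B\subset\Omega^*$ one has $|\dD u^*|(B)=\int_\R\Per(\{u^*>t\},B)\,\d t=\int_{\essinf u}^{\esssup u}\Ig(\mu(t))\nchi_B(r_t)\,\d t$. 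The first inequality then falls out immediately: taking $A=\Omega$ in $|\dD u|(A)=\int_\R\Per(\{u>t\},A)\,\d t$ and inserting \eqref{eq:isop in u bv} under the integral gives $|\dD u|(\Omega)=\int_{\essinf u}^{\esssup u}\Per(\{u>t\},\Omega)\,\d t\ge{\sf C}\int_{\essinf u}^{\esssup u}\Ig(\mu(t))\,\d t={\sf C}\,|\dD u^*|(\Omega^*)$; and if both sides are finite and equal, the non-negative integrand $\Per(\{u>t\},\Omega)-{\sf C}\,\Ig(\mu(t))$ must vanish for a.e.\ $t$, which is \eqref{eq:isop BV polya}.

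For the singular part I would first record that $|\dD u|\mres D^+_u=\nchi_{D^+_u}|Du|_1\,\mm\mres\Omega$ by \eqref{eq:AC part}, that $|Du|_1=0$ $\mm$-a.e.\ off $D^+_u$ (by the definition of $D^0_u$, $\mm(S)=0$ and \eqref{eq:sym diff}), and that $\dD^s u$ is concentrated on $S$ with $D^+_u\cap S=\varnothing$; hence $|\dD^s u|(\Omega)=|\dD u|(\Omega\setminus D^+_u)=\int_\R\Per(\{u>t\},\Omega\setminus D^+_u)\,\d t$. The identical computation for $u^*$, together with the equivalence $r_t\in D^+_{u^*}\iff t\in u^*(D^+_{u^*})$ of \eqref{eq:kiave}, gives $|\dD^s u^*|(\Omega^*)=\int_{\Sigma_0}\Ig(\mu(t))\,\d t$ with $\Sigma_0\coloneqq(\essinf u,\esssup u)\setminus u^*(D^+_{u^*})$. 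Now for a.e.\ $t\in\Sigma_0$ one has $\mu'(t)=0$ by Lemma \ref{lem:derivative mu}, so \eqref{eq: mu' mu'} forces $\int\tfrac{\nchi_{D^+_u}}{|\underline D u|_1}\,\d\Per(\{u>t\},\cdot)=0$, whence $\Per(\{u>t\},D^+_u)=0$ and therefore $\Per(\{u>t\},\Omega\setminus D^+_u)=\Per(\{u>t\},\Omega)\ge{\sf C}\,\Ig(\mu(t))$ by \eqref{eq:isop in u bv}. Integrating over $\Sigma_0$ and discarding the remaining levels yields $|\dD^s u|(\Omega)\ge{\sf C}\int_{\Sigma_0}\Ig(\mu(t))\,\d t={\sf C}\int_{\Omega^*}g\,\d TV^s(u^*)$.

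For the jump part I would first use that on a PI-space Assumption \eqref{eq:ass2} is automatic (Section \ref{sec:key}), so the two preceding conclusions hold for every $u\in BV_{loc}(\Omega)$. Writing $|\dD u|(J_u)=\int_\R\Per(\{u>t\},J_u)\,\d t$ and $|\dD u^*|(J_{u^*})=\int_{\Sigma_J}\Ig(\mu(t))\,\d t$ with $\Sigma_J\coloneqq\{t:r_t\in J_{u^*}\}$, the key observation is that, $u^*$ being monotone, $\Sigma_J$ coincides up to a countable (hence $\Leb 1$-null) set with the union of the range-gaps of $u^*$, so $\Sigma_J\subset(\essinf u,\esssup u)\setminus u^*(\Omega^*)$ modulo a null set, and \eqref{eq:regularization ii} gives $\partial^*\{u>t\}\cap C_u=\varnothing$ for a.e.\ $t\in\Sigma_J$. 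On a PI-space, for a.e.\ $t$ the measure $\Per(\{u>t\},\cdot)$ is concentrated on $\partial^*\{u>t\}\cap\Omega$ (by \cite{Ambrosio2001,Ambrosio2002}), whose points $x$ satisfy $u^\wedge(x)\le t\le u^\vee(x)$ and thus lie in $C_u\cup J_u$; combining, $\Per(\{u>t\},\cdot)$ sits on $J_u$ for a.e.\ $t\in\Sigma_J$, hence $\Per(\{u>t\},J_u)=\Per(\{u>t\},\Omega)\ge{\sf C}\,\Ig(\mu(t))$ there, and integrating over $\Sigma_J$ gives $|\dD u|(J_u)\ge{\sf C}\int_{\Sigma_J}\Ig(\mu(t))\,\d t={\sf C}\int_{\Omega^*}g\,\d TV(u^*)\mres{J_{u^*}}$.

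The coarea bookkeeping is routine; the genuinely delicate steps I expect to be (i) identifying the set of levels that carry the singular part of $u^*$ with $\Sigma_0=\{\mu'=0\}$ up to null sets, so that Proposition \ref{prop:non vanishing} and Lemma \ref{lem:derivative mu} can be invoked to annihilate the $D^+_u$-contribution to $\Per(\{u>t\},\cdot)$; and (ii) matching $\Sigma_J$ with the range-gaps of $u^*$ and exploiting, through Lemma \ref{lem:regularization} together with the PI-space concentration of perimeters on essential boundaries, that over those levels $\Per(\{u>t\},\cdot)$ lives entirely in $J_u$.
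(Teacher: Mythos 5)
Your proposal is correct and follows the same route as the paper's proof: coarea on both sides, the formula $|\dD u^*| = g\,TV(u^*)$ from Lemma~\ref{lem:LEMMONE} together with $\Per(\{u^*>t\},\cdot) = g(r_t)\delta_{r_t}$, Lemma~\ref{lem:derivative mu} and the equivalence \eqref{eq:kiave} to identify the levels carrying the singular part of $u^*$ with $\{\mu'=0\}$, Proposition~\ref{prop:non vanishing} (eq.~\eqref{eq: mu' mu'}) to kill the $D^+_u$-contribution of $\Per(\{u>t\},\cdot)$ on those levels, and Lemma~\ref{lem:regularization} plus the PI-concentration of perimeter on $\partial^*\{u>t\}$ for the jump part. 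The only cosmetic difference is bookkeeping: the paper splits the level integral for $|\dD^s u^*|(\Omega^*)$ and $|\dD u^*|(J_{u^*})$ into a ``good'' part and a remainder shown to vanish, whereas you write the exact identities $|\dD^s u^*|(\Omega^*)=\int_{\Sigma_0}\Ig(\mu(t))\,\d t$ and $|\dD u^*|(J_{u^*})=\int_{\Sigma_J}\Ig(\mu(t))\,\d t$ directly (using that the perimeter of $\{u^*>t\}$ is concentrated at the single point $r_t$), which streamlines the presentation slightly but carries the same content.
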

\begin{proof}
Set $m \coloneqq \essinf u,M\coloneqq  \esssup u$. Recall that $u^* \in BV_{loc}(\Omega^*;\omega)$ and, thanks to the identification in Lemma \ref{lem:LEMMONE} and the Radon-Nikodym decomposition of $TV(u^*)$, we can prove equivalently the three claimed inequalities with the metric total variation $|\dD u^*|$ on the right-hand side.  

Fix  $u \in BV_{loc}(\Omega)$ satisfying Assumption \ref{eq:ass2}. Using the standard coarea formula and \eqref{eq:isop in u bv}, we get
\[
|\dD u|(\Omega) = \int_m^M\Per(\{u>t\},\Omega)\, \d t \ge  {\sf C} \int_m^M \Ig(\mu(t)) \,\d t \overset{\eqref{eq:Per u* g rt}}{=} {\sf C} \int_m^M \Per(\{ u^*>t\})\,\d t,
\]
The first claimed inequality then follows by application of the coarea formula on the right-hand side. In particular, conclusion \eqref{eq:isop BV polya} is immediate if equality occurs in the above.

We pass to the proof of the second claimed inequality. Recalling  that $|\dD^s u|$ is concentrated on $\Omega \setminus D^+_u$ (by definition of $D^+_u,D^0_u$) and that $u^*(D^+_{u^*})$ is Borel (see Lemma \ref{lem:monotone functions}), we estimate 
\begin{align*}
|\dD^s u|(\Omega) & = |\dD u|(\Omega\setminus D^+_u) = \int_m^M \Per(\{u>t\},\Omega\setminus  D^+_u)\, \d t \\
&= \int_{(m,M)\setminus u^*(D^+_{u^*})}\Per(\{u>t\},\Omega\setminus D^+_u)\, \d t + \int_{u^*(D^+_{u^*})}\Per(\{u>t\},\Omega\setminus  D^+_u )\, \d t \\
&\ge \int_{(m,M)\setminus u^*(D^+_{u^*})}\Per(\{u>t\},\Omega\setminus  D^+_u )\, \d t.
\end{align*}
We notice now the following important fact
\[
\int_{(m,M)\setminus u^*(D^+_{u^*})}\Per(\{u>t\},\Omega\setminus  D^+_u )\, \d t = 
\int_{(m,M)\setminus u^*(D^+_{u^*})}\Per(\{u>t\},\Omega)\,\d t.
\]
Indeed, notice 
\[
0 \overset{\eqref{eq: mu' mu'}}{\ge} \int \frac{\nchi_{ D_u^+}}{|\underline D u|_1}\, \d \Per(\{u>t\},\cdot),\qquad \text{a.e.\ } t \in (m,M)\setminus u^*(D^+_{u^*})\text{ so that }\exists \mu'(t)=0.
\]
Since by Lemma \ref{lem:derivative mu} we know that $\mu'(t) =0 $ for a.e.\ $t \in (m,M)\setminus u^*(D^+_{u^*})$, it follows that $\Per(\{u>t\}, D^+_u)=0$ a.e.\ on $(m,M)\setminus u^*(D^+_{u^*})$ as desired.  Therefore, by  \eqref{eq:isop in u bv} we reach
\[
|\dD^s u|(\Omega) \ge  {\sf C} \int_{(m,M)\setminus u^*(D^+_{u^*})}\Per(\{u^*>t\})\, \d t.
\]
Using coarea this time for $u^*$ (recall $ |\dD^s u^*|(\Omega^*) =  |\dD u^*|(\Omega^*\setminus D^+_{u^*})$ from Remark \ref{rmk:nobady cares}) yields 
\begin{align*}
    |\dD^s u^*|(\Omegastar) &=  \int_{(m,M)\setminus u^*(D^+_{u^*})}\Per(\{u^*>t\},\Omegastar\setminus D^+_{u^*})\, \d t + \int_{u^*(D^+_{u^*})}\Per(\{u^*>t\},\Omegastar\setminus D^+_{u^*})\, \d t \\
    &\le \int_{(m,M)\setminus u^*(D^+_{u^*})}\Per(\{u^*>t\})\, \d t + \int_{u^*(D^+_{u^*})}\Per(\{u^*>t\}, \Omegastar \setminus D^+_{u^*})\, \d t.
\end{align*}
We see now that the second claimed inequality in the statement would follow by the combination of what it has been achieved so far, provided 
\[
\int_{u^*(D^+_{u^*})}\Per(\{u^*>t\},\Omegastar\setminus D^+_{u^*})\, \d t=0.
\]
However, for a.e.\ $t \in u^*(D^+_{u^*})$, we know by Lemma \ref{lem:derivative mu} that $(u^*)^{-1}(\{t\})$ is a singleton  belonging to $D^+_{u^*}$. Hence, by \eqref{eq:perimeter of halfline}, we get $\Per(\{u^*>t\},\Omegastar\setminus D^+_{u^*}) = g\delta_{(u^*)^{-1}(\{t\})}(\Omegastar\setminus D^+_{u^*}) =0$ at any such $t$.

From here, we shall assume that $\X$ is a PI-space and prove the claimed inequality for the jump set. As discussed in Section \ref{sec:key}, in this case any $u \in BV_{loc}(\Omega)$ satisfies Assumption \eqref{eq:ass2}, implied by the fact that $\Per(E,\cdot)$ is concentrated on $\partial^*E \cap \Omega$, if $\nchi_E \in BV_{loc}(\Omega)$. In particular, all the previous steps hold for any $u \in BV_{loc}(\Omega)$. Let us start by arguing as previously done to get
\begin{align*}
 |\dD u|(J_u) &= \int_{(m,M)\setminus u^*(\Omegastar)} \Per(\{u>t\},J_u)\,\d t  + \int_{u^*(\Omegastar)} \Per(\{u>t\} ,J_u)\,\d t \\
&\ge \int_{(m,M)\setminus u^*(\Omegastar)} \Per(\{u>t\},\Omega)\,\d t,
\end{align*}
having used (thanks to $\X$ being PI) that $\Per(\{u>t\},\cdot)$ is concentrated on $\partial^*\{u>t\}\cap \Omega$, that $\partial^*\{u>t\}\cap J_u = \partial^*\{u>t\}\cap \Omega \setminus C_u$ and finally \eqref{eq:regularization ii} to get $\Per(\{u>t\} , \Omega \setminus C_u) = \Per(\{u>t\},\Omega)$ for a.e.\ $t \in (m,M)\setminus u^*(\Omegastar)$. By coarea formula, we also have
\begin{align*}
    |\dD u^*|(J_{u^*}) \le   \int_{(m,M)\setminus u^*(\Omegastar)} \Per(\{u^*>t\})\,\d t + \int_{ u^*(\Omegastar)}\Per(\{u^*>t\},J_{u^*})\,\d t.
\end{align*}
However, the second integral vanishes since $u^*(C_{u^*})\cup u^*(J_{u^*}) = u^*(\Omegastar),$ $u^*(J_{u^*})$ is at most countable and if $t \in u^*(C_{u^*})$ then $\partial^*\{u^*>t\} \subset C_{u^*}$. Using as before \eqref{eq:isop in u bv}, we get the third inequality. 
\end{proof}
\begin{remark}
    Notice that the first rearrangement inequality as well \eqref{eq:isop BV polya} hold for any $u \in BV_{loc}(\Omega)$ even without Assumption \eqref{eq:ass2}, having only relied on the standard coarea formula. \fr
\end{remark}
\section{P\'olya-Szeg\H{o} inequalities: Ricci lower bounds}\label{sec:Polya Ricci}
In this part, we specialize our rearrangement theory on spaces with Ricci curvature lower bounds and prove our main results for this part. We face each setting separately.
\subsection{Positive Ricci lower bound}
Let us start by recalling the following sharp and rigid isoperimetric inequality in this setting. For any $N \in [1,\infty)$, recall the isoperimetric profile
\[
\mathcal I_{N-1,N}(v) \coloneqq c_N \sin^{N-1}(b_v),\qquad \forall \, v \in (0,1),
\]
where $c_N =  \big(\int_0^\pi \sin^{N-1}(t)\,\d t \big)^{-1}$ and $b_v \in [0,\pi]$ is so that $c_N \int_0^{b_v}\sin^{N-1}(t)\,\d t = v.$ Note  that $\mathcal I_{N-1,N}=\Ig$ for $g=c_N \sin^{N-1}$ as defined in \eqref{eq:profile}.
\begin{theorem}[\cite{CavallettiMondino17-Inv,CavallettiMaggiMondino19}]\label{thm:levy gromov cavamondino}
    Let $\Xdm$ be an essentially non-branching ${\sf CD}(N-1,N)$ space for some $N\in (1,\infty)$ with $\mm(\X)=1$. Then, it holds
    \begin{equation}\label{eq:Levy gromov positive curv}
          \Per(E)\ge  \mathcal I_{N-1,N}(\mm(E)),\qquad \forall \, E\subset\X \text{ Borel}.
    \end{equation}
Moreover, if equality holds in \eqref{eq:Levy gromov positive curv} then:
\begin{itemize}
    \item[{\rm i)}]  $E$ is  a metric ball $B_\rho(p)$ up to $\mm$-negligible sets, where  $\mm(B_\rho(p))= \int_0^\rho c_N \sin^{N-1}(t)\d t$ ;
    \item[{\rm ii)}] $\X$ is covered by geodesics of length $\pi$ starting from $p$;
    \item[{\rm iii)}] if $\X$ is an ${\sf RCD}(N-1,N)$ space, then it is isomorphic to a spherical suspension and $p$ is a tip.
\end{itemize}
\end{theorem}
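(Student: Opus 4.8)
I would prove this via the $L^1$-optimal transport \emph{needle decomposition} for essentially non-branching $\CD(N-1,N)$ spaces. First observe that $\mm(\X)=1$ together with $\CD(N-1,N)$, $N<\infty$, forces $\diam(\X)\le\pi$ by the generalized Bonnet--Myers theorem, so $\X$ is compact; fix a Borel set $E$ with $v\coloneqq\mm(E)\in(0,1)$ and consider the mean-zero function $f\coloneqq\nchi_E-v$. Applying the localization theorem one obtains a partition of $\X$, up to an $\mm$-null set, into transport rays $\{\X_q\}_{q\in Q}$, together with a disintegration $\mm=\int_Q\mm_q\,\d\mathfrak q(q)$ over a probability space $(Q,\mathfrak q)$, where (after normalizing each $\mm_q$ to a probability measure) every $(\X_q,\sfd,\mm_q)$ is isomorphic to a one-dimensional $\CD(N-1,N)$ model and, crucially, $\int f\,\d\mm_q=0$, i.e.\ $\mm_q(E)=v$ for $\mathfrak q$-a.e.\ $q$. (The part of $\X$ not met by any transport ray, where $f$ is already ``balanced'', is dealt with by a standard separate argument and does not spoil the estimates.)

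\textbf{The one-dimensional model and the inequality.} A probability $\CD(N-1,N)$ density on an interval is, up to isometry, of the form $c_q\sin^{N-1}(t)\,\d t$ on some $[a_q,b_q]\subseteq[0,\pi]$. For such a measure the classical one-dimensional computation (the Berard--Besson--Gallot / L\'evy--Gromov reduction in dimension one) shows that its isoperimetric profile is pointwise $\ge\mathcal I_{N-1,N}$; in particular $\Per_{\mm_q}(E\cap\X_q)\ge\mathcal I_{N-1,N}(v)$ for $\mathfrak q$-a.e.\ $q$. It then remains to localize the perimeter: the key technical lemma of this circle of ideas is that, because the $\X_q$ are precisely the transport rays built from $\nchi_E$, one has
\[
\Per(E,\X)\ \ge\ \int_Q\Per_{\mm_q}\!\big(E\cap\X_q\big)\,\d\mathfrak q(q).
\]
Combining the two facts yields $\Per(E,\X)\ge\mathcal I_{N-1,N}(v)$, which is \eqref{eq:Levy gromov positive curv}.

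\textbf{Rigidity.} Assume equality holds in \eqref{eq:Levy gromov positive curv}. Then both inequalities above are equalities for $\mathfrak q$-a.e.\ $q$. Equality in the one-dimensional isoperimetric inequality forces, by the rigidity of the one-dimensional model (again an explicit computation), that $\mm_q=c_N\sin^{N-1}(t)\,\d t$ on the \emph{full} interval $[0,\pi]$ --- so $\X_q$ is a geodesic of length $\pi$ --- and that $E\cap\X_q$ is the end sub-interval $[0,\rho)$ with $\rho$ determined by $c_N\int_0^\rho\sin^{N-1}=v$, independent of $q$; write $p_q$ for the endpoint $0$ of $\X_q$. One then shows all $p_q$ coincide: by the monotonicity and essentially non-branching structure of transport rays, two distinct initial points would allow concatenating portions of two length-$\pi$ needles into a curve of length $>\pi$, contradicting $\diam(\X)\le\pi$; hence there is a single $p$ with $p_q=p$ for $\mathfrak q$-a.e.\ $q$. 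Since the needles cover $\X$ up to an $\mm$-null set, $E$ agrees with $B_\rho(p)$ up to $\mm$-negligible sets (item i), with $\mm(B_\rho(p))=\int_0^\rho c_N\sin^{N-1}$), and passing to closures every point of $\X$ lies on a geodesic of length $\pi$ issuing from $p$ (item ii). Finally, if $\X$ is $\RCD(N-1,N)$, item ii) gives $\diam(\X)=\pi$, so Ketterer's maximal-diameter rigidity theorem \cite{Ketterer13} applies and $\X$ is isomorphic to a spherical suspension; since $p$ is joined to every other point by a geodesic of length $\pi$, it must be one of the two suspension tips, giving item iii).

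\textbf{Main obstacle.} The crux is the perimeter localization inequality $\Per(E,\X)\ge\int_Q\Per_{\mm_q}(E\cap\X_q)\,\d\mathfrak q$: it requires controlling how the measure-theoretic boundary of $E$ distributes along the transport rays --- including the behaviour of $\Per$ and of the $\mm_q$ near the ray endpoints --- and it is here that the fine structure of the localization enters, together with the treatment of the non-transport part. In the rigidity statement the delicate point is the gluing argument identifying all ray-endpoints $p_q$ with a single $p$ while keeping the $\mm$-null exceptional set under control; the remaining one-dimensional rigidity facts and the passage to Ketterer's theorem are then routine.
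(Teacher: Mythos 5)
Your overall strategy --- localization via the $L^1$-needle decomposition, reduction to the one-dimensional model $c_N\sin^{N-1}(t)\,\d t$ on $[0,\pi]$, perimeter localization along transport rays, then rigidity of the one-dimensional problem plus gluing of ray endpoints and Ketterer's maximal-diameter theorem --- is indeed the approach taken in the cited papers \cite{CavallettiMondino17-Inv,CavallettiMaggiMondino19}. Note, however, that the paper does not reprove this theorem: it records it as a citation, and the only ``new'' content of the paper's proof is the short deduction of item~ii) from the specific formula~(5.5) of \cite{CavallettiMaggiMondino19} (which already asserts that the needles have length $\pi$ and \emph{common} endpoints) together with a compactness argument to pass from ``a.e.\ point'' to ``every point''.

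The genuine gap is in your gluing step for item~ii). You argue that if two needles had distinct starting points $p_q\neq p_{q'}$, then ``concatenating portions of two length-$\pi$ needles into a curve of length $>\pi$'' would contradict $\diam(\X)\le\pi$. This does not work as stated: concatenating two geodesic segments produces a curve, not a geodesic, and a curve of length $>\pi$ is perfectly consistent with $\diam(\X)\le\pi$ (the latter only bounds shortest-path lengths). There is no elementary diameter contradiction to be had here. The actual identification of a common initial point $\bar q$ (and common terminal point $\bar p$) of the needles is one of the subtler parts of \cite{CavallettiMaggiMondino19}: it comes out of their quantitative variance analysis along the disintegration --- roughly, equality forces a family of one-dimensional deficit quantities to vanish, and those deficits control the spread of the ray endpoints --- not from a concatenation/triangle-inequality argument. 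Without that machinery (or an explicit appeal to their statement~(5.5), as the paper does) your rigidity argument for item~ii), and hence for item~i), is incomplete. Everything else in your sketch (the reduction to 1D, the perimeter localization as the key technical lemma, the passage from item~ii) to Ketterer's theorem for item~iii)) is consistent with the cited proofs.
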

\begin{proof}
Inequality  \eqref{eq:Levy gromov positive curv} and item iii) are proved in \cite{CavallettiMondino17-Inv}, while i) is a consequence of \cite{CavallettiMaggiMondino19}.     Item ii) is also essentially contained in the (much more general) results in \cite{CavallettiMaggiMondino19}. For the convenience of the reader, we add further details. The argument in \cite{CavallettiMaggiMondino19} starts considering a suitable disintegration  of the measure $\mm$ as
\begin{equation}\label{eq:disint}
    \mm=\int_Q \int_{X_q} \mm_q\, \d\mathfrak{q},
\end{equation}
where $\mathfrak{q}$ is a probability measure on some set $Q$ and $\mm_q$ are probability measures supported on (possibly degenerate) closed geodesics $X_q\subset \X.$ From statement \cite[(5.5)]{CavallettiMaggiMondino19} it follows that, if equality holds in \eqref{eq:Levy gromov positive curv} then there exists $Q_l^g\subset Q$ with $\mathfrak{q}(Q\setminus Q_l^g)=0$ and such that the geodesics $\{X_q\}_{q \in Q_l^g}$ are all of length $\pi$ and have common endpoints $\bar q,\bar p\in \X.$ From formula \eqref{eq:disint} it then follows  that $\{X_q\}_{q \in Q_l^g}$ cover $\X$ up to $\mm$-null sets. Moreover from the very beginning of the proof of \cite[Theorem 1.1]{CavallettiMaggiMondino19} we see that we can take $p=\bar q$ in item i). This proves that  $\X$ is covered up to $\mm$-null sets by geodesics of length $\pi$ starting from $p$, from which item ii) follows by a compactness argument.
\end{proof}
Actually, by combining the full analysis with diameter dependence in \cite{CavallettiMondino17-Inv} with \cite[Lemma 3.2]{CavallettiMondinoSemola23}, the following improvement holds
\begin{equation}
 \Per(E)\ge  {\sf BBG}_N(\diam(\X)) \cdot \mathcal I_{N-1,N}(\mm(E)),
\label{eq:LevyGromov BBG}
\end{equation}
where
\begin{equation}\label{eq:BBG constant}
    {\sf BBG}_N(D) \coloneqq \left(\frac{\int_0^{\frac \pi 2} \cos^{N-1}(t) \,\d t }{\int_0^{D/2} \cos^{N-1}(t) \,\d t  }\right)^{\frac 1N}\ge 1,\qquad \forall \, D \in (0,\pi],
\end{equation}
is the sharp Berard-Besson-Gallot lower bound \cite{BerardBessonGallot85}. Recall also e.g.\ from \cite[Lemma 3.3]{CavallettiMondinoSemola23} that there exist constants $C_N >0$ so that 
\begin{equation}
    {\sf BBG}_N(D)^2 -1\ge C_N\left( \pi -D\right)^N,\qquad \forall \, D \in (0,\pi].\label{eq:BBG quantitativo}
\end{equation}
Actually, in \cite[Lemma 3.3]{CavallettiMondinoSemola23} the existence of some $C_N>0$ for the above to hold is shown in the range $D \in (D_N,\pi]$ for some $D_N \in (0,\pi)$. However, as ${\sf BBG}_N(D) \to +\infty$ as $D\downarrow 0$ and $\cos^{N-1}$ is strictly positive in $(0,\pi/2)$, a simple compactness argument shows that, for a possibly smaller constant $C_N>0$, this holds for all $D \in (0,\pi]$.

In the above, for any $N \in (1,\infty)$ the $N$-spherical suspension over a metric measure space $(Y,\mm_Y,\sfd_Y)$ is defined  to be
$([0,\pi]\times_{\sin}^N Y)\coloneqq Y\times \left[0,\pi \right]/(Y\times \left\{0,\pi\right\})$ endowed with the distance $\sfd((t,y),(s,y'))\coloneqq \cos^{-1}\big(\cos(s)\cos(t)+\sin(s)\sin(t)\cos\left({\sfd}_Y(y,y')\wedge\pi\right)\big)$ and measure $\mm\coloneqq \sin^{N-1}(t) \d t\,\otimes\, \mm_Y.$ The points $\{0\}\times Y,\{\pi\}\times Y$ are called tips of the suspension.
\begin{proof}[Proof of Theorem \ref{thm:main polya compact}]
    Recall that the isoperimetry of Theorem \ref{thm:levy gromov cavamondino}, in the form \eqref{eq:LevyGromov BBG}, reads
    \[
    \Per(E) \ge {\sf BBG}_N(\diam(\X)) \cdot \mathcal I^\flat_g(\mm(E)) ,\qquad \forall \, E \subset \X\text{ Borel},
    \]
    where $g = c_N \sin^{N-1}$.  Hence, by considering $ \Omega=\X, I = (0,\pi)$, then $\omega =\mm_{N-1,N} = g\Leb 1\mres I$ and ${\sf C} \coloneqq {\sf BBG}_N(\diam(\X)) $ we see that Theorem \ref{thm:main Sobolev PZ metric} as Theorem \ref{thm:polya BV} here apply (recall that $\X$ a PI-space by \cite{Sturm06I,Sturm06II} and \cite{Rajala12-2}). This concludes the proof of i) and ii).

    We are therefore left to prove the rigidity parts of the statement and we start considering $u \in W^{1,p}(\X)$ satisfying \eqref{eq: equality PZ compact}. Since ${\sf BBG}_N(D)\ge 1$, we directly get
    \[
        {\sf BBG}_N(D) =1, \quad \text{ and thus by \eqref{eq:BBG quantitativo} also }  \diam(\X) =\pi.
    \]  
    In particular, if $\X$ is also assumed an ${\sf RCD}$ space, then the maximal diameter theorem  \cite{Ketterer13} gives directly that $\X$ is isomorphic to a spherical suspension. 
    
    Assume now (that $\X$ is an essentially non-branching ${\sf CD}(N-1,N)$  space, see \cite{RajalaSturm12}, and) that  $(u^*)' \neq 0$ a.e.\ on $\{\essinf u<u^*<\esssup u\}$. Then, $u$ is deduced radial by application of Theorem \ref{thm:radiality astratta} recalling also that $\X$ has the Soboleb-to-Lipschitz property (cf.\ \cite[Section 4.1.3]{Gigli13_splitting}) and Theorem \ref{thm:levy gromov cavamondino}.
\end{proof}
\subsection{Non-negative Ricci curvature and Euclidean volume growth}
We start recalling the available isoperimetry in this setting. On a ${\sf CD}(0,N)$ space $\Xdm$ the \emph{asymptotic volume ratio}
\begin{equation}
    {\sf AVR}(\X) \coloneqq \lim_{r\to\infty} \frac{\mm(B_r(x))}{\omega_Nr^N} \in [0,\infty),
\end{equation}
is well defined and independent on $x \in \X$ by Bishop-Gromov monotonicity. A Euclidean sharp isoperimetric inequality  holds \cite{BaloghKristaly21} if ${\sf AVR}(\X) >0$ (in smooth-setting previously derived by \cite{Brendle20,AgostinianiFogagnoloMazzieri20,FogagnoloMazzieri22, Johne}, see also \cite{CavallettiManini22-isoMCP}).
\begin{theorem}\label{thm:isoperimetric avr}
Let $\Xdm$ be an ${\sf CD}(0,N)$ space with $N\in(1,\infty),{\sf AVR}(\X)>0$. Then
\begin{equation}
     \Per(E) \ge N({\sf AVR}(\X)\omega_N)^{\frac 1N}\mm(E)^{\frac{N-1}N}, \qquad \forall \, E\subset \X \text{ Borel, $\mm(E)<+\infty$}.\label{eq:isoperimetry AVR}
\end{equation}
\end{theorem}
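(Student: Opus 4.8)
The plan is to obtain \eqref{eq:isoperimetry AVR} by the optimal transport method of Cordero-Erausquin--Nazaret--Villani \cite{C-ENV04}, adapted to the synthetic setting as in \cite{BaloghKristaly21}; under the additional assumption of essential non-branching one could alternatively follow the localization route of \cite{CavallettiManini22-isoMCP,CavallettiManini22}. As a preliminary step I would reduce to the case of $E$ bounded with $\Per(E,\X)<\infty$: if $\Per(E,\X)=+\infty$ there is nothing to prove, and for general $E$ with $\mm(E)<\infty$ one truncates with metric balls $B_{R_j}(x_0)$ along a sequence $R_j\uparrow\infty$ chosen so that the extra perimeter created on $\partial B_{R_j}$ vanishes --- this is possible by a coarea argument, since the integral in $R$ of that quantity is dominated by $\mm(E)$ --- and then passes to the limit using the lower semicontinuity of the perimeter and the continuity of $\mm$.

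Next I would fix $x_0\in\X$ and pick $R_0>0$ with $\mm(B_{R_0}(x_0))=\mm(E)=:m$, which is possible since $r\mapsto\mm(B_r(x_0))$ is continuous and, by $\avr(\X)>0$, unbounded; Bishop--Gromov monotonicity then gives $m=\mm(B_{R_0}(x_0))\ge\avr(\X)\,\omega_N R_0^N$, hence $R_0\le\big(m/(\avr(\X)\,\omega_N)\big)^{1/N}$. I would then run the transport argument between $\mu_0:=m^{-1}\mm\mres E$ and $\mu_1:=m^{-1}\mm\mres B_{R_0}(x_0)$: along the $W_2$-geodesic $(\mu_t)_{t\in[0,1]}$ the Jacobian of the interpolation equals $1$ at both endpoints (the two densities being the same constant $m^{-1}$), and the ${\sf CD}(0,N)$ condition forces the $1/N$-th power of this Jacobian to be concave in $t$, whence a pointwise lower bound. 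Coupling this with the first-variation identity for the transport (divergence equals perimeter), i.e.\ the metric counterpart of the Euclidean $\int_E\operatorname{div}T=\int_{\partial^*E}\langle T,\nu\rangle\le R_0\,\Per(E,\X)$ of \cite{C-ENV04}, leads to
\[
N\,m \;\le\; R_0\,\Per(E,\X).
\]
Dividing by $R_0$ and inserting the Bishop--Gromov bound on $R_0$ yields $\Per(E,\X)\ge N\,m/R_0\ge N(\avr(\X)\,\omega_N)^{1/N}m^{(N-1)/N}$, which is \eqref{eq:isoperimetry AVR}.

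The main obstacle is that a general ${\sf CD}(0,N)$ space is not assumed essentially non-branching, so the Wasserstein geodesic need not be induced by a transport \emph{map} and geodesics between points need not be unique; the whole argument must therefore be carried out at the level of optimal dynamical plans and of the R\'enyi-entropy form of the curvature-dimension condition, and the divergence identity has to be recovered by an approximation within the metric BV calculus of Section~\ref{sec:calculus}. A clean way to circumvent the map-based divergence step is to first establish the sharp Brunn--Minkowski inequality $\mm(Z_t)^{1/N}\ge(1-t)\mm(E)^{1/N}+t\,\mm(B_{R_0}(x_0))^{1/N}$ for the displacement interpolant $Z_t$ between $E$ and $B_{R_0}(x_0)$ and then differentiate at $t=0^+$ together with a coarea/layer-cake computation; alternatively one may prove first the sharp $L^p$-Sobolev inequality for $p>1$ by the transport method of \cite{Kristaly23} and let $p\downarrow1$, using the coarea formula to recover the isoperimetric form. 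Under essential non-branching one instead disintegrates $\mm$ into one-dimensional ${\sf CD}(0,N)$ needles and concludes by an explicit one-dimensional estimate, as in \cite{CavallettiManini22-isoMCP}.
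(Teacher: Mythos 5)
The paper does not prove Theorem \ref{thm:isoperimetric avr} at all: it is quoted directly from \cite{BaloghKristaly21}, so there is no internal proof to compare against. Your proposal should therefore be judged on its own merits as a reconstruction of the cited argument.

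Your central computation ($Nm\le R_0\,\Per(E)$, then $R_0\le (m/(\avr(\X)\omega_N))^{1/N}$ from Bishop--Gromov, and the algebra that combines them) is correct. The truncation-by-balls reduction to $E$ bounded with finite perimeter is also fine. However, the \emph{main} line of attack you propose — the map-based transport argument with a ``divergence equals perimeter'' first-variation step — does not survive in a general ${\sf CD}(0,N)$ space, for two reasons that are not merely technical. First, without essential non-branching the optimal plan between $m^{-1}\mm\mres E$ and $m^{-1}\mm\mres B_{R_0}(x_0)$ need not be induced by a map, so there is no $T$, no Monge--Amp\`ere identity $\det DT=1$, and no vector field to integrate by parts. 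Second, even granting a map, the inequality $\int_E \operatorname{div} T\le R_0\,\Per(E,\X)$ is a genuinely Euclidean statement (Gauss--Green for $BV$ vector fields tested against a Lipschitz set); there is no comparable divergence theorem in the metric-measure framework of Section~\ref{sec:calculus}, and ``recovering it by approximation within the metric $BV$ calculus'' is precisely the hard part, not a routine step. The way you invoke ${\sf CD}(0,N)$ (``the $1/N$-th power of the Jacobian is concave'') also points in the wrong direction for this step — that concavity gives a lower bound $J_t\ge1$, whereas the CENV argument needs the arithmetic--geometric mean inequality $\det(DT)^{1/N}\le\Delta\phi/N$ to produce a \emph{lower} bound on a Laplacian; the ${\sf CD}$ condition enters as the synthetic substitute for that AM--GM/Hessian step, and that substitution is where all the work lives.

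Your two fallbacks are the viable routes, and they are closer to what \cite{BaloghKristaly21} actually does: either differentiate a sharp Brunn--Minkowski inequality for the $t$-intermediate sets $Z_t$ between $E$ and large balls (the ${\sf CD}(0,N)$ Brunn--Minkowski bound holds with no branching hypothesis), or prove the sharp $p$-Sobolev inequality and let $p\downarrow1$. But both come with a point you gloss over: what $\frac{\d}{\d t}\big|_{t=0^+}\mm(Z_t)$ controls is an outer Minkowski-type content, and passing from Minkowski content to perimeter requires an approximation/relaxation argument (it is not true for arbitrary finite-perimeter sets that the two coincide). This is a standard but non-trivial ingredient and should be flagged explicitly. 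Finally, the needle/localization route via \cite{CavallettiManini22-isoMCP,CavallettiManini22} requires essential non-branching, which is not among the hypotheses of the theorem as stated, so it can only serve as an alternative under an extra assumption, not as a proof of the result as written.
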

Here $ \omega_N\coloneqq \pi^{N/2} \Gamma^{-1}\left(N/2+1\right)$ and we notice that $\Ig(v) =  N({\sf AVR}(\X)\omega_N)^{\frac 1N}v^{\frac{N-1}{N}} $ for $v>0$ and $g(t) = N\omega_N t^{N-1}$, as defined in \eqref{eq:profile}. Let us discuss the rigidity in \eqref{eq:isoperimetry AVR}. This has been proved in \cite{AntonelliPasqualettoPozzettaSemola22} under the noncollapsed assumption extended to the current setting by \cite[Theorem 1.4]{CavallettiManini22}.
\begin{theorem}\label{thm:rigidity ISOAVR}
Let $\Xdm$ be an essentially non-branching ${\sf CD}(0,N)$ space  with $N\in(1,\infty)$ and ${\sf AVR}(\X)>0$. 
If equality holds in \eqref{eq:isoperimetry AVR} for some bounded Borel set $E\subset \X$ with $\mm(E)<\infty$ then:
\begin{itemize}
    \item[{\rm i)}]  $E$ is  a metric ball $B_{\rho}(p)$ up to $\mm$-negligible sets, with $\mm(E)=\mm(B_{\rho}(p))=\omega_N  {\sf AVR}(\X)\rho^N;$
    \item[{\rm ii)}] $\X$ is covered by geodesics rays starting from $p$;
    \item[{\rm iii)}] if $\X$ is an $\RCD(0,N)$ space, then it is isomorphic to an $N$-Euclidean cone and $p$ is a tip.
\end{itemize}
\end{theorem}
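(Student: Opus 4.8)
The plan is to extract all three conclusions from the existing literature, supplying detail only where the geometric statement is not literally quoted. Inequality \eqref{eq:isoperimetry AVR} together with item iii) was established in \cite{AntonelliPasqualettoPozzettaSemola22} for non-collapsed $\RCD$ spaces and extended to the general essentially non-branching ${\sf CD}(0,N)$ setting in \cite[Theorem 1.4]{CavallettiManini22}; item i) is likewise a direct consequence of the equality analysis carried out in those works, being the first geometric conclusion one reads off from the rigidity of the localization. Thus the only point that deserves elaboration is item ii), namely that equality forces $\X$ to be covered, up to an $\mm$-negligible set, by geodesic rays issuing from the center $p$ of the ball $E$.

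For item ii) I would argue exactly as in the proof of Theorem \ref{thm:levy gromov cavamondino} above, replacing the closed-geodesic localization of \cite{CavallettiMaggiMondino19} by the localization (needle decomposition) used in \cite{CavallettiManini22}. Concretely, the proof there produces a one-dimensional disintegration $\mm = \int_Q \mm_q\,\d\mathfrak{q}(q)$ of $\mm$ along transport rays $X_q$ of a Kantorovich potential adapted to $E$; in the equality case the quantitative part of the localization forces, for $\mathfrak{q}$-a.e.\ $q$, the conditional $\mm_q$ to carry the model ${\sf CD}(0,N)$ one-dimensional density on a half-line, so that $X_q$ is an unbounded geodesic ray emanating from a single common point, which---by the very construction of the ball in item i)---is $p$. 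Since the rays $\{X_q\}_{q\in Q}$ then cover $\X$ up to an $\mm$-null set and $\supp\mm=\X$, a standard compactness argument for unit-speed geodesics (together with the closedness of the set of rays through $p$) upgrades this to: every point of $\X$ lies on a geodesic ray starting at $p$, which is item ii).

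The only real obstacle is bookkeeping: one must locate in \cite{CavallettiManini22} the precise statements that identify which transport rays survive in the equality case, pin their common basepoint to $p$, and give the model density along each surviving ray---this is the exact analogue of the use of \cite[(5.5)]{CavallettiMaggiMondino19} in the proof of Theorem \ref{thm:levy gromov cavamondino}. Everything else---the inequality, items i) and iii), and the passage from ``$\mm$-a.e.\ covering'' to ``every point''---is either quoted verbatim or a routine repetition of the positive-curvature argument.
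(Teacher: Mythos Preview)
Your proposal is correct and matches the paper's approach: the paper does not give a self-contained proof of this theorem but attributes items i) and iii) directly to \cite{AntonelliPasqualettoPozzettaSemola22} and \cite[Theorem 1.4]{CavallettiManini22}, while item ii) is implicit in the localization analysis there, just as you outline by analogy with the proof of Theorem~\ref{thm:levy gromov cavamondino}. Your sketch of how to extract item ii) from the needle decomposition---identifying the surviving rays as half-lines with common basepoint $p$ and upgrading the $\mm$-a.e.\ covering to a full covering via compactness---is exactly the pattern the paper follows in the positive-curvature case, and is the natural way to fill in this detail.
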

Recall the an $N$-Euclidean  cone  over a metric measure space $(Y,\mm_Y,\sfd_Y)$ is defined  to be the space $Y\times [0,\infty)/(Y\times \{0\})$ endowed with the distance $\sfd((t,y),(s,y'))\coloneqq \sqrt{t^2+s^2-2st\cos(\sfd_Y(y,y')\wedge \pi)}$ and the measure $\mm\coloneqq t^{N-1} \d t\,\otimes\, \mm_Y$, for $N>1$. The point $Y \times \{0\}$ is called tip of the cone. If only $\mm\coloneqq t^{N-1} \d t\,\otimes\, \mm_Y$  is assumed we speak of an $N$-volume cone.
\begin{remark}\label{topological reg avr}
    \rm  
    As an outcome of the topological regularity for isoperimetric sets, {the boundedness assumption on $E$ in Theorem \ref{thm:rigidity ISOAVR} can be removed, see \cite[Theorem 1.3]{PasqualettoRajala25} as well as the earlier \cite{APPV23}.} \fr
\end{remark}
\begin{proof}[Proof of Theorem \ref{thm:main polya noncompact}]
Recall that the isoperimetry in Theorem \ref{thm:isoperimetric avr} reads as
\begin{equation}\label{eq:isop avr in proof}
     \Per(E) \ge {\sf AVR}(\X)^{\frac 1N} \cdot \mathcal I^\flat_g(\mm(E)) ,\qquad \forall \, E \subset \X\text{ Borel with }\mm(E)<\infty,
\end{equation}
where $g(t) = N\omega_N t^{N-1}$. Hence, by considering $ \Omega=\X, I = (0,\infty)$, then $\omega =  \mm_{0,N} = g\Leb 1\mres I$ and ${\sf C} \coloneqq  {\sf AVR}(\X)^{\frac 1N} $ we see that Theorem \ref{thm:main Sobolev PZ metric} as well as Theorem \ref{thm:polya BV} here apply (recall that $\X$ is a  PI-space). This concludes the proof of i) and ii).

We are therefore left to prove the rigidity statements. Hence we assume that equality holds in \eqref{eq:polya Sobolev AVR} with both sides non-zero and finite. Then the rigidity part of Theorem \ref{thm:Polya W1p} ({taking into account Remark \ref{topological reg avr}}) gives the existence of a set $E=\{u>t\}$ that satisfies equality in \eqref{eq:isop avr in proof} for a.e.\ $t \in(m,M)$ and that $\X$ is an $N$-volume cone.

Assume now that  $(u^*)' \neq 0$ a.e.\ on $\{\essinf u<u^*<\esssup u\}$. Then, $u$ is deduced radial by application of Theorem \ref{thm:radiality astratta} recalling also that $\X$ has the Sobolev-to-Lipschitz property (cf.\ \cite[Section 4.1.3]{Gigli13_splitting}) and also Theorem \ref{thm:rigidity ISOAVR}. {Finally, if $\X$ is also an ${\sf RCD}(0,N)$ space, the last conclusion follows by \cite{DePhilippisGigli15}}.
\end{proof}
\subsection{A boosted inequality in Convex Cones}\label{sec:boosted cones}
As anticipated, we can specialize the previous analysis to the setting of weighted convex cones. For a weight $w$, recall that we denote by $\mm_w(E) \coloneqq \int_E w\Leb d$ and by  $H^{1,p}_0(\Omega;w)$ the closure of $C^\infty_c(\Omega)$ with respect to the weighted Sobolev norm. 
\begin{theorem}\label{thm:boosted covnex cones}
    Let $d \in \N,d\ge 2$ be an integer and let $\Sigma \subset \R^d$ be an open convex cone with vertex $0$ so that $ \Sigma = \tilde \Sigma \times \R^{d-k}$ with $k \in\N, 0\le k<d$ with $\tilde \Sigma$ being an open convex cone containing no lines. Let $w \colon \overline{\Sigma} \to [0,\infty)$ be a non-negative (non identically zero)  continuous $\alpha$-homogeneous function so that $w^{1/\alpha}$ is concave on $\Sigma$. Set $N\coloneqq d+\alpha$. Then, for every $  \varnothing \neq\Omega\subset \Sigma$ open with $\mm_w(\Omega)<\infty$ and $u \in H^{1,2}_0(\Omega;w)$ with $\essinf u>0$, the decreasing rearrangement $u^*$ with respect to  $ \mm_{0,N}$ satisfies $ u^* \in {\sf AC}_{loc}(\Omega^*)$ and it holds
    \begin{equation}\label{eq:boosted pz}
         \int_\Omega |\nabla u|^2w\,\d\Leb d \ge  \big( \mm_w(B_1(0)\cap \Sigma) /\omega_N \big)^{\frac{2}{N}} \int_{\Omega^*} |(u^*)'|^2 \,\d \mm_{0,N} + c \mathcal A(\Omega) \mm_w(\Omega)^{1-\frac 2N}s^2,
    \end{equation}
    for some constant $c>0$ depending on $d,\alpha,\Sigma$ and $w$, where  
    \[
          \mathcal A(\Omega) = \inf\left\{ \frac{\mm_w\big(\Omega\triangle (B_r(x) \cap \Sigma)\big)}{\mm_w(\Omega)}  \colon x \in \R^k\times {0^{d-k}} \text{ and } \mm_w(\Omega)= \mm_w(B_r(x_0)) \right\},
    \]
    and $s \coloneqq \sup \left\{ t>0 \colon \mm_w(\{u>t\})  \ge \mm_w(\Omega)\big(1- \mathcal A(\Omega)/4 \big)\right\}$.
\end{theorem}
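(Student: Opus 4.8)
The plan is to regard the weighted cone $(\overline{\Sigma},|\cdot|,\mm_w)$ as a metric measure space; it is a $\CD(0,N)$ space (cf.\ the discussion around \cite{CavallettiManini22}), hence in particular a PI-space, so the calculus of $H^{1,p}_0(\Omega;w)$ agrees with the metric Sobolev calculus with $|Du|=|\nabla u|$ $\mm_w$-a.e., and $\avr(\overline\Sigma,|\cdot|,\mm_w)=\mm_w(B_1(0)\cap\Sigma)/\omega_N$. Extending $u$ by zero outside $\Omega$ (legitimate since $u\in H^{1,2}_0(\Omega;w)$) gives $u\in W^{1,2}_{loc}(\Sigma)$ with $\mm_w(\{u>t\})\le\mm_w(\Omega)<\infty$ for all $t>0$, and the sharp weighted isoperimetric inequality in convex cones \cite{CabreRosOtonSerra16} reads $\Per(E,\Sigma)\ge{\sf C}\cdot\Ig(\mm_w(E))$ with $g(t)=N\omega_N t^{N-1}$ and ${\sf C}=\bigl(\mm_w(B_1(0)\cap\Sigma)/\omega_N\bigr)^{1/N}$, i.e.\ exactly hypothesis \eqref{eq:isop general}. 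Thus Theorem~\ref{thm:main Sobolev PZ metric} (equivalently Theorem~\ref{thm:main polya noncompact}(i)) applies on $\Sigma$ with $p=2$, yielding $u^*\in{\sf AC}_{loc}(\Omega^*)$ and the first term of \eqref{eq:boosted pz}; here $\Omega^*=\bigl(0,(\mm_w(\Omega)/\omega_N)^{1/N}\bigr)$ and the rearrangement of the zero-extension coincides with $u^*$ there and vanishes beyond.

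The extra remainder is extracted by running the proof of Theorem~\ref{thm:Polya W1p} with the \emph{quantitative} weighted isoperimetric inequality of \cite{CintiGlaudoPratelliRosOtonSerra20} replacing the sharp one, namely $\Per(E,\Sigma)\ge{\sf C}\cdot\Ig(\mm_w(E))\bigl(1+c_0\,\mathcal A_w(E)^2\bigr)$ for some $c_0=c_0(d,\alpha,\Sigma,w)>0$, where $\mathcal A_w(E)$ is the Fraenkel asymmetry of $E$ relative to balls centred on the translation-invariant subspace of $\Sigma$. Inserting this into the refined inequality \eqref{eq:improved polya rs} (with $p=2$, $t_1=\essinf u=0$, $t_2=\esssup u$), recalling $\Per(\{u^*>t\})=g(r_t)$, using $(1+x)^2\ge1+2x$ and the one-dimensional coarea identity $\int g(r_t)\,|(u^*)'(r_t)|\,\d t=\int_{\Omega^*}|(u^*)'|^2\,\d\mm_{0,N}$, I expect to obtain
\[
\int_\Omega|\nabla u|^2w\,\d\Leb d\ \ge\ {\sf C}^2\int_{\Omega^*}|(u^*)'|^2\,\d\mm_{0,N}\ +\ 2c_0{\sf C}^2\int_0^{\esssup u}\mathcal A_w(\{u>t\})^2\,g(r_t)\,|(u^*)'(r_t)|\,\d t .
\]

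It then remains to bound the last integral below by $c\,\mathcal A(\Omega)\,\mm_w(\Omega)^{1-2/N}s^2$, for which I would use the Hansen--Nadirashvili/Brasco--De Philippis thresholding. For $t<s$ the set $E_t:=\{u>t\}$ (of the zero-extended $u$) satisfies $E_t\subseteq\Omega$ and $\mm_w(\Omega)\ge\mm_w(E_t)\ge\mm_w(\Omega)(1-\mathcal A(\Omega)/4)$; if $B$ nearly realises $\mathcal A_w(E_t)$ and $B'\supseteq B$ is the concentric ball with $\mm_w(B')=\mm_w(\Omega)$, then $\mm_w(\Omega)\mathcal A(\Omega)\le\mm_w(\Omega\triangle B')\le\mm_w(\Omega\setminus E_t)+\mm_w(E_t\triangle B)+\mm_w(B'\setminus B)\le\tfrac{\mathcal A(\Omega)}{4}\mm_w(\Omega)+\mathcal A_w(E_t)\mm_w(\Omega)+\tfrac{\mathcal A(\Omega)}{4}\mm_w(\Omega)$, whence $\mathcal A_w(E_t)\ge\tfrac12\mathcal A(\Omega)$. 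Restricting the remainder integral to $t\in(0,s)$ and using this bound, it suffices to estimate $\int_0^{s}g(r_t)|(u^*)'(r_t)|\,\d t=\int_{\{0<u^*<s\}}|(u^*)'|^2\,\d\mm_{0,N}$ from below; on the corresponding $r$-range one has $r_t\ge(\mm_w(\Omega)/2\omega_N)^{1/N}$, hence $g(r_t)\gtrsim\mm_w(\Omega)^{(N-1)/N}$, while the transition layer is thin, $\mm_{0,N}(\{0<u^*<s\})=\mm_w(\Omega)-\mu(s^-)\le\tfrac{\mathcal A(\Omega)}{4}\mm_w(\Omega)$, so a Cauchy--Schwarz estimate across this layer (where $u^*$ decreases from a value $\simeq s$ down to $\essinf u>0$) produces the required lower bound $\gtrsim\mathcal A(\Omega)^{-1}\mm_w(\Omega)^{1-2/N}s^2$. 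Multiplying by $2c_0{\sf C}^2(\mathcal A(\Omega)/2)^2$ then gives $c\,\mathcal A(\Omega)\,\mm_w(\Omega)^{1-2/N}s^2$, which is \eqref{eq:boosted pz}.

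I expect the principal difficulty to lie in the bookkeeping of the second step: threading the quantitative defect $c_0\mathcal A_w(\{u>t\})^2$ through the Jensen/coarea chain behind \eqref{eq:improved polya rs} — where the level-set perimeter enters raised to the power $p$ inside a Jensen-type estimate — without it being absorbed by the inequalities used there. The secondary delicate point is the calibration of the threshold in the last step: one needs $\mathcal A_w(\{u>t\})\gtrsim\mathcal A(\Omega)$ on a range of levels whose $\mm_{0,N}$-image is thin enough (this is where the definition of $s$ via $\mathcal A(\Omega)/4$ is tuned, and where the hypothesis $\essinf u>0$ ensures $s>0$ and makes the transition-layer Cauchy--Schwarz close in the stated form), so that the final estimate produces precisely the exponents $\mathcal A(\Omega)^1$, $\mm_w(\Omega)^{1-2/N}$ and $s^2$.
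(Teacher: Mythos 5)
Your proposal matches the paper's proof almost line for line: both start from the ratio form \eqref{eq:improved polya rs} of Theorem~\ref{thm:Polya W1p} --- which is precisely the device that keeps the asymmetry defect from being absorbed in the Jensen/coarea chain, resolving the difficulty you flag --- then insert the quantitative weighted isoperimetric inequality of \cite{CintiGlaudoPratelliRosOtonSerra20}, propagate the asymmetry as $\mathcal A(\{u>t\})\ge \mathcal A(\Omega)/2$ for $t<s$, and close with a convexity estimate over the transition layer. The only cosmetic differences are that you carry out the final estimate via Cauchy--Schwarz in the rearranged $r$-variable while the paper applies Jensen's inequality to $\int_0^s \d t/(-\mu'(t))$ in the $t$-variable (the two are the same under $t=u^*(r)$), and that you rederive by hand the propagation-of-asymmetry step that the paper delegates to \cite[Lemma 2.8]{BrascoDephilippis17}.
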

\begin{proof}    In this proof,  $c>0$ is a constant depending only on  $d,\alpha,\Sigma$ and $w$ whose value might change from line to line.
    
    Recall that, it is known under the standing assumptions that the convex cone can be regarded implicitly as a metric measure space by setting
    \[
        (\X,\sfd,\mm_w) \coloneqq (\overline{\Sigma}, |\cdot |, w\Leb d),
    \]
    and, thanks to the assumptions on the weight $w$  \cite{CabreRosOtonSerra16}, it holds (see \cite{BaloghKristaly21,CavallettiManini22}) that $\X$ is an ${\sf RCD}(0,N)$ space for $N= d+\alpha$ satisfying
    \[  
        {\sf AVR}(\X) =   \frac{\mm_w(B_1(0)\cap \Sigma)}{\omega_N}.
    \]

   Note, by lower semicontinuity, that $u \in H^{1,2}_0(\Omega;w) \subset W^{1,2}_{loc}(\X)$ with $|Du|\le |\nabla u|$ a.e.\ on $\Omega$. Therefore \eqref{eq:boosted pz} in the case $\mathcal A (\Omega)=0$ follows directly from Theorem \ref{thm:main noncompact AVR}. 

   We now proceed with the proof of \eqref{eq:boosted pz} when $\mathcal A (\Omega)>0$. It was already noticed in \cite{BaloghKristaly21,CavallettiManini22} that $ \Per(E) = \int_{\partial^* E\cap\Sigma} w\,\d\HH^{d-1}$  for any $E\subset \X$ Borel, that is that the perimeter measure coincides with the classical weighted perimeter relative to $\Sigma$. Thus, we can invoke Theorem \ref{thm:Polya W1p} with $I=(0,\infty)$, specifically \eqref{eq:improved polya rs} with $r=0,t=\esssup u$ to deduce (recall also \eqref{eq:dependence gradient}) that  the decreasing rearrangement $u^*$  of $u$ with respect to $\mm_{0,N}$ satisfies $u^* \in {\sf AC}_{loc}(\Omega^*)$ and 
    \begin{equation}
     \int_\Omega |\nabla u|^2 w\,\d\Leb d \ge \int |D u|^2 \,\d \mm_w\ge {\sf AVR}(\X)^\frac{2}{N}\int_0^\infty  \left(\frac{\Per( \{u>t\})}{\Per(\{u^*>t\})}\right)^2 \int |(u^*)'|\,\d\Per(\{ u^*>t\},\cdot) \d t.
    \label{eq:improved covnex cone}
    \end{equation}
    Recall here $\Per(\{ u^*>t\},\cdot)$ is the perimeter measure on $\Omega^*$ with respect to the metric measure space $(\R,|\cdot |,\mm_{0,N})$. Recall $\mu(t) =\mm_w(\{u>t\})$  for $t>0$ is the distribution function of $u$.  Fix any $t>0$. By the quantitative isoperimetric inequality \cite[Theorem 1.3]{CintiGlaudoPratelliRosOtonSerra20}, we infer
    \begin{equation}
       \Per(\{u>t\}) - \Per(\{u^*>t\}) \ge c \mu(t)^{\frac{N-1}{N}} \mathcal A(\{u>t\})^2,  \label{eq:quantitative level set}
    \end{equation}
    for some constant $c>0$ depending only on  $d,\alpha,\Sigma$ and $w$ (here we are using that $\Per(\{u^*>t\}) = N({\sf AVR}(\X)\omega_N)^{1/N}\mu(t)^{\frac{N-1}{N}})$). Notice that $\mathcal A(\{u>t\})$ is also well defined, as the asymmetry does not depend on the equivalence class up to $\Leb d$-negligible set of $u$. By convexity, we then have
    \begin{align*}
         \Per(\{u>t\})^2&\ge   \Per(\{u^*>t\})^2  +2 \Per(\{u^*>t\})\left( \Per(\{u>t\})  - \Per(\{u^*>t\}) \right) \\
         &\ge   \Per(\{u^*>t\})^2 + 2 (N {\sf AVR}(\X))^{\frac 1N}\omega_N \mu(t)^{\frac{N-1}{N}})  \left( \Per(\{u>t\})  - \Per(\{u^*>t\}) \right).
    \end{align*}
    Combining the above with \eqref{eq:quantitative level set} and then plugging in \eqref{eq:improved covnex cone} we get, after using coarea formula, that
    \[
      \int_\Omega |\nabla u|^2 w\,\d\Leb d \ge {\sf AVR}(\X)^\frac{2}{N} \int_0^\infty |(u^*)'|^2\, \d \mm_{0,N} + c\int_{(0,s)\setminus u^*(D^+_{u^*})} \frac{\mathcal A(\{u>t\})^2\mu(t)^{2\frac{N-1}{N}}}{-\mu'(t)}\,\d t,
    \]
    having also used Lemma \ref{lem:LEMMONE} to get the first term and Lemma \ref{lem:derivative mu} for the second term. Actually, arguing as in the proof of Theorem \ref{thm:Polya W1p}, we get also that $[0, s)\setminus u^*(D^+_{u^*})$ is negligible. 
    
    Since $\mu(t)$ is decreasing and $\essinf u>0$ strictly by assumption, we have $\mu(t)\ge \mu(s) > \mm_w(\Omega)(1-\mathcal A(\Omega)/4)$ for all $t \in (0,s)$. Arguing as in \cite[Lemma 2.8]{BrascoDephilippis17} in this setting (this principle goes under the name of  ``propagation of asymmetry'', see also  \cite[Section 5]{HansenNadirashvili94}) using that  $\mm_w(\Omega\setminus \{u>t\}) /\mm_g(\Omega) \le \mathcal A(\Omega)/4$), we deduce that $\mathcal A(\{u>t\}) \ge \mathcal A(\Omega)/2$ for all $t \in (0,s)$. Thus, we get
    \[
     \int_0^s \frac{\mathcal A(\{u>t\})^2\mu(t)^{2\frac{N-1}{N}}}{-\mu'(t)}\,\d t\ge   \frac{\mathcal A(\Omega)^2}{8} \mm_g(\Omega)^{2\frac{N-1}{N}}\int_0^s\frac{1}{-\mu'(t)}\,\d t,
    \]
    having used that $\mm_w(\Omega)(1-\mathcal A(\Omega)/4)\ge \mm_w(\Omega)/2$ and by convention we set $1/\mu'(t)=0$ if $t \notin u^*(D^+_{u^*})$.
    Finally, by convexity and Jensen inequality, we observe
    \[
    \int_0^s\frac{1}{-\mu'(t)}\,\d t \ge \frac{s^2}{\int_0^s-\mu'(t)\,\d t} \ge \frac{s^2}{\mm_w(\Omega)-\mu(s)} \ge \frac{4s^2}{\mathcal A(\Omega)\mm_w(\Omega)},
    \]
    where the central inequality is due to the fact $\mu(0) - \mu(s) \ge \int_0^s -\mu'(t)\,\d t$ as, in general, $t\mapsto \mu(t)$ might be not locally absolutely continuous (cf.\ with Lemma \ref{prop:non vanishing}) and the last inequality is due to the choice of $s$. This concludes the proof.
\end{proof}

\section{Proofs: geometric and functional inequalities under Ricci lower bounds}

\subsection{Quantitative stability under positive Ricci lower bounds}
In this part, we prove each conclusion of  Theorem \ref{thm:main quant compact spaces} separately.

\begin{proof}[Proof of i) in Theorem \ref{thm:main quant compact spaces}]
Up to scaling, it is enough to work with $\mm(\X)=1$. Let us fix $u \in W^{1,p}(\X)$ non-zero with $\int u|u|^{p-2}\,\d\mm =0$ and set for brevity
\[
\delta(u) :=   \|u\|_{L^p(\X)}^{-p}\|Du\|_{L^p(\X)}^p- \lambda_{p,N-1,N}.
\]
Consider $u^*$ the decreasing rearrangement of $ u$ with respect to $\mm_{N-1,N}$. By Theorem \ref{thm:main polya compact}, we can estimate using \eqref{eq:BBG quantitativo} for some $C_N>0$ 
\begin{align*}
\delta(u) \ge  \big(1+C_N( \pi - \diam(\X))^N\big)^{\frac p2}\frac{\|(u^*)'\|^p_{L^p(\mm_{N-1,N})}}{\|u^*\|_{L^p(\mm_{N-1,N})}^{p}}- \lambda_{p,N-1,N}.
\end{align*}
We now notice that $u^*$ is a competitor to optimize $\lambda_{p,N-1,N}$ since $ \int u^*|u^*|^{p-2}\,\d\mm_{N-1,N}=0$ by vi) in Lemma \ref{lem:basic prop u*} choosing $G(t)=t|t|^{p-2}$. In particular,  we obtain
\[
\delta(u) \ge C (\pi - \diam(\X))^{{N}}  \lambda_{p,N-1,N},
\]
for a possibly smaller constant $C>0$ depending on $p,N$ (having used the global lower bound $(1+x)^q-1>qx$ for $x>0$ if $q\ge 1$, while for $q\in (0,1)$, a lower bound $(1+x)^q-1>c_{N,q}x $ valid for $x \in (0,C_N\pi^d)$ for a suitable $c_{N,q}>0$ depending on $q,d$). The conclusion, in this case is then given by absorbing in $C$ the  positive number $ \lambda_{p,N-1,N}$ (if $p\ge 2$ and $N\in\N$ then we can also use the explicit bound $\lambda_{p,N-1,N} \ge \frac{N^{p/2}}{(p-1)^{p-1}}$  \cite{LiWang2016}).
\end{proof}

\begin{proof}[Proof of ii) in Theorem \ref{thm:main quant compact spaces}]
Up to scaling, we can suppose $\mm(\X)=1$.  Fix $u\in W^{1,2}(\X)$ non-constant and set
\[
\delta(u) :=   \frac{q-2}{N} - \frac{\| u\|^2_{L^{q}(\X)} - \|u\|_{L^2(\X)}^2 }{ \| Du\|_{L^2(\X)}^2 }.
\]
Consider $u^*$ the decreasing rearrangement of $u$ with respect to $\mm_{N-1,N}$. By Theorem \ref{thm:main polya compact}, we can estimate using \eqref{eq:BBG quantitativo} for some $C_N >0$
\[
     \| D u\|_{L^2(\X)}^2\Big(\frac{q-2}{N} - \delta(u)\Big) \ge \Big(\frac{q-2}{N} - \delta(u)\Big)(1+C_N(\pi - \diam(\X))^N)\|(u^*)'\|_{L^2(\mm_{N-1,N})}^2.
\]
On the other hand, by equimeasurability, we have
\[
\|D u\|_{L^2(\X)}^2\Big(\frac{q-2}{N} - \delta(u)\Big)  =  \| u^*\|^2_{L^{q}(\mm_{N-1,N})} - \|u^*\|_{L^2(\mm_{N-1,N})}^2 \le  \frac{q-2}{N} \| (u^*)'\|_{L^2(\mm_{N-1,N})}^2,
\]
having used, in the last inequality, the validity of the Sobolev inequality for $u^*$ in the model interval (see, e.g., \cite{CavallettiMondino17}). Since $u$ is assumed not constant, then $u^*$ is not constant (for instance, by the second part of i) in Lemma \ref{lem:basic prop u*}). Hence, we can simplify the $\| (u^*)'\|_{L^2}$ term and rearrange to achieve
\[
C_N\frac{q-2}{N}(\pi - \diam(\X))^N \le \delta(u)(1+C_N(\pi-\diam(\X))^N). 
\]
Finally, using that $\pi- \diam(\X) \le \pi$, we find $C>0$ depending only on $N$ and $q$ for the conclusion \eqref{intro:q-sob quantitative} to hold.
\end{proof}

\begin{proof}[Proof of iii) in Theorem \ref{thm:main quant compact spaces}]
Up to scaling, it is enough to work $\mm(\X)=1$. Fix $u\in W^{1,2}(\X)$ non-constant with $\int |u|\,\d\mm =1$ and set
\[
\delta(u)\coloneqq \Big(\int |u|\log|u|\,\d\mm\Big)^{-1}\int_{\{|u| >0\}}\frac{|Du|^2}{|u|}\,\d\mm - 2N.
\]
Consider now the decreasing rearrangement $|u|^*$ of $|u|$ with respect to  $\mm_{N-1,N}$. Note that $\sqrt{|u|^*} = \big(\sqrt{|u|}\big)^*$ $\mm_{N-1,N}$-a.e.\ on $(0,\pi)$ point recalling \cite[Lemma 2.20]{mondinovedovato21} since $t\mapsto\sqrt{t}$ is monotone. By Theorem \ref{thm:main polya compact}, using $2|D\sqrt {|u|}| = |Du|/\sqrt{|u|}$ on $\{|u| >0\}$ by the  chain rule, as well by \eqref{eq:rearrangeEntropy}, we have
\begin{align*}
    \delta(u) &= \frac{2 \int|D\sqrt{ |u|}|^2\,\d\mm}{\int |u|\log|u|\,\d\mm}- 2N \ge 2\frac{ {\sf BBG}_N(\diam(\X))^2 \int|(\sqrt{ |u|^*})'|^2\,\d\mm_{N-1,N}}{\int |u|^*\log |u|^*\,\d\mm_{N-1,N}}- 2N \\
    &\overset{\eqref{eq:BBG quantitativo}}{\ge} C_N(\pi-\diam(\X))^N\frac{\int_{\{|u|^* >0\}}\frac{|(|u|^*)'|^2}{||u|^*|}\,\d\mm_{N-1,N}}{\int |u|^*\log |u|^*\,\d\mm_{N_1,N}} \\
    &\ge 2NC_N(\pi-\diam(\X))^N,
\end{align*}
having used in the last inequality  that $\int |u|^* \d\mm_{N-1,N}=1$ and { that $|u|^*$ is not constant (e.g.\ by i) in Lemma \ref{lem:basic prop u*})}, so that  $|u|^*$ satisfies the log-Sobolev inequality in the model interval (see, e.g., \cite{CavallettiMondino17}). For a suitable $C$ depending only on $N$, the above concludes the proof of \eqref{intro:log Sob quantitative}.
\end{proof}
In the  above, we used the following lemma
\begin{lemma} 
    Let  $\Xdm$ be a metric measure space, let $\varnothing \neq \Omega\subset\X$ be open and let $u \in L^p(\Omega)$ for $p \in [1,\infty)$. Consider  $\omega \ll \Leb 1$  satisfying \eqref{eq:mass compatibility},\eqref{eq:zero a sx} and consider the decreasing rearrangement $|u|^*$ of {$|u|$ with respect to $\omega$}. Then, it holds
    \begin{equation}
       \int_\Omega |u|^p\log(|u|^p)\,\d\mm = \int_\Omegastar (|u|^*)^p\log( (|u|^*)^p)\,\d\omega, \label{eq:rearrangeEntropy}
    \end{equation}
    meaning that one integral makes sense if and only if the other does, in which case they agree.
\end{lemma}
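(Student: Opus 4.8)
The plan is to recognize \eqref{eq:rearrangeEntropy} as an instance of the equimeasurability identity iv) in Lemma \ref{lem:basic prop u*}, applied to the function $|u|$ and to the continuous ``entropy integrand'' $G(t)\coloneqq t^p\log(t^p)$ for $t>0$, $G(0)\coloneqq 0$. First I would check that $G$ extends to a continuous function on $[0,\infty)$ (indeed $t^p\log(t^p)\to 0$ as $t\downarrow 0$) and then to all of $\R$, say by $G\equiv 0$ on $(-\infty,0)$, which is harmless since $|u|$ and $|u|^*$ are non-negative. With the usual convention $0\log 0=0$ one then has $|u|^p\log(|u|^p)=G(|u|)$ $\mm$-a.e.\ on $\Omega$ and $(|u|^*)^p\log((|u|^*)^p)=G(|u|^*)$ $\omega$-a.e.\ on $\Omegastar$, so that \eqref{eq:rearrangeEntropy} becomes precisely $\int_\Omega G(|u|)\,\d\mm=\int_{\Omegastar}G(|u|^*)\,\d\omega$. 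I would also record that $|u|\in L^p(\Omega)$ forces $\mm(\{|u|>t\})\le t^{-p}\|u\|_{L^p(\Omega)}^p<\infty$ for every $t>0\ge\essinf|u|$, so that $|u|^*$ (which equals $(|u|)^*$) is well defined and Lemma \ref{lem:basic prop u*} is applicable to $|u|$.

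The only real subtlety --- and the step I expect to need the most care --- is that $G$ is neither bounded nor of a single sign, so iv) of Lemma \ref{lem:basic prop u*} cannot be quoted verbatim for the signed integral; exactly as in the proof of that lemma (where one reduces to $G\ge 0$), I would split $G=G^+-G^-$ into its positive and negative parts. Both $G^\pm$ are continuous and non-negative on $\R$, so iv) applies to each of them separately, giving
\[
\int_\Omega G^\pm(|u|)\,\d\mm=\int_{\Omegastar}G^\pm(|u|^*)\,\d\omega\ \in\ [0,\infty].
\]
In particular $\int G^+$ over $\Omega$ and over $\Omegastar$ are simultaneously finite or infinite, and likewise for $G^-$.

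To conclude I would combine these two identities: by definition $\int_\Omega G(|u|)\,\d\mm$ ``makes sense'' (is not of the form $\infty-\infty$) if and only if at least one of $\int_\Omega G^\pm(|u|)\,\d\mm$ is finite, which by the previous step happens exactly when the corresponding integral over $\Omegastar$ is finite, i.e.\ exactly when $\int_{\Omegastar}G(|u|^*)\,\d\omega$ makes sense; and in that case subtracting the two non-negative identities yields $\int_\Omega G(|u|)\,\d\mm=\int_{\Omegastar}G(|u|^*)\,\d\omega$, which is \eqref{eq:rearrangeEntropy}. There is no genuine obstacle here: the statement is essentially a bookkeeping corollary of the change-of-variables/equimeasurability Lemma \ref{lem:basic prop u*}, and the positive/negative part decomposition is exactly what is needed to handle the sign and the possible infiniteness of the entropy.
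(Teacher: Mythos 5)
Your proof is correct and takes essentially the same route as the paper's: the paper simply invokes \eqref{eq:composition rearr G} (item iv) of Lemma \ref{lem:basic prop u*}) with $G(t)=t^p\log(t^p)$ for $t\ge 0$ and $G(t)=0$ for $t<0$. The only difference is that you re-derive the positive/negative-part reduction by hand; in fact item iv) is already stated for arbitrary continuous $G:\R\to\R$ with exactly the ``one integral makes sense iff the other does'' conclusion (its proof contains the $G^\pm$ splitting), so it can be quoted verbatim and your extra care, while harmless, is not strictly necessary.
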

\begin{proof}
Follows from \eqref{eq:composition rearr G} taking $G(t)\coloneqq t^p\log(t^p)$ for $t\ge 0$ and $G(t)\coloneqq 0$ for $t< 0.$
\end{proof}
\subsection{Rigidity under non-negative Ricci curvature and Euclidean volume growth}
In this section, we prove each conclusion of  Theorem \ref{thm:main noncompact AVR} separately. 
\begin{proof}[Proof of i) in Theorem \ref{thm:main noncompact AVR}]
     Here we suppose for some $u \in W^{1,p}_0(\Omega)$ with $\mm(\Omega)<\infty$ that \eqref{intro: p-Eig AVR} holds with both sides non-zero and finite. Set $\rho>0$ so that $\mm_{0,N}(0,\rho)=\mm(\Omega)$ and notice, by scaling, that
    \begin{equation}    \lambda^\mathcal{D}_{p,N,\rho} =  F_{N,p}\mm(\Omega)^{-\frac pN}.
     \label{eq:scaling eigenvalue}
    \end{equation}
    Since $u \in W^{1,p}_0(\Omega)\subset W^{1,p}_{loc}(\X)$ (by zero extension outside of $\Omega$), we can  consider $|u|^*$ the decreasing rearrangement of $|u|$ with respect to  $\mm_{0,N}$ (notice $\mm(\{|u|>t\})<\infty$ for $t>0$, as $|u| \in L^p(\Omega)$). Note also that $\Omega^*=(0,\rho)$  and that by  Remark \ref{rem: Dirichlet BC 1D} we have that $\lim_{t\to \rho^-}|u|^*(t)=0$.  Hence by Lemma \ref{lem:w1p0 1-d} we have $|u|^*\in W^{1,p}_0([0,\rho))$. Therefore we can estimate
    \begin{align*}
       \|Du\|_{L^p(\Omega)}^p&\overset{\eqref{eq:polya Sobolev AVR}}{\ge}   {\sf AVR}(\X)^{\frac pN}\|(|u|^*)'\|^p_{L^{p}(\Omega^*)} \ge \lambda^\mathcal{D}_{p,N,\rho} {\sf AVR}(\X)^{\frac pN}\||u|^*\|^p_{L^{p}(\Omega^*)}\\
       &\overset{\eqref{eq:scaling eigenvalue}}{=} F_{N,p}\big(\mm(\Omega)^{-1}{\sf AVR}(\X)\big)^{\frac pN}\|u\|^p_{L^{p}(\Omega)}.
    \end{align*}
    Therefore, all inequalities must be equalities  giving in turn that $ \lambda^\mathcal{D}_{p,N,\rho} = \frac{\int_0^{\rho} |(|u|^*)'|^p\,\d\mm_{0,N}}{\int_0^{\rho}||u|^*|^p\,\d\mm_{0,N}}.$ In particular $|u|^*$ satisfies all the assumptions of Lemma \ref{lem:1D regularity} with $\lambda=\lambda^\mathcal{D}_{p,N,\rho}$, hence it satisfies \eqref{intro:p-ODE AVR} and $(|u|^*)'\neq 0$ on $(0,\rho)$. We can thus invoke the rigidity case in Theorem \ref{thm:main polya noncompact} to deduce that $|u|= |u|^*\circ \big({\sf AVR}(\X)^{1/N} \sfd_{x_0}\big)$ $\mm$-a.e.\ on $\{|u|>0\}$ for some $x_0\in\Omega$ {and that $\X$ is an $N$-volume cone in the $\CD(0,N)$ (in fact an $N$-Euclidean cone in the $\RCD(0,N)$ case) with tip $x_0$.} In particular, $\{|u|>0\}$ is a ball up to $\mm$-negligible sets. 

    Now we show that $\Omega$ is equivalent to $\{|u|>0\}$ up to negligible set. To this aim, notice that $\mm_{0,N}(\{|u|^*>0\}) = \mm_{0,N}(0,\rho)$ since $|u|^*$ is strictly positive in the interior of $(0,\rho)$ (as $|u|^*$ is non-negative and monotone decreasing with $(|u|^*)'\neq 0$). Therefore, by equimeasurability and definition of $\rho$, we have $\mm(\{|u|>0\}) = \mm_{0,N}(0,\rho) = \mm(\Omega)$. Since, clearly, $\{|u|>0\}\subset \Omega$ up to $\mm$-negligible sets, we deduce that $\Omega$ and  $\{|u|>0\}$ are equivalent. Hence, $\Omega$ is equivalent to a ball centred at $x_0$ of radius $\rho / {\sf AVR}(\X)^{1/N} $. Finally, since $|u|$ is radial and $|u|^*$ is strictly positive, then the claim for $u$ follows up to possibly inverting the sign.
\end{proof}
\begin{proof}[Proof of ii) in Theorem \ref{thm:main noncompact AVR}]
    Here we suppose for some non-zero $u \in W^{1,p}_{loc}(\X)\cap L^{p^*}(\X)$ and 
     $p > N$ that \eqref{intro:p-Sob AVR} holds with both sides nonzero and finite where
    \begin{equation}
       S_{N,p} = \frac 1N\Big(\frac{N(p-1)}{N-p}\Big)^{\frac{p-1}p}\Big(\frac{\Gamma(N+1)}{N\omega_N\Gamma(N/p)\Gamma (N+1-N/p)}\Big)^{\frac 1N}, \label{eq:Sobolev constant}
    \end{equation} 
    is the sharp Euclidean Sobolev constant (generalized, if $N$ is not a natural number). Consider $|u|^*$ the decreasing rearrangement of $|u|$ with respect to  $\mm_{0,N}$ (notice $\mm(|u|>t)<\infty$ for $t>0$, as $u \in L^{p^*}(\X)$) and estimate
    \[
         S_{p,N}{\sf AVR}(\X)^{-\frac 1N}\|Du\|_{L^p(\X)}\overset{\eqref{eq:polya Sobolev AVR}}{\ge}  S_{p,N} \|(|u|^*)'\|_{L^{p^*}(\mm_{0,N})} \ge    \||u|^*\|_{L^{p^*}(\mm_{0,N})} =\|u\|_{L^{p^*}(\mm_{0,N})} ,
    \]
    having used, since $|u|^* \in {\sf AC}_{loc}(0,\infty)$, the Bliss inequality \cite{Bliss30} in the second inequality. Thus, all inequalities must be equalities and by characterization of extremal for the Bliss inequalities we get
    \[
     |u|^*(t) = a\big(1+b t^{\frac p{p-1}}\big)^{\frac{N-p}{p}},\qquad\forall \, t>0,
    \]
    for some $a\in \R,b>0$. In particular, it holds $(|u|^*)'\neq 0$ a.e.\ on $(0,\infty)$ and therefore the rigidity case in Theorem \ref{thm:main polya noncompact} gives that $|u|= |u|^*\circ \big({\sf AVR}(\X)^{1/N} \sfd_{x_0}\big)$ for some $x_0\in\X$ {and the conical rigidity of the ambient space as above.} Since $|u|^*$ is strictly positive,  the claimed identity for $u$ follows up to possibly inverting sign.
\end{proof}
\begin{proof}[Proof of iii) in Theorem \ref{thm:main noncompact AVR}]
    Fix $p>1$  and a function $u \in W^{1,p}(\X)$ satisfying $ \int|u|^p\,\d\mm=1$ (in particular, $u$ is not constant) and such  that \eqref{intro:p-LogSov AVR} holds with both sides non-zero and finite where
    \begin{equation}
       L_{p,N} \coloneqq \frac{p}{N}\Big(\frac{p-1}{e}\Big)^{p-1} \Big(\sigma_N \Gamma\big(N/p'+1\big)\Big)^{-\frac{p}{N}}\label{eq:LogSobolev constant}
    \end{equation}
    is the sharp log-Sobolev Euclidean constant \cite{DelPinoDolbeault03} (extended to  non-integer $N$). Consider $|u|^*$ the decreasing rearrangement of $|u|$ with respect to  $\mm_{0,N}$ and estimate
    \begin{align*}
          \frac{N}{p}\log\left( L_{p,N}{\sf AVR}(\X)^{-\frac pN} \|Du\|_{L^p(\X)}^p\right) &\overset{\eqref{eq:polya Sobolev AVR}}{\ge}  \frac{N}{p}\log\left( L_{p,N} \|(|u|^*)'\|_{L^p(\mm_{0,N})}^p\right)\\
          &\ge \int (|u|^*)^p\log (|u|^*)^p\,\d \mm_{0,N}  \overset{\eqref{eq:rearrangeEntropy}}{=}   \int |u|^p\log |u|^p\,\d \mm ,
    \end{align*}
    having used, since $|u|^* \in {\sf AC}_{loc}(0,\infty)$, the Euclidean log-Sobolev inequality \cite{BaloghDonKristaly24}. Therefore, all inequalities must be equalities, and by the characterization of equality cases in the Euclidean log-Sobolev inequality (by \cite{DelPinoDolbeault03} for $p\le N$ and $N$ integer, while for general $p \in (1,\infty), N>1$ by \cite{BaloghDonKristaly24}) it must hold for some $ \lambda>0$ that
    \[
    |u|^*(t) = \lambda^{\frac N{pp'}} \left(\Gamma( \tfrac n{p'} +1) \omega_N\right)^{-\frac 1p}e^{-\lambda\frac{ t^{p'}}{p}},\qquad \forall \, t>0.
    \]
    In particular, it holds $(|u|^*)'\neq 0$ a.e.\ on $(0,\infty)$ and therefore the rigidity case in Theorem \ref{thm:main polya noncompact} gives that $|u|= |u|^*\circ \big({\sf AVR}(\X)^{1/N} \sfd_{x_0}\big)$ for some $x_0\in\X$ {as well as the conical rigidity of the ambient space as above.} Since $|u|^*$ is strictly positive, the claim for $u$ follows up to possibly inverting sign.
\end{proof}
\section{Proofs: geometric and functional inequalities in the Euclidean space and
beyond}
\subsection{The Faber-Krahn inequality in the Euclidean
space with radial log-convex density}\label{sec:log-convex conj}
In this part, we prove Theorem \ref{thm:faber Krahn log convex}. 

Let $f\colon [0,\infty) \to \R$ be smooth and convex and consider the radial log-convex density
\[
    g(x) = e^{f(|x|)},\qquad \text{on }\R^d.
\]
Kenneth Brakke conjectured that balls around the origin are isoperimetric regions for every volume on $\R^d$ with radially log-convex density, see \cite[Conjecture 3.2]{RosalesCaneteBayleMorgan08}. The validity of this conjecture and the full characterization of isoperimetric regions was affirmatively solved by Chambers in \cite{Chambers19} (see also \cite{FuscoLaManna23} for a quantitative improvement). Let us first recall the isoperimetric problem in this weighted scenario. For $g(x)=e^{f(|x|)}$ as discussed, define the weighted volume and weighted perimeter respectively as
\[
\mm_g(E) \coloneqq \int_E g\,\d \Leb d, \qquad P_g(E) \coloneqq \int_{\partial^* E} g\,\d\HH^{d-1},  
\]
for every  set of finite perimeter $E\subset \R^d$, in the classical sense, where $\HH^{d-1}$ is the $(d-1)$-dimensional Hausdorff measure. We refer to \cite{AmbrosioFuscoPallara,Maggi12_Book} for this classical theory. In particular, the isoperimetric problem in this weighted scenario reads as
\begin{equation}
\mathcal I_g(v)\coloneqq \inf\left\{P_g(E) \colon \mm_g(E)=v\right\},\qquad\forall \, v>0,\label{eq:isop problem log convex}
\end{equation}
where the infimum runs over all sets with finite perimeter. A set is called isoperimetric for the volume $v>0$, if it is a minimum of the above. We can now state the resolution of the conjecture.
\begin{theorem}[\cite{Chambers19}]\label{thm:chambers 1}
    Given a radially log-convex density $g(x)=e^{f(|x|)}$, balls around the origin minimize \eqref{eq:isop problem log convex} for any given volume $v>0$.
\end{theorem}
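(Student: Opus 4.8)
This is precisely the Log-Convex Density Conjecture of Brakke (see \cite[Conjecture~3.2]{RosalesCaneteBayleMorgan08}), and the plan is to follow the three-stage strategy of Chambers \cite{Chambers19}: (i) produce a bounded isoperimetric region and establish its regularity; (ii) extract the first-variation (constant weighted mean curvature) condition; (iii) compare an arbitrary isoperimetric region with the centered ball of equal weighted volume. The first two stages are by now standard in the theory of sets of finite perimeter with density (cf.\ \cite{AmbrosioFuscoPallara,Maggi12_Book}); the third is the genuinely hard part.

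\emph{Existence and regularity.} Since $f$ is finite and convex on $[0,\infty)$, it is bounded below and eventually non-decreasing, so $g=e^{f(|\cdot|)}$ is bounded away from $0$ and does not decay at infinity; under this hypothesis the compactness/concentration arguments of Morgan--Pratelli (and subsequent refinements) yield, for each $v>0$, an isoperimetric region $E_v$, which can be taken bounded. By the weighted analogue of the Almgren--Federer regularity theory, $\partial^*E_v$ is a smooth embedded hypersurface away from a closed singular set of Hausdorff dimension at most $d-8$.

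\emph{First variation.} Put $\psi:=\log g=f(|\cdot|)$. Computing the first variation of $P_g$ along a compactly supported vector field $X$ gives
\[
\delta P_g(E_v;X)=\int_{\partial^*E_v}\big(H+\partial_\nu\psi\big)\,\langle X,\nu\rangle\,\d\big(g\,\HH^{d-1}\big),
\]
where $\nu$ is the outer unit normal and $H=\operatorname{div}_{\partial E_v}\nu$ the scalar mean curvature. Stationarity under volume-preserving deformations forces $H+\partial_\nu\psi\equiv\Lambda$ on the regular part of $\partial E_v$ for some constant $\Lambda$; since $\nabla\psi=f'(|x|)\,x/|x|$, this reads $H(x)+f'(|x|)\,\langle x/|x|,\nu\rangle=\Lambda$ at every regular $x\in\partial E_v$.

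\emph{Comparison with the centered ball and the main obstacle.} It remains to show $P_g(E_v)\ge P_g(B_\rho(0))$ whenever $\mm_g(B_\rho(0))=v$, with equality only for $E_v=B_\rho(0)$. The natural device is the foliation of $\R^d\setminus\{0\}$ by centered spheres $S_r(0)$, whose weighted mean curvature is $h(r):=\tfrac{d-1}{r}+f'(r)$. If $h$ were monotone in $r$, a standard sliding/maximum-principle argument (pushing a sphere from infinity until it first touches $\partial E_v$, resp.\ from the origin outward) would immediately force $\partial E_v$ to be a leaf. The crux --- and the reason the conjecture resisted for so long --- is that $h$ need \emph{not} be monotone when $f'$ changes sign, so there is no elementary comparison foliation. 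Chambers' resolution is a delicate geometric argument: one reduces to $E_v$ connected with connected boundary, distinguishes the portions of $\partial E_v$ facing towards and away from the origin via the sign of $\langle x,\nu\rangle$, and exploits the constancy of $H+\partial_\nu\psi$ together with the convexity of $f$ (the monotonicity of $f'$ along radial directions), combining comparisons at the innermost and outermost spheres meeting $\partial E_v$ with a tilting argument that precludes the boundary from spiraling around the origin; the rigidity clause of the strong maximum principle then upgrades this to $\partial E_v=S_\rho(0)$. This final stage is the whole content of \cite{Chambers19} and is where essentially all the difficulty lies; I would not expect any shortcut through it.
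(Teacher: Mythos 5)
The paper does not offer its own proof of this statement; it is quoted directly as a theorem of Chambers \cite{Chambers19}, and nothing more is said. Your sketch is therefore not in competition with anything in the paper --- it is a fair high-level account of what Chambers actually does, and you are right that the third stage is precisely where the entire difficulty resides and that there is no known shortcut through the sliding/tilting argument when the weighted mean curvature $h(r)=\tfrac{d-1}{r}+f'(r)$ of the centered spheres fails to be monotone.

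One small inaccuracy in your existence paragraph: a smooth convex $f\colon[0,\infty)\to\R$ need not be bounded below, and $g=e^{f(|\cdot|)}$ need not be bounded away from $0$ --- take $f$ affine with negative slope, or any $f$ with $f'<0$ throughout. So the compactness argument you appeal to cannot rest on ``$g$ does not decay at infinity''; in the regime where $f$ is decreasing the existence of bounded minimizers needs a different justification (and indeed Chambers handles the problem without first securing existence in full generality, working with a minimizing sequence / the isoperimetric profile directly). This does not affect the logic of the deep part of the argument, but be careful not to slip a false lemma into the easy part.

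Since the paper simply cites Chambers and you effectively do the same, there is nothing further to compare.
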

In particular, we have the following expression for the isoperimetric profile
\[
\mathcal I_g(v) = P_g( B_{r_v}(0)) = g(r_v)\HH^{d-1}(B_{r_v}(0)) = g(r_v)d\omega_d^{\frac 1d} r_v^{\frac{d-1}{d}},\qquad \forall \, v>0,
\]
where $ r_v>0$ so that $v = \mm_g(B_{r_v}(0))$.

Moreover, we also have the full characterization of isoperimetric sets. Define the numbers
\[
R(g)\coloneqq \sup\{ |x|\colon f(x)=f(0),\, x \in \R^d\}, \qquad V_g\coloneqq \mm_g(B_{R(g)}(0)).
\]
\begin{theorem}[\cite{Chambers19}]\label{thm:chambers 2}
The only isoperimetric regions are either balls centred at the origin, or balls entirely contained in $B_{R(g)}(0)$.
\end{theorem}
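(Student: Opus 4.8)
The statement is Chambers's classification theorem \cite{Chambers19}; I outline the strategy one would follow, which combines the variational theory of weighted perimeters with a delicate analysis of the generating curve of axially symmetric competitors.

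\textbf{Step 1: existence, connectedness, regularity and first variation.} Fix a volume $v>0$. A minimizing sequence for \eqref{eq:isop problem log convex} cannot lose mass at infinity (convexity of $f$ gives $g\ge e^{f(0)}$ and controls the slicing term), so by a concentration--compactness argument one obtains a bounded isoperimetric set $E$. Using that $\mathcal{I}_g(v_1+v_2)<\mathcal{I}_g(v_1)+\mathcal{I}_g(v_2)$ for the ball profile (a direct computation on $v\mapsto g(r_v)\,d\omega_d^{1/d}r_v^{(d-1)/d}$, recalling Theorem \ref{thm:chambers 1}) one reduces to a connected $E$. By the classical regularity theory for almost minimizers of perimeter with a $C^1$ density (De Giorgi--Federer, adapted to weights as in \cite{AmbrosioFuscoPallara,Maggi12_Book}), $\partial^* E$ is a smooth hypersurface away from a closed singular set $\Sigma$ of Hausdorff dimension $\le d-8$, and the first variation of $P_g-\Lambda\mm_g$ vanishes on the regular part, giving the constant weighted mean curvature equation
\begin{equation}\label{eq:cwmc}
H(x)-f'(|x|)\,\Big\langle \tfrac{x}{|x|},\,\nu(x)\Big\rangle=\Lambda,\qquad x\in\partial^*E\setminus\Sigma,
\end{equation}
where $H$ is the scalar mean curvature of $\partial^*E$ in $\R^d$ and $\nu$ the outer unit normal.

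\textbf{Step 2: reduction to axial symmetry.} Since $g$ is radial and therefore constant on each sphere $\partial B_r(0)$, the spherical symmetrization $E\mapsto E^\star$ — replacing, for each $r>0$, the slice $E\cap\partial B_r(0)$ by the geodesic cap on $\partial B_r(0)$ of the same $\HH^{d-1}$-measure centred on a fixed axis — preserves $\mm_g$ and does not increase $P_g$. Hence every isoperimetric set, after the equality analysis of this rearrangement (in the spirit of Brothers--Ziemer, which controls when equality forces the competitor to coincide with its symmetrization), is, up to rotation and modification on a null set, a \emph{cap set}: axially symmetric with each spherical slice a geodesic ball about the axis. Such a set is encoded by a generating curve $\gamma$ in the closed half-plane $\{(r,z):r\ge0\}$, and \eqref{eq:cwmc} becomes a planar ODE for $\gamma$.

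\textbf{Step 3: rigidity of the generating curve and conclusion.} This is the heart of Chambers's argument and the main obstacle. Analysing the ODE \eqref{eq:cwmc} for the curvature of $\gamma$, and using crucially the signs $f''\ge0$ and the resulting monotonicity (Chambers's ``tilt-a-whirl'' comparison mechanism), one rules out every non-circular solution and, via a barrier argument, excludes singular points, so $\Sigma=\varnothing$ and $\gamma$ is a circular arc centred on the axis; equivalently $E=B_\rho(x_0)$ is a metric ball. A short computation with \eqref{eq:cwmc} on $\partial B_\rho(x_0)$ (where $H\equiv (d-1)/\rho$ while $\langle x/|x|,\nu\rangle$ varies unless $x_0=0$) then forces either $x_0=0$, or $f'\equiv0$ on the interval of radii attained on $\partial B_\rho(x_0)$; in the latter case convexity of $f$ and the definition of $R(g)$ give $B_\rho(x_0)\subset B_{R(g)}(0)$. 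This is precisely the asserted dichotomy. The principal difficulty is entirely in the curve rigidity of Step 3; Steps 1--2 are standard once the weighted spherical rearrangement and its equality case are in hand.
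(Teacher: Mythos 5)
The paper does not prove this statement: Theorem~\ref{thm:chambers 2} is quoted from \cite{Chambers19} precisely because the proof is an entire (long and intricate) paper, and the authors only need to \emph{use} it as a black box inside Theorem~\ref{thm:radiality astratta}. So there is no in-paper argument to compare you against.

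As a roadmap of Chambers's strategy, what you wrote is broadly faithful: existence and interior regularity of an isoperimetric region, reduction by spherical (cap) symmetrization to an axially symmetric competitor described by a planar generating curve, translation of the stationarity condition into an ODE for that curve, and then the curvature-monotonicity/reflection (``tilt-a-whirl'') analysis that forces the curve to be a circular arc. The concluding bookkeeping in Step~3 --- plugging a ball $B_\rho(x_0)$ with $x_0\neq 0$ into the weighted CMC equation, noting that $H$ is constant while $f'(|x|)\langle x/|x|,\nu\rangle$ varies unless $f'$ vanishes on the relevant annulus, and then invoking convexity of $f$ and the definition of $R(g)$ to get $B_\rho(x_0)\subset B_{R(g)}(0)$ --- is the right mechanism for obtaining the stated dichotomy. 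Two small slips: the first variation gives $H+\partial_\nu(\log g)=\Lambda$, i.e.\ a \emph{plus} sign rather than the minus in your \eqref{eq:cwmc} (harmless, it only renames $\Lambda$), and the profile should read $g(r_v)\,d\omega_d\,r_v^{d-1}$ rather than the garbled exponent you copied over.

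The substantive point, which you flag yourself, is that Step~3 is the whole theorem: ``one rules out every non-circular solution'' compresses the entire curve-rigidity machinery of \cite{Chambers19} (the monotonicity of the turning angle, the no-crossing/reflection lemmas, and the exclusion of singular sets) into one sentence that a reader could neither verify nor reconstruct. So this is an accurate \emph{outline} of where the difficulty lies, not a proof, and for the purposes of this paper that is exactly the status of the statement: an imported external result.
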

In particular, we have that:
\begin{itemize}
    \item[{\rm i)}] If $\mm_g(E)\ge V_g$ and $P_g(E)=\mathcal I_g(E)$, then $\mm_g(E\triangle B_{r_{\mm(E)_g}}(0))=0 $;
    \item[{\rm ii)}] If $\mm_g(E)< V_g$ and $P_g(E)=\mathcal I_g(E)$, then $\mm_g(E\triangle B_{r_{\mm_g(E)}}(x))=0 $ for some $x \in B_{R(g)}(0)$ so that $B_{r_{\mm_g(E)}}(x)\subset B_{R(g)}(0) $.
\end{itemize}
We are now ready to prove our main result for this part.
\begin{proof}[Proof of Theorem \ref{thm:faber Krahn log convex}]
    We subdivide the proof into two steps.

    \noindent\textsc{Preliminary step}. We regard $\R^d$ with radial log-convex density $g$ as a metric measure space 
    \[
   (\X,\sfd,\mm_g)\coloneqq (\R^d,|\cdot |, g\Leb d).
    \]
    We notice that $ H^{1,p}_0(\Omega;g) \subset  W^{1,p}_0(\Omega ) \subset W^{1,p}_{loc}(\X)$ (by zero extension outside of $\Omega$) and $|Du|_p\le |\nabla u|$ a.e.\ on $\Omega$  holds for all $u \in H^{1,p}_0(\Omega;g)$. This simply follows by definition and lower semicontinuity of the $W^{1,p}$-space using the identity $\lip \, u  = |\nabla u|$ on $\Omega$ for $u$ smooth (see, e.g., \cite{Bjorn-Bjorn11}). Next, note that
    \begin{equation}
    \Per(E,\cdot ) = g\HH^{d-1}\mres{\partial^* E},\qquad \forall \, E\subset \R^d \text{ Borel so that }\nchi_E \in BV(\X), \label{eq:Per is g H}
    \end{equation}
    as measures on $\R^d$. The simple verification relies on the fact that $g$ is locally positive and locally bounded and on standard lower semicontinuity arguments using the definition of $\Per(E,\cdot)$  as well as classical density arguments (see e.g.\ the proof of Lemma \ref{lem:LEMMONE}). In particular, \eqref{eq:Per is g H} yields that Assumption \eqref{eq:ass2} is true for every $u  \in  W_{loc}^{1,p}(\X)$ by the same argument employed in ii) in Section \ref{sec:key}.
    
    For future use, we point out that $W^{1,p}(\X)$ coincides with the space $H^{1,p}(\R^d;g)$ of classical Sobolev functions with respect to the weighted Lebesgue measure (see \cite{LucicPasqualettoRajala21} for a detailed study). In particular, the Sobolev-to-Lipschitz property holds in $(\X,\sfd,\mm_g)$ by classical reasons.

    \noindent\textsc{Proof of  the results}. We start by proving \eqref{eq:faber density conjecture}. Let $u \in H^{1,p}_0(\Omega;g)$. We can thus invoke Theorem \ref{thm:main Sobolev PZ metric} with $ \Omega=\X, I = (0,\infty)$, then $\omega \coloneqq  \nchi_{(0,\infty)}e^{f(t)} d\omega_d t^{d-1}\d t $ and ${\sf C} \coloneqq  1 $ to deduce that the decreasing rearrangement $|u|^*$ of $|u|$ with respect to  $\omega$ is in $ {\sf AC}_{loc}(\Omega^*)$ where $\Omega^*=(0,\rho)$ with $\mm_g(\Omega)=\omega((-\infty,\rho))$ and it satisfies
   \[
    \int_\Omega|\nabla u|^pg\,\d \Leb d \ge \int |D|u||_p^p\,\d\mm_g \overset{(\ast)}{\ge} \int_0^\rho |(|u|^*)'|^p \,\d\omega.
    \]
    Moreover by Remark \ref{rem: Dirichlet BC 1D} we have $\lim_{t \to \rho^-} |u^*|(t)=0$.
    Now, the function $\tilde u(x) \coloneqq |u|^* (|x|)$ is clearly $H^{1,p}_0(B_\rho (0);g)$, and therefore by definition of $\lambda_{p}^\cD(B_\rho (0);g)$ it holds
    \[
    \int_0^\rho |(|u|^*)'|^p\,\d\omega = \int_{B_{\rho}(0)} |\nabla \tilde u|^pg\,\d \Leb d \ge \lambda_{p}^\cD(B_\rho (0);g) \int_{B_{\rho}(0)} |\tilde u|^pg\,\d\Leb d.
    \]
    By arbitrariness of $u$ and since $|u|,|u|^*$ and $\tilde u$ are equidistributed, the proof of \eqref{eq:faber density conjecture} is concluded.

    Next, we characterize the equality case and further assume that $u \in H^{1,p}_0(\Omega;g)$ satisfies equality in \eqref{eq: equality faber}. Tracing equalities in the previous proof, we get the identity
    \[
     \lambda_{p}^\cD(\Omega;g)  = \lambda_{p}^\cD(B_\rho(0);g) =    \frac{\int_0^\rho |(|u|^*)'|^p \,\d\omega}{\int_0^\rho |u^*|^p \,\d\omega}
    \]
    and also that equality holds in the starred $(\ast)$ inequality.
    On the other hand for all $v \in W^{1,p}((0,\rho);\omega)$ satisfying $\lim_{t \to \rho^-} v(t)=0$, taking $\tilde v\coloneqq v(|x|)$ shows that
    \[
    \lambda_{p}^\cD(B_\rho(0);g) \le  \inf_{ v\in W^{1,p}((0,\rho);\omega), \,\, \lim_{t \to \rho^-} v(t)=0}   \frac{\int_0^\rho |v'|^p \,\d\omega}{\int_0^\rho |v|^p \,\d\omega}.
    \]
    Moreover  $|u^*|$ satisfies equality in the above inequality. Therefore, Lemma \ref{lem:1D regularity} (recall also Lemma \ref{lem:w1p0 1-d}) gives the validity of the differential equation \eqref{eq:ODE log convex} and also that that $(|u|^*)'\neq 0$ a.e.\ on $(0,\rho)$. The latter property,  the first step in this proof and the characterization result for isoperimetric sets given by Theorem \ref{thm:chambers 2} make it then possible to apply Theorem \ref{thm:radiality astratta}. Hence, we get that $|u|=|u|^*(|x-x_0|)$ a.e.\ on $\Omega$ for some $x_0 \in B_{R(g)}(0)$ which, is forced to be $x_0=0$ if $\mm_g(\Omega)\ge V_g$. In particular, since $|u|^*>0$, then $u$ is radial up to possibly inverting the sign.
    
    We conclude by showing that $\Omega$ is a.e.\ equivalent to a ball. Notice $\omega(\{|u|^*>0\}) = \omega(0,\rho)=\mm_g(\Omega) $. Indeed $|u|^*$ is strictly positive on $(0,\rho)$ (as $|u|^*$ is non-negative and monotone decreasing with $(|u|^*)'\neq 0$) and thus, by equimeasurability, also $\mm_g(\{|u|>0\})=\mm_g(\Omega)$. Therefore, since $\{|u|>0\}\subset \Omega$ up to negligible sets and  $\{|u|>0\}$ is equivalent to a ball centred at $x_0$ with radius $\rho$, it follows that $\Omega$ is equivalent to a ball.
\end{proof}
\subsection{The Sobolev inequality in the Euclidean space outside a convex set}\label{sec:outside convex}
Let us recall the relative isoperimetric inequality outside a convex set \cite{ChoeGhomiRitore07} in the version due to \cite{FuscoMorini23}. We denote by $P(E,\Omega)$ the classical perimeter of a Borel set $E$ relative to an open set $\Omega$, see \cite{AmbrosioFuscoPallara,Maggi12_Book}.
\begin{theorem}\label{thm:isoperimetric outsideconvec}
    Let $d\ge 2$ and $C\subset \R^d$ be  closed, convex and with non-empty interior. Then, for every set of finite perimeter $E \subset \R^d\setminus C$ we have
    \begin{equation}
         P(E,\R^d\setminus C) \ge 2^{-1/d} d\omega_d^{1/d}|E|^{\frac{d-1}{d}}.
    \label{eq:isop outside C}
    \end{equation}       
    Moreover, equality occurs for some $E \subset \R^d\setminus C$ if $E$ is a half-ball. 
\end{theorem}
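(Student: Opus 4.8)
I would first observe that the claimed bound is nothing but the relative perimeter of a half-ball of the same volume, which simultaneously explains the constant and the equality clause. If $H\subset\R^d$ is a half-ball of radius $\rho$ sitting with its flat face inside $\partial C$ (e.g.\ when $C=\{x_d\le0\}$, any half-ball centred on $\{x_d=0\}$), then $|H|=\tfrac12\omega_d\rho^d$, while the only part of $\partial H$ contributing to $P(H,\R^d\setminus C)$ is the spherical cap, of $\mathcal H^{d-1}$-measure $\tfrac12 d\omega_d\rho^{d-1}$; substituting $\rho=(2|E|/\omega_d)^{1/d}$ with $|H|=|E|$ yields exactly $2^{-1/d}d\omega_d^{1/d}|E|^{(d-1)/d}$. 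Thus \eqref{eq:isop outside C} says precisely that $E$ cannot beat a half-ball of equal volume, and the equality statement is immediate whenever $\partial C$ contains a flat disc (in particular for half-spaces).

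\textbf{The model case: $C$ a half-space.} Take $C=\{x_d\le0\}$, so $\R^d\setminus C=\{x_d>0\}$, and let $\sigma$ denote reflection across $\{x_d=0\}$. Given $E\subset\{x_d>0\}$ of finite perimeter, set $\widetilde E\coloneqq E\cup\sigma(E)$, so $|\widetilde E|=2|E|$. Since $E$ has density $0$ in $\{x_d<0\}$, at $\mathcal H^{d-1}$-a.e.\ point of $\partial^*E\cap\{x_d=0\}$ the set $E$ has density $1$ from the upper side, hence $\widetilde E$ has density $1$ there and such points are interior to $\widetilde E$; therefore $\partial^*\widetilde E$ is, up to $\mathcal H^{d-1}$-null sets, $(\partial^*E\cap\{x_d>0\})\cup\sigma(\partial^*E\cap\{x_d>0\})$, and $P(\widetilde E)=2\,P(E,\{x_d>0\})$. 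The Euclidean isoperimetric inequality gives $2\,P(E,\{x_d>0\})=P(\widetilde E)\ge d\omega_d^{1/d}(2|E|)^{(d-1)/d}$, which rearranges to \eqref{eq:isop outside C}. Equality forces $\widetilde E$ to be a ball; being $\sigma$-invariant it is centred on $\{x_d=0\}$, so $E$ is a half-ball.

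\textbf{General convex $C$.} Here I would follow Choe--Ghomi--Ritor\'e \cite{ChoeGhomiRitore07}, in the form refined by Fusco--Morini \cite{FuscoMorini23}. First, by approximation (replacing $C$ by a bounded smooth uniformly convex body and passing to the limit via lower semicontinuity of the relative perimeter) and by passing to a minimizer of $\inf\{P(F,\R^d\setminus C):F\subset\R^d\setminus C,\ |F|=|E|\}$, one reduces to the case of a regular minimizer $E$: by free-boundary regularity (Gr\"uter, Gr\"uter--Jost) its free boundary $\Sigma=\partial^*E\cap(\R^d\setminus\overline C)$ is a smooth constant-mean-curvature hypersurface, smooth up to $\partial C$ off a singular set of codimension $\ge 7$, meeting $\partial C$ orthogonally along $\partial\Sigma\subset\partial C$. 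The heart of the matter is then a comparison estimate showing $\mathcal H^{d-1}(\Sigma)\ge 2^{-1/d}d\omega_d^{1/d}|E|^{(d-1)/d}$ with equality only when $\Sigma$ is a spherical cap (so $E$ is a half-ball); this is exactly where convexity of $C$ is used, through the $1$-Lipschitz nearest-point projection $\pi_C$ onto $C$ — equivalently, through the nonnegativity of the second fundamental form of $\partial C$ in a Heintze--Karcher/Reilly-type identity for $\Sigma$ with the orthogonality boundary condition — which forbids the free boundary from wrapping cheaply around $C$ and pins down the half-ball as extremal. Finally one undoes the regularization of $C$ and the reduction to minimizers.

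\textbf{The main obstacle.} The two delicate points are: (i) the comparison estimate for the minimizer, i.e.\ converting the convexity of $C$ into the precise quantitative gain of the factor $2^{-1/d}$ over the free Euclidean bound while keeping track of the equality case; and (ii) making the limits rigorous, since $P(\cdot,\R^d\setminus C)$ is only lower semicontinuous and the ``boundary-trace'' contributions $\mathcal H^{d-1}(\partial^*F\cap\partial C)$ must be controlled — in practice one argues with the relative isoperimetric profile and its continuity in $C$ rather than with a single set. I expect (i) to be the genuinely hard step; once it is available for smooth uniformly convex $C$, the rest is soft.
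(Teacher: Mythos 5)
The statement in the paper is not proved from scratch; it is recalled verbatim from Choe--Ghomi--Ritor\'e and Fusco--Morini, with the two references serving as the proof. Your proposal does the same for a general convex $C$, so there is no discrepancy there, and your preliminary computation correctly identifies $2^{-1/d}d\omega_d^{1/d}|E|^{(d-1)/d}$ as the relative perimeter of a half-ball of volume $|E|$ resting with its flat face on a planar portion of $\partial C$, which settles the ``equality occurs if $E$ is a half-ball'' clause (with the implicit proviso that $\partial C$ must actually contain a flat $(d-1)$-disc supporting the half-ball, otherwise no equality is attained for that particular $C$).

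The genuine addition in your write-up is the complete, self-contained reflection argument when $C$ is a half-space: the identity $P(\widetilde E)=2\,P(E,\{x_d>0\})$ for $\widetilde E=E\cup\sigma(E)$ is correct (the trace argument shows $\mathcal H^{d-1}$-a.e.\ point of $\{x_d=0\}$ has $\widetilde E$-density $0$ or $1$, so nothing on the hyperplane contributes to $\partial^*\widetilde E$), and combining it with the sharp Euclidean isoperimetric inequality for $\widetilde E$ yields both the inequality and the rigidity ($\widetilde E$ a $\sigma$-invariant ball, hence $E$ a half-ball). This is a classical argument and not what Choe--Ghomi--Ritor\'e or Fusco--Morini do for general $C$ — reflection is unavailable once $\partial C$ is curved — but it correctly and economically verifies the model case the paper actually needs in its rigidity analysis (Theorem~\ref{thm:main outsideconvex} forces $C$ to be a slab in the end). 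Your sketch for general $C$ correctly identifies where the convexity enters (the comparison estimate on the free-boundary minimizer via the $1$-Lipschitz nearest-point projection, equivalently the sign of the second fundamental form of $\partial C$ in a Reilly/Heintze--Karcher-type identity) as the genuinely hard step, and I agree that without importing that estimate from the literature the proof is not self-contained; the paper itself does not attempt to reproduce it.
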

Notice that $v\mapsto 2^{-1/d} d \omega_d^{1/d}v^{\frac{d-1}{d}}$ is the isoperimetric profile relative to an upper half hyperplane in $\R^d$ whose isoperimetric sets for every volume are exactly half balls. Hence the above is a rigid isoperimetric comparison for sets outside a convex set. We are now ready to prove our main result.
\begin{proof}[Proof of Theorem \ref{thm:main outsideconvex}]
     Recall that  $W^{1,p}(\R^d)$ coincide with the classical Sobolev space $H^{1,p}(\R^d)$ and, in this case, $|Du|_p = |\nabla u|$ a.e.\ for every $u \in W^{1,p}(\R^d)$ (cf.\ \cite{Bjorn-Bjorn11}). The discussion applies for the space $BV(\X)$ and more generally by locality, the same discussion again applies to the related notions of calculus on an open set. In particular, we have the representation formula
     \begin{equation}
         \Per(E,\cdot)= \HH^{d-1}\mres{\partial^* E\cap (\R^d\setminus C)}, \qquad \text{if }\nchi_{E} \in BV(\R^d\setminus C), \label{eq:Per is Prelative}
     \end{equation}
     as measures on $\R^d\setminus C$.  As discussed in ii) in Section \ref{sec:key}, the above formula ensures that if  $u \in W^{1,p}_{loc}(\R^d\setminus C)$, then $u$ satisfies Assumption \ref{eq:ass2}.

     We now prove \eqref{eq:Sobolev outside convex}. Fix arbitrary $u \in \dot H^{1,p}(\R^d\setminus C)\cap L^{p^*}(\R^d\setminus C)$. In particular, it holds $\Leb d(\{|u|>t\})<\infty$ for $t >0$. Then, setting $\Omega\coloneqq \R^d\setminus C, I\coloneqq (0,\infty),\omega\coloneqq \nchi_{(0,\infty)} d\omega_d^{1/d} t^{d-1}\d t, {\sf C}\coloneqq 2^{-1/d}$, we can invoke Theorem \ref{thm:main Sobolev PZ metric} (recalling \eqref{eq:Per is Prelative} and Theorem \ref{thm:isoperimetric outsideconvec}) to deduce that, the monotone rarrangement $|u|^*$ of $|u|$ with respect to $\omega$ satisfies
    \[
      \int_{\R^d\setminus C} |\nabla u|^p \,\d\Leb d = \int|D|u||^p\,\d\mm \overset{(\ast)}{\ge} 2^{-p/d}\int_0^\infty|(|u|^*)'|^p\,\d\omega.
    \]
    Now, the Bliss inequality \cite{Bliss30} combined with v) in Lemma \ref{lem:basic prop u*} gives  \eqref{eq:Sobolev outside convex}. More generally, if $u  \in \dot H^{1,p}(\R^d\setminus C)$ is arbitrary, then by definition of the space $\dot H^{1,p}(\R^d\setminus C)$ and a cut-off argument using the weak lower semicontinuity of the $L^{p^*}(\R^d\setminus C)$ norm, we deduce that $u \in L^{p^*}(\R^d\setminus C)$ from what just proved, and thus the proof of  \eqref{eq:Sobolev outside convex} is completed.

    We now analyze the equality case and suppose that $u \in \dot H^{1,p}(\R^d\setminus C)$ non-zero satisfies equality in \eqref{eq:Sobolev outside convex}. The argument is similar to Theorem \ref{thm:radiality astratta}. Since $u \in L^{p^*}(\R^d\setminus C)$, we can trace the equalities in the previous part to deduce that equality occurs in the application of the Bliss inequality. The latter fact directly implies for some numbers $a,b>0$ that 
    \[
    |u|^*(t) = a(1+bt^{\frac{p-1}{p}})^{\frac{p-d}{p}},\qquad \forall \, t>0.
    \]
    In particular, we have $(|u|^*)'\neq 0$ on $(0,\infty)$. Note also that $\esssup |u|^*=\esssup |u|=a$. 
    Moreover, equality also holds in the starred inequality and, by Theorem \ref{thm:Polya W1p}, we deduce that equality holds in \eqref{eq:isop outside C} with $E=\{|u|>t\}$ for a.e.\ $t>0$. However, since $|u|,|u|^*$ are equidistributed and $\omega(\{|u|^*=t\}) =0$, this upgrades to all $t>0$. Therefore, Theorem \ref{thm:isoperimetric outsideconvec} gives that the sets $\{|u|>t\}$ coincide up to negligible sets with (open) half balls that we denote by $B^+_{\rho_t}(y_t)\subset \R^d\setminus C$, centred at some point $y_t \in \partial C$ for some radius $\rho_t\ge 0$ for all $t>0$. Moreover the flat part of $\partial B^+_{\rho_t}(y_t)$  belongs to $\partial C$ (see \cite{FuscoMorini23}). Since $(|u|^*)'\neq 0$, then the last part of Theorem \ref{thm:Polya W1p} gives that $|u|^*$ is invertible with locally absolutely continuous inverse $H \colon (0,a)\to (0,\infty)$ satisfying $|H'(|u|)||D u|=2^{-1/d}$ a.e.\ on $\{|u|>0\}$.

   Fix now any $n \in \N$ with $1/n<a$ and set  $f\coloneqq  2^{1/d} H\circ u_n$, 
   where $u_n \coloneqq  1/n\vee| u| \wedge (a-1/n)$. As in Theorem \ref{thm:radiality astratta}, we readily see that $\{f<t\} \cong B_t^+(x_t)$ for all $t \in ( 2^{1/d}H(a-1/n), 2^{1/2}H(1/n))$, where $B_t^+(x_t)$ is some open half ball centered at some $x_t \in \partial C$, of radius $t>0$ and such that the flat part of $\partial B_{ t}^+(x_{0})$ lies in $\partial C$.

   Secondly, we note that $f$ has a $1$-Lipschitz representative on $B_t^+(x_t)$.  Indeed, by the chain rule in Lemma \ref{lem:local chain rule} we deduce that $f \in W^{1,p}_{loc}(\R^d\setminus C)$ and  $  |D f|= \nchi_{\{m_n<u<M_n\}}$ a.e.\ in $\R^d\setminus C$ (cf.\ with Theorem \ref{thm:radiality astratta}). In particular $f$ has a representative in $ \Lip_{loc}(\R^d\setminus C)$, still denoted by $f.$ 
   Moreover  we get that $f$ is  $1$-Lipschitz in $B_t^+(x_t)$ for all $t>0$ (as $B_t^+(x_t)$ is convex).

   Next we consider the (closed) half sphere $S(t)\coloneqq \partial B^+_{t}(x_t)\cap \partial B_{t}(x_t)$ and claim that 
   \begin{equation}\label{eq:f=t on sphere}
       S(t)\setminus C\subset \{f=t\}
   \end{equation}
    To see this note that for all $z \in S(t)$ and $r>0$, the ball $B_r(z)$ intersects both $B_{t}^+(x_t)$ and  $(\R^d \setminus C)\setminus B_{t}^+(x_t)$ in a set of positive Lebesgue measure. The first one is obvious. On the other hand if the second was false we would be able to find a point in $B_r(z)\cap C$ in the open half-plane containing $B_{t}^+(x_t)$. However, since $x_t \in C$ and $C$ is convex, this would contradict $B_{t}^+(x_t)\cap C=\emptyset .$ Moreover $\{f<t\} \cong B_t^+(x_t)$, hence $B_r(z)$ intersects both $\{f<t\}$ and $\{f\ge t\}$ in a set of positive Lebesgue measure. Since $f$ is continuous in $\R^d\setminus C$, if $z \notin C$ this implies that $f(z)=t$, which proves the claim.

We are now ready to prove that $x_t$ is in fact independent of $t.$
Suppose by contradiction  that $x_t \neq x_{\bar t}$ for some $t,\bar t\in ( 2^{1/d}H(a-1/n), 2^{1/2}H(1/n))$ with $\bar t < t$ and set $\delta\coloneqq |x_t-x_{\bar t}|>0.$ Note that $B_{\bar t}^+(x_{\bar t})\subset \overline{B_t^+(x_t)}$. Moreover, since $x_{\bar t} \in C$, we must have that $x_{\bar t}\in \partial B_t^+(x_t)$ and so $x_t$ lies on the flat part of $\partial B_t^+(x_t)$.  We can then find a segment $l$ with endpoints $x_t$, $x\in S(t)$ and containing $x_{\bar t}$. Denote also by $y$ the point in $l\cap \partial B_{\bar t}(x_{\bar t})$ closest to $x.$ In particular by construction $y \in S(\bar t)$ and $|y-x_{ t}|=\delta+\bar t\le t.$
Recall that $f$ is 1-Lipschitz in $B_t^+(x_t)$ and so can be extended to a 1-Lipschitz function in $\overline{B_t^+(x_t)}$, still denoted by $f$. From \eqref{eq:f=t on sphere} we then get $f(x)=t$, $f(y)=\bar t.$ Therefore
   \[
t-\bar t=f(x)-f(y)\le |x-y|=|t-(\bar t+\delta )|=t-(\bar t+\delta )=t-\bar t-\delta< t-\bar t,
\]
which is a contradiction. Hence $x_t\equiv x_0$ for all $t \in ( 2^{1/d}H(a-1/n), 2^{1/2}H(1/n))$ and some $x_0 \in \partial C$. Additionally, because $B_{ t}^+(x_{ 0})$, $t>0$, form an increasing sequence of sets, we must have that the flat part of $\partial B_{ t}^+(x_{0})$ belongs to a common hyperplane $H$. Up to rigid motions we can thus assume that $\cup_{t>0} B_{ t}^+(x_{ 0})=\{x_d>0\}$ and that $x_0$ is the origin. Finally, since the flat part of $\partial B_{ t}^+(x_{0})$ lies in $\partial C$ we obtain that $\{x_d=0\}\subset \partial C$ and that $\{x_d>0\}$ is a connected component of $\R^d\setminus C$. Therefore $f(x)=|x|$ in  $\{1/n<|u|\le a-1/n\}$  and $\{x_d>0\}\cong \{|u|>0\}$.  From this, by the arbitrariness of $n\in \N$ and arguing as in Theorem \ref{thm:radiality astratta}, we deduce that
\[
|u|(x)=|u|^*\big(2^{-1/d}|x|\big), \quad \text{a.e.\ in $\{|u|>0\}=\{x_d>0\}$}
\]
and that $u=0$ a.e.\ in the other connected components of $\R^d\setminus C.$
 It remains to show that $C$ is a strip. Consider a point $(\bar x,t)\in C$, with $\bar x \in \R^{d-1}$ and $t\le 0$. Then $C$ contains the convex hull of $(\bar x,t)$ and $\{x_d=0\}$, which is the strip $\R^{d-1}\times [t,0]$. Hence, since $C$ is closed, denoting by $-L$ the infimum of all $t\le 0$ such that there is $(\bar x,t)\in C$ we deduce $\R^{d-1}\times [-L,0]\subset C$. On the other hand if we had $y \in C\setminus  (\R^{d-1}\times [-L,0])$ we would  have $y=(\bar x,t)\in C$ for some $t<-L$ (recall $\{x_d>0\}\subset \R^d\setminus C$), which is a contradiction. Hence $\R^{d-1}\times [-L,0]= C$. The proof is concluded.
\end{proof}
\subsection{Neumann eigenvalue lower bound in metric spaces}
In  \cite{BCT15} it was proved the following lower bound for the Neumann eigenvalue for open sets in the Euclidean space.
\begin{theorem}[{\cite[Theorem 1.1]{BCT15}}]
    Let $\Omega \subset \rr^d$ be open and bounded with $\Leb d(\Omega)>0$. Then:
    \begin{equation}\label{eq:BCT bound}
          \lambda_{p}^\cN(\Omega) \ge 2^\frac{p}{d} K_d(\Omega)^p\lambda_{p}^\cD(B_R(0)),
    \end{equation}
    for all $p\in(1,\infty)$ where $|B_R(0)|=|\Omega|$ and $K_d(\Omega)\coloneqq \underset{E\subset \Omega}{\inf} \Per(E,\Omega) \Big(d \omega_d^{1/d} \min(|E|,|\Omega \setminus E|)^\frac{d-1}{d}\Big)^{-1}.$
\end{theorem}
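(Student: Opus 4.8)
The final statement to be proved is Theorem \ref{thm:main neumann} (the Euclidean bound \eqref{eq:BCT bound} of \cite{BCT15} being the special case $\X=\R^d$, $Q=d$, for which $\lambda_{p,Q,R}^\cD$ is the Dirichlet $p$-eigenvalue of a ball in $\R^d$). The plan is to realize the relative isoperimetric inequality \eqref{eq:isop relative} as a \emph{global} isoperimetric comparison against a bounded weighted interval carrying a symmetric log-concave weight, and then to feed a Neumann competitor into the P\'olya--Szeg\H{o} machinery of Theorem \ref{thm:main Sobolev PZ metric}, split at a median so that each half produces a Dirichlet problem on a model ball of volume at most $\mm(\Omega)/2$ --- which is exactly where the gain $2^{p/Q}$ comes from.

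Concretely, I would fix $R_0>0$ by $\omega_Q R_0^Q=\mm(\Omega)/2$ (so $R_0=2^{-1/Q}R$), take $I=(0,2R_0)$ and $\omega=g\,\d t$ with $g(t)=Q\omega_Q\min(t^{Q-1},(2R_0-t)^{Q-1})$. This weight is continuous and positive on $I$, satisfies $\omega(I)=\mm(\Omega)$ and $\lim_{x\to-\infty}\omega((-\infty,x])=0$, and has profile $\Ig(s)=Q\omega_Q^{1/Q}\min(s,\mm(\Omega)-s)^{(Q-1)/Q}$ on $[0,\mm(\Omega)]$, so that \eqref{eq:isop relative} reads \emph{exactly} $\Per(E,\Omega)\ge C\,\Ig(\mm(E))$ for every Borel $E\subset\Omega$. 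Now fix a competitor $u\in W^{1,p}(\Omega)$ for $\lambda_p^\cN(\Omega)$, i.e.\ $\|u\|_{L^p(\Omega)}=1$ and $\int_\Omega|u|^{p-2}u\,\d\mm=0$, pick a median $m$ of $u$, and set $v\coloneqq(u-m)^+$, $w\coloneqq(u-m)^-$; then $\mm(\{v>t\})\le\mm(\Omega)/2$ and $\mm(\{w>t\})\le\mm(\Omega)/2$ for $t>0$, and $\int_\Omega|Du|_p^p\,\d\mm=\int_\Omega|Dv|_p^p\,\d\mm+\int_\Omega|Dw|_p^p\,\d\mm$ by locality of the minimal weak upper gradient. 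Applying Theorem \ref{thm:main Sobolev PZ metric} to $v$ with this $(I,\omega)$ and constant $C$, the decreasing rearrangement $v^*$ satisfies $\Omega^*=(0,2R_0)$, $v^*\in{\sf AC}_{loc}((0,2R_0))$ and $\int_\Omega|Dv|_p^p\,\d\mm\ge C^p\int_0^{2R_0}|(v^*)'|^p\,\d\omega$. Since $\mm(\{v>0\})\le\mm(\Omega)/2=\omega([0,R_0))$, the function $v^*$ is supported in $(0,s_v)$ with $s_v\le R_0$ and, being continuous on $(0,2R_0)$, vanishes at $s_v$; hence on $[0,s_v]\subset[0,R_0]$, where $\omega$ has density $Q\omega_Q t^{Q-1}$, Lemma \ref{lem:w1p0 1-d} gives $v^*\in W^{1,p}_0([0,s_v);\mm_{0,Q})$, so it is admissible for $\lambda_{p,Q,s_v}^\cD$, and by domain monotonicity of Dirichlet eigenvalues together with equimeasurability (Lemma \ref{lem:basic prop u*}) $\int_0^{2R_0}|(v^*)'|^p\,\d\omega\ge\lambda_{p,Q,s_v}^\cD\|v^*\|_{L^p(\omega)}^p\ge\lambda_{p,Q,R_0}^\cD\|v\|_{L^p(\Omega)}^p$; likewise for $w$.

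Adding the two estimates gives $\int_\Omega|Du|_p^p\,\d\mm\ge C^p\lambda_{p,Q,R_0}^\cD\big(\|v\|_{L^p(\Omega)}^p+\|w\|_{L^p(\Omega)}^p\big)=C^p\lambda_{p,Q,R_0}^\cD\|u-m\|_{L^p(\Omega)}^p$, and the Neumann constraint $\int_\Omega|u|^{p-2}u\,\d\mm=0$ says precisely that the convex function $\tau\mapsto\|u-\tau\|_{L^p(\Omega)}^p$ has vanishing derivative, hence a minimum, at $\tau=0$, so $\|u-m\|_{L^p(\Omega)}^p\ge\|u\|_{L^p(\Omega)}^p=1$. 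Thus $\int_\Omega|Du|_p^p\,\d\mm\ge C^p\lambda_{p,Q,R_0}^\cD$, and taking the infimum over all such $u$ while using the scaling invariance of $I_Q$ --- namely $\lambda_{p,Q,R_0}^\cD=(R/R_0)^p\lambda_{p,Q,R}^\cD=2^{p/Q}\lambda_{p,Q,R}^\cD$ since $\mm_{0,Q}([0,R_0))=\mm(\Omega)/2$ --- yields \eqref{eq:neumann lower bound}.

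The step I expect to require the most care is the boundary behaviour of $v^*$ at $s_v$: one needs $v^*$ continuous up to $s_v$, so that it is a genuine Dirichlet competitor on the \emph{smaller} model ball $B_{s_v}(0)\subset B_{R_0}(0)$ --- this is what produces the factor $2^{p/Q}$ rather than $1$. It is precisely here that passing to a \emph{bounded symmetric} target pays off, since then the isoperimetric hypothesis of Theorem \ref{thm:main Sobolev PZ metric} holds for \emph{all} $E\subset\Omega$ and the theorem delivers $v^*\in{\sf AC}_{loc}$ on the whole $\Omega^*$; alternatively one observes that \eqref{eq:isop relative} rules out any jump of $v^*$, because a set $A$ with $\nchi_A\in W^{1,p}(\Omega)$ has $\Per(A,\Omega)=0$, contradicting \eqref{eq:isop relative} whenever $0<\mm(A)<\mm(\Omega)$. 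The remaining ingredients --- additivity of the $p$-energy across $\{u=m\}$, the equimeasurability identities, the one-dimensional facts behind Lemma \ref{lem:w1p0 1-d}, and the eigenvalue scaling --- are routine.
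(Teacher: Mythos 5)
Your proposal is correct, but it takes a genuinely different route from the paper's proof of Theorem~\ref{thm:main neumann}. The paper rearranges the Neumann competitor $u$ \emph{directly} onto the symmetric weighted interval $I_{Q,r}$ and observes (via Lemma~\ref{lem:basic prop u*}) that $u^*$ inherits the constraints $\int |u^*|^{p-2}u^*\,\d\omega_{Q,r}=0$ and $\|u^*\|_{L^p}=1$, so $u^*$ is itself admissible for $\lambda_p^{\mathcal N}(I_{Q,r})$; the gain $2^{p/Q}$ then comes from Lemma~\ref{lem:half interval}, which identifies $\lambda_p^{\mathcal N}(I_{Q,r})$ with $\lambda_p^{\mathcal D}([0,r))$ and whose proof relies on the existence of a minimizer (via the compact Sobolev embedding of the $\RCD(0,Q)$ model), the $1$-dimensional $p$-Laplacian ODE of Lemma~\ref{lem:1D regularity}, and an odd reflection of Dirichlet competitors. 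You instead split $u$ at a median $m$ into $v=(u-m)^+$ and $w=(u-m)^-$, rearrange each half separately with the \emph{same} symmetric target, exploit that each has superlevel sets of volume at most $\mm(\Omega)/2$ (so the rearrangement lands in the half interval $[0,R_0]$ where the weight is $Q\omega_Q t^{Q-1}$), and obtain the $2^{p/Q}$ from the elementary scaling $\lambda^{\mathcal D}_{p,Q,R_0}=2^{p/Q}\lambda^{\mathcal D}_{p,Q,R}$, together with domain monotonicity and the standard convexity fact $\|u-m\|_{L^p}\ge\|u\|_{L^p}$ when $\int|u|^{p-2}u\,\d\mm=0$. Your route is more elementary --- it bypasses entirely the $\RCD$ structure of the model, the direct method, and the ODE analysis in Lemmas~\ref{lem:half interval} and~\ref{lem:1D regularity}, needing only domain monotonicity of Dirichlet eigenvalues --- while the paper's route is more uniform with the rest of its treatment (rearranging once onto a symmetric $\RCD$ model and reducing Neumann to Dirichlet in $1$d). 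Both proofs rest on Theorem~\ref{thm:main Sobolev PZ metric}, Lemma~\ref{lem:basic prop u*}, and Lemma~\ref{lem:w1p0 1-d}; your argument is a valid and arguably cleaner alternative.
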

In \cite{BCT15} is also shown that \eqref{eq:BCT bound} is sharp for $p=n=2.$  In this part, as an application of Theorem \ref{thm:main Sobolev PZ metric}, we extend the above result to arbitrary metric measure spaces. It is worth noting that the proof of \eqref{eq:BCT bound} in \cite{BCT15} is considerably different from the one presented here since it is based on a comparison result \`a la Chiti and on a reverse H\"older inequality. Instead, our argument is a direct application of the P\'olya-Szeg\H{o} principle, provided we rearrange with respect to the appropriate weight $g$.

Fix $r>0$ and $Q>1$. Consider the one-dimensional weight $\omega=g(t)\nchi_{[0,2r]}(t)\d t $, where $g:[0,2r]\to \rr$ is given by
$g(t)\coloneqq 
    \omega_Q \frac1Q Q \min(t^{Q-1},(2r-t)^{Q-1}).$
In particular $\omega_{Q,r}(\R)=2\omega_Qr^Q$ and it can be easily checked that
\begin{equation}
\Ig(v)=Q \omega_Q^\frac1Q \min(v,\omega_{Q,r}(\R)-v)^\frac{Q-1}{Q}, \qquad \forall \, v \in (0,\omega_{Q,r}(\R)).
\label{eq:profile min Q}
\end{equation}
Also we note that the m.m.s.\ $I_{Q,r}\coloneqq ([0,2r],|\cdot |,\omega_{Q,r})$ is an $\RCD(0,Q)$ space. Indeed the function $g^\frac1{Q-1}$ is concave. For $Q\in \mathbb N$, the space $I_{Q,r}$ can be seen as the mGH-limit as $k\uparrow \infty$ of the m.m.\ spaces $(C_k,|. |,\frac1{a_k}\mathcal H^Q)$, where $C_k\subset \rr^Q$ is the closed convex set obtained by gluing by their bases two $Q$-dimensional cones of height $r$  with base an $(Q-1)$-dimensional disk of radius $\frac1k,$ and $a_k\coloneqq \frac{\mathcal H^Q(C_k)}{2\omega_Qr^Q}.$  

\begin{lemma}\label{lem:half interval}
    For $r>0$, consider the one dimensional m.m.s.\ $I_{Q,r}\coloneqq ([0,2r],|\cdot |,\omega_{Q,r})$. Then
    \[
    \lambda_p(I_{Q,r})=  \lambda_p^\cN(I_{Q,r})=\lambda_p^\cD([0,r)),
    \]
   where all the eigenvalues  are computed in the  m.m.s.\ $I_{Q,r}.$
\end{lemma}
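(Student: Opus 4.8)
Since $\lambda_p(I_{Q,r})=\lambda_p^\cN(I_{Q,r})$ is a matter of definition, the content is the identity $\lambda_p^\cN(I_{Q,r})=\lambda_p^\cD([0,r))$, which I would prove as two inequalities. Throughout, every object is relative to the m.m.s.\ $I_{Q,r}$, whose weight $\omega_{Q,r}=g\,\Leb 1$, $g(t)=\omega_Q Q\min(t^{Q-1},(2r-t)^{Q-1})$, is symmetric about $t=r$, vanishes at $\{0,2r\}$, is log-concave, and coincides with $\mm_{0,Q}$ (as a density) on $[0,r]$; in particular the open subset $[0,r)\subset I_{Q,r}$ carries the model weight $\mm_{0,Q}$. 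For the inequality ``$\le$'', given $v\in W^{1,p}_0([0,r))$ with $\|v\|_{L^p(\omega_{Q,r})}^p=1$, I would extend it to $\tilde v$ on $[0,2r]$ by odd reflection about $r$, $\tilde v(t):=-v(2r-t)$ for $t\in[r,2r]$. Since the trace of $v$ at $r$ vanishes (Lemma \ref{lem:w1p0 1-d}), approximating $v$ in $\Lip_{bs}([0,r))$ and reflecting shows $\tilde v\in W^{1,p}(I_{Q,r})$ with $\int|\tilde v'|^p\,\d\omega_{Q,r}=2\int|v'|^p\,\d\omega_{Q,r}$ and $\int|\tilde v|^p\,\d\omega_{Q,r}=2$, by symmetry of $g$; and $\int\tilde v|\tilde v|^{p-2}\,\d\omega_{Q,r}=0$ because $t\mapsto t|t|^{p-2}$ is odd and $g$ is symmetric about $r$. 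Thus $2^{-1/p}\tilde v$ is admissible for $\lambda_p^\cN(I_{Q,r})$, giving $\lambda_p^\cN(I_{Q,r})\le\int|v'|^p\,\d\omega_{Q,r}$, and taking the infimum over $v$ yields $\lambda_p^\cN(I_{Q,r})\le\lambda_p^\cD([0,r))$.

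For the reverse inequality I would analyze a minimizer. As $I_{Q,r}$ is a compact $\RCD(0,Q)$ space, hence a PI space, the embedding $W^{1,p}(I_{Q,r})\hookrightarrow L^p(I_{Q,r})$ is compact and $\lambda_p^\cN(I_{Q,r})$ is attained by standard direct methods, say by $u$. Replacing $u$ by its decreasing rearrangement with respect to $\omega_{Q,r}$ — which is still admissible by equimeasurability (Lemma \ref{lem:basic prop u*} applied with $G(t)=t|t|^{p-2}$, together with Lemma \ref{lem:LEMMONE}) and does not increase the energy by Theorem \ref{thm:main Sobolev PZ metric}, since the log-concavity of $g$ gives the relative isoperimetric inequality \eqref{eq:isop general} in $I_{Q,r}$ with profile \eqref{eq:profile min Q} — I may assume $u$ is non-increasing. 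The Euler--Lagrange equation for the constrained minimum reads $-(|u'|^{p-2}u'g)'=|u|^{p-2}(\lambda u+\nu)g$ on $(0,2r)$, with $\lambda=\lambda_p^\cN(I_{Q,r})$ and a Lagrange multiplier $\nu\in\R$. Integrating this against the constant $1$ and using $g(0)=g(2r)=0$ together with the boundary decay $|u'|^{p-2}u'g\to 0$ at both endpoints (which follows from $\int|u'|^pg<\infty$ and $g(t)\sim t^{Q-1}$, $g(t)\sim(2r-t)^{Q-1}$) and the constraint $\int u|u|^{p-2}\,\d\omega_{Q,r}=0$ forces $\nu\int|u|^{p-1}\,\d\omega_{Q,r}=0$, hence $\nu=0$. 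So $u$ is a non-increasing $p$-eigenfunction with natural (Neumann) conditions at $0,2r$; it must change sign (else $\int u|u|^{p-2}\,\d\omega_{Q,r}=0$ forces $u\equiv 0$), and by the $C^1$-regularity of one-dimensional $p$-eigenfunctions and the Hopf-type non-vanishing of the derivative of a positive first Dirichlet eigenfunction at its Dirichlet endpoint, $u$ crosses zero transversally at a single point $t_0\in(0,2r)$, being positive on $(0,t_0)$ and negative on $(t_0,2r)$.

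Restricted to $(0,t_0)$, $u$ is a positive solution of the $p$-eigenvalue equation with eigenvalue $\lambda$, vanishing at $t_0$ with natural condition at $0$; being sign-definite it is a first Dirichlet eigenfunction of the open set $[0,t_0)\subset I_{Q,r}$, so $\lambda=\lambda_p^\cD([0,t_0))$. Symmetrically $\lambda=\lambda_p^\cD((t_0,2r])$, and the reflection $t\mapsto 2r-t$, which preserves $\omega_{Q,r}$, gives $\lambda_p^\cD((t_0,2r])=\lambda_p^\cD([0,2r-t_0))$. Hence $\lambda_p^\cD([0,t_0))=\lambda_p^\cD([0,2r-t_0))$; since the first Dirichlet eigenvalue is strictly decreasing under inclusion of open subintervals, this forces $t_0=2r-t_0$, i.e.\ $t_0=r$. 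Therefore $\lambda_p^\cN(I_{Q,r})=\lambda=\lambda_p^\cD([0,r))$, which also gives back ``$\le$'' and completes the proof.

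The main obstacle I expect is the one-dimensional ODE bookkeeping needed for the second half: verifying $\nu=0$ and the endpoint decay of $|u'|^{p-2}u'g$, and upgrading ``$u$ changes sign'' to ``$u$ has a single transversal zero''. A completely rearrangement-free alternative would instead work directly with the nodal domains of a general minimizer (each of which carries a first Dirichlet eigenfunction of eigenvalue $\lambda$, forcing, via the same reflection-plus-monotonicity comparison applied to an adjacent pair, the unique zero at $r$), at the cost of invoking a Courant-type nodal-domain bound for the singular one-dimensional $p$-Laplacian.
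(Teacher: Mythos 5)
Your ``$\le$'' direction coincides with the paper's: both take a Dirichlet competitor on $[0,r)$, extend it by odd reflection across $r$, and observe it becomes admissible for the Neumann problem on $I_{Q,r}$ with the same Rayleigh quotient. The ``$\ge$'' direction is where you diverge, and the paper's route is considerably more economical. The paper takes a minimizer $u$ of $\lambda_p^\cN(I_{Q,r})$ (existence by compactness, since $I_{Q,r}$ is a compact $\RCD(0,Q)$ space), obtains the ODE $(|u'|^{p-2}u'g)'=-\lambda g|u|^{p-2}u$ directly from Lemma \ref{lem:1D regularity} -- whose second branch already covers the orthogonality-constrained Neumann variational problem, so no Lagrange-multiplier bookkeeping or boundary-decay analysis is needed -- and then simply picks \emph{any} zero $x$ of $u$, which WLOG lies in $(0,r]$ by symmetry of $g$. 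Truncating $u$ to $(0,x)$ and extending by zero on $[x,r]$ gives a competitor for $\lambda_p^\cD([0,r))$, while multiplying the ODE by $u$ and integrating by parts on $(0,x)$ (using $g(0)=0$ and $u(x)=0$) shows the Rayleigh quotient of this competitor is exactly $\lambda$; hence $\lambda\ge\lambda_p^\cD([0,r))$ without ever needing to know where the zeros are or how many there are.

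Your version instead rearranges $u$ to a monotone minimizer (which requires the isoperimetric inequality on $I_{Q,r}$; you assert this from log-concavity but don't check it, and it is not used anywhere in the paper's proof of this lemma), re-derives the ODE via a Lagrange-multiplier computation, and then pins the unique nodal point at $t_0=r$ using $C^1$-regularity, a Hopf-type lemma, positivity-implies-first for the singular 1D $p$-Laplacian, and strict monotonicity of $\lambda_p^\cD$ under inclusion. This is correct in outline and yields strictly more structural information about the minimizer, but each of those auxiliary facts needs verification in the present degenerate-weight setting, whereas the paper's argument sidesteps all of them: one does not need uniqueness of the zero, nor monotonicity of the minimizer, to obtain the inequality. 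Two small slips: integrating the E-L equation against the constant $1$ gives $\nu\int|u|^{p-2}\,\d\omega_{Q,r}=0$, not $\nu\int|u|^{p-1}\,\d\omega_{Q,r}=0$ (same conclusion); and the boundary-decay claim $|u'|^{p-2}u'g\to0$ does not follow from $\int|u'|^pg<\infty$ alone -- one gets it by combining a subsequential limit with the fact that the E-L equation forces $|u'|^{p-2}u'g$ to have a limit at the endpoints -- but in any case this is unnecessary if you simply test the weak form against $\phi\equiv1$, whose derivative vanishes.
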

\begin{proof}
The first identity is by definition, hence we only need to show the second identity.

    Since $I_{Q,r}$ is a compact $\RCD(0,Q)$ space it admits a Sobolev embedding (analogously, one can argue that it is a PI-space as a suitable weighted uniform domain \cite{Keith03,Bjorn-Bjorn11}), hence via the direct method of calculus of variations we can find a minimizer $u \in W^{1,p}([0,2r];g)$ for $ \lambda_p(I_{Q,r})$, i.e.\ in the Sobolev space in the metric measure space $([0,2r],|\cdot |,\omega_{Q,r})$ (see e.g.\ \cite[Theorem 4.3]{MondinoSemola20}). By Lemma \ref{lem:w1p0 1-d}, we have that $W^{1,p}([0,2r];g)=\{ f \in{\sf AC}_{loc}(0,r) \ : \  f',f\in L^p((0,r);g)\}$. Hence $u$ satisfies the assumption  of Lemma \ref{lem:1D regularity} with $\lambda=\lambda_p(I_{Q,r})$, from which we deduce
    \begin{equation}\label{eq:ode for u}
         \left(|u'|^{p-2}u'g \right)'=-\lambda_p(I_{Q,r}) g |u|^{p-2}u.
    \end{equation}
    Since $u$ has a continuous representative and $\int_0^{2r} |u|^{p-2}u g=0$, we must have that $u(x)=0$ for some $x\in (0,2r).$ Without loss of generality we can assume that $x\in (0,r].$ Consider the function $\tilde u$ which coincides with $u$ in $(0,x)$ and identically zero in $[x,r]$. Then by approximating $\tilde u$ with functions in $\Lip_c([0,x))$ we have that 
    \[
     \frac{\int_0^x |u'(t)|^p g(t)\d t}{\int_0^x |u(t)|^p g(t)\d t}\ge \lambda_p^\cD([0,r)).
    \]
    On the other hand multiplying \eqref{eq:ode for u} by $u$ and integrating by parts (using that $g(0)=0$) we obtain
    \[
    -\lambda_p(I_{Q,r})\int_0^x |u|^p g =\int_0^x \left(|u'|^{p-2}u'g \right)' u \d t=-\int_0^x |u'|^{p}g  \d t.
    \]
    Combining with the above we obtain $\lambda_p(I_{Q,r})\ge \lambda_p^\cD([0,r))$. On the other hand consider $v$ a competitor for $\lambda_p^\cD([0,r))$. Then the function $\tilde u$ that equals $v$ in $(0,r)$ and $-v(r-t)$ in $(r,2r)$ is clearly a competitor  for  $\lambda_p(I_{Q,r})$. This immediately shows that $\lambda_p(I_{Q,r})\le \lambda_p^\cD([0,r))$ concluding the proof.
\end{proof}
We are now ready to prove our last main result.
\begin{proof}[Proof of Theorem \ref{thm:main neumann}]
    If $\lambda_p^\cN(\Omega)=+\infty$ there is nothing to prove. Assume then $\lambda_p^\cN(\Omega)<\infty$. In particular ,for all $\eps>0$ we can find $u \in W^{1,p}(\Omega)$ with $\int |u|^{p-2}u\, \d \mm=0$, $\|u\|_{L^p(\X)}=1$ satisfying
    \[
    \int |D u|_p^p \d \mm\le  \lambda_p^\cN(\Omega)+\eps.
    \]
    Clearly we have $\mm(\{u>t\})<\infty$ for all $t \in \R.$  Moreover taking $r=\left(\frac{\mm(\Omega)}{2\omega_Q}\right)^\frac{1}{Q},$ it holds $\omega_{Q,r}(\R)=2\omega_Qr^Q=\mm(\Omega).$ Hence it makes sense to consider the rearrangement $u^*$ of $u$ with respect to  $\omega_{N,r}.$ Thanks to \eqref{eq:isop relative} and recalling \eqref{eq:profile min Q}, we can apply Theorem \ref{thm:main Sobolev PZ metric} and obtain
    \[
    C^p\int_0^R |(u^*)'|^p\d \omega_{N,r}\le  \int |D u|_p^p\d \mm \le \lambda_p^\cN(\Omega)+\eps.
    \]
    Finally, by Proposition \ref{lem:basic prop u*} v)-vi), we have $\int |u^*|^{p-2}u^*\d \omega_{Q,r}=0$ and $\|u^*\|_{L^p(\omega_{Q,r})}=1$. Therefore $u^*$ is a competitor for $\lambda_p^\cN(I_{Q,r})$, that is $\int_0^R |(u^*)'(t)|^p\d \omega_{Q,r}\ge \lambda_p^\cN(I_{Q,r}).$ From the arbitrariness of $\eps$ we obtain that 
    $\lambda_p^\cN(\Omega)\ge C^p\lambda_p^\cN(I_{Q,r}).$ Then by Lemma \ref{lem:half interval}  we obtain that
    \[
     \lambda_p^\cN(\Omega)\ge C^p\lambda_p^\cD([0,r)).
    \]
    From this \eqref{eq:neumann lower bound} follows noting that $\lambda_p^\cD([0,r)$ (computed in $I_{Q,r}$) coincides with the Dirichlet $p$-eigenvalue of $[0,r)$, but computed in the m.m.s.\ $I_Q\coloneqq ([0,\infty),|\cdot |, Q\omega_Q t^{Q-1}\d t)$, and by scaling.
\end{proof}

\appendix

\section{Calculus on weighted interval}\label{Appendix:Calculus1D}
Due to the need to consider everywhere in this note monotone functions on weighted intervals, we prove here some useful results around the nonsmooth calculus.
\begin{lemma}\label{lem:LEMMONE}
        Let $J\subset \R$ be a possibly unbounded open interval. Let $g \colon J \to (0,\infty)$ be continuous. Define $\omega \coloneqq   g\Leb 1\mres J$ and fix $v\in L^1_{loc}(\omega)$.

        Then $v$ is of locally bounded variation on $J$ in the classical sense if and only if $v \in BV_{loc}(J;\omega)$, in which case $|\dD v|=g TV(v)$ and $|Dv|_1 = |v'|$ $\omega$-a.e. In particular, it holds as measures on $J$ that
        \begin{equation}\label{eq:perimeter of halfline}
        \Per((-\infty,r)\cap J,\cdot )=g(r)\delta_{r}, \qquad \forall \, r\in J.
        \end{equation}       
        Finally, if  $p>1$ we have that  $v\in {\sf AC}_{loc}(J)$  and $v,|v'|\in L^p_{loc}(\omega)$  if and only if $v \in W^{1,p}_{loc}(J;\omega)$ and, in this case,  it holds $|Du|_p = |v'|$ $\omega$-a.e.. 
\end{lemma}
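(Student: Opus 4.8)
The statement asserts that on a weighted open interval $(J,\omega)$ with $\omega=g\Leb 1\mres J$ and $g$ continuous and strictly positive, the metric-measure notions of $BV_{loc}$ and $W^{1,p}_{loc}$ coincide with the classical ones, the total variation and minimal weak upper gradient transform by the explicit formulas $|\dD v|=gTV(v)$, $|Dv|_1=|v'|$, $|Dv|_p=|v'|$, and in particular the perimeter of a half-line is the atom $g(r)\delta_r$. The overarching strategy is a localization-plus-density argument: since $g$ is continuous and positive, on every compact subinterval $[a,b]\subset J$ it is bounded above and bounded away from zero, so $\omega$ and $\Leb 1$ are comparable there; this makes the $L^1_{loc}$, $L^p_{loc}$ topologies of the two metric measure structures $(\R,|\cdot|,\omega)$ and $([a,b],|\cdot|,\Leb 1)$ coincide, and hence the approximating sequences in the infimum defining $|\dD v|$ can be taken the same in both settings. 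I would therefore prove all identities first on an arbitrary $[a,b]\subset J$ and then pass to $J$ by the inner-regularity/locality of the total variation measure (recall \eqref{eq:inf tot var}).

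\textbf{Key steps.} First, the equivalence of the $BV$ classes: given $v$ of classical locally bounded variation on $J$, fix $[a,b]\subset J$; mollifying $v$ produces $v_n\in\Lip_{loc}((a,b))$ with $v_n\to v$ in $L^1_{loc}$ and $\int_{(a,b)}|v_n'|\,\d t\to TV(v)((a,b))$. Since $\lip\,v_n=|v_n'|$ a.e.\ (one-dimensional, Lipschitz), and $g$ is continuous, $\int \lip\,v_n\,\d\omega=\int|v_n'|g\,\d t\to\int_{(a,b)}g\,\d TV(v)$ by dominated convergence on $[a,b]$; this gives $|\dD v|((a,b))\le \int_{(a,b)}g\,\d TV(v)$. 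Conversely, for any competitor sequence $u_n\in\Lip_{loc}((a,b))$ with $u_n\to v$ in $L^1_{loc}(\omega)=L^1_{loc}((a,b))$, lower semicontinuity of classical total variation plus $g\ge \min_{[a,b]}g>0$ gives $\int \lip\,u_n\,\d\omega=\int|u_n'|g\,\d t\ge (\min_{[a',b']}g)\int_{(a',b')}|u_n'|\,\d t$ on interior subintervals, so after a further localization one recovers $|\dD v|((a,b))\ge\int_{(a,b)}g\,\d TV(v)$; combined with the opposite inequality, and by outer regularity of both sides as measures, $|\dD v|=gTV(v)$ as measures on $J$ — which in particular shows $v\in BV_{loc}(J;\omega)$, and the argument is fully reversible so the classes coincide. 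The identity $|Dv|_1=|v'|$ $\omega$-a.e.\ then follows by taking Radon--Nikodym densities with respect to $\mm=\omega$: the absolutely continuous part of $gTV(v)$ with respect to $g\Leb 1$ has density $|v'|$. For \eqref{eq:perimeter of halfline}, apply the formula to $v=\nchi_{(-\infty,r)\cap J}$, whose classical total variation is $\delta_r$ on $J$, giving $\Per((-\infty,r)\cap J,\cdot)=g\,\delta_r=g(r)\delta_r$. Finally, for the Sobolev case $p>1$: if $v\in{\sf AC}_{loc}(J)$ with $v,|v'|\in L^p_{loc}(\omega)$, then on $[a,b]$ mollification gives Lipschitz $v_n$ with $v_n\to v$ and $v_n'\to v'$ in $L^p([a,b])$, hence in $L^p([a,b];\omega)$ by comparability of the measures, so $v\in W^{1,p}_{loc}(J;\omega)$ with $|Dv|_p\le|v'|$; the reverse inequality and the refinement $|Dv|_p\ge|Dv|_1=|v'|$ come from \eqref{eq:dependence gradient} applied in $(\R,|\cdot|,\omega)$, giving equality. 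Conversely $v\in W^{1,p}_{loc}(J;\omega)$ forces $v\in BV_{loc}(J;\omega)$, hence classically locally $BV$ with $|\dD v|\ll\mm$, i.e.\ $TV(v)\ll\Leb 1$ on $[a,b]$ with $L^p_{loc}$ density, which together with $v\in L^1_{loc}$ and the fundamental theorem of calculus for $BV$ functions with no singular part yields $v\in{\sf AC}_{loc}(J)$.

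\textbf{Main obstacle.} The routine part is the mollification and the comparability of measures; the subtler point is making the lower-bound direction of $|\dD v|\ge gTV(v)$ rigorous when $g$ degenerates near the endpoints of $J$ (or of $[a,b]$ after pushing to $\inf J$, $\sup J$), since there $\min g$ need not be positive on all of $J$. This is precisely why the argument must be staged through compact subintervals $[a,b]\Subset J$ and then assembled via inner regularity \eqref{eq:inf tot var}: on each such $[a,b]$ one has $0<\min_{[a,b]}g\le\max_{[a,b]}g<\infty$, so the two measures are genuinely comparable and the weighted and unweighted total variations control each other up to the constants $\max g$, $\min g$, which cancel in the limit as the exhaustion $[a,b]\uparrow J$ refines. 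Care is also needed to ensure the density identities $|Dv|_1=|v'|$ and $|Dv|_p=|v'|$ hold $\omega$-a.e.\ rather than merely $|\dD v|$-a.e., but since $\omega$ and $\Leb 1$ are mutually absolutely continuous on $J$ this is immediate once the measure identity $|\dD v|=gTV(v)$ is in hand.
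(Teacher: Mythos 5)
Your skeleton (localize to compact subintervals where $g$ is bounded above and below, then assemble via inner regularity \eqref{eq:inf tot var}) and your upper bound $|\dD v|\le g\,TV(v)$ via mollification both match the paper, which uses the Anzellotti--Giaquinta smooth approximation for the latter. The gap is in the lower bound $|\dD v|(U)\ge\int_U g\,\d TV(v)$. What your comparison-of-measures step actually yields is only $|\dD v|(U)\ge(\min_U g)\,TV(v)(U)$, and your claim that the constants ``cancel in the limit as the exhaustion $[a,b]\uparrow J$ refines'' is false as stated: as $[a,b]$ grows the ratio $\max_{[a,b]}g/\min_{[a,b]}g$ typically deteriorates rather than tends to $1$ (take $g(t)=e^t$ on $J=\R$), and inner regularity only transports an already-established measure identity from compacts to $J$ — it cannot sharpen a constant-factor bound into an equality.

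Two ways to close this. Your intended route, read between the lines of ``after a further localization,'' is to subdivide a \emph{fixed} compact $U$ into intervals $U_i$ of diameter $\to 0$, use $\liminf_n\sum_i\ge\sum_i\liminf_n$ for the fixed optimal sequence, and invoke uniform continuity of $g$ on $\overline U$ so that $\max_i\big(\max_{U_i}g/\min_{U_i}g\big)\to 1$; it is the \emph{partition}, not the exhaustion, that refines, and the proposal should say so. The paper takes a cleaner route that sidesteps continuity of $g$ entirely: by Cavalieri, $\int_U g\,\d TV(v)=\int_\R TV(v)(\{g>t\}\cap U)\,\d t$, then apply the classical lower semicontinuity $TV(v)(A)\le\liminf_n\int_A|v_n'|\,\d\Leb 1$ on each open superlevel set $A=\{g>t\}\cap U$ and Fatou in $t$, obtaining $\int_U g\,\d TV(v)\le\liminf_n\int_U g|v_n'|\,\d\Leb 1=|\dD v|(U)$. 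Your remaining steps — density identity by Radon--Nikodym, \eqref{eq:perimeter of halfline} as the case $v=\nchi_{(-\infty,r)\cap J}$, and the Sobolev block ($p>1$) with mollification in one direction and the chain $W^{1,p}_{loc}\Rightarrow|\dD v|\ll\omega\Rightarrow TV(v)\ll\Leb 1\Rightarrow v\in{\sf AC}_{loc}(J)$ in the other — agree with the paper.
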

\begin{proof}
Denote $J=(-a,b)$ with $a,b \in [0,\infty]$.
Thanks to the assumptions on the weight $g$, it holds $C_K\le  g \le C_K^{-1}$ on $K$  for $K\subset J$ compact and for some $C_K>0$. In particular, the measure $\omega$ and $\Leb 1\mres{J}$ are mutually absolutely continuous and the space $L^1_{loc}$ as well as the $L^1_{loc}$ convergence with respect to either $\omega$ or to $\Leb 1\mres{J}$  coincide. We shall tacitly use multiple times the above facts.

Let us assume first that $v$ is of locally bounded variation.  Fix $U$ open relatively compact in $J$, in particular $TV(v)(U)$ is finite. By the Anzellotti-Giaquinta approximation result (see, e.g., \cite[Theorem 3.9]{AmbrosioFuscoPallara}) we obtain functions $v_n \in C^\infty(U)$ such that $v_n \to v$ in $L^1(U)$  and $TV(v_n)(U)\to TV(v)(U)$. In particular, $TV(v_n)$ converge as measures to $TV(v)$ in duality with bounded continuous functions in $U$ (see e.g.\ before \cite[Proposition 3.15]{AmbrosioFuscoPallara})).  Hence
\[
|\dD v|(U) \le \liminf_n| \dD v_n|(U) \le \liminf_n \int_U |v_n'|g\,\d \Leb 1 = \int_U g\,\d TV(v),
\]
where in the first equality we used the lower semicontinuity of total variation measures (in the metric sense) with respect to $L^1_{loc}$ convergence, in the second inequality the fact that $v_n \in \Lip_{loc}(U)$ and the definition of $| \dD v_n|$ and, in last equality, we used that $g$ is continuous and bounded on $U$.

For the converse implication, assume $v \in BV_{loc}(J;\omega)$. Consider an optimal sequence $(v_n)\subset \Lip_{loc}(U)$ so that $v_n \to v$ in $L^1(U)$ and  $\int \lip \, v_n\, \d\omega \to |\dD v|(U)$. This already shows that $v$ is classically of bounded variation in $U$, since $\|v_n'\|_{L^1(U)}$ is uniformly bounded.
Moreover, thanks to the lower semicontinuity of $|\dD v|$ with respect to the $L^1$-convergence, $(\lip \, v_n ) \omega$ converges to $|\dD v|$ weakly in duality with continuous functions on $U$. Using Cavalieri's formula, we can estimate
\begin{align*}
    \int_U g\,\d TV(v) &= \int_{\R} TV(v)(\{g> t\}\cap U)\, \d t \le \liminf_{n\to\infty}\int_{\R} \int_{\{g>t\}\cap U} |v_n'|\, \d\Leb 1\d t \\
    &=\liminf_{n\to\infty} \int_U g |v_n'|\,\d \Leb 1 = |\dD v|(U),
\end{align*}
where in the first inequality we used that $TV(v)(A)\le \liminf_n \int_A |v_n'| \d \Leb 1 $ for every open set $A\subset U$ combined with Fatou's Lemma. We obtained that
\begin{equation}
|\dD v|(U) =\int_U g\,\d TV(v),\qquad \text{for all open sets }U\subset J.
\label{eq:DV = TV}
\end{equation}
From this, we deduce that $|\dD v|$ and $g TV(v)$ coincide as locally finite measures on $J$.  In particular, their absolutely continuous parts with respect to $\omega=g \Leb 1$ must coincide, that is $|D v|_1=|v'|$ $\omega$-a.e.. 

From the above, as a particular case we have  $\nchi_{(-a,r)}\in BV_{loc}(J;\omega)$ and consequently, from what we have just proved, it holds
\[
\Per((-\infty,r)\cap J,\cdot ) =|\dD \nchi_{(-\infty,r)\cap J}|=gTV(\nchi_{(-\infty,r)})\mres J,
\]
as measures on $J$, the first being the perimeter measure of $(-a,r)\subset J$ (according to \eqref{eq:def du}). This immediately yields \eqref{eq:perimeter of halfline}.

Let us conclude by proving the last statement. If $v \in W^{1,p}_{loc}(J;\omega)$, then the fact \eqref{eq:dependence gradient} and the identity \eqref{eq:DV = TV} easily imply that $v \in {\sf AC}_{loc}(J)$ and $|v'| \le |Du|_p$ $\omega$-a.e..

Conversely, if $v \in {\sf AC}_{loc}(J)$, the identity \eqref{eq:DV = TV} forces $|\dD^s v|=0$, by uniqueness of the Radon Nikodym decomposition of $|\dD v|$. Thus, if $v,|v'|\in L^p_{loc}(\omega)$, a cut-off and a convolution argument, using the estimate $|((\eta v)\ast \rho_\eps)'|\le |(\eta v)'|\ast \rho_\eps$, for $(\rho_\eps)_{\eps>0}$ convolution kernel and $\eta$ Lipschitz boundedly supported cut-off function, gives $v \in W^{1,p}_{loc}(J;\omega)$ and $|Du|_p \le |v'|$.
\end{proof}
The following gives the characterization of Sobolev functions with zero boundary conditions.
\begin{lemma}\label{lem:w1p0 1-d}
    Let $g:[0,\infty)\to (0,\infty)$ be a continuous function and set $\omega\coloneqq g \d t$. Then for all $r>0$ and $p>1$ it holds
    \[
    W^{1,p}_0([0,r);\omega) = \left\{f \in {\sf AC}_{loc}(0,r) \colon \int_0^r (|f|^p+|f'|^p)g\,\d t <\infty,\,\,  \lim_{t \to r^-} f(t)=0 \right\},
    \]
    where the equality is intended up to a.e.\ identification.
\end{lemma}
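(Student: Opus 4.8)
The plan is to prove the two inclusions separately, reducing everything to classical one–dimensional Sobolev theory by means of Lemma~\ref{lem:LEMMONE}, and exploiting that, since $g$ is continuous and strictly positive on $[0,\infty)$, on every compact subinterval $K\subset[0,\infty)$ the weight satisfies $c_K\le g\le c_K^{-1}$ for some $c_K>0$; hence on such $K$ the spaces $L^p(\omega)$, $W^{1,p}(\omega)$ coincide with $L^p(\Leb1)$, $W^{1,p}(\Leb1)$ with comparable norms, and minimal weak upper gradients agree with classical derivatives.

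\textbf{The inclusion $\subseteq$.} Let $f\in W^{1,p}_0([0,r);\omega)\subset W^{1,p}(\X)$, with $\X=([0,\infty),|\cdot|,\omega)$, and pick $f_n\in\Lip_{bs}([0,r))$ with $f_n\to f$ in $W^{1,p}(\X)$. Restricting to the open interval $(0,r)$ and applying Lemma~\ref{lem:LEMMONE} shows that (the good representative of) $f$ lies in ${\sf AC}_{loc}(0,r)$ with $|Df|_p=|f'|$ $\omega$-a.e., so that $\int_0^r(|f|^p+|f'|^p)g\,\d t\le\|f\|_{W^{1,p}(\X)}^p<\infty$. It remains to verify $\lim_{t\to r^-}f(t)=0$. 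On $[r/2,r]$ the weight is comparable to a constant, so $f_n\to f$ in the classical space $W^{1,p}((r/2,r);\Leb1)$; since $p>1$, the one–dimensional Morrey embedding $W^{1,p}((r/2,r))\hookrightarrow C^0([r/2,r])$ yields uniform convergence of the continuous representatives on $[r/2,r]$. As each $f_n$ vanishes identically near $r$, hence $f_n(r)=0$, the uniform limit satisfies $f(r)=\lim_{t\to r^-}f(t)=0$.

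\textbf{The inclusion $\supseteq$.} Let $f$ belong to the right-hand side. Truncating, $f\mapsto(-M)\vee f\wedge M$, which stays in the right-hand side and converges to $f$ in $W^{1,p}(\omega)$ by dominated convergence, we may assume $f$ bounded. For $\delta\in(0,r/2)$ let $\eta_\delta$ be the piecewise linear cut-off equal to $1$ on $[0,r-\delta]$, to $0$ on $[r-\delta/2,\infty)$, and with $|\eta_\delta'|\le 2/\delta$; set $f_\delta\coloneqq\eta_\delta f$, which is supported in the compact set $[0,r-\delta/2]\subset[0,r)$. Because $g(0)>0$, the bound $\int_0^r|f'|^pg\,\d t<\infty$ forces $f'\in L^p$ near $0$ with respect to $\Leb1$, whence $f\in{\sf AC}([0,r-\delta/2])$ and $f_\delta$ is absolutely continuous, bounded, and boundedly supported in $[0,r)$. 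Mollifying $f_\delta$ (extending it across $0$ by even reflection first) produces Lipschitz functions supported in $[0,r)$ that converge to $f_\delta$ in $W^{1,p}(\Leb1)$; as they are all supported in a fixed compact subinterval, they converge to $f_\delta$ also in $W^{1,p}(\X)$, so $f_\delta\in W^{1,p}_0([0,r);\omega)$ for every $\delta$. Since $W^{1,p}_0([0,r);\omega)$ is closed in $W^{1,p}(\X)$, it suffices to show $f_\delta\to f$ in $W^{1,p}(\X)$ as $\delta\to0$.

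\textbf{The key estimate.} That $f_\delta\to f$ in $L^p(\omega)$ and $\eta_\delta f'\to f'$ in $L^p(\omega)$ is immediate from dominated convergence. The only delicate term is the cut-off error $\eta_\delta' f$, living on $[r-\delta,r-\delta/2]$. Here we use that $f(r^-)=0$ together with $f'\in L^p((r/2,r);\omega)$ and $g\ge c>0$ on $[r/2,r]$: from $f(t)=-\int_t^r f'(s)\,\d s$ and H\"older's inequality, $|f(t)|^p\le (r-t)^{p-1}c^{-1}\int_t^r|f'|^pg\,\d s$ for $t\in[r/2,r]$. Therefore, with $C\coloneqq 2^{p-1}c^{-1}\sup_{[r/2,r]}g$,
\[
\int_{r-\delta}^{r-\delta/2}|\eta_\delta' f|^pg\,\d t\le\Big(\tfrac{2}{\delta}\Big)^p\Big(\sup_{[r/2,r]}g\Big)\tfrac{\delta}{2}\sup_{t\in[r-\delta,r]}|f(t)|^p\le C\int_{r-\delta}^{r}|f'|^pg\,\d s,
\]
which tends to $0$ as $\delta\to0$, being the tail of a convergent integral. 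This Hardy-type cancellation between the $\delta^{-p}$ blow-up of $|\eta_\delta'|^p$ and the vanishing of $f$ at $r$ is the one genuinely non-routine point; granted it, $f_\delta\to f$ in $W^{1,p}(\X)$ and the proof is complete. The identifications up to $\omega$-a.e.\ equality are understood throughout.

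\textbf{Main obstacle.} Everything is soft except the cut-off error estimate above: one must prove that membership in the right-hand side (in particular $f(r^-)=0$ and $f'\in L^p(\omega)$) suffices to kill the diverging factor $\delta^{1-p}$ produced by differentiating the linear cut-off. I expect this to be the step requiring care; the rest follows from Lemma~\ref{lem:LEMMONE}, a standard mollification, and the $1$-dimensional Morrey embedding.
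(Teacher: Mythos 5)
Your proof is correct. The first inclusion is handled exactly as in the paper: Lemma~\ref{lem:LEMMONE} identifies the interior behaviour and gives the integral bound, and uniform control near $r$ (your Morrey argument, the paper's equivalent H\"older-continuity remark) forces $f(r^-)=0$ from $f_n(r)=0$.

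For the second inclusion you take a genuinely different route. The paper truncates in the \emph{range}: it sets $f_n:=(f-1/n)^+$, which has bounded support because $f(r^-)=0$, and converges to $f$ in $W^{1,p}(\omega)$ because $f'=0$ a.e.\ on $\{f=0\}$; no cut-off error appears at all, so no Hardy-type estimate is needed. You instead truncate in \emph{space}, multiplying by a linear cut-off near $r$, and then kill the $\delta^{-p}$ blow-up of $|\eta_\delta'|^p$ via the H\"older bound $|f(t)|^p\le (r-t)^{p-1}c^{-1}\int_t^r|f'|^pg$, exploiting $f(r^-)=0$. Both arguments are sound. What each buys: the paper's vertical truncation is shorter and avoids the analytic work; however as literally written, $(f-1/n)^+$ only makes sense for $f\ge0$ (for signed $f$ one should use, e.g., $\mathrm{sign}(f)(|f|-1/n)^+$), and the paper compresses the remaining approximation of a compactly supported $AC$ function by Lipschitz functions into a ``clearly''. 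Your horizontal truncation handles signed $f$ verbatim and spells out the mollification step (including the even reflection across $0$, which is the right way to mollify on the half-line without losing information at the non-Dirichlet endpoint), at the cost of the Hardy-type computation, which you carry out correctly.
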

\begin{proof}
    Suppose first that $f \in W^{1,p}_0([0,r);\omega)  $. Note first that also $f \in W^{1,p}_0((0,r);\omega) $ and so by Lemma \ref{lem:LEMMONE} we  have $f \in {\sf AC}_{loc}(0,r)$ and  $\int_0^r (|f|^p+|f'|^p)g\,\d t <\infty$. Consider  $f_n \to f $ in $W^{1,p}_0([0,r);\omega)  $ with $f_n \in \Lip_{bs}([0,r).$  In particular trivially  $\lim_{t \to r^-} f_n(t)=0$. On the other hand, $\int_0^r |f_n'(t)|^pg(t)$ is uniformly bounded and since $g$ is bounded away from zero near $r$, we deduce that $f_n$ are uniformly H\"older continuous in a left-neighborhood of $r.$ This is sufficient to deduce that   $\lim_{t \to r^-} f(t)=0$.

    Conversely suppose that have $f \in {\sf AC}_{loc}(0,r)$,  $\int_0^r (|f|^p+|f'|^p)g\,\d t <\infty$ and  $\lim_{t \to r^-} f(t)=0$. Then by Lemma \ref{lem:LEMMONE} we have $f\in W^{1,p}((0,r);\omega)$ and so trivially also $f\in W^{1,p}([0,r);\omega)$. On the other hand the function $f_n\coloneqq (f-1/n)^+\in W^{1,p}([0,r);\omega)$ converge in $W^{1,p}([0,r);\omega)$ to $f$ as $n\to +\infty.$ Since clearly $f_n \in W^{1,p}_0([0,r);\omega)$ we conclude the proof.
\end{proof}

Next, we collect some useful results on eigenvalue problems in weighted intervals.
\begin{lemma}\label{lem:1D regularity}
    Let $g:(0,r)\to (0,\infty)$ be a continuous function with $g\in L^1(0,r)$ and fix $p\in(1,\infty).$ Let $\mathcal W$ be one of the following two function spaces: {
    \begin{align*}
            &   \left \{f \in {\sf AC}_{loc}(0,r) \colon \int_0^r |f|^pg\,\d t <\infty,\,\,  \lim_{t \to r^-} f(t)=0 \right\};\\
            &\left \{f \in {\sf AC}_{loc}(0,r) \colon \int_0^r |f|^pg\d t<\infty,\,\, \int_0^r |f|^{p-2}f g\d t =0  \right\}.
    \end{align*} }
    Suppose that $u \in \mathcal W\setminus\{0\}$,  satisfies
    \begin{equation}\label{eq:RQ in 1d}
        \frac{\int_0^r |u'(t)|^p g(t)\d t}{\int_0^r |u(t)|^p g(t)\d t}=\lambda\coloneqq \inf_{ f \in \mathcal W\setminus\{0\}}   \frac{\int_0^r |f'(t)|^p g(t)\d t}{\int_0^r |f(t)|^p g(t)\d t}<\infty.
    \end{equation}
    Then $|u'|^{p-2}u'g \in C^1(0,r)$, $|u'|^{p-2}u'\in C^0(0,r)$ and
    \begin{equation}\label{eq:implicit ode}
       \left(|u'|^{p-2}u'g \right)'=-\lambda g |u|^{p-2}u,
    \end{equation}
   and if $g$ is a.e.\ differentiable it also holds
    \begin{equation}\label{eq:ode}
         \left(|u'|^{p-2}u'\right)'+|u'|^{p-2}u' (\log g)'=-\lambda  |u|^{p-2}u, \qquad \text{a.e.\ in $(0,r).$}
    \end{equation}
    Moreover if $u$ is non-negative and monotone non-increasing, then  $u'<0$ in $(0,r)$ (and in particular $u>0$ in $(0,r)$).
\end{lemma}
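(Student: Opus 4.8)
The statement is a standard elliptic regularity / Euler--Lagrange argument for a one-dimensional weighted $p$-Laplace eigenvalue problem, so the plan is to derive the equation by the first variation and then bootstrap regularity. First I would produce the weak form of the Euler--Lagrange equation: since $u$ minimizes the Rayleigh quotient over $\mathcal W$, for every admissible test function $\varphi$ (Lipschitz with support compactly contained in $(0,r)$, plus in the second case one must also handle the constraint $\int_0^r |f|^{p-2}fg\,\d t =0$ via a Lagrange multiplier, which turns out to be $\lambda$ itself because testing against $u$ and integrating by parts is consistent) one gets
\[
\int_0^r |u'|^{p-2}u'\varphi' g\,\d t = \lambda \int_0^r |u|^{p-2}u\,\varphi\, g\,\d t .
\]
Because $u\in L^p((0,r);g)$ and $g\in L^1(0,r)$, the right-hand side defines an absolutely continuous function of the upper endpoint; hence $t\mapsto |u'(t)|^{p-2}u'(t)g(t)$ agrees a.e.\ with an absolutely continuous (indeed $C^1$, since its derivative $-\lambda g|u|^{p-2}u$ is continuous once we know $u$ is continuous) function, and \eqref{eq:implicit ode} holds. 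Dividing by the continuous, strictly positive $g$ shows $|u'|^{p-2}u'\in C^0(0,r)$, and if $g$ is a.e.\ differentiable the Leibniz rule gives \eqref{eq:ode}.

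Second, to upgrade continuity of $u$ itself: from $|u'|^{p-2}u' = \Phi/g$ with $\Phi$ continuous and $g$ continuous positive, one solves for $u'$ by inverting $s\mapsto |s|^{p-2}s$ (a homeomorphism of $\mathbb R$), so $u'$ is continuous on $(0,r)$, hence $u\in C^1(0,r)$. This also makes all the integration-by-parts steps above rigorous a posteriori. I would insert this bootstrap right after obtaining the weak form, so that the remaining assertions are about a genuinely $C^1$ function.

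Third, the positivity/strict monotonicity claim. Assume $u\ge 0$ is non-increasing. Then $u'\le 0$ everywhere. Suppose $u'(t_0)=0$ for some $t_0\in(0,r)$; then $\Phi(t_0)=|u'(t_0)|^{p-2}u'(t_0)g(t_0)=0$. I would argue that $u$ cannot vanish on a nonempty subinterval: if $u\equiv 0$ on some $[a,b]$ then, since $\Phi' = -\lambda g|u|^{p-2}u = 0$ there, $\Phi$ is constant on $[a,b]$; using a unique-continuation / ODE-uniqueness argument for the equation $\Phi' = -\lambda g |u|^{p-2}u$, $\Phi = |u'|^{p-2}u' g$ (both $\Phi$ and $u$ vanishing at an interior point forces $u\equiv 0$ on all of $(0,r)$ by propagating the data in both directions, contradicting $u\neq 0$). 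Hence $\{u>0\}$ is a nonempty open interval, and by monotonicity it is of the form $(0,c)$. On $(0,c)$ we have $u>0$, so $\Phi' = -\lambda g u^{p-1} < 0$ strictly, meaning $\Phi$ is strictly decreasing; combined with appropriate behaviour at the endpoints this forces $\Phi<0$, i.e.\ $u'<0$, on all of $(0,c)$. Finally one rules out $c<r$: at $t=c$, $u(c)=0$ and $u'(c)<0$ would force $u<0$ just to the right of $c$, contradicting $u\ge 0$; so $c=r$ and $u>0$, $u'<0$ throughout $(0,r)$.

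\textbf{Main obstacle.} The routine parts are the first variation and the regularity bootstrap; the delicate point is the strict positivity argument, specifically ruling out that $u'$ has an interior zero or that $u$ vanishes on a subinterval. This is a Hopf-lemma / strong-maximum-principle type statement for the degenerate operator $(|u'|^{p-2}u'g)'$, and because the operator is degenerate at points where $u'=0$ the ODE-uniqueness used to propagate the ``$u=0$, $u'=0$'' data is not completely automatic --- one must either invoke a known unique-continuation result for one-dimensional $p$-Laplace-type equations, or give a direct ad hoc argument exploiting that $g$ is continuous and positive (so the equation is non-degenerate in $g$) and that $|u'|^{p-2}u' = \Phi/g$ determines $u'$ continuously. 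I would treat this case analysis carefully, as it is the crux; everything else is bookkeeping.
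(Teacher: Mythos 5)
Your first-variation and regularity argument is essentially the paper's: test with $\phi\in C^\infty_c(0,r)$ (handling the Neumann constraint via a multiplier or by citing \cite{DGB92}), note that $w:=|u'|^{p-2}u'g\in L^1_{loc}$ together with the weak equation forces $w\in W^{1,1}_{loc}$ with $w'=-\lambda g|u|^{p-2}u$, then upgrade to $C^1$ by continuity of $u$ and $g$. Your additional observation that one can invert $s\mapsto|s|^{p-2}s$ to deduce $u\in C^1$ is fine (the paper stops at $|u'|^{p-2}u'\in C^0$, which suffices). One nit: your aside that the Lagrange multiplier for the constraint ``turns out to be $\lambda$ itself'' is off --- the constraint $\int|f|^{p-2}fg=0$ contributes a \emph{separate} multiplier, which one shows to vanish by testing with $\phi\equiv 1$; $\lambda$ is the multiplier for the $L^p$-normalization. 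Both you and the paper defer the details to \cite{DGB92}, so this doesn't affect correctness.

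Where you genuinely diverge is in the positivity step, and here you have overestimated the difficulty. You flag unique continuation / a Hopf-lemma for the degenerate $p$-Laplacian as ``the crux,'' but you never actually need it: since $u\ge0$ is non-increasing and $u\not\equiv 0$, the set $\{u>0\}$ is automatically of the form $(0,c)$ with $c\in(0,r]$ --- if $u(t_0)>0$ then $u>0$ on $(0,t_0]$, and if $u(t_1)=0$ then $u\equiv 0$ on $[t_1,r)$. The whole paragraph about propagating the data ``$u=0$, $u'=0$'' through the degenerate equation is redundant. Your \emph{actual} argument (on $(0,c)$ one has $w'=-\lambda g u^{p-1}<0$ strictly, so $w$ is strictly decreasing, which together with $w\le 0$ forces $w<0$ on $(0,c)$, hence $u'<0$; then $u'(c)<0$ and $u(c)=0$ would produce $u<0$ just past $c$, forcing $c=r$) is correct and, if anything, a touch cleaner than the paper's, which asserts the dichotomy ``$w\equiv0$ or $w<0$ on $(0,r)$'' directly from $w\le 0$ and $w$ non-increasing --- a step that strictly speaking also needs the ODE (if $w(t_1)=0$ then $w\equiv 0$ on $(0,t_1]$, hence $u\equiv 0$ there, hence $u\equiv0$). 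So: same route, correct, but trim the unique-continuation detour --- the monotonicity hypothesis does the heavy lifting for you.
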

\begin{proof}
Suppose first that {$\mathcal W=\{f \in {\sf AC}_{loc}(0,r)\colon\int_0^r |f|^pg\,\d t <\infty,\,\,  \lim_{t \to r^-} f(t)=0 \}$.}
Fix $\phi\in C^{\infty}_c(0,r)$. Then $u+ s\phi \in \mathcal W$ for all $s\in \R$ and is easily seen that the function
 $$s\mapsto \int_0^r |u'(t)+s \phi'(t)|^p g(t)-\lambda |u(t)+s \phi(t) |^p g(t)\d t$$
 is differentiable at $s=0$ and the derivative must vanish from \eqref{eq:RQ in 1d}. Computing the derivative explicitly  we deduce
    \begin{equation}\label{eq:weak 1d plap}
        \int_0^r |u'(t)|^{p-2}u'(t) \phi'(t) g(t)\d t=-  \lambda \int_0^r |u(t)|^{p-2}u(t) \phi(t) g(t)\d t, \qquad \forall\, \phi \in  C^{\infty}_c(0,r).
    \end{equation}
   Because $w\coloneqq |u'|^{p-2}u'g \in L^1_{loc}(0,r)$, from \eqref{eq:weak 1d plap} we get that $w\in W^{1,1}_{loc}(0,r)$ with $w'=-\lambda u^{p-1} g$ a.e.\ in $(0,r)$. This shows \eqref{eq:implicit ode} and since both $u$ and $g$ are continuous we obtain that $w\in C^1(0,r).$ Noting that $1/g$ is also continuous in $(0,r)$ we obtain that $|u'|^{p-2}u'\in C^0(0,r).$
   A similar standard, but slightly more involved, argument can be used to obtain \eqref{eq:implicit ode}  in the case {$\mathcal W=\{f \in {\sf AC}_{loc}(0,r) \colon \int_0^r |f|^pg\d t<\infty,\,\, \int_0^r |f|^{p-2}f g(t)\d t =0  \}$} (see e.g.\ \cite[Lemma 2.4]{DGB92}).
From \eqref{eq:implicit ode}, since $g$ is strictly positive in $(0,r)$,  assuming it is also a.e.\ differentiable we obtain identity \eqref{eq:ode}.

 Suppose now that $u$ is non-negative and monotone non-increasing. Then $u'\le 0$ a.e.\ in $(0,r)$ and so $w\le 0$. However by \eqref{eq:implicit ode} we have $w'=-\lambda u^{p-1} g\le 0$. Therefore $w$ is monotone non-increasing and non-positive in $(0,r)$. Hence either $w\equiv 0$ or $w<0$ in $(0,r).$ If $w=0$ it means that $u'=0$ a.e.\ in $(0,r)$  and so $u$ is constant. However the only constant function in $\mathcal W$ (in both cases) is zero, and $u\neq 0$ by assumption.
 Hence $w<0$ and, since $g>0$, we deduce that $u'<0$ in $(0,r).$ 
\end{proof}

\medskip

\noindent\textbf{Acknowledgments}. Both authors are members of INDAM-GNAMPA. F.N. was partially supported by the Academy of Finland project \emph{Approximate incidence geometry}, Grant No.\ 355453 and acknowledges the MIUR Excellence Department Project awarded to the Department of Mathematics, University of Pisa, CUP I57G22000700001. 
I.Y.V.\ was partially supported by the European Union (ERC, ConFine, 101078057).
We thank the referee for the useful suggestions.

\def\cprime{$'$} \def\cprime{$'$}

\end{document}